\documentclass[12pt, reqno]{amsart}
\usepackage{mathrsfs}
\usepackage{amsfonts}
\usepackage[nobysame,sorted,compressed-cites,sorted-cites,initials]{amsrefs}
\usepackage[centertags]{amsmath}
\usepackage{amssymb,array}
\usepackage{amsthm,youngtab}
\usepackage{graphicx}
\usepackage{caption}
\usepackage{ytableau}
\usepackage{mathbbol}
\usepackage{diagbox}
\DeclareFontFamily{U}{MnSymbolC}{}
\DeclareFontShape{U}{MnSymbolC}{m}{n}{
  <-6> MnSymbolC5
  <6-7> MnSymbolC6
  <7-8> MnSymbolC7
  <8-9> MnSymbolC8
  <9-10> MnSymbolC9
  <10-12> MnSymbolC10
  <12-> MnSymbolC12}{}
\DeclareSymbolFont{MnSyC}{U}{MnSymbolC}{m}{n}
\DeclareMathSymbol{\smallsquare}{\mathbin}{MnSyC}{'150}
\newcommand{\cybe}[1]{\mathbb{\Lbrack}#1,#1\mathbb{\Rbrack}}
\newcommand{\cyb}[2]{\mathbb{\Lbrack}#1,#2\mathbb{\Rbrack}}
\usepackage{enumitem}
\SetLabelAlign{center}{\hfill #1\hfill}
\usepackage[textwidth=16cm, hmarginratio=1:1]{geometry}
\usepackage{tikz-cd}
\usepackage{rotating}
\usepackage[all,cmtip]{xy}
\usepackage[titletoc, title]{appendix}
\usepackage{amscd}
\usepackage[colorlinks]{hyperref}
\usepackage{float}
\hypersetup{
bookmarksnumbered,
pdfstartview={FitH},
breaklinks=true,
linkcolor=blue,
urlcolor=blue,
citecolor=blue,
bookmarksdepth=2
}
\usepackage[utf8]{inputenc}

\newcommand{\wh}[1]{\widehat{#1}}
\newcommand{\sfR}{\mathsf{R}}
\SetLabelAlign{center}{\hfil#1\hfil}

\allowdisplaybreaks[4]
\usetikzlibrary{matrix,arrows,decorations.pathmorphing}
\newcommand{\stirling}[2]{\genfrac{\{}{\}}{0pt}0{#1}{#2}}
\newcommand{\tstirling}[2]{\genfrac{\{}{\}}{0pt}1{#1}{#2}}

\newcommand{\lact}{\triangleright}
\newtheorem{theorem}{Theorem}[section]
\newtheorem{corollary}[theorem]{Corollary}
\newtheorem{mthm}[theorem]{Main Theorem}
\newtheorem{lemma}[theorem]{Lemma}
\newtheorem{proposition}[theorem]{Proposition}
\theoremstyle{definition}

\newtheorem{remark}[theorem]{Remark}
\newtheorem{example}[theorem]{Example}
\newtheorem{question}[theorem]{Question}

\newtheorem{conjecture}[theorem]{Conjecture}

\newcommand{\ascprod}{\mathop{\overrightarrow{\prod}}}
\newcommand{\dscprod}{\mathop{\overleftarrow{\prod}}}
\newcommand{\ascprodst}{\mathop{\overrightarrow{\prod}^\ast}}
\newcommand{\dscprodst}{\mathop{\overleftarrow{\prod}^\ast}}
\newcommand{\ascprodtens}{\mathop{\overrightarrow{\prod}^\tensor}}

\numberwithin{equation}{section}

\newcommand{\tensor}{\otimes}

\newcommand{\kk}{\mathbb{k}}\newcommand{\lie}[1]{{\mathfrak{#1}}}
\newcommand{\ZZ}{{\mathbb Z}}
\newenvironment{enumalph}{\begin{enumerate}[label={\rm(\alph*)},leftmargin=*]}{\end{enumerate}}
\newenvironment{enumrom}{\begin{enumerate}[label={\rm(\roman*)},leftmargin=*]}{\end{enumerate}}

\DeclareMathOperator*{\Ob}{Ob}
\DeclareMathOperator*{\diag}{diag}
\DeclareMathOperator{\id}{id}

\DeclareMathOperator{\ad}{ad}

\DeclareMathOperator{\Mat}{Mat}

\DeclareMathOperator{\End}{End}

\DeclareMathOperator{\Hom}{Hom}

\DeclareMathOperator{\sign}{sign}
\DeclareMathOperator{\Inv}{Inv}
\DeclareMathOperator{\Inj}{Inj}
\DeclareMathOperator{\Map}{Map}
\DeclareMathOperator{\Part}{Part}

\newcommand{\EE}{{\mathcal{E}}}
\newcommand{\PP}{{\mathcal {P}}}
\newcommand{\sr}[3]{\mathsf r_{#1,#2}^{(#3)}}
\newcommand{\sgna}[1]{\boldsymbol{\epsilon}(#1)}
\begin{document}

\title{Monomial bialgebras}
\author{Arkady Berenstein, Jacob Greenstein, and Jian-Rong Li}
\address{Arkady Berenstein, Department of Mathematics, University of Oregon, Eugene, OR 97403, USA}
\email{arkadiy@math.uoregon.edu}
\address{Jacob Greenstein, Department of Mathematics, University of California, Riverside, CA 92521, USA}
\email{jacobg@ucr.edu}
\address{Jian-Rong Li, Department of Mathematics, Universit\"at Wien,
Vienna, Austria}
\email{lijr07@gmail.com}
\date{}
\thanks{This work was partially supported by the Simons Foundation Collaboration Grant no.~636972 (A.~Berenstein), the Simons
foundation collaboration grant no.~245735 (J.~Greenstein), and Austrian Science Fund (FWF): P 34602, Grant DOI: 10.55776/P34602, and PAT 9039323, Grant-DOI 10.55776/PAT9039323 (J.-R. Li).
}

\begin{abstract}
Starting from 
a single solution of QYBE (or CYBE) we produce an infinite family of 
solutions of QYBE (or CYBE) parametrized by transitive arrays and, in particular, by signed permutations. We are especially interested in
cases when such solutions yield quasi-triangular
structures on direct powers of Lie bialgebras and tensor 
powers of Hopf algebras. We obtain infinite families of such structures as well and study the corresponding
Poisson-Lie structures and co-quasi-triangular algebras.
\end{abstract}

\maketitle 

\tableofcontents

\section{Introduction}
Classical and quantum Yang Baxter equations (CYBE and QYBE, respectively) play a fundamental role in representation theory, low-dimensional topology and mathematical physics,
especially in the theory of integrable
systems and statistical mechanics. This makes the construction of new solutions of CYBE and QYBE quite important. 

In the present work, starting from 
a single solution of CYBE (QYBE), we produce an infinite family of solutions of CYBE (QYBE) parametrized by {\em transitive arrays} and, in particular, by signed permutations. More precisely, we start with a quasi-triangular Lie
bialgebra $\lie g$ with a classical r-matrix~$r\in \lie g\tensor \lie g$.
As customary, let~$U(\lie l)$ be the universal enveloping algebra of a Lie algebra~$\lie l$, and it is convenient to identify~$\lie l$ as the subspace (of all primitive elements)
inside the Hopf algebra~$U(\lie l)$.
Then~$U(\lie g)^{\tensor n}$, $n\ge 1$, is naturally isomorphic to~$U(\lie g^{\oplus n})$. Given~$x=\sum_t a_t\tensor b_t\in \lie g\tensor \lie g$ and~$1\le i<j\le m$, denote
\begin{equation}\label{eq:copy in tensor}
x_{i,j}=\sum_t
1^{\tensor (i-1)}\tensor a_t\tensor 1^{\tensor(j-i-1)}\tensor b_t\tensor 1^{\tensor (m-j)}\in U(\lie g)^{\tensor m}
\end{equation}
and set~$x_{j,i}=(\tau(x))_{i,j}$ where~$\tau$ is the permutation of factors. 
For 
any $n\times n$-matrix $\boldsymbol\epsilon $ with entries in $\{1,-1\}$
define 
$\mathbf r^{(\boldsymbol\epsilon)}\in 
\lie g^{\oplus n}\tensor 
\lie g^{\oplus n}\subset U(\lie g)^{\tensor 2n}$
by 
\begin{equation}\label{eq:r c gen}
\mathbf r^{(\boldsymbol\epsilon)}=\sum_{1\le i,j\le n} r_{j,i+n}^{(\epsilon_{i,j})}
\end{equation}
where $r^{(1)}=r$, $r^{(-1)}=-r_{2,1}$.
A remarkable result~\cite{LuMou}*{Theorem~6.2} can
be restated as follows to justify the notation.
\begin{theorem}[\cite{LuMou}*{Theorem~6.2 and Remark~6.4}]\label{thm:thm 1}
Let $\lie g$
be a quasi-triangular Lie bialgebra with a classical r-matrix~$r$ and let~$n\ge 2$. Then,
for any~$w\in S_n$ and~$\boldsymbol d\in\{1,-1\}^n$,
$\lie g^{\oplus n}$ is a quasi-triangular Lie bialgebra with the classical r-matrix~$\mathbf r^{(\boldsymbol\epsilon(w,\boldsymbol d))}$ where 
\begin{equation}\label{eq:eps w alpha defn}
\boldsymbol\epsilon(w,\boldsymbol d)_{i,j}=\delta_{i,j}d_i+\sign(w(j)-w(i)),
\qquad 1\le i,j\le n.
\end{equation}
\end{theorem}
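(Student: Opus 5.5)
The statement has two parts. We must show that $\mathbf r:=\mathbf r^{(\boldsymbol\epsilon(w,\boldsymbol d))}$ has $\lie g^{\oplus n}$-invariant symmetric part, and that it satisfies the CYBE in $U(\lie g^{\oplus n})^{\tensor 3}$. The coboundary $\delta(x)=[x\tensor 1+1\tensor x,\mathbf r]$ then defines a quasi-triangular Lie bialgebra structure, by the standard theory. Throughout, write $\Omega=r+r_{2,1}$, which is $\ad$-invariant by hypothesis. I would identify $\lie g^{\oplus n}\tensor\lie g^{\oplus n}=\bigoplus_{a,b}\lie g^{(a)}\tensor\lie g^{(b)}$. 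In this decomposition the $(j,i)$-component of $\mathbf r$ is $r^{(\epsilon_{i,j})}$, with first leg in copy $j$ and second leg in copy $i$.

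\emph{Symmetric part.} For $i\ne j$ we have $\epsilon_{i,j}=\sign(w(j)-w(i))=-\epsilon_{j,i}$. The $(j,i)$-component of $\mathbf r+\tau(\mathbf r)$ is $r^{(\epsilon_{i,j})}+\tau\bigl(r^{(\epsilon_{j,i})}\bigr)$. Both $r+\tau(-r_{2,1})$ and $-r_{2,1}+\tau(r)$ vanish, so these off-diagonal components cancel. On the diagonal, $\epsilon_{i,i}=d_i$ and $r^{(\pm1)}+\tau(r^{(\pm1)})=\pm\Omega$. Hence $\mathbf r+\tau(\mathbf r)=\sum_i d_i\,\Omega^{(i,i)}$. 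This is $\lie g^{\oplus n}$-invariant, because each $\Omega^{(i,i)}$ is invariant under copy $i$ and is centralized by the other copies.

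\emph{CYBE.} I would expand
\[
\cybe{\mathbf r}=[\mathbf r_{12},\mathbf r_{13}]+[\mathbf r_{12},\mathbf r_{23}]+[\mathbf r_{13},\mathbf r_{23}]
\]
into components indexed by triples of copies $(a,b,c)$ for the three tensor legs. Elements of distinct copies commute, so each bracket contributes only when the two shared legs lie in the same copy. I then group the terms according to the set of copies involved.

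First, all three legs in one copy $i$: the contribution is $\cybe{r^{(d_i)}}$. This vanishes because $-r_{2,1}$ satisfies the CYBE whenever $r$ does, by applying the flip $\tau_{13}$.

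Second, three distinct copies: since $w$ orders the copies linearly, each $\epsilon_{i,j}$ just records whether $w(i)<w(j)$. Relabeling legs by this order, every such term becomes $\tau_\sigma$ applied to the CYBE of $r$ for a suitable permutation $\sigma$ of the legs. The signs from $r^{(-1)}=-r_{2,1}$ combine with the sign of the bracket reversal so that the total is $\pm\tau_\sigma\cybe{r}=0$. I would verify this on the ordered triple $w(a)<w(b)<w(c)$ and transport it to the others by symmetry.

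\emph{Main obstacle: the mixed cases.} The remaining terms have exactly two legs in one copy $i$ and one in another copy $j$. Here the diagonal entry $d_i$ interacts with the off-diagonal sign $\epsilon_{i,j}$. I expect this bookkeeping to be the hardest step. The contribution reduces to expressions of the form $[r^{(d_i)}_{12},x_{13}+x_{23}]+[x_{13},x_{23}]$ and variants, with $x=r$ or $-r_{2,1}$. My approach would be to substitute $r^{(-1)}=r-\Omega$ and use the CYBE for $r$ to cancel everything except terms of the form $[\Omega_{12},x_{13}+x_{23}]$. These vanish by $\ad$-invariance of $\Omega$. The check has to be done separately for the four cases $d_i=\pm1$, $\epsilon_{i,j}=\pm1$, and for each position of the lone leg. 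The crucial input is the antisymmetry $\epsilon_{i,j}=-\epsilon_{j,i}$, which makes the $\Omega$-terms appear in invariant combinations.
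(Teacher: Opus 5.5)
Your argument is correct, and it takes a genuinely different route from the paper's. One small point: your coboundary $[x\tensor 1+1\tensor x,\mathbf r]$ is the negative of the paper's $\delta_{\mathbf r}(x)=[\mathbf r,\Delta(x)]$, which does not affect the two conditions you verify.

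The paper never checks CYBE for $\mathbf r^{(\boldsymbol\epsilon(w,\boldsymbol d))}$ directly. In Remark~\ref{rem:pf thm 1.1} it specializes Main Theorem~\ref{thm:main thm trans} to the family $\{r,-r_{2,1}\}$ and the transitive array $\sgna{w}$. There $\mathbf r(\sgna{w},\boldsymbol d)=\mathbf r^{(\diag\boldsymbol d)}+j_{\sgna w}^-$ is obtained from the direct-sum r-matrix by the classical Drinfeld twist of Theorem~\ref{thm:trans classical twists}. That twist is built by induction on $n$ from relative classical twists and the identity of Proposition~\ref{prop:identity jc} in $\lie{qtr}_{3m+3}(C)$. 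The result is then identified with $\mathbf r^{(\boldsymbol\epsilon(w,\boldsymbol d))}$.

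Your component decomposition can be streamlined so that the ``main obstacle'' disappears. For $\mathbf r=\mathbf r^{(\boldsymbol\epsilon)}$,
\begin{equation*}
\cybe{\mathbf r}=\sum_{p,q,s\in[n]}\phi_{p,n+q,2n+s}\Big([r^{(\epsilon_{q,p})}_{1,2},r^{(\epsilon_{s,p})}_{1,3}]+[r^{(\epsilon_{q,p})}_{1,2},r^{(\epsilon_{s,q})}_{2,3}]+[r^{(\epsilon_{s,p})}_{1,3},r^{(\epsilon_{s,q})}_{2,3}]\Big).
\end{equation*}
Transitivity of $\boldsymbol\epsilon(w,\boldsymbol d)$ (Lemma~\ref{lem:In trans to trans}) at $(s,q,p)$ gives $\epsilon_{s,p}\in\{\epsilon_{q,p},\epsilon_{s,q}\}$. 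So every summand, mixed or not, vanishes by Lemma~\ref{lem:bas classical family}.

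If you do the mixed cases by hand, note that for $d_i=1$, $x=-r_{2,1}$, substituting $x=r-\Omega$ is the awkward direction. Write $r^{(d_i)}=x\pm\Omega$ instead; the remainder is then $\cybe{x}\pm[\Omega_{1,2},x_{1,3}+x_{2,3}]=0$.

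What the paper's route buys is structure. It realizes the new cobracket as the twist $\tilde\delta_{\mathbf c}$ of the direct-sum cobracket by an explicit $j_{\mathbf c}$. It treats arbitrary families and transitive arrays uniformly. It is also designed to run in parallel with the quantum Theorem~\ref{thm:trans twist}, where QYBE has no additive splitting into components and a twisting argument is genuinely needed.

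What yours buys is brevity, and a clean separation of the hypotheses. CYBE uses only transitivity of $\boldsymbol\epsilon$, while almost skew-symmetry enters only through the invariance of $\mathbf r+\tau(\mathbf r)$ (cf.\ Lemma~\ref{lem:quasi invariant}). Consequently the displayed identity shows that $\mathbf r^{(\boldsymbol a)}$ solves CYBE for any transitive $\boldsymbol a$ and any family satisfying \eqref{eq:trans CYBE}, which is the content of Conjecture~\ref{conj:CYBE for trans}.

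Your distinct-copies computation gives more. It shows that for $C=\{1,-1\}$ the colour patterns allowed in \eqref{eq:trans CYBE} are exactly the acyclic orientations of the triangle, each equal to $\pm$ a leg-permutation of $\cybe{r}$. Hence $\{r,-r_{2,1}\}$ satisfies \eqref{eq:trans CYBE} for every solution $r$ of CYBE, with no invariance assumption. This covers Conjecture~\ref{conj:CYBE for trans 1,-1} as well.
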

By~\cite{RSTS}, under the additional assumption that~$r+r_{2,1}\in S^2(\lie g)$ is non-degenerate, 
$\lie g\oplus\lie g$ with
the r-matrix corresponding to~$\sgna{\id,(1,1)}$
is isomorphic to the Drinfeld double of~$\lie g$.
The Lie bracket dual to the Lie cobracket
corresponding to $\mathbf r^{(\sgna{\id,(1,\dots,1)})}$
for arbitrary~$n$ was constructed in~\cite{EK-III}*{Proposition~1.9}.

It turns out that matrices
$\boldsymbol\epsilon(w,\boldsymbol d)$, $w\in S_n$
and~$\boldsymbol d\in\{1,-1\}^n$ have a special combinatorial property. In the spirit of~\cites{ABGJ,ABGJepr}, we say that an $n\times n$-matrix $\boldsymbol a=(a_{i,j})_{1\le i,j\le n}$
with entries in some set~$C$ is {\em transitive} if
$a_{i,k}\in\{a_{i,j},a_{j,k}\}$ for all $1\le i,j,k\le n$. By Lemma~\ref{lem:In trans to trans}, every transitive $n\times n$ matrix~$\boldsymbol{a}$
with entries~$a_{i,j}\in\{1,-1\}$, $1\le i,j\le n$ which is {\em almost skew-symmetric},
that is, $a_{j,i}=-a_{i,j}$ for all~$1\le i<j\le n$,
is equal to~$\boldsymbol{\epsilon}(w,\boldsymbol d)$
for some unique~$w\in S_n$ and~$\boldsymbol d\in\{1,-1\}^n$.
In particular, there are $2^nn!$ of such matrices. 
More generally, transitive $n\times n$ matrices
with entries in~$\{1,-1\}$ naturally identify with 
{\em bitransitive} relations (\cite{FZ}) on the set~$\{1,\dots,n\}$, and their numbers
$\{B_n\}_{n\ge 1}$ are given by the sequence~\href{https://oeis.org/A004123}{A004123} in~\cite{OEIS}, the first few terms being $2$, $10$, $74$, $730$, $9002$, $133210,\dots$ (see~\cites{FZ,HK,Wag} and~\S\ref{subs:pol trans}). By~\cite{FZ}*{Proposition~4}
\begin{equation}\label{eq:Bn asymptotic}
\frac{B_n}{2^n n!}=O(\lambda^n),\qquad n\to\infty,
\end{equation}
where~$\lambda=(2\log(\frac32))^{-1}>\frac65$.
The following is an ultimate justification of our notion of transitivity.
\begin{conjecture}\label{conj:CYBE for trans 1,-1}
Let~$\boldsymbol{\epsilon}$ be a
transitive $n\times n$ matrix with entries in~$\{1,-1\}$
and let~$r\in \lie g\tensor \lie g$
be a solution of CYBE. 
Then $\mathbf r^{(\boldsymbol{\epsilon})}$ solves CYBE.
\end{conjecture}
We verified this conjecture for~$n\le 4$ and, by the above, it holds for all transitive matrices with entries in~$\{1,-1\}$ which are almost skew-symmetric.
We expect that, for a generic~$r$, this exhausts all solutions of CYBE of the form~\eqref{eq:r c gen}. It should be noted that the symmetric group~$S_n$ acts
naturally on~$\lie g^{\oplus n}$, and each r-matrix
provided by Theorem~\ref{thm:thm 1} is equivalent
to~$\mathbf r^{(\boldsymbol{\epsilon}(\id,\boldsymbol d))}$ for some~$\boldsymbol d\in\{1,-1\}^n$, while
bialgebra structures on~$\lie g^{\oplus n}$ given by these r-matrices are all isomorphic. 
This is no longer the case for a general transitive~$\boldsymbol\epsilon$. While the class of r-matrices provided by Conjecture~\ref{conj:CYBE for trans 1,-1} is closed under the action of~$S_n$ on~$\lie g^{\oplus n}$, by~\eqref{eq:Bn asymptotic} the number of $S_n$-orbits grows exponentially faster than~$2^n$.

Theorem~\ref{thm:thm 1} can be recovered as a special case of a much more general Theorem~\ref{thm:main thm trans}, there the role of transitivity, albeit of a slightly different kind, becomes even more prominent. 
Namely, generalizing the basic family~$\{r^{(1)},r^{(-1)}\}=
\{r,-r_{2,1}\}$, let
$\{r^{(c)}\}_{c\in C}\subset \lie g\tensor\lie g$ be a 
family of r-matrices
corresponding to the {\em same} Lie cobracket on~$\lie g$. By Proposition~\ref{prop:color CYBE}, such a family solves 
the {\em transitive CYBE} in~$U(\lie g)^{\tensor 3}$, which appears to be new, namely
\begin{equation}\label{eq:trans CYBE}
[r_{1,2}^{(c)},r_{1,3}^{(c')}]+[r_{1,2}^{(c)},r_{2,3}^{(c'')}]+[r_{1,3}^{(c')},
r_{2,3}^{(c'')}]=0
\end{equation}
whenever~$c'\in\{c,c''\}\subset C$. The ordinary CYBE corresponds to the
case when~$c=c'=c''$. 

Given an upper triangular $n$-array $\mathbf c=(c_{i,j})_{1\le i<j\le n}$ with entries in a set~$C$,
$\boldsymbol d=(d_1,\dots,d_n)\in C^n$ and $\mathbf r=\{r^{(c)}\}_{c\in C}\subset 
\lie g\tensor \lie g$, define
\begin{equation}\label{eq:r c d}
\mathbf r(\mathbf c,\boldsymbol d):=\sum_{1\le i\le n}(r^{(d_{i})})_{i,i+n}
+\sum_{1\le i<j\le n} (r^{(c_{i,j})})_{j,i+n}-(r^{(c_{i,j})})_{j+n,i}.
\end{equation}
We say that an upper triangular $n$-array $\mathbf c=(c_{i,j})_{1\le i<j\le n}$ is {\em transitive} (cf.~\cites{ABGJ,ABGJepr}) 
if $c_{i,k}\in\{c_{i,j},c_{j,k}\}$ for all~$1\le i<j<k\le n$. Our main result in the classical case is
\begin{mthm}\label{thm:main thm trans}
Let~$\lie g$ be a quasi-triangular Lie bialgebra with a family~$\mathbf r=\{r^{(c)}\}_{c\in C}$ of r-matrices corresponding to the same Lie cobracket.
Then for 
any $\boldsymbol d\in C^n$ and
any transitive $n$-array $\mathbf c$ with entries in~$C$,
$\lie g^{\oplus n}$ is a quasi-triangular Lie bialgebra with the $r$-matrix~$\mathbf r(\mathbf c,
\boldsymbol d)$, that is,
$\tilde\delta_{\mathbf c}:\lie g^{\oplus n}\to \lie g^{\oplus n}\tensor \lie g^{\oplus n}$ defined by 
$$\tilde\delta_{\mathbf c}(x)=[\mathbf r(\mathbf c,\boldsymbol{d}),
1\tensor x+x\tensor 1],\qquad x\in\lie g^{\oplus n}
$$
is a Lie cobracket and~$\mathbf r(\mathbf c,\boldsymbol d)$ solves the CYBE.
\end{mthm}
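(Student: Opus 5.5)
The plan is to use the standard criterion for coboundary Lie bialgebras. Suppose an element $\mathbf R\in\lie l\tensor\lie l$ has an $\ad$-invariant symmetric part $\mathbf R+\mathbf R_{2,1}$ and satisfies the CYBE. Then $x\mapsto[\mathbf R,1\tensor x+x\tensor 1]$ automatically satisfies co-skew-symmetry and the co-Jacobi identity, and it is a $1$-cocycle because it is a coboundary. So, with $\lie l=\lie g^{\oplus n}$ and $\mathbf R=\mathbf r(\mathbf c,\boldsymbol d)$, the statement reduces to two claims: (i) $\mathbf R+\mathbf R_{2,1}$ is $\lie g^{\oplus n}$-invariant; (ii) $\mathbf R$ solves the CYBE in $U(\lie g)^{\tensor 3n}$.

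It helps to record where each summand of~\eqref{eq:r c d} lives. Index the first tensor factor by $1,\dots,n$ and the second by $n+1,\dots,2n$, and write $x_{(a,b)}$ for $x$ placed in the $a$-th copy of $\lie g$ in the first factor and the $b$-th copy in the second. Then
$$\mathbf R=\sum_i r^{(d_i)}_{(i,i)}+\sum_{i<j}\Bigl(r^{(c_{i,j})}_{(j,i)}-\bigl(r^{(c_{i,j})}_{2,1}\bigr)_{(i,j)}\Bigr).$$
The hypothesis that all $r^{(c)}$ define the same cobracket means that the differences $r^{(c)}-r^{(c')}$ are invariant. Together with quasi-triangularity, this shows that all $r^{(c)}$ share a common invariant symmetric part $\Omega=r^{(c)}+r^{(c)}_{2,1}$. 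Applying the flip $\tau$ to $\mathbf R$, the off-diagonal terms cancel pairwise, leaving $\mathbf R+\mathbf R_{2,1}=\sum_i\Omega_{(i,i)}$. This element is invariant because $\Omega$ is $\lie g$-invariant and different copies of $\lie g$ commute. That settles (i).

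For (ii), I will first enlarge the colour set. Since $\Omega$ is invariant, we have $[-r^{(c)}_{2,1},\Delta x]=[r^{(c)}-\Omega,\Delta x]=[r^{(c)},\Delta x]$. Hence the family $\{r^{(c)}\}\cup\{-r^{(c)}_{2,1}\}$, indexed by $C\times\{\pm1\}$, again consists of r-matrices with the same cobracket, so Proposition~\ref{prop:color CYBE} applies to the whole enlarged family. I then expand $[\mathbf R_{12},\mathbf R_{13}]+[\mathbf R_{12},\mathbf R_{23}]+[\mathbf R_{13},\mathbf R_{23}]$ and sort the terms by the triple $(a,b,c)\in\{1,\dots,n\}^3$ of copies of $\lie g$ occupied in the three factors. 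A commutator vanishes unless its two entries share a copy of $\lie g$ in the common factor. So for each triple only the three terms of the shape~\eqref{eq:trans CYBE} survive, with colours that depend on the relative order of $a,b,c$.
\begin{enumerate}
\item If $a,b,c$ are distinct, the colours come from $c_{\min,\mathrm{mid}}$, $c_{\min,\max}$, $c_{\mathrm{mid},\max}$, each with a sign or transpose determined by the order. Transitivity of $\mathbf c$ gives $c_{\min,\max}\in\{c_{\min,\mathrm{mid}},c_{\mathrm{mid},\max}\}$, which is exactly the condition $c'\in\{c,c''\}$ needed in~\eqref{eq:trans CYBE}.
\item If two of the indices coincide, one of the three colours is some $d_i$. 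The needed coincidence of colours then holds trivially, with the repeated index forcing $c'=c$ or $c'=c''$.
\item If $a=b=c$, the identity is the CYBE for the single r-matrix $r^{(d_a)}$.
\end{enumerate}

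The main obstacle is the bookkeeping in the case analysis. For each of the six orderings of distinct $a,b,c$, and for each coincidence pattern, one must check that the surviving terms really assemble into an instance of~\eqref{eq:trans CYBE}. Concretely, after applying a permutation of tensor factors, the transposed and negated pieces must land as elements of the enlarged family $\pm r^{(c)}$ or $\mp r^{(c)}_{2,1}$ with the middle colour in the right slot. I would handle this by first verifying the case $a<b<c$ directly, and then reducing the other orderings to it by applying the flip to pairs of factors. This reduction uses the fact that the transitive CYBE is preserved under permuting factors, provided one replaces $r^{(c)}$ by $-r^{(c)}_{2,1}$ where appropriate; that is exactly why the enlarged family is closed under the operations needed.
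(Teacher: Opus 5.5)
Your route is correct, and it is genuinely different from the paper's. The paper never expands $\cybe{\mathbf r(\mathbf c,\boldsymbol d)}$ directly. Instead it proves, by induction on $n$, that $j_{\mathbf c}=\sum_{i<j}r^{(c_{i,j})}_{j,i+n}$ is a classical Drinfeld twist for $\delta_{\lie g^{\oplus n}}$ (Theorem~\ref{thm:trans classical twists}). The inductive step is a relative classical twist (Proposition~\ref{prop:key prop classical}), verified via the identity of Proposition~\ref{prop:identity jc} in $\lie{qtr}_{3m+3}(C)$ and Corollary~\ref{cor:clas twist from rel twist}. The theorem then follows from Proposition~\ref{prop:cnd classical twist}\ref{prop:cnd classical twist.b}, since $\mathbf r(\mathbf c,\boldsymbol d)=\sum_i r^{(d_i)}_{i,i+n}+j_{\mathbf c}^-$.

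Your argument is local and much shorter, and it loses nothing: reading Proposition~\ref{prop:cnd classical twist}\ref{prop:cnd classical twist.b} backwards, your result gives that $j_{\mathbf c}$ is a classical twist. As an aside, the same bookkeeping applied to $\mathbf r^{(\boldsymbol a)}$ puts $r^{(a_{q,p})},r^{(a_{s,p})},r^{(a_{s,q})}$ on the three pairs of a triple $(p,q,s)$. Transitivity gives $a_{s,p}\in\{a_{s,q},a_{q,p}\}$ there with no flips at all. What the paper's route buys is that it is the exact classical shadow of the proof of Theorem~\ref{thm:trans twist}. There $\mathbf R(\mathbf c,\boldsymbol d)$ is a product, the QYBE does not split over triples of copies, and the induction through relative twists and $\mathsf{QTr}^+_n(C)$ is what works.

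Two corrections are needed.

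First, the $r^{(c)}$ do not share a common symmetric part. For $\{r,-r_{2,1}\}$ the symmetric parts are $\pm(r+r_{2,1})$. You do not need that claim. Each $r^{(c)}+r^{(c)}_{2,1}$ is invariant by Proposition~\ref{prop:compat cond}\ref{prop:compat cond.b}, and the off-diagonal terms cancel as you say. Hence $\mathbf R+\mathbf R_{2,1}=\sum_i(r^{(d_i)}+\tau(r^{(d_i)}))_{i,i+n}$ is invariant. That $-r^{(c)}_{2,1}$ is an r-matrix for $\delta$ is Lemma~\ref{lem:bas classical family}.

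Second, the pattern $a=c\neq b$ is not trivial in the given order. Write $\rho_{a,b}$ for the component of $\mathbf R$ on copies $(a,b)$, so $\rho_{a,a}=r^{(d_a)}$, $\rho_{a,b}=r^{(c_{b,a})}$ for $a>b$, and $\rho_{a,b}=-\tau(r^{(c_{a,b})})$ for $a<b$. Then $\rho_{b,a}=-\tau(\rho_{a,b})$ whenever $a\neq b$. In this case the pairs carry $\rho_{a,b}$, $r^{(d_a)}$, $\rho_{b,a}$. The middle one matches neither outer label unless $d_a=c_{\min(a,b),\max(a,b)}$.

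Set $E(A,B,C)=[A_{1,2},B_{1,3}]+[A_{1,2},C_{2,3}]+[B_{1,3},C_{2,3}]$. Then
$\tau_{1,2}(E(A,B,C))=-E(-\tau(A),C,B)$ and $\tau_{2,3}(E(A,B,C))=-E(B,A,-\tau(C))$.
So one swap turns the bad term into $-E(\rho_{b,a},\rho_{b,a},r^{(d_a)})=0$.

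These two identities, together with $\rho_{b,a}=-\tau(\rho_{a,b})$, also make your reduction of the distinct case precise. Sorting positions by copy index turns each term into $\pm E(s^{(c_{p,q})},s^{(c_{p,t})},s^{(c_{q,t})})$ with $p<q<t$ and $s^{(c)}=-\tau(r^{(c)})$. This vanishes by transitivity and Proposition~\ref{prop:color CYBE}. The patterns $a=b$ and $b=c$ do work directly, as you state.
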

It should be noted that $\tilde\delta_{\mathbf c}$,
unlike~$\mathbf r(\mathbf c,\boldsymbol d)$, does not 
depend on~$\boldsymbol{d}$, hence the notation.
We prove this Theorem in~\S\ref{subs:pf m thm 1} by constructing, for each transitive
$n$-array~$\mathbf c$, a {\em classical Drinfeld twist}~$j_{\mathbf c}(\mathbf r)$ which deforms the natural Lie bialgebra structure 
on~$\lie g^{\oplus n}$ to~$\tilde\delta_{\mathbf c}$. Classical twists were defined by
Drinfeld in the foundational work~\cite{Drinf} and studied in~\cites{AM,EH,Halb,KPSST,LuMou}
to name but a few. We proceed by induction on~$n$,
and the main ingredient in the inductive step is 
a generalization of classical Drinfeld twist which
we refer to as a {\em relative classical twist} for a pair
of Lie bialgebras (see~\S\ref{subs:rel clas twist}).
Another important tool is 
a ``transitive version'' $\lie{qtr}_n(C)$, which we introduce in~\S\ref{subs:Cqtr}, of the Lie algebra~$\lie{qtr}_n$ defined in~\cite{BEER}. 

As shown in~\cite{ABGJ} (cf.~Lemma~\ref{lem:trans sign} for the precise statement), transitive $n$-arrays 
with entries in~$\{1,-1\}$
are 
in a natural bijection with~$S_n$
via 
\begin{equation}\label{eq:sgna defn}w\mapsto\sgna{w}:=(\sign(w(j)-w(i)))_{1\le i<j\le n},\quad w\in S_n.
\end{equation}
Theorem~\ref{thm:thm 1} is a special case of
Theorem~\ref{thm:main thm trans} for~$\mathbf r=\{r^{(1)},r^{(-1)}\}=\{r,-r_{2,1}\}$ and~$\mathbf c=\boldsymbol{\epsilon}(w)$ (see Remark~\ref{rem:pf thm 1.1}). 

If~$C$ is finite, the number of transitive $n$-arrays with entries in~$C$
equals~$p_n(|C|)$ where
\begin{equation}\label{eq:p_n pochhammer}
p_n(x)=\sum_{1\le k\le n-1} 
p_{n,k}\, x(x-1)\cdots (x-k+1)
\end{equation}
with~$p_{n,k}\in\ZZ_{>0}$, $1\le k\le n-1$ (\cite{ABGJepr}*{Proposition~2.13}, see also Corollary~\ref{cor:enum transitive}). In particular, 
$p_{n,1}=1$, $p_{n,2}=\frac12 n!-1$, whence~$p_n(2)=n!$, and, by~\cite{ABGJ}*{Theorem~2.17}, $p_{n,n-1}$ is the $(n-1)$th Catalan
number. The coefficient~$p_{n,n-2}$ is given in Conjecture~\ref{conj:K n n-2}. 
We discuss this polynomiality phenomenon in much greater generality in~\S\ref{subs:polynom}.

To take the full advantage of Theorem~\ref{thm:main thm trans}, we pose the following 
\begin{question}\label{prob:family}
Which quasi-triangular Lie bialgebras admit non-trivial families  of classical r-matrices $\{r^{(c)}\}_{c\in C}$ with~$|C|>2$?
\end{question}
We answer this question when the underlying Lie algebra is of the form~$\lie t=\lie t(V,\lie g):=V\rtimes \lie g$, where~$\lie g$ is a Lie algebra and $V$ is
(a quotient of) the adjoint~$\lie g$-module~$\lie g_{\ad}$. Lie algebras of this type appeared in various contexts (see e.g.~\cites{CG,AN,Chaf,ChKR,GM,Pan,Tak}). Namely, suppose that~$\lie g$ is quasi-triangular with an r-matrix~$r$ and fix a surjective homomorphism of~$\lie g$-modules $f:\lie g_{\ad}\to V$.
We prove (Proposition~\ref{prop:takiff}) that
$\lie t$ is quasi-triangular with the r-matrix
 $\wh r:=(f\tensor\id_{\lie g}+\id_{\lie g}\tensor f)(r)$, which seems to be new. Finally, let~$\Omega$ be a $\lie g$-invariant in $V\tensor V$ (for example, if~$\lie g$ is simple then~$\Omega$ is the image of the Casimir element in~$\lie g\tensor\lie g$) and
 suppose that $r$ is skew-symmetric. Then~$\{\wh r^{(c)}\}_{c\in \kk}$ where~$\wh r^{(c)}=\wh r+c\,\Omega$
 provides an answer to Question~\ref{prob:family}
(Proposition~\ref{prop:takiff}).

One consequence of our construction is the following observation. The diagonal embedding of Lie algebras $\lie g\hookrightarrow \lie g^{\oplus n}$ is not, generally speaking, a homomorphism of Lie bialgebras for the direct sum bialgebra structure. However, this Nature's mistake 
is rectified if~$\lie g^{\oplus n}$ is regarded as 
a Lie bialgebra with the cobracket $\tilde\delta_{\mathbf c}$ (Theorem~\ref{thm:diag embed bialg}).

The following is a natural generalization of Conjecture~\ref{conj:CYBE for trans 1,-1}, also  verified for~$n\le 4$.
\begin{conjecture}\label{conj:CYBE for trans}
Let~$\boldsymbol{a}$ be a
transitive $n\times n$ matrix with entries in~$C$
and let~$\mathbf r=\{r^{(c)}\}_{c\in C}\subset \lie g\tensor \lie g$
be any solution of the transitive CYBE~\eqref{eq:trans CYBE}. Then
     $\mathbf r^{(\boldsymbol{a})}$ solves CYBE.
\end{conjecture}
Once~$\mathbf r^{(\boldsymbol{a})}$
satisfies CYBE, the necessary and sufficient 
condition for it to be an r-matrix for~$\lie g^{\oplus n}$ is provided by Lemma~\ref{lem:quasi invariant}
and is reminiscent the almost skew-symmetry discussed above.

It is curious that the number of transitive~$n\times n$ matrices with entries
in a finite~$C$ is again a polynomial~$q_n$ in~$|C|$ of degree~$n$ which is dramatically easier to compute than~$p_n$ from~\eqref{eq:p_n pochhammer}. Namely,
$q_n(x)=2x^n-x+(\frac12 B_n-2^n+1)x(x-1)$ 
(see Proposition~\ref{prop:enum trans} and
Corollary~\ref{cor:poly q_n}; in particular, $q_n(2)=B_n$ as expected).
In general, under a very mild assumption on~$C$ and the family~$\mathbf r$, 
$\mathbf r(\mathbf c,\boldsymbol d)$ can be
written as~$\mathbf r^{(\widehat{\mathbf c})}$
where the diagonal of~$\widehat{\mathbf c}$ is~$\boldsymbol{d}$ and the off-diagonal part is
obtained by a certain ``skew-symmetrization'' of~$\mathbf c$. 
Yet~$\widehat{\mathbf c}$ so obtained is not
a transitive $n\times n$-matrix and satisfies a rather different
combinatorial condition.

We conclude the discussion of the ``classical story'' with applications to Poisson geometry. Let~$G$
be an algebraic Poisson-Lie group whose Lie algebra is 
the Lie bialgebra~$\lie g$. Then its coordinate algebra~$\kk[G]$ is Poisson
with the Poisson bracket induced by
the cobracket on~$\lie g$ (see~\S\ref{subs:Poisson alg} for details). In particular, any Lie bialgebra structure on~$\lie g^{\oplus n}$ yields a Poisson algebra structure on~$\kk[G]^{\tensor n}\cong \kk[G^{\times n}]$.
\begin{theorem}\label{thm:Poisson mult}
Let~$\mathbf r=\{r^{(c)}\}_{c\in C}$ be 
a family of r-matrices for a Lie bialgebra~$\lie g$, let~$\mathbf c$ be a transitive $n$-array with entries in~$C$, $n\in\ZZ_{>1}$, and let~$G$ be an algebraic Poisson-Lie group with the Lie algebra~$\lie g$. Then
the diagonal embedding $G\hookrightarrow G^{\times n}$ is Poisson, that is the multiplication~$\kk[G]^{\tensor n}\to\kk[G]$ 
is a homomorphism of Poisson algebras, where 
the Poisson structure on~$\kk[G]^{\tensor n}$ is induced by the cobracket~$\tilde\delta_{\mathbf c}$ on~$\lie g^{\oplus n}$ corresponding to~$\mathbf r$ and~$\mathbf c$ in the notation of Theorem~\ref{thm:main thm trans}.
\end{theorem}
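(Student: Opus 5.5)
The argument is infinitesimal: for connected algebraic groups, a morphism of Poisson-Lie groups is the same as a Lie bialgebra homomorphism of their tangent Lie bialgebras. So we reduce to the diagonal embedding $\Delta:\lie g\to\lie g^{\oplus n}$, with $\lie g^{\oplus n}$ carrying the cobracket $\tilde\delta_{\mathbf c}$, and prove this is a Lie bialgebra homomorphism; this is Theorem~\ref{thm:diag embed bialg} announced in the introduction.

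At the level of coordinate rings, the Poisson bracket on $\kk[G]$ is the Sklyanin-type bracket determined by the cobracket $\delta$, i.e.\ by $r$. For the quasi-triangular structure with r-matrix $r^{(c)}$, on matrix coefficients it reads
$$\{f,g\}=\langle f\tensor g,\ r^{(c)}\cdot\Delta(\cdot)-\Delta(\cdot)\cdot r^{(c)}\rangle$$
(left and right translations). This bracket is independent of $c$, since all $r^{(c)}$ give the same $\delta$. The analogous bracket on $\kk[G]^{\tensor n}=\kk[G^{\times n}]$ is built from $\tilde\delta_{\mathbf c}$ and $\mathbf r(\mathbf c,\boldsymbol d)$.

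Multiplication $\kk[G]^{\tensor n}\to\kk[G]$ is the pullback along the diagonal. It is a Poisson map exactly when
$$(\Delta\tensor\Delta)\circ\delta=\tilde\delta_{\mathbf c}\circ\Delta,$$
because both sides of the Poisson condition are biderivations determined by their values on generators, and these values are computed from the cobrackets via the standard dictionary of \S\ref{subs:Poisson alg}.

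The key computation checks this identity. For $x\in\lie g$,
$$\tilde\delta_{\mathbf c}(\Delta x)=[\mathbf r(\mathbf c,\boldsymbol d),\,1\tensor\Delta x+\Delta x\tensor 1].$$
Write $\Delta x=\sum_k x_k$, with $x_k$ the copy of $x$ in the $k$-th summand (slot $k$ on the left, slot $k+n$ on the right). Then:
\begin{itemize}
\item Each diagonal term $(r^{(d_i)})_{i,i+n}$ commutes only with $x_i$ and $x_{i+n}$, and gives $(\Delta\tensor\Delta)$ applied to the $(i,i)$-component of $\delta(x)$.
\item An off-diagonal pair $(r^{(c_{i,j})})_{j,i+n}-(r^{(c_{i,j})})_{j+n,i}$ gives $[r^{(c)},x\tensor1+1\tensor x]$ placed in slots $(j,i+n)$, minus the same expression placed in slots $(j+n,i)$ after flipping. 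This equals $\delta(x)_{j,i+n}-(\tau\delta(x))_{j+n,i}$, and since $\delta$ is skew, $\tau\delta(x)=-\delta(x)$, so the two pieces add to $\delta(x)_{j,i+n}+\delta(x)_{i,j+n}$.
\end{itemize}
Summing over all $i,j$ gives exactly $\sum_{k,l}\delta(x)_{k,l+n}=(\Delta\tensor\Delta)\delta(x)$.

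Remarkably, this step uses neither transitivity nor the particular $c_{i,j}$; it only uses that each $r^{(c)}$ induces the same $\delta$. Transitivity enters solely through Theorem~\ref{thm:main thm trans}, which guarantees that $\tilde\delta_{\mathbf c}$ is a genuine cobracket and hence that $\kk[G]^{\tensor n}$ is Poisson at all.

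The main obstacle is the passage from the Lie bialgebra homomorphism to the Poisson statement for algebraic groups. I would handle it by working directly with the multiplicative bracket formula on matrix coefficients. The Poisson bracket on $\kk[G^{\times n}]$ defined by $\mathbf r(\mathbf c,\boldsymbol d)$ restricts along the diagonal, via left and right translation derivations, to the bracket defined by the image of $\mathbf r(\mathbf c,\boldsymbol d)$ under the multiplication map $\lie g^{\oplus n}\tensor\lie g^{\oplus n}\to\lie g\tensor\lie g$. Modulo $\Delta$-invariant terms, this image is $\sum_i r^{(d_i)}+\sum_{i<j}(r^{(c_{i,j})}-\tau r^{(c_{i,j})})$. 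Each $r^{(c)}-\tau(r^{(c)})$ differs from $2r$ by an invariant symmetric element, and invariant elements act trivially in the difference of left and right actions. The result is therefore an r-matrix for $\delta$ up to scalar normalization, and it is this normalization that needs care to get right.
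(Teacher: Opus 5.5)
Your key computation is correct: it is the paper's proof of Theorem~\ref{thm:diag embed bialg}. You are also right that it uses only that every $r^{(c)}$, and hence every $-\tau(r^{(c)})$, induces $\delta$.

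\textbf{The gap.} It lies in the step you yourself flag as the main obstacle, and the argument in your last paragraph is wrong. The restriction of $\{F,F'\}_{\mathbf c}$ to the diagonal is not governed by the summed element $\sum_i r^{(d_i)}+\sum_{i<j}\bigl(r^{(\mathbf c(i,j))}-\tau(r^{(\mathbf c(i,j))})\bigr)\in\lie g\tensor\lie g$. The slot derivatives of a general $F\in\kk[G]^{\tensor n}$ are unrelated to one another. The component of $\mathbf r(\mathbf c,\boldsymbol d)$ with legs in slots $k$ and $l+n$ pairs only with the $k$-th slot derivative of $F$ and the $l$-th slot derivative of $F'$. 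So the comparison has to be made slot-pair by slot-pair; it cannot be collapsed into one element of $\lie g\tensor\lie g$.

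The outcome confirms that summing is the wrong operation. Modulo $\lie g$-invariants your summed element is $nr+2\binom{n}{2}r=n^2r$, which would give $n^2\{\cdot,\cdot\}$. But the Poisson property forces $\mu^{(n)}(\{f^{(k)},f'^{(l)}\}_{\mathbf c})=\{f,f'\}$ for single-slot generators. This is not a normalization to be tuned; the reduction itself is invalid. Your second paragraph does not close the gap either. It conflates generators of $\kk[G]^{\tensor n}$ with generators of $U(\lie g)$: your identity is verified only on $\lie g$, while functionals in $\kk[G]^{\tensor n}$ see all of $U(\lie g)^{\tensor n}$.

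\textbf{Two ways to finish.}

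\emph{Your route, completed by duality.} Since $\kk[G]\subset U(\lie g)^*$, you have $\{F,F'\}_{\mathbf c}=(F\tensor F')\circ\tilde\delta_{\mathbf c}$, and $\mu^{(n)}$ is the transpose of $\Delta^{(n)}:U(\lie g)\to U(\lie g)^{\tensor n}$. So it suffices that $(\Delta^{(n)}\tensor\Delta^{(n)})\circ\delta=\tilde\delta_{\mathbf c}\circ\Delta^{(n)}$ on all of $U(\lie g)$. Both sides are $\varphi$-derivations for the algebra homomorphism $\varphi=(\Delta^{(n)}\tensor\Delta^{(n)})\circ\Delta=\Delta_{U(\lie g)^{\tensor n}}\circ\Delta^{(n)}$. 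This uses \eqref{eq:delta prod} and the fact that $\Delta^{(n)}$ is both an algebra and a coalgebra map. Hence your computation on $\lie g$, which generates $U(\lie g)$, is enough. This is the algebraic proof of the integration fact you cite in your first paragraph.

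\emph{The paper's route.} The paper argues directly on $\kk[G]^{\tensor n}$ and never uses Theorem~\ref{thm:diag embed bialg}. As in \eqref{eq:c Poi bracket}, in $\mathbf r(\mathbf c,\boldsymbol d)\bowtie(f'^{(k')}\tensor f^{(k)})$ only the single component in slots $(k',k+n)$ survives. That component is $r^{(c)}$ or $-\tau(r^{(c)})$ for one $c\in C$, so it induces the bracket of $\kk[G]$, and $\mu^{(n)}(\{f'^{(k')},f^{(k)}\}_{\mathbf c})=\{f',f\}$. Both sides of the Poisson condition are biderivations along $\mu^{(n)}$, so checking on generators suffices. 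This is the correct form of the matrix-coefficient argument you were reaching for.
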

We provide a proof of Theorem~\ref{thm:Poisson mult},
along with an explicit formula for the corresponding Poisson bracket
$\{\cdot,\cdot\}_{\mathbf c}$ on~$\kk[G]^{\tensor n}$,
in~\S\ref{subs:Poisson}.
As a first example (which we did not find in the literature), taking $G$ to be the group~$GL_m$ or even the monoid~$\Mat_m$ and starting from the standard Poisson structure~\eqref{eq:Am Pois bracket} on~$\kk[G]$, yields the Poisson structure~\eqref{eq:Am,n Pois bracket} on
$\kk[G]^{\tensor n}$.

For~$\mathbf r=
\{r^{(1)},r^{(-1)}\}=
\{r,-r_{21}\}$ where~$r$ is an r-matrix for~$\lie g$, Poisson structures on~$\kk[G]^{\tensor n}$ 
corresponding to
different transitive $n$-arrays are naturally isomorphic
(see Remark~\ref{rem:Pois perms}). However, this is no longer the case if~$|C|>2$ 
(moreover, in the quantum case even when $C=\{1,-1\}$ the corresponding quantum algebras need not be isomorphic, see Proposition~\ref{prop:perm isom}
and~\S\ref{subs:twisted tens quantum matrices}). An example is provided by
Poisson algebra structures on tensor powers of
the coordinate algebra~$\kk[V\rtimes G]$ of the 
algebraic group~$V\rtimes G$, where~$V$ is (a quotient of) the adjoint $\lie g$-module. 
As an algebra, $\kk[V\rtimes G]$ identifies with~$\kk[V]\tensor \kk[G]$ where~$\kk[G]$ is 
a Poisson-commutative Poisson ideal, while~$\kk[V]\cong S(V^*)$ is a Poisson subalgebra whose Poisson bracket extends the (non-abelian) Lie bracket
on~$V^*$ induced by the Lie cobracket
on~$\lie t=V\rtimes \lie g$ (see~\S\ref{subs:Poiss Takiff}). It is quite involved even for~$G=GL_m$ and~$r\in\lie g\tensor\lie g$ belonging to the skew-symmetric family~\eqref{eq:skew symm r matr GLm} which extends the
well-known skew-symmetric solution of CYBE for~$\lie{gl}_2$
(cf.~\eqref{eq:Lie algebra V*}). Since~$\lie t$,
as discussed above, admits an infinite family of non-equivalent r-matrices,
our construction yields a family~\eqref{eq:Poiss Takiff tens} of 
Poisson brackets on~$\kk[V\rtimes G]^{\tensor n}
\cong\kk[(V\rtimes G)^{\times n}]$ parametrized
by transitive $n$-arrays with entries in~$\kk$ which appear to be non-isomorphic
for~$|C|>2$.
Quite remarkably, for~$n>1$ the natural image of~$\kk[V]^{\tensor n}$ in~$\kk[V\rtimes G]^{\tensor n}$
is no longer a Poisson subalgebra, while~$\kk[G]^{\tensor n}$
remains a Poisson-commutative Poisson ideal. 

We will now discuss the quantum case, which turns out
to be even more spectacular. 
Let~$H$ be an associative algebra.
Given $w\in S_n$
and an invertible $R$ in (a suitable completion $H\widehat\tensor H$, see~\S\ref{subs:compl}, of) $H\tensor H$, define 
$J_w=J_w(R)\in H^{\widehat\tensor 2n}$ by 
$$
J_w=(R_{2,n+1}^{(\sgna w_{1,2})})
(R_{3,n+2}^{(\sgna w_{2,3})}R_{3,n+1}^{(\sgna w_{1,3})})\cdots 
(R_{n,2n-1}^{(\sgna w_{n-1,n})}\cdots R_{n,n+1}^{(\sgna w_{1,n})})
$$
where~$\sgna w$ is defined by~\eqref{eq:sgna defn} and
$$
R_{i,j}^{(\epsilon)}=\begin{cases}
R_{i,j},&\epsilon=1,\\
R_{j,i}^{-1},&\epsilon=-1.
\end{cases}
$$
The notation~$R_{i,j}$ has the same meaning as in~\eqref{eq:copy in tensor} with~$U(\lie g)$ now replaced by~$H$.

Henceforth, let~$H$ be a bialgebra with the comultiplication $\Delta:H\to H\tensor H$. Then~$H^{\tensor n}$ is naturally a 
bialgebra with the comultiplication 
$\Delta_{H^{\tensor n}}:H^{\tensor n}\to H^{\tensor n}\tensor H^{\tensor n}$ determined by 
$$
\Delta_{H^{\tensor n}}(1^{\tensor(i-1)}\tensor h
\tensor 1^{\tensor (n-i)})=\Delta(h)_{i,n+i},\qquad h\in H,\quad 
1\le i\le n.
$$
In the sequel, we consider quasi-triangular bialgebras, rather than Hopf algebras, as we do not need the antipode for our constructions.
The following is our first main result in the quantum case.
\begin{mthm}\label{thm:main thm 2}
Let $H$ be a quasi-triangular bialgebra 
with a (universal) R-matrix~$R$ and
let~$w\in S_n$. Then
\begin{enumalph}
\item\label{thm:main thm 2.a}
$J_w=J_w(R)$ is a Drinfeld twist for~$\Delta_{H^{\tensor n}}$, hence
$\Delta_w:H^{\tensor n}\to H^{\widehat\tensor 2n}$ defined by
$\Delta_w(h)=J_w^{-1}\Delta_{H^{\tensor n}}(h)J_w$, $h\in H^{\tensor n}$
equips 
 $H^{\tensor n}$ with a structure 
 of a (topological) bialgebra~$H^{\tensor n,w}$;
 \item\label{thm:main thm 2.b}
 For every~$w\in S_n$, $\boldsymbol{d}=(d_1,\dots,d_n)\in\{1,-1\}^n$, 
 $$\mathbf R(\sgna{w},\boldsymbol d):=(J_w(R)^{op})^{-1} R_{1,n+1}^{(d_1)}\cdots R_{n,2n}^{(d_n)}\, J_w(R)
$$
is an R-matrix for $H^{\tensor n,w}$, where for
$X=X_1\tensor\cdots\tensor X_{2n}\in H^{\tensor 2n}$
in Sweedler-like notation we abbreviate $X^{op}:=X_{n+1}\tensor \cdots\tensor X_{2n}
\tensor X_1\tensor\cdots\tensor X_n
$.
\end{enumalph}
 \end{mthm}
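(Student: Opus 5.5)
Part (a) reduces to the twist cocycle condition
\[
(J_w)_{12}\,(\Delta_{H^{\tensor n}}\tensor\id)(J_w)=(J_w)_{23}\,(\id\tensor\Delta_{H^{\tensor n}})(J_w)
\]
in $H^{\widehat\tensor 3n}$, where the subscripts refer to the three blocks of $n$ tensor factors. Normalization $(\varepsilon\tensor\id)(J_w)=(\id\tensor\varepsilon)(J_w)=1$ is immediate, since $(\varepsilon\tensor\id)(R^{\pm1})=1=(\id\tensor\varepsilon)(R^{\pm1})$. I will argue by induction on $n$, in the spirit of the relative classical twists announced for Theorem~\ref{thm:main thm trans}.

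For the induction, write $w\in S_n$ as $w'\in S_{n-1}$, the restriction of the order pattern to $\{1,\dots,n-1\}$, together with the signs $\sgna w_{i,n}$. Then $J_w=J_{w'}\cdot K$, where $K=R^{(\sgna w_{n-1,n})}_{n,2n-1}\cdots R^{(\sgna w_{1,n})}_{n,n+1}$ only couples the $n$th factor of the first block with the first $n-1$ factors of the second block. The key tools are the quasi-triangularity identities $(\Delta\tensor\id)(R)=R_{13}R_{23}$ and $(\id\tensor\Delta)(R)=R_{13}R_{12}$, their analogues for $R^{(-1)}=R_{21}^{-1}$ (which is again an R-matrix), and the QYBE $R_{12}R_{13}R_{23}=R_{23}R_{13}R_{12}$. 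Applying $\Delta_{H^{\tensor n}}$ to each factor $R^{(\epsilon)}_{a,b}$ splits it into a product of two copies. The cocycle identity then becomes an equality between two ordered products of elements $R^{(\epsilon)}_{x,y}$ indexed by pairs of positions in $3n$ factors.

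To compare these two products I rearrange them using three moves: commuting factors with disjoint supports, QYBE-type braid moves for triples $R^{(\epsilon)}_{ab}R^{(\epsilon')}_{ac}R^{(\epsilon'')}_{bc}$, and mixed QYBE for combinations of $R$ and $R_{21}^{-1}$. The mixed relations hold exactly when the sign triple is consistent, i.e.\ when $\epsilon'\in\{\epsilon,\epsilon''\}$. This is the quantum analogue of the transitive CYBE~\eqref{eq:trans CYBE}, and transitivity of $\sgna w$ guarantees that every triple that has to be braided satisfies it. The main obstacle will be organizing this bookkeeping so that each braid move used is legitimate. My plan is to prove a lemma that such mixed QYBE holds for any transitive triple, and then run the induction on $n$: first apply the induction hypothesis to $J_{w'}$, then move the new factors coming from $K$ through using only those lemma moves.

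For part (b), recall that if $J$ is a twist for a quasi-triangular $(A,\Delta,\mathcal R)$, then $(J^{op})^{-1}\mathcal R J$ is an R-matrix for $\Delta^J$. So it suffices to show that $\mathcal R=R^{(d_1)}_{1,n+1}\cdots R^{(d_n)}_{n,2n}$ is an R-matrix for the untwisted $H^{\tensor n}$. This holds because the factors live in different tensor slots and each $R^{(d_i)}\in\{R,R_{21}^{-1}\}$ is an R-matrix for $H$. The standard twisting check, namely $\Delta^{J,op}=\mathcal R^J\Delta^J(\mathcal R^J)^{-1}$ together with the two hexagon identities derived from the cocycle condition, then finishes the proof, with completions handled as in~\S\ref{subs:compl}.
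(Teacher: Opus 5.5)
Your overall architecture is the paper's. The paper also peels off the last row, $J_{\mathbf c}=(J_{\mathbf c^-})_{[1,2n-1]\setminus\{n\}}\,K$ by \eqref{eq:rec Jc}. Its relative-twist machinery (Lemma~\ref{lem:rel twist from twist}, Corollary~\ref{cor:twist from rel twist}, Proposition~\ref{prop:key proposition}) is a packaged form of ``apply the induction hypothesis to $J_{w'}$ and cancel'', and part (b) is handled exactly as you say.

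However, the identity you reduce (a) to has its factors in the wrong order. $(J_w)_{12}(\Delta\tensor\id)(J_w)=(J_w)_{23}(\id\tensor\Delta)(J_w)$ is the cocycle condition for a \emph{left} twist $\Delta\mapsto J\Delta J^{-1}$. For $\Delta_w=J_w^{-1}\Delta_{H^{\tensor n}}J_w$, and for the formula $(J^{op})^{-1}\mathcal R J$ you invoke in (b), the condition is \eqref{eq:Drinfeld twist}: $(\Delta\tensor\id)(J_w)\,(J_w)_{12}=(\id\tensor\Delta)(J_w)\,(J_w)_{23}$. Your version is false for $J_w$. For $n=2$, $w=\id$ it reads $R_{23}R_{25}R_{45}=R_{45}R_{25}R_{23}$ in $H^{\tensor 6}$. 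Applying $\varepsilon$ in the fourth factor (using $(\varepsilon\tensor\id)(R)=1$) gives $R_{12}R_{13}=R_{13}R_{12}$. This fails e.g.\ for $U_\hbar(\lie{sl}_2)$, where the $\hbar^2$-term of the difference is $[r_{12},r_{13}]\neq 0$. The correct version reads $R_{25}R_{45}R_{23}=R_{25}R_{23}R_{45}$ and is trivial.

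Second, and more substantively, once the order is fixed the actual content is left unstated. Let $J'$ be the image of $J_{w'}$, $c_{j,n}=\sgna{w}(j,n)$, $K=\dscprod_{j}R^{(c_{j,n})}_{n,n+j}$, and use blocks $[1,n],[n+1,2n],[2n+1,3n]$ of $H^{\tensor 3n}$. Then $(\Delta\tensor\id)(K)$ commutes with $J'_{12}$ (disjoint supports). The induction hypothesis replaces $(\Delta\tensor\id)(J')J'_{12}$ by $(\id\tensor\Delta)(J')J'_{23}$. Also $(\Delta\tensor\id)(K)=P\,K_{23}$ with $P=\dscprod_j R^{(c_{j,n})}_{n,2n+j}$. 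After cancelling, everything reduces to
\[
J'_{23}\,P\,\dscprod_{j}R^{(c_{j,n})}_{n,n+j}=\Big(\dscprod_{j}R^{(c_{j,n})}_{n,2n+j}R^{(c_{j,n})}_{n,n+j}\Big)\,J'_{23}.
\]
Once the outer factors $R_{n,3n-1}$ and $R_{n,n+1}$ (both commuting with $J'_{23}$) are removed, this is exactly \eqref{eq:compressed}, i.e.\ Proposition~\ref{prop:key relation} with $m=n-1$. This is where transitivity is used, and it is not a consequence of the outer induction hypothesis. The paper proves it by a second induction:
\begin{enumerate}
\item split $J'$ once more via \eqref{eq:rec Jc};
\item push its last row through the product one factor at a time (Lemma~\ref{lem:move}); each step is a single mixed QYBE move with colors $(c_{k,n-1},c_{k,n},c_{n-1,n})$, legitimate because $c_{k,n}\in\{c_{k,n-1},c_{n-1,n}\}$;
\item recurse on the restriction of $\mathbf c$ to $\{1,\dots,n-2,n\}$, which stays transitive by Lemma~\ref{lem:rest ext trans}\ref{lem:rest ext trans.b}.
\end{enumerate}
Your assertion that transitivity ``guarantees that every triple that has to be braided'' is consistent only has content relative to such a specific order of moves. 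Supplying that order and the nested induction is the missing key step.
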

This result was inspired by~\cite{RSTS}*{Theorem~2.9}
for~$n=2$ and its generalization~\cite{Mou}*{\S6.2}
for~$w=\id\in S_n$.
Similarly to the classical setup, we prove Theorem~\ref{thm:main thm 2} in~\S\ref{subs:fam Drinf twist} as a special case of a more general result (Theorem~\ref{thm:trans twist}).
This result is highly non-trivial,  since, unlike
the classical situation (Theorem~\ref{thm:thm 1}),
bialgebras corresponding to different $w\in S_n$
do not have to be isomorphic (see for example~\S\ref{subs:small quantum}). In particular, unlike
in the classical case,
there is no obvious connection between new classes of solutions of QYBE
provided by Theorem~\ref{thm:main thm 2}\ref{thm:main thm 2.b} for various~$w\in S_n$.
Furthermore, similarly to Conjecture~\ref{conj:CYBE for trans 1,-1}, we expect a new large class of solution of QYBE as a non-commutative manifestation of transitivity.
\begin{conjecture}\label{conj:QCYBE trans 1,-1}
Let~$R$ be any solution of QYBE. Then
for any transitive $n\times n$-matrix $\boldsymbol{\epsilon}=(\epsilon_{i,j})_{1\le i,j\le n}$ with~$\epsilon_{i,j}\in\{1,-1\}$, $1\le i,j\le n$,
\begin{equation}\label{eq:R eps defn}
\mathbf R^{(\boldsymbol{\epsilon})}:=
(R_{1,2n}^{(\epsilon_{n,1})}\cdots 
R_{n,2n}^{(\epsilon_{n,n})}) 
(R_{1,2n-1}^{(\epsilon_{n-1,1})}\cdots 
R_{n,2n-1}^{(\epsilon_{n-1,n})})
\cdots (R_{1,n+1}^{(\epsilon_{1,1})}\cdots R_{n,n+1}^{(\epsilon_{1,n})})
\end{equation}
solves QYBE.
\end{conjecture}
One can show (Proposition~\ref{prop:Spec trans conj})
that if $\boldsymbol\epsilon$ is transitive almost skew-symmetric
then $\mathbf R^{(\boldsymbol{\epsilon})}=\mathbf R(\sgna w,\boldsymbol{d})$ for some~$w\in S_n$ and
$\boldsymbol{d}\in\{1,-1\}^n$, which in particular
verifies Conjecture~\ref{conj:QCYBE trans 1,-1} in that case. We also verified it for~$n\le 4$.

Another difference between the classical and the quantum situations is that while in the former
a twist only affects the cobracket but not the comultiplication on~$U(\lie g)$, in the latter new comultiplications heavily depend on a choice of a {\em 
Drinfeld twist}. These were first introduced by Drinfeld in~\cite{Drinf} and have been extensively
studied, in particular in connection with lattice models in statistical physics and as a tool
for turning cocommutative bialgebras into non-commutative ones (see~\cites{CPbook,BB,Ter,BoW,MS,Tor,Majid,Ne} to name but a few).

As in the classical world, suppose that our bialgebra~$H$
admits a family of R-matrices $\mathbf R=\{R^{(c)}\}_{c\in C}$
for the {\em same} comultiplication.
By Proposition~\ref{prop:cQYBE}, such a family solves the {\em transitive QYBE}
\begin{equation}\label{eq:trans QYBE}
R_{1,2}^{(c)}R_{1,3}^{(c')}R_{2,3}^{(c'')}
=R_{2,3}^{(c'')}R_{1,3}^{(c')}R_{1,2}^{(c)}
\end{equation}
whenever~$c'\in\{c,c''\}\subset C$, which
is the multiplicative analogue
of our transitive CYBE \eqref{eq:trans CYBE}.
Our second quantum main result generalizes Theorems~\ref{thm:main thm trans}
and~\ref{thm:main thm 2}.
\begin{mthm}\label{thm:trans twist}
Let~$H$ be a bialgebra with a family
$\mathbf R=\{R^{(c)}\}_{c\in C}$ of (universal) R-matrices.
Then for any transitive $n$-array $\mathbf c=(c_{i,j})_{1\le i<j\le n}$
$$
J_{\mathbf c}=J_{\mathbf c}(\mathbf R):=R_{2,n+1}^{(c_{1,2})}
(R_{3,n+2}^{(c_{2,3})}R_{3,n+1}^{(c_{1,3})})
\cdots (R_{n,2n-1}^{(c_{n-1,n})}\cdots 
R_{n,n+1}^{(c_{1,n})})
$$
is a Drinfeld twist for~$H^{\tensor n}$ with its
standard comultiplication~$\Delta_{H^{\tensor n}}$. 
In particular, $$
\mathbf R(\mathbf c,\boldsymbol{d}):=
(J_{\mathbf c}^{op})^{-1} R_{1,n+1}^{(d_1)}
\cdots R_{n,2n}^{(d_n)}\,J_{\mathbf c}
$$
is an~R-matrix for~$H^{\tensor n}$ with 
the comultiplication twisted by~$J_{\mathbf c}$
and thus solves QYBE.
\end{mthm}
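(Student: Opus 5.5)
We argue by induction on $n$. For $n=1$ there is nothing to prove, and for $n=2$ the element $J_{\mathbf c}=R^{(c_{1,2})}_{2,3}$ is a twist for $H\tensor H$ by the classical computation underlying~\cite{RSTS}*{Theorem~2.9}. For the general case we first fix the cocycle condition to be verified. A Drinfeld twist $J$ for a bialgebra $(A,\Delta)$ is an invertible element satisfying
$$(J\tensor 1)(\Delta\tensor\id)(J)=(1\tensor J)(\id\tensor\Delta)(J),$$
together with the counit normalization. The counit condition is immediate from $(\varepsilon\tensor\id)(R^{(c)})=1=(\id\tensor\varepsilon)(R^{(c)})$.

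The inductive step rests on the factorization
$$J_{\mathbf c}=\iota(J_{\mathbf c'})\,K,\qquad K:=R^{(c_{n-1,n})}_{n,2n-1}\cdots R^{(c_{1,n})}_{n,n+1},$$
where $\mathbf c'=(c_{i,j})_{1\le i<j\le n-1}$ is again transitive and $\iota$ embeds $H^{\tensor 2(n-1)}$ into $H^{\tensor 2n}$ by omitting the factors $n$ and $2n$. Regard $H^{\tensor n}=H^{\tensor(n-1)}\tensor H$. Then $K$ is a ``relative twist'' coupling the last factor $H$ of the first copy with the first $n-1$ factors of the second copy. This is the quantum analogue of the relative classical twist mentioned for the proof of Theorem~\ref{thm:main thm trans}. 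The plan is to prove a general lemma: if $J'$ is a twist for $A$ and $K\in B\tensor A$ satisfies suitable compatibility identities, then $J'K$ (suitably placed) is a twist for $A\tensor B$.

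The compatibility identities to check are of two kinds.
\begin{enumalph}
\item Hexagon-type identities for $K$:
$$(\id\tensor\Delta_{H^{\tensor(n-1)}})(K)=K_{(1,2)}K_{(1,3)}\quad\text{and}\quad (\Delta_H\tensor\id)(K)=K_{(1,3)}K_{(2,3)}.$$
These follow factor by factor from the quasi-triangularity of each $R^{(c)}$, because $K$ is a product of copies of R-matrices whose second legs lie in distinct tensor factors.
\item An intertwining identity expressing how $\iota(J_{\mathbf c'})$ commutes past the relevant copies of $K$. Expanding both sides of the cocycle identity in $H^{\tensor 3n}$ reduces this to a reordering problem for products of elements $R^{(c)}_{a,b}$.
\end{enumalph}

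Factors with disjoint legs commute. The only nontrivial moves are triples $R^{(c_{i,j})}_{\cdot,\cdot}R^{(c_{i,k})}_{\cdot,\cdot}R^{(c_{j,k})}_{\cdot,\cdot}$ with $i<j<k$, which must be reversed. Transitivity, $c_{i,k}\in\{c_{i,j},c_{j,k}\}$, is exactly the hypothesis of the transitive QYBE~\eqref{eq:trans QYBE}, available from Proposition~\ref{prop:cQYBE}, so each such move is legitimate. The remaining moves use $(\Delta\tensor\id)(R^{(c)})=R^{(c)}_{1,3}R^{(c)}_{2,3}$ and $R^{(c)}\Delta(h)=\Delta^{op}(h)R^{(c)}$. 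I expect the main obstacle to be organizing this reordering so that only transitive triples ever need to be braided. My first attempt would be an induction on $k$: move the block $R_{k,\cdot}$ across the previously assembled blocks one R-matrix at a time, tracking indices via the triangular shape of the product defining $J_{\mathbf c}$. A cleaner alternative is to set up a ``transitive'' Lie/monoid relations algebra, analogous to $\lie{qtr}_n(C)$, in which these relations hold, and verify the identity there.

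Finally, once $J_{\mathbf c}$ is known to be a twist, the standard fact applies. If $R_0$ is an R-matrix for $\Delta$, then $(J^{op})^{-1}R_0J$ is an R-matrix for $J^{-1}\Delta J$. Here we take $R_0=R^{(d_1)}_{1,n+1}\cdots R^{(d_n)}_{n,2n}$, which is an R-matrix for $\Delta_{H^{\tensor n}}$ since each $R^{(d_i)}$ is one for $H$. This gives the claim for $\mathbf R(\mathbf c,\boldsymbol d)$, and the QYBE then follows from quasi-triangularity.
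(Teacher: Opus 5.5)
Your architecture is the paper's: induction on $n$, the splitting $J_{\mathbf c}=\iota(J_{\mathbf c'})K$ with $K$ equal to the paper's $F=\dscprod_{1\le j\le n-1}R^{(c_{j,n})}_{1,j+1}\in H\tensor H^{\tensor(n-1)}$, and a gluing lemma (Proposition~\ref{prop:rel Drinfeld twist} with Corollary~\ref{cor:twist from rel twist}). The problem is what you require of $K$.

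Your first identity $(\Delta_H\tensor\id)(K)=K_{13}K_{23}$ is correct. It only needs $R_{2,j}$ to commute with $R_{1,k}$ for $j\ne k$. The second identity is false, and your ``factor by factor'' justification fails exactly there. Expanding gives
\[
(\id\tensor\Delta_{H^{\tensor(n-1)}})(K)=\dscprod_{1\le j\le n-1}R^{(c_{j,n})}_{1,n+j}R^{(c_{j,n})}_{1,j+1}.
\]
All these factors share the leg $1$, so they cannot be regrouped into $K_{13}K_{12}$. (That is the correct order for an R-matrix-type hexagon; your $K_{12}K_{13}$ already fails for $n=2$.) For $n=3$ the regrouping would require $R^{(c_{2,3})}_{1,3}R^{(c_{1,3})}_{1,4}=R^{(c_{1,3})}_{1,4}R^{(c_{2,3})}_{1,3}$, which does not hold in general.

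So $K$ is not a relative twist for the untwisted coproduct on $H^{\tensor(n-1)}$. It becomes one only after that coproduct is twisted by $J_{\mathbf c'}$, and this is how the paper applies Corollary~\ref{cor:twist from rel twist}. Given the first hexagon, the relative-twist condition is equivalent to the single identity
\[
(1\tensor J_{\mathbf c'})\,K_{13}K_{12}=\Big(\dscprod_{1\le j\le n-1}R^{(c_{j,n})}_{1,n+j}R^{(c_{j,n})}_{1,j+1}\Big)(1\tensor J_{\mathbf c'}).
\]
Here $K_{12}$ and $K_{13}$ place $K$ on the first factor and on the first, respectively second, copy of $H^{\tensor(n-1)}$. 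This identity fuses your (a) and (b). Neither the hexagon nor a commutation of $J_{\mathbf c'}$ past $K_{13}K_{12}$ holds on its own.

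Proving this identity is the real content of the theorem, and your proposal leaves it as ``the main obstacle''. The paper first cancels the outer factors $R_{1,2}$ and $R_{1,2n-1}$, which commute with $1\tensor J_{\mathbf c'}$, and discards idle legs. What remains is Proposition~\ref{prop:key relation} in $\mathsf{QTr}^+_{2m-1}(C)$ with $m=n-1$, $\boldsymbol\gamma=\mathbf c'$ and $\boldsymbol\alpha=(c_{1,n},\dots,c_{n-1,n})$. It is proved by a second induction on $m$:
\begin{enumerate}
\item Split off the last block $\dscprod_{i}\sfR^{(\boldsymbol\gamma(i,m))}_{m,i+m}$ of $\mathsf J_{\boldsymbol\gamma}$ using~\eqref{eq:rec Jc}.
\item Push this block through one factor at a time (Lemma~\ref{lem:move}), using only the braid move
\[
\sfR^{(\boldsymbol\gamma(k,m))}_{m,m+k}\sfR^{(\alpha_k)}_{1,m+k}\sfR^{(\alpha_m)}_{1,m}=\sfR^{(\alpha_m)}_{1,m}\sfR^{(\alpha_k)}_{1,m+k}\sfR^{(\boldsymbol\gamma(k,m))}_{m,m+k}.
\]
This move is legitimate because transitivity of $\mathbf c$ gives $\alpha_k\in\{\boldsymbol\gamma(k,m),\alpha_m\}$.
\item Apply the induction hypothesis to $(\boldsymbol\gamma^-,\boldsymbol\alpha^-)$, which is again transitive.
\end{enumerate}
Until you supply this, or an equivalent reordering in which only transitive triples are braided, the cocycle condition is not established. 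Your base case, the counit condition and the final passage to $\mathbf R(\mathbf c,\boldsymbol d)$ are fine.
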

We prove Theorem~\ref{thm:trans twist} in~\S\ref{subs:fam Drinf twist}
using the notion of a {\em relative Drinfeld twist}
for a pair of bialgebras (see~\S\ref{subs:rel twist})
and a transitive generalization~$\mathsf{QTr}_n(C)$
of the group~$\mathsf{QTr}_n$ defined in~\cite{BEER}
(see~\S\ref{subs:QTR(C)} for the details).

Furthermore, starting with any family~$\mathbf R=\{R^{(c)}\}_{c\in C}\subset H\wh\tensor H$ and an $n\times n$ matrix~$\boldsymbol{a}=(a_{i,j})_{1\le i,j\le n}$
define 
$$
\mathbf R^{(\boldsymbol{a})}=
(R_{1,2n}^{(a_{n,1})}\cdots 
R_{n,2n}^{(a_{n,n})}) 
(R_{1,2n-1}^{(a_{n-1,1})}\cdots 
R_{n,2n-1}^{(a_{n-1,n})})
\cdots (R_{1,n+1}^{(a_{1,1})}\cdots R_{n,n+1}^{(a_{1,n})})
$$
which is the quantum counterpart of~\eqref{eq:r c gen}.
Mirroring Conjecture~\ref{conj:CYBE for trans}, we formulate 
\begin{conjecture}\label{conj:QYBE for trans}
Given a solution~$\mathbf R=\{R^{(c)}\}_{c\in C}\in H\wh\tensor H$ of the transitive QYBE \eqref{eq:trans QYBE}, 
$\mathbf R^{(\boldsymbol{a})}$ solves QYBE for any transitive $n\times n$-matrix $\boldsymbol{a}$ with 
entries in~$C$.
\end{conjecture}
This conjecture was verified for~$n\le 4$. It should
be noted that for~$|C|>2$ solutions $\mathbf R(\mathbf c,\boldsymbol{d})$ of QYBE provided by Theorem~\ref{thm:trans twist} are not of the form~$\mathbf R^{(\boldsymbol{a})}$ for a transitive
matrix~$\boldsymbol a$, and it would be interesting to
find a class of solutions of QYBE encompassing these two.

Similarly to Question~\ref{prob:family}, it is only natural to raise the following
\begin{question}\label{prob:family quantum}
Which bialgebras~$H$ admit families 
of~R-matrices $\{R^{(c)}\}_{c\in C}$?
\end{question}
Any quantization (see, for example, \cites{EH,EK-I,EK-II,EK-III,Drinf-YBE}) of an answer to Question~\ref{prob:family} answers Question~\ref{prob:family quantum}. However, there are also finite-dimensional examples (see~\S\ref{subs:small quantum})
which are not directly obtainable via quantization. 
Yet even in finite-dimensional cases twisting the 
comultiplication can have very non-trivial 
consequences, as e.g. in~\cite{BB}.

We finish the discussion of the quantum story with the dual picture which is even more attractive since it does not require completions. Namely, 
for a family of co-quasi-triangular structures
$\{\mathcal R^{(c)}\}_{c\in C}\subset \Hom_\kk(A^{\tensor 2},\kk)$ on a bialgebra~$A$ and 
a transitive $n$-array~$\mathbf c$, we construct 
(Theorem~\ref{thm:dual Jc twist})
a {\em dual Drinfeld twist} $\mathcal J_{\mathbf c}\in 
\Hom_\kk(A^{\tensor 2n},\kk)$ (see~\S\ref{subs:quant dual}
and~\S\ref{subs:dual Jc}
for the details).
This yields a family of associative multiplications on~$A^{\tensor n}$ with the same coalgebra structure. 
In addition to avoiding completions, 
the twisted multiplication can often be expressed in
a more compact form (Proposition~\ref{prop:Jc twisted mult}).
In particular, for~$n=2$ this recovers the multiplication
introduced by Takeuchi (\cite{Take}*{Section~8})
and studied by Majid for bialgebras (see e.g.~\cite{Majid}). It turns out that the (iterated) multiplication
$A^{\tensor n}\to A$ becomes a homomorphism of
bialgebras for the twisted algebra structure on~$A^{\tensor n}$ (Theorem~\ref{thm:mult is homomorphism}). Once again, in the inductive argument
we utilize a {\em relative dual Drinfeld twist} (see~\S\ref{subs:quant dual}). As an example,
we obtain a family of twisted algebra structures
parametrized
by permutations on tensor powers of ``quantum matrices''  (\S\ref{subs:twisted tens quantum matrices}), which, unlike their $q=1$ limits, are 
no longer isomorphic under permutations of factors.
We expect that they are pairwise non-isomorphic.
Of course, they become isomorphic after passing to the completion, but to construct such an isomorphism is a serious challenge. Perhaps, an even stronger 
challenge is to find all {\em rational} quantizations of 
a given classical object.
Moreover, the above ``homomorphism assertion'' generalizes to the situation when the natural map $A\tensor B\to C$ for any sub-bialgebras $A$, $B$ of a bialgebra~$C$
becomes an algebra homomorphism when the algebra
structure on~$A\tensor B$ is twisted in an appropriate way
(Proposition~\ref{prop:twist hom alg}). 
Based on the above and other observations, we expect these algebras to be of interest and, in particular, to admit quantum cluster-like structures,
which we plan to explore. 

\subsection*{Acknowledgments}
The main part of this work was carried out while
the first two authors were visiting University of Vienna, Austria, whose hospitality is gratefully acknowledged.
We are indebted to Pavel Etingof and Milen Yakimov for  important references and other helpful suggestions.
We also thank Dennis Nguyen and Shane Rankin for stimulating discussions. 

\section{Preliminaries}

\subsection{General notation}
All vector spaces, algebras and coalgebras are over some fixed base field~$\kk$ of characteristic zero. The symbol~$\tensor$, when unadorned, stands for the tensor product over~$\kk$.

Given $a,b\in \mathbb Z$, denote~$[a,b]:=\{x\in\ZZ\,:\,a\le x\le b\}$. We abbreviate
$[n]:=[1,n]$ for~$n\in\ZZ_{>0}$.
Define~$\Upsilon:\mathbb R\to \{0,1\}$ by~$\Upsilon(x)=1$
if~$x>0$ and~$\Upsilon(x)=0$ if~$x\le 0$, i.e.
\begin{equation}\label{eq:Ups formula}
\Upsilon(x)=\tfrac12(1+\sign(x)-\delta_{x,0}),\qquad x\in\mathbb R. 
\end{equation}

Let~$\mathsf M$ be a multiplicative monoid. 
Given any finite subset~$I\subset \ZZ$ and
a family $X_i$, $i\in I$ of elements of~$\mathsf M$ we set
$$
\ascprod_{i\in I} X_i=X_{i_1}\cdots X_{i_r},\qquad
\dscprod_{i\in I} X_i=X_{i_r}\cdots X_{i_1}.
$$
where $I=\{i_1,\dots,i_r\}$ with~$i_1<\cdots<i_r$. 

Given any collection~$V_1,\dots,V_n$, $n\ge2$ of vector spaces
and a permutation~$\sigma\in S_n$, define $\widehat\sigma:
\in\Hom_\kk(V_1\tensor\cdots\tensor V_n, V_{\sigma(1)}\tensor\cdots\tensor V_{\sigma(n)})$
by~$\widehat\sigma(v_1\tensor \cdots\tensor v_n)=
v_{\sigma(1)}\tensor\cdots\tensor v_{\sigma(n)}$, $v_i\in V_i$, $1\le i\le n$.
For~$\sigma=(i,j)$, $1\le i<j\le n$, we denote this map by~$\tau_{i,j}$ and often abbreviate~$\tau_{1,2}$ as~$\tau$. 
It is easy to see that~$\widehat{\sigma\rho}=\widehat{\rho}\circ\widehat{\sigma}$
in $\Hom_\kk(V_1\tensor\cdots\tensor V_n,V_{\sigma\rho(1)}\tensor\cdots\tensor V_{\sigma\rho(n)})$, $\sigma,\rho\in S_n$.

Let~$B$ be a unital algebra. Given~$N\ge n\in\ZZ_{>0}$ and~$\mathbf i=(i_1,\dots,i_n)$ with
$1\le i_r\le N$, $1\le r\le n$, 
and $i_r\not=i_s$, $1\le r<s\le n$, we denote by~$\phi_{\mathbf i}:B^{\tensor n}\to B^{\tensor N}$ the unique homomorphism of algebras satisfying 
$$
\phi_{\mathbf i}(1_B^{\tensor (r-1)}\tensor b\tensor 
1_B^{\tensor (n-r)})=1_B^{\tensor (i_r-1)}\tensor b
\tensor 1_B^{\tensor (N-i_r)},\qquad b\in B,\,1\le r\le n.
$$
We will often abbreviate $X_{\mathbf i}:=\phi_{\mathbf i}(X)$, $X\in B^{\tensor n}$.
For example, for~$X=x_1\tensor x_2\tensor x_3\in B^{\tensor 3}$ in Sweedler-like notation, 
$$
X_{2,6,4}=1_B\tensor x_1\tensor 1_B\tensor x_3 \tensor 1_B\tensor x_2\in B^{\tensor 6}.
$$
When we write a sequence of indices as a set,
we assume that it is ordered increasingly; for example, $[1,2n]\setminus\{n\}$ stands for the sequence $(1,\dots,n-1,n+1,\dots,2n)$. 
Note that if $N\ge \max(k,l)$, $X\in B^{\tensor k}$, $Y\in B^{\tensor l}$ then, for any disjoint sequences $\mathbf i=(i_1,\dots,i_k)$
and~$\mathbf j=(j_1,\dots,j_l)$
with $1\le i_r,j_s\le N$, 
$X_{\mathbf i}$ and~$Y_{\mathbf j}$
commute in~$B^{\tensor N}$.

We will also use similar notation for tensor products of different (bi)algebras.

For any associative algebra~$A$, we denote its center by~$Z(A)$. If~$B$ is an associative algebra and
$\varphi:A\to B$ is a homomorphism of algebras,
we say that~$\partial\in\Hom_\kk(A,B)$ is a $\varphi$-{\em derivation} if $\partial(aa')=\partial(a)\varphi(a')+\varphi(a)\partial(a')$
for all~$a,a'\in A$.

\subsection{Completions}\label{subs:compl}
Let~$A$ be an algebra and let~$K_A$ be an ideal in~$A$. We say
that~$(A,K_A)$ is a {\em locally finite pair} 
if
\begin{equation}\label{eq:cnd loc finite}
\dim(A/K_A^{r})<\infty,\qquad r\ge 1.\end{equation}
The following well-known fact provides 
a large class of locally finite pairs.
\begin{lemma}\label{lem:counit locally finite}
Let~$A$ be finitely generated and let~$K_A\subset A$
be an ideal of codimension~$1$. Then~$(A,K_A)$ is 
a locally finite pair. 
Moreover, the dimension of
$A/K_A^r$, $r\ge 0$ is bounded above by~$r N^{r-1}$ where~$N$
is the number of generators of~$A$.
\end{lemma}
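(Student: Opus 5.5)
Let $x_1,\dots,x_N$ generate $A$. Since $K_A$ has codimension~$1$, $A=\kk 1\oplus K_A$, so $y_i:=x_i-\lambda_i 1\in K_A$ for suitable scalars $\lambda_i\in\kk$. Because $x_i=y_i+\lambda_i 1$, the elements $y_1,\dots,y_N$ also generate $A$. I would prove, by induction on~$r$, that $K_A^r$ is spanned, modulo $K_A^{r+1}$, by the monomials $y_{i_1}\cdots y_{i_r}$ of length exactly~$r$, and then count these monomials.

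First, $K_A$ is spanned by the monomials in the $y_i$ of positive length. Indeed, $A$ is spanned by all monomials in the $y_i$, including the empty one. Write $a\in K_A$ as $c\cdot 1+(\text{combination of monomials of positive length})$. Each positive-length monomial lies in $K_A$, hence so does $c\cdot 1$. Since $1\notin K_A$ (otherwise $K_A=A$, contradicting codimension~$1$), we get $c=0$. Consequently $K_A^r$ is spanned by products of $r$ positive-length monomials, that is, by monomials of length at least~$r$. Conversely, every such monomial lies in $K_A^r$, so $K_A^r$ equals the span of the monomials of length $\ge r$. Therefore $A/K_A^r$ is spanned by the images of the monomials of length $<r$, and
$$
\dim A/K_A^r\le \sum_{k=0}^{r-1}N^k,
$$
which is finite. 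This proves~\eqref{eq:cnd loc finite}.

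It remains to compare $\sum_{k=0}^{r-1}N^k$ with $rN^{r-1}$. For $N\ge 1$ each term satisfies $N^k\le N^{r-1}$, and there are $r$ terms, so the sum is at most $rN^{r-1}$. The only delicate point is the degenerate bookkeeping. For $r=0$, $A/K_A^0=0$ and the bound is trivial. If $N=0$, then $A=\kk$ and $K_A=0$, so we read $0^0=1$ and the estimate still holds. None of these steps presents a real obstacle; the key input is the observation that $1\notin K_A$ forces $K_A$ to be the span of the positive-length monomials in the shifted generators.
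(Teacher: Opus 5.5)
Your proof is correct and is essentially the paper's argument. The paper picks a generating subspace $V\subset K_A$ (your shifted generators $y_i$) and identifies $K_A^r$ with the image of $K^r$, where $K=\bigoplus_{k\ge1}V^{\tensor k}\subset T(V)$. It thus realizes $A/K_A^r$ as a quotient of $\kk\oplus V\oplus\cdots\oplus V^{\tensor(r-1)}$, which is exactly your count of monomials of length $<r$; you also carry out the comparison $\sum_{k<r}N^k\le rN^{r-1}$, which the paper leaves implicit.

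One correction to your bookkeeping: for $N=0$ and $r\ge 2$ the estimate fails. There $A=\kk$ and $K_A=0$, so $\dim A/K_A^r=1$, while $rN^{r-1}=0$; your step $N^k\le N^{r-1}$ breaks at $k=0$. So the bound should be read with $N\ge 1$, which can always be arranged by adding a generator.
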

\begin{proof}
Let~$V\subset K_A$ be a finite-dimensional subspace generating~$A$. Let~$K=\bigoplus_{r\ge 1}V^{\tensor r}$ be the augmentation ideal of the tensor algebra~$T(V)$ of~$V$
and let~$I$ be the kernel of the
canonical projection~$T(V)\to A$. Then~$K_A$ identifies
with~$(K+I)/I$, hence~$K_A^r$ identifies with~$(K^r+I)/I$.
Therefore, $A/K_A^{r}$ is isomorphic 
to~$T(V)/(K^{r}+I)$, which is a homomorphic 
image of~$T(V)/K^{r}\cong \kk\oplus V\oplus\cdots 
\oplus V^{\tensor (r-1)}$ as a vector space. 
\end{proof}
Let~$\wh A=\underset{\longleftarrow}\lim  A/K_A^r$
be the completion of~$A$ with respect to~$K_A$. 
We say that~$f\in A^*$ is {\em locally finite} with respect to~$K_A$ if~$f(K_A^r)=0$ for some~$r>0$ and 
let~$A^{\smallsquare}\subset A^*$ be the subspace of all locally finite linear functionals with respect to~$K_A$.
Clearly,
Sweedler finite dual~$A^o$ of~$A$ is
contained in~$A^{\smallsquare}$, which justifies the notation.  
The following is immediate.
\begin{lemma}\label{lem:restr dual}
The evaluation pairing~$A^{\smallsquare}\tensor A\to \kk$ naturally lifts to a well-defined pairing
$A^{\smallsquare}\tensor \wh A\to \kk$. 
\end{lemma}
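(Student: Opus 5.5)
We have to define the lifted pairing and check that it is well defined; this is a direct unwinding of the definitions of the inverse limit and of~$A^{\smallsquare}$. Recall that $\wh A=\underset{\longleftarrow}\lim A/K_A^r$ consists of compatible sequences $(a_r)_{r\ge1}$ with $a_r\in A/K_A^r$. Compatibility means that $a_{s}\mapsto a_r$ under the canonical projection $\pi_{s,r}:A/K_A^s\to A/K_A^r$ for $s\ge r$. Let $\pi_r:\wh A\to A/K_A^r$ be the canonical projection.

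Take $f\in A^{\smallsquare}$ and choose $r>0$ with $f(K_A^r)=0$. Then $f$ factors through a linear functional $\bar f_r:A/K_A^r\to\kk$. We define
$$\langle f,\hat a\rangle:=\bar f_r(\pi_r(\hat a)),\qquad \hat a\in\wh A.$$

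The main point, and the only real step, is independence of the choice of~$r$. If $f(K_A^r)=0$ and $s\ge r$, then $f(K_A^s)=0$ as well, because $K_A^s\subset K_A^r$. In that case $\bar f_s=\bar f_r\circ\pi_{s,r}$. Compatibility gives $\pi_{s,r}\circ\pi_s=\pi_r$, so $\bar f_s(\pi_s(\hat a))=\bar f_r(\pi_r(\hat a))$. Any two admissible choices of $r$ can be compared with their maximum, so the value does not depend on~$r$.

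Bilinearity then follows from two facts: each $\pi_r$ is linear, and $\bar f_r$ depends linearly on $f$. For the latter, given $f,g\in A^{\smallsquare}$, use a common $r$ that works for both. Finally, the new pairing restricts to the evaluation pairing on the image of $A$ in $\wh A$. Indeed, $a\in A$ maps to $(a+K_A^r)_r$, and $\bar f_r(a+K_A^r)=f(a)$. Hence the pairing genuinely lifts the evaluation pairing; it is well defined even when $A\to\wh A$ fails to be injective, since the formula is given directly on $\wh A$.

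This argument does not need the locally finite condition~\eqref{eq:cnd loc finite}. That condition is only relevant later, for instance to identify $A^{\smallsquare}$ with a restricted dual of~$\wh A$.
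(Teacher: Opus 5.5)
Your proof is correct, and it is essentially the paper's approach: the paper offers no argument beyond calling the lemma immediate. Your construction is the routine verification being left implicit: factor $f$ through $A/K_A^r$, evaluate on the $r$-th component of an element of $\wh A$, and check that the result does not depend on $r$.
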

\begin{lemma}\label{lem:tens completion}
Suppose that~$(A,K_A)$ and~$(B,K_B)$ are locally finite pairs and let~$K_{A\tensor B}=K_A\tensor B+A\tensor K_B$,
which is an ideal in~$A\tensor B$.
\begin{enumalph}
    \item\label{lem:tens completion.a} $(A\tensor B,K_{A\tensor B})$ is a locally finite pair;
    \item\label{lem:tens completion.b} $A^{\smallsquare}\tensor B^{\smallsquare}
    \subset (A\tensor B)^{\smallsquare}$;
    \item\label{lem:tens completion.c} Let $A\wh\tensor B$ be the completion
    of~$A\tensor B$ with respect to~$K_{A\tensor B}$.
    Then the evaluation pairing $(A^{\smallsquare}\tensor B^{\smallsquare})\tensor 
    (A\tensor B)\to \kk$ naturally lifts to a 
    well-defined pairing $(A^{\smallsquare}\tensor B^{\smallsquare})\tensor (A\wh\tensor B)\to\kk$.
\end{enumalph}
\end{lemma}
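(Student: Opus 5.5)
The plan is to compare powers of $K_{A\tensor B}$ with the ideals $K_A^r\tensor B+A\tensor K_B^s$ and then argue in each part from that comparison. The comparison is the inclusion
$$
K_{A\tensor B}^{2r-1}\subset K_A^r\tensor B+A\tensor K_B^r,\qquad r\ge 1 .
$$
To prove it, expand $K_{A\tensor B}^{2r-1}=(K_A\tensor B+A\tensor K_B)^{2r-1}$ as a sum of products of $2r-1$ factors, each of the form $K_A\tensor B$ or $A\tensor K_B$. Since $K_A$ and $K_B$ are ideals, a product containing $p$ factors of the first kind and $q$ of the second lies in $K_A^p\tensor K_B^q$. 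We have $p+q=2r-1$, so either $p\ge r$ or $q\ge r$, and the term lies in the right-hand side. The reverse inclusion $K_A^r\tensor B+A\tensor K_B^r\subset K_{A\tensor B}^r$ is immediate, because $K_A^r\tensor B=(K_A\tensor B)^r$ (using $1\in B$). Thus the two filtrations are cofinal.

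For part \ref{lem:tens completion.a}, the comparison gives a surjection
$$
(A\tensor B)/K_{A\tensor B}^{2r-1}\twoheadrightarrow (A\tensor B)/(K_A^r\tensor B+A\tensor K_B^r).
$$
I would instead bound the dimension of $(A\tensor B)/K_{A\tensor B}^{2r-1}$ from above by noting that it is a quotient of $(A\tensor B)/(K_A^{2r-1}\tensor B+A\tensor K_B^{2r-1})$. Indeed, $K_A^{2r-1}\tensor B+A\tensor K_B^{2r-1}\subset K_{A\tensor B}^{2r-1}$ by the reverse inclusion. The space $(A\tensor B)/(K_A^m\tensor B+A\tensor K_B^m)$ is isomorphic to $A/K_A^m\tensor B/K_B^m$, which is finite-dimensional by \eqref{eq:cnd loc finite}. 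Hence every $(A\tensor B)/K_{A\tensor B}^{m}$ is finite-dimensional.

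For part \ref{lem:tens completion.b}, take $f\in A^{\smallsquare}$ with $f(K_A^r)=0$ and $g\in B^{\smallsquare}$ with $g(K_B^r)=0$; enlarging $r$ if necessary, we may use a common exponent. Then $f\tensor g$ vanishes on $K_A^r\tensor B+A\tensor K_B^r$, which contains $K_{A\tensor B}^{2r-1}$. So $f\tensor g\in(A\tensor B)^{\smallsquare}$, and the claim follows by linearity.

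For part \ref{lem:tens completion.c}, parts \ref{lem:tens completion.a} and \ref{lem:tens completion.b} allow us to apply Lemma~\ref{lem:restr dual} to the locally finite pair $(A\tensor B,K_{A\tensor B})$. This gives a pairing $(A\tensor B)^{\smallsquare}\tensor(A\wh\tensor B)\to\kk$, and restricting it to $A^{\smallsquare}\tensor B^{\smallsquare}$ yields the desired pairing. Concretely, an element $\phi\in(A\tensor B)^{\smallsquare}$ killing $K_{A\tensor B}^m$ is paired with the image of an element of the completion in $(A\tensor B)/K_{A\tensor B}^m$; this does not depend on $m$, because the transition maps of the inverse system are compatible. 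The only step needing care is the counting argument behind the inclusion $K_{A\tensor B}^{2r-1}\subset K_A^r\tensor B+A\tensor K_B^r$; everything else is formal.
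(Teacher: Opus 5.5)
Your argument is correct and is essentially the paper's: part (a) comes from $K_A^m\tensor B+A\tensor K_B^m\subset K_{A\tensor B}^m$ together with $(A\tensor B)/(K_A^m\tensor B+A\tensor K_B^m)\cong A/K_A^m\tensor B/K_B^m$, and part (c) comes from parts (a), (b) and Lemma~\ref{lem:restr dual}. Your pigeonhole inclusion $K_{A\tensor B}^{2r-1}\subset K_A^r\tensor B+A\tensor K_B^r$ is exactly the detail the paper leaves implicit when it calls part (b) obvious; the surjection you first write down in part (a) is correct but not needed there.
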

\begin{proof}
Clearly, $K_{A\tensor B}^r\supset K_A^r\tensor B+
A\tensor K_B^r$ hence $(A\tensor B)/K_{A\tensor B}^r$
is a homomorphic image of~$(A\tensor B)/(K_A^r\tensor B+A\tensor K_B^r)\cong A/K_A^r\tensor B/K_B^r$. This proves~\ref{lem:tens completion.a}. Part~\ref{lem:tens completion.b} is obvious while~\ref{lem:tens completion.c}
follows from~\ref{lem:tens completion.a}, \ref{lem:tens completion.b} and Lemma~\ref{lem:restr dual}
\end{proof}
\begin{corollary}\label{cor:complet tensor}
Let~$H$ be a finitely generated algebra with counit and
let~$H^{\wh\tensor n}$ be the natural completion of~$H^{\tensor n}$ in the spirit of Lemma~\ref{lem:tens completion}. Then for any~$R\in H^{\wh\tensor n}$
the assignments~$f_1\tensor\cdots\tensor f_1\mapsto 
(f_1\tensor\cdots\tensor f_n)(R)$,
$f_1,\dots,f_n\in H^{\smallsquare}$ define a 
linear map~$\mathcal R:H^{\smallsquare}{}^{\tensor n}\to \kk$.
\end{corollary}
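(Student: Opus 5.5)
The plan is to reduce the statement to Lemma~\ref{lem:tens completion}, iterated over the $n$ factors, and then to Lemma~\ref{lem:restr dual} applied to the resulting locally finite pair. First I fix the ideal. Since $H$ has a counit $\varepsilon$, the ideal $K_H:=\ker\varepsilon$ has codimension~$1$; here I use that the counit is an algebra homomorphism, as it is for a bialgebra, so that $\ker\varepsilon$ is a two-sided ideal. Because $H$ is finitely generated, Lemma~\ref{lem:counit locally finite} shows that $(H,K_H)$ is a locally finite pair, and $H^{\smallsquare}$ is understood with respect to this $K_H$.

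Next I define the ideal in the tensor power by
\[
K_{H^{\tensor n}}=\sum_{i=1}^n H^{\tensor(i-1)}\tensor K_H\tensor H^{\tensor(n-i)}.
\]
Using associativity of the tensor product, $K_{H^{\tensor n}}=K_{H^{\tensor(n-1)}}\tensor H+H^{\tensor(n-1)}\tensor K_H$. Induction on $n$ with Lemma~\ref{lem:tens completion}\ref{lem:tens completion.a} then gives that $(H^{\tensor n},K_{H^{\tensor n}})$ is a locally finite pair, and this is the pair whose completion is $H^{\wh\tensor n}$. The same induction with Lemma~\ref{lem:tens completion}\ref{lem:tens completion.b} gives
\[
(H^{\smallsquare})^{\tensor n}\subset (H^{\tensor(n-1)})^{\smallsquare}\tensor H^{\smallsquare}\subset (H^{\tensor n})^{\smallsquare}.
\]

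Finally, Lemma~\ref{lem:restr dual}, applied to the pair $(H^{\tensor n},K_{H^{\tensor n}})$, provides a well-defined pairing $(H^{\tensor n})^{\smallsquare}\tensor H^{\wh\tensor n}\to\kk$. Restricting it to $(H^{\smallsquare})^{\tensor n}$ and evaluating at the fixed $R$ defines the linear map $\mathcal R$. On pure tensors this map equals $(f_1\tensor\cdots\tensor f_n)(R)$. To make that concrete, take $r$ with $f_i(K_H^r)=0$ for all $i$. Then $f_1\tensor\cdots\tensor f_n$ kills $K_{H^{\tensor n}}^{nr}$, since every monomial in $K_{H^{\tensor n}}^{nr}$ contains at least $r$ factors from $K_H$ in some tensor slot. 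So one evaluates on the image of $R$ in the finite-dimensional quotient $H^{\tensor n}/K_{H^{\tensor n}}^{nr}$, and this is independent of choices because the inverse-limit projections are compatible.

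The one point needing care is identifying the iterated completion with the completion of $H^{\tensor n}$ with respect to this single ideal $K_{H^{\tensor n}}$. With the definition above this holds by construction, so I do not expect a genuine obstacle; the result is a direct bookkeeping consequence of Lemmas~\ref{lem:counit locally finite}, \ref{lem:restr dual} and~\ref{lem:tens completion}.
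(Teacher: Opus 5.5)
Your argument is correct and follows the paper's intended route. The paper states this corollary without a separate proof, as an immediate consequence of Lemma~\ref{lem:counit locally finite} (applied to $\ker\varepsilon$), Lemma~\ref{lem:tens completion} iterated over the $n$ factors, and Lemma~\ref{lem:restr dual}; your pigeonhole check that $f_1\tensor\cdots\tensor f_n$ kills $K_{H^{\tensor n}}^{nr}$ simply makes part~\ref{lem:tens completion.b} of that lemma explicit.
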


\subsection{Coalgebras}
Let $C$ be a coalgebra with comultiplication $\Delta:C\to C\tensor C$ and counit $\varepsilon:C\to \kk$. 
Recall that~$\sigma\in\End_\kk C$ is 
a coalgebra endomorphism if 
$(\sigma\tensor \sigma)\circ\Delta=
\Delta\circ \sigma$ and $\varepsilon\circ \tau=\varepsilon$. 
Clearly, this notion is dual to that of
an endomorphism of unital algebras
and that coalgebra endomorphisms form
a monoid with respect to composition.
\begin{lemma}\label{lem:op coalg end}
Let~$\sigma\in\End_\kk C$ be a coalgebra endomorphism. Then
$\sigma$ is also an endomorphism of 
$C^{cop}=(C,\Delta^{op})$ where~$\Delta^{op}=\tau_{1,2}\circ\Delta$ is the opposite
comultiplication.
\end{lemma}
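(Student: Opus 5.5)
We will verify the two defining conditions of a coalgebra endomorphism of $C^{cop}=(C,\Delta^{op})$ directly from those for $(C,\Delta)$. The counit of $C^{cop}$ is the same $\varepsilon$ as for $C$, so the counit condition $\varepsilon\circ\sigma=\varepsilon$ carries over verbatim. (The statement preceding the lemma writes $\varepsilon\circ\tau$; we read this as $\varepsilon\circ\sigma$.) It remains to check that $(\sigma\tensor\sigma)\circ\Delta^{op}=\Delta^{op}\circ\sigma$.

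The key observation is that the flip $\tau=\tau_{1,2}$ on $C\tensor C$ commutes with any map of the form $f\tensor f$. Indeed, on pure tensors
$$
\tau\circ(f\tensor f)(a\tensor b)=f(b)\tensor f(a)=(f\tensor f)\circ\tau(a\tensor b),
$$
and this extends to all of $C\tensor C$ by linearity. Applying this with $f=\sigma$ gives
$$
(\sigma\tensor\sigma)\circ\Delta^{op}
=(\sigma\tensor\sigma)\circ\tau\circ\Delta
=\tau\circ(\sigma\tensor\sigma)\circ\Delta
=\tau\circ\Delta\circ\sigma=\Delta^{op}\circ\sigma ,
$$
where the third equality uses that $\sigma$ is a coalgebra endomorphism of $(C,\Delta)$.

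There is no real obstacle here. The only point requiring care is the commutation of $\tau$ with $\sigma\tensor\sigma$, and that reduces to the one-line check on pure tensors above.
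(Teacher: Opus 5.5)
Your proof is correct and is essentially the paper's argument: the paper computes $\Delta^{op}(\sigma(c))=\sigma(c_{(2)})\tensor\sigma(c_{(1)})=(\sigma\tensor\sigma)\Delta^{op}(c)$ in Sweedler notation, which is your observation that $\tau$ commutes with $\sigma\tensor\sigma$. You also check the counit condition, reading $\varepsilon\circ\tau$ as $\varepsilon\circ\sigma$, which the paper leaves implicit.
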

\begin{proof}
Let~$c\in C$. Since~$\sigma$ is a coalgebra endomorphism, we have $\Delta(\sigma(c))=(\sigma(c))_{(1)}\tensor(\sigma(c))_{(2)}=
\sigma(c_{(1)})\tensor\sigma(c_{(2)})$.
Then $\Delta^{op}(\sigma(c))=
(\sigma(c))_{(2)}\tensor(\sigma(c))_{(1)}
=\sigma(c_{(2)})\tensor\sigma(c_{(1)})
=(\sigma\tensor\sigma)\Delta^{op}(c)$.
\end{proof}

Let~$C$, $D$ be coalgebras with respective comultiplications 
$\Delta_C:C\to C\tensor C$, $\Delta_D:D\to D\tensor D$ and respective
counits $\varepsilon_C:C\to\kk$, $\varepsilon_D:D\to \kk$. 
Then $C\tensor D$ is naturally a coalgebra with the
counit~$\varepsilon_C\tensor\varepsilon_D$ and with the
comultiplication $\Delta_{C\tensor D}:C\tensor D\to (C\tensor D)\tensor (C\tensor D)$ defined by
$$
\Delta_{C\tensor D}(c\tensor d)=c_{(1)}\tensor d_{(1)}\tensor c_{(2)}\tensor d_{(2)}
$$
where~$\Delta_C(c)=c_{(1)}\tensor c_{(2)}$ in Sweedler notation.
In particular, for any coalgebra~$C$ and~$n>0$, $C^{\tensor n}$
is naturally a coalgebra with 
$$
\Delta_{C^{\tensor n}}(c^1\tensor\cdots\tensor c^n)=c^1_{(1)}\tensor \cdots \tensor c^n_{(1)}\tensor c^1_{(2)}\tensor \cdots\tensor c^n_{(2)}
$$
for all $c^1,\dots,c^n\in C$.
\begin{lemma}
The symmetric group~$S_n$ acts 
on~$C^{\tensor n}$ by coalgebra automorphisms via
$$
\sigma\triangleright (c^1\tensor\cdots c^n)=c^{\sigma^{-1}(1)}\tensor\cdots \tensor c^{\sigma^{-1}(n)},
\quad c^1,\dots,c^n\in C,\sigma\in S_n.
$$
\end{lemma}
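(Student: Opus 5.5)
We check three things in turn: that the assignment is a group action on $C^{\tensor n}$, that each $\sigma\triangleright$ is compatible with $\Delta_{C^{\tensor n}}$, and that it is compatible with the counit. Write $\sigma\triangleright(\cdot)$ in terms of the maps $\widehat\rho$ from the general notation. On $V_1=\cdots=V_n=C$ we have $\widehat\rho(c^1\tensor\cdots\tensor c^n)=c^{\rho(1)}\tensor\cdots\tensor c^{\rho(n)}$. Hence $\sigma\triangleright(\cdot)=\widehat{\sigma^{-1}}$.

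\emph{Action.} By the identity $\widehat{\sigma\rho}=\widehat\rho\circ\widehat\sigma$ recorded earlier, we get $\widehat{(\sigma\rho)^{-1}}=\widehat{\rho^{-1}\sigma^{-1}}=\widehat{\sigma^{-1}}\circ\widehat{\rho^{-1}}$. This says $(\sigma\rho)\triangleright x=\sigma\triangleright(\rho\triangleright x)$. Trivially $\id\triangleright x=x$. So we have a left action by linear automorphisms, with the inverse of $\sigma\triangleright$ given by $\sigma^{-1}\triangleright$.

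\emph{Comultiplication.} It suffices to check $\Delta_{C^{\tensor n}}(\sigma\triangleright x)=(\sigma\triangleright\tensor\sigma\triangleright)\Delta_{C^{\tensor n}}(x)$ on pure tensors $x=c^1\tensor\cdots\tensor c^n$. Put $i_k=\sigma^{-1}(k)$. By the displayed formula for $\Delta_{C^{\tensor n}}$, the left side is
$$c^{i_1}_{(1)}\tensor\cdots\tensor c^{i_n}_{(1)}\tensor c^{i_1}_{(2)}\tensor\cdots\tensor c^{i_n}_{(2)}.$$
For the right side, apply $\sigma\triangleright$ separately to $c^1_{(1)}\tensor\cdots\tensor c^n_{(1)}$ and to $c^1_{(2)}\tensor\cdots\tensor c^n_{(2)}$. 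This gives the same expression, since the Sweedler sums for different $c^k$ are independent and can be reordered freely.

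\emph{Counit.} We have $(\varepsilon^{\tensor n})(\sigma\triangleright x)=\prod_k\varepsilon(c^{\sigma^{-1}(k)})=\prod_k\varepsilon(c^k)$, because the product in $\kk$ is commutative.

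Together these show each $\sigma\triangleright$ is a coalgebra automorphism and that $\sigma\mapsto\sigma\triangleright$ is a homomorphism $S_n\to\operatorname{Aut}(C^{\tensor n})$. The only real hazard is index bookkeeping with $\sigma^{-1}$, namely making sure the action is left rather than right. The composition rule for $\widehat\sigma$ handles that, and no step is a serious obstacle.
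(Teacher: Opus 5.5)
Your proof is correct and follows the paper's argument, which also checks $\Delta_{C^{\tensor n}}\circ(\sigma\triangleright)=(\sigma\triangleright\tensor\sigma\triangleright)\circ\Delta_{C^{\tensor n}}$ directly on pure tensors. In addition, you derive the action property from the rule $\widehat{\sigma\rho}=\widehat\rho\circ\widehat\sigma$ and verify the counit condition; the paper calls the former a standard fact and leaves the latter implicit.
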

\begin{proof}
Given $\sigma\in S_n$, denote $\tilde\sigma$ the $\kk$-linear automorphism of~$C^{\tensor n}$ defined by
$$
\tilde\sigma(c^1\tensor\cdots\tensor c^n)
=c^{\sigma^{-1}(1)}\tensor\cdots\tensor c^{\sigma^{-1}(n)},
c^1,\dots,c^n\in C.
$$
It is a standard fact that $\widetilde{\sigma\circ\tau}=\tilde\sigma\circ\tilde\tau$ for all~$\sigma,\tau\in S_n$. 
We have, for any $c^1,\dots,c^n\in C$
\begin{align*}
\Delta&_{C^{\tensor n}}(\tilde\sigma(c^1\tensor\cdots c^n))=
\Delta_{C^{\tensor n}}(c^{\sigma^{-1}(1)} \tensor\cdots \tensor c^{\sigma^{-1}(n)})\\
&=(c^{\sigma^{-1}(1)})_{(1)} \tensor\cdots \tensor (c^{\sigma^{-1}(n)})_{(1)}\tensor 
(c^{\sigma^{-1}(1)})_{(2)} \tensor\cdots \tensor (c^{\sigma^{-1}(n)})_{(2)}\\
&=(\tilde\sigma\tensor 
\tilde\sigma)(c^1_{(1)}\tensor\cdots\tensor c^n_{(1)})\tensor (c^1_{(2)}\tensor\cdots\tensor c^n_{(2)})
=(\tilde\sigma\tensor 
\tilde\sigma)\Delta_{C^{\tensor n}}
(c^1\tensor\cdots\tensor c^n)
.\qedhere
\end{align*}
\end{proof}
Let $B$ be a bialgebra. We say that $\sigma\in\End_\kk B$ is a bialgebra endomorphism if it is an algebra and a coalgebra endomorphism.

Let~$C$ be a coalgebra.  
Given~$f\in\Hom_\kk(C^{\tensor r},\kk)$, $r\ge 1$,
and a sequence~$(i_1,\dots,i_r)\in [N]^r$, $N\ge r$
with~$i_s\not=i_t$, $1\le s<t\le r$
define~$f_{i_1,\dots,i_r}\in\Hom_\kk(C^{\tensor N},\kk)$
by
$$
f_{i_1,\dots,i_r}(c_1\tensor \cdots\tensor c_N)=
f(c_{i_1}\tensor\cdots\tensor c_{i_r})\prod_{j\in [N]\setminus \{i_1,\dots,i_r\}} \varepsilon(c_j),
\qquad c_1,\dots,c_N\in C.
$$
The convolution product $\ast$ is defined on~$\Hom_\kk(C,\kk)$ via
$$
(f\ast g)(c)=f(c_{(1)})g(c_{(2)}),\qquad 
f,g\in\Hom(C,\kk),\,c\in C,
$$
and is easily seen to be associative, while~$\varepsilon$ is the unity. It is immediate from the definition that for any homomorphism of coalgebras~$\varphi:C\to C'$
\begin{equation}\label{eq:hom coalg}
(f\circ \varphi)\ast (g\circ \varphi)=(f\ast g)\circ\varphi,\qquad f,g\in\Hom_\kk(C',\kk),
\end{equation}
whence if~$f\in \Hom_\kk(C',\kk)$ is $\ast$-invertible
then so is~$f\circ\varphi$
and 
\begin{equation}\label{eq:hom coalg inv}
(f\circ \varphi)^{\ast-1}=f^{\ast-1}\circ\varphi,
\end{equation}
where~$f^{\ast-1}$ is the~$\ast$-inverse of~$f$.
Note the following
elementary yet very useful 
\begin{lemma}\label{lem:old mult from twisted}
Let~$C$ be a coalgebra and let~$\mathcal S\in\Hom(C,\kk)$
be $\ast$-invertible.
Then the following are equivalent for
a $\kk$-vector space~$V$ and $f,g\in\Hom_\kk(C,V)$
\begin{enumrom}
\item\label{lem:old mult from twisted.i} $(\mathcal S\tensor f)\circ\Delta=(g\tensor\mathcal S)\circ\Delta$; \item\label{lem:old mult from twisted.ii}
$f=(\mathcal S^{\ast-1}\tensor g\tensor \mathcal S)\circ(\Delta\tensor\id_C)\circ\Delta$;
\item\label{lem:old mult from twisted.iii}
$(f\tensor \mathcal S^{\ast -1})\circ\Delta=
(\mathcal S^{\ast-1}\tensor g)\circ\Delta$.
\end{enumrom}
\end{lemma}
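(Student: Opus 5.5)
The plan is to work in the convolution algebra $\Hom_\kk(C,V)$-valued setting. We use coassociativity of~$\Delta$ and the counit property to pass between the three conditions by ``convolving'' with $\mathcal S^{\ast-1}$ on the left or on the right. It is convenient to write, for $h\in\Hom_\kk(C,\kk)$ and $f\in\Hom_\kk(C,V)$,
$$h\ast f:=(h\tensor f)\circ\Delta,\qquad f\ast h:=(f\tensor h)\circ\Delta,$$
where we identify $\kk\tensor V\cong V\cong V\tensor\kk$. By coassociativity these mixed convolutions are associative, e.g.\ $(h\ast f)\ast h'=h\ast(f\ast h')$ and $(h\ast h')\ast f=h\ast(h'\ast f)$. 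By the counit axiom, $\varepsilon\ast f=f=f\ast\varepsilon$. In this notation the three conditions read: \ref{lem:old mult from twisted.i} $\mathcal S\ast f=g\ast\mathcal S$; \ref{lem:old mult from twisted.ii} $f=\mathcal S^{\ast-1}\ast g\ast\mathcal S$, where the iterated expression $(\mathcal S^{\ast-1}\tensor g\tensor\mathcal S)\circ(\Delta\tensor\id_C)\circ\Delta$ is exactly $(\mathcal S^{\ast-1}\ast g)\ast\mathcal S$; and \ref{lem:old mult from twisted.iii} $f\ast\mathcal S^{\ast-1}=\mathcal S^{\ast-1}\ast g$.

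For \ref{lem:old mult from twisted.i}$\Rightarrow$\ref{lem:old mult from twisted.ii} we convolve both sides on the left with $\mathcal S^{\ast-1}$. This gives $\mathcal S^{\ast-1}\ast(\mathcal S\ast f)=(\mathcal S^{\ast-1}\ast\mathcal S)\ast f=\varepsilon\ast f=f$ on one side and $\mathcal S^{\ast-1}\ast g\ast\mathcal S$ on the other. For the converse \ref{lem:old mult from twisted.ii}$\Rightarrow$\ref{lem:old mult from twisted.i} we convolve \ref{lem:old mult from twisted.ii} on the left with $\mathcal S$. Similarly, \ref{lem:old mult from twisted.ii}$\Leftrightarrow$\ref{lem:old mult from twisted.iii} follows by convolving on the right with $\mathcal S^{\ast-1}$, respectively with $\mathcal S$. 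For instance, from \ref{lem:old mult from twisted.ii} we get $f\ast\mathcal S^{\ast-1}=\mathcal S^{\ast-1}\ast g\ast(\mathcal S\ast\mathcal S^{\ast-1})=\mathcal S^{\ast-1}\ast g$.

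The only point requiring care, which I regard as the main (mild) obstacle, is justifying the associativity of the mixed convolutions, i.e.\ that $(h\ast f)\ast h'$ and $h\ast(f\ast h')$ both equal $(h\tensor f\tensor h')\circ(\Delta\tensor\id_C)\circ\Delta=(h\tensor f\tensor h')\circ(\id_C\tensor\Delta)\circ\Delta$. This holds by coassociativity in Sweedler notation, as does the analogous identity with two scalar functionals on one side. Once this is checked, the rest is formal group-like manipulation with the invertible element~$\mathcal S$.
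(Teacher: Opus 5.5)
Your proof is correct and is essentially the paper's argument: the paper likewise gets (i)$\Rightarrow$(ii) and (iii)$\Rightarrow$(ii) by cancelling $\mathcal S^{\ast-1}$ against $\mathcal S$, and the converses by convolving (ii) with $\mathcal S$ on the left or with $\mathcal S^{\ast-1}$ on the right. The only difference is presentation: the paper writes these steps out in Sweedler notation, while you package them as associativity of the mixed convolutions making $\Hom_\kk(C,V)$ a bimodule over the convolution algebra $\Hom_\kk(C,\kk)$.
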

\begin{proof}
To prove implications \ref{lem:old mult from twisted.i}$\implies$\ref{lem:old mult from twisted.ii}
and~\ref{lem:old mult from twisted.iii}$\implies$
\ref{lem:old mult from twisted.ii} 
it suffices to observe that~$(\mathcal S^{\ast-1}\tensor\mathcal S\tensor h)\circ(\Delta\tensor\id_C)
\circ\Delta=h=(h\tensor \mathcal S^{\ast-1}\tensor 
\mathcal S)\circ(\Delta\tensor\id_C)\circ\Delta$
for all~$h\in\Hom_\kk(C,V)$.
Suppose that~\ref{lem:old mult from twisted.ii} holds,
that is~$f(c)=\mathcal S^{\ast-1}(c_{(1)})\mathcal S(c_{(3)})g(c_{(2)})$
for all~$c\in C$. Then
$$
\mathcal S(c_{(1)})f(c_{(2)})
=\mathcal S(c_{(1)})\mathcal S^{\ast-1}(c_{(2)})
\mathcal S(c_{(4)})g(c_{(3)})
=\mathcal S(c_{(3)})g(c_{(2)}),
$$
which is \ref{lem:old mult from twisted.i}, and
similarly
\begin{equation*}
\mathcal S^{\ast-1}(c_{(2)})f(c_{(1)})
=\mathcal S^{\ast-1}(c_{(1)})\mathcal S(c_{(3)})
\mathcal S^{\ast-1}(c_{(4)})g(c_{(2)})
=\mathcal S^{\ast-1}(c_{(1)})g(c_{(2)}),
\end{equation*}
which is~\ref{lem:old mult from twisted.iii}.
\end{proof}

If~$A$ and~$B$ are unital bialgebras, we can write
$\Delta_{A\tensor B}(a\tensor b)$ as $(\Delta_A(a))_{(1,3)}\cdot  (\Delta_B(b))_{(2,4)}$, and we will use the shorthand 
$\Delta_{A\tensor B}=(\Delta_A)_{1,3}\circ(\Delta_B)_{2,4}$ in this 
situation. Likewise, the standard comultiplication on~$B^{\tensor n}$ can be presented as $\Delta_{B^{\tensor n}}=\Delta_{1,n+1}\circ\cdots\circ \Delta_{n,2n}$.

\subsection{Lie bialgebras}
Let~$\lie g$ be a Lie algebra and let~$U(\lie g)$ be its universal enveloping algebra. Let~$\Delta:U(\lie g)\to U(\lie g)\tensor U(\lie g)$ be the standard 
comultiplication which is uniquely defined by $\Delta(x)=x\tensor 1+1\tensor x$, $x\in\lie g$. We identify $\lie g\tensor \lie g$ with its natural image in~$U(\lie g)\tensor U(\lie g)$.

Recall that $\delta\in\Hom_\kk(\lie g,\lie g\tensor \lie g)$ is a Lie {\em cobracket} 
if it satisfies 
\begin{enumerate}[label={($LB_\arabic*$)}]
\item\label{cnd.3} $\delta([x,y]_{\lie g})=[\delta(x),\Delta(y)]-[\delta(y),\Delta(x)]
$ in $U(\lie g)\tensor U(\lie g)$
for all $x,y\in\lie g$;
\item\label{cnd.1} $\tau_{1,2}\circ\delta=-\delta$;
\item\label{cnd.2} $(\id_{\lie g^{\tensor 3}}+\tau_{1,2}\tau_{2,3}+\tau_{2,3}\tau_{1,2})\circ (\delta\tensor\id_{\lie g})\circ \delta=0$
as a linear map~$\lie g\to \lie g^{\tensor 3}$.
\end{enumerate}
The first condition is equivalent to the requirement that~$\delta$ extends to a $\Delta$-derivation
$U(\lie g)\to U(\lie g)\tensor U(\lie g)$,
that is
\begin{equation}\label{eq:delta prod}
\delta(x y)=\delta(x)\Delta(y)+\Delta(x)\delta(y),\quad x,y\in U(\lie g).
\end{equation}
Note that if~$\delta$ satisfies~\ref{cnd.1}
then~\ref{cnd.2} can be rewritten as 
\begin{equation}\label{eq:equiv coJacobi}
(\delta\tensor\id_{\lie g})\circ \delta =
(\id_{\lie g}\tensor\delta)\circ\delta+
\tau_{2,3}(\delta\tensor\id_{\lie g})\circ\delta.
\end{equation}
The following is well-known (see e.g.~\cite{CPbook}).
\begin{lemma}\label{lem:coLeib}
Let~$(\lie g,\delta)$ be a Lie bialgebra. Then 
$(\Delta\tensor\id)\circ\delta=(\id\tensor\delta)\circ\Delta+
\tau_{2,3}(\delta\tensor\id)\circ\Delta$ on~$U(\lie g)$.
\end{lemma}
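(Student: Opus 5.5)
We argue that both sides of the identity
$$(\Delta\tensor\id)\circ\delta=(\id\tensor\delta)\circ\Delta+\tau_{2,3}(\delta\tensor\id)\circ\Delta$$
are $\Delta^{(2)}$-derivations $U(\lie g)\to U(\lie g)^{\tensor 3}$, where $\Delta^{(2)}=(\Delta\tensor\id)\circ\Delta=(\id\tensor\Delta)\circ\Delta$. Such a derivation is determined by its values on the generating subspace $\lie g$, so it then suffices to compare the two sides on $x\in\lie g$.

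\emph{Left-hand side.} By~\eqref{eq:delta prod}, $\delta(xy)=\delta(x)\Delta(y)+\Delta(x)\delta(y)$. Since $\Delta\tensor\id$ is an algebra homomorphism, applying it gives
$$(\Delta\tensor\id)\delta(xy)=(\Delta\tensor\id)\delta(x)\cdot\Delta^{(2)}(y)+\Delta^{(2)}(x)\cdot(\Delta\tensor\id)\delta(y),$$
so the left-hand side is a $\Delta^{(2)}$-derivation.

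\emph{First term on the right.} Since $\Delta$ is multiplicative and $\id\tensor\delta$ is a $(\id\tensor\Delta)$-derivation of $U(\lie g)\tensor U(\lie g)$, the composite $(\id\tensor\delta)\circ\Delta$ is a derivation relative to $(\id\tensor\Delta)\circ\Delta=\Delta^{(2)}$.

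\emph{Second term on the right.} Similarly, $(\delta\tensor\id)\circ\Delta$ is a $\Delta^{(2)}$-derivation. Composing with the algebra automorphism $\tau_{2,3}$ gives a $\tau_{2,3}\circ\Delta^{(2)}$-derivation. Because $U(\lie g)$ is cocommutative, $\tau_{2,3}\circ\Delta^{(2)}=\Delta^{(2)}$, so this term is also a $\Delta^{(2)}$-derivation. This is the one place where cocommutativity of $U(\lie g)$ is used.

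\emph{Check on generators.} Let $x\in\lie g$ and write $\delta(x)=\sum a\tensor b$ with $a,b\in\lie g$. Then:
\begin{itemize}
\item The left-hand side is $\sum (a\tensor 1\tensor b+1\tensor a\tensor b)$.
\item Since $\Delta(x)=x\tensor 1+1\tensor x$ and $\delta(1)=0$ (because $\delta(1)=\delta(1\cdot 1)=2\delta(1)$), the first term on the right is $(\id\tensor\delta)(x\tensor1+1\tensor x)=1\tensor\delta(x)=\sum 1\tensor a\tensor b$.
\item The second term is $\tau_{2,3}\bigl(\delta(x)\tensor 1\bigr)=\sum a\tensor 1\tensor b$.
\end{itemize}
The two sides agree on $\lie g$.

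\emph{Conclusion.} The derivation identity shows that the set of elements on which two $\Delta^{(2)}$-derivations agree is closed under products; it is also a linear subspace containing $1$, since both sides vanish at $1$. As $\lie g$ generates $U(\lie g)$, the two sides coincide on all of $U(\lie g)$.

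The main point requiring care is the derivation bookkeeping for the $\tau_{2,3}$ term, which is resolved by cocommutativity of $\Delta^{(2)}$. Note that the argument uses only~\ref{cnd.3}, in the form~\eqref{eq:delta prod}, and does not need co-Jacobi.
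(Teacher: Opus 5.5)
Your proof is correct and is essentially the paper's argument. The paper inducts on the canonical filtration $U_n(\lie g)$: its case $n=1$ is your check on generators, and its inductive step for $xu$ with $x\in\lie g$, $u\in U_{n-1}(\lie g)$ verifies precisely that both sides obey the Leibniz rule relative to $(\Delta\tensor\id)\circ\Delta$, using cocommutativity and coassociativity to absorb the $\tau_{2,3}$ term. Phrasing it as ``two $\Delta^{(2)}$-derivations that agree on $\lie g$'' is a cleaner packaging of the same computation.
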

\begin{proof}
The argument is by induction on the canonical filtration~$\{U_n(\lie g)\}_{n\ge 0}$ on~$U(\lie g)$. The assertion for~$n=0$ is trivial while for~$n=1$ it is easily checked. Suppose that the Lemma is proved for all
$u\in U_{n-1}(\lie g)$. Then for any~$x\in\lie g$, $u\in U_{n-1}(\lie g)$
\begin{align*}
(\Delta\tensor\id)\delta(xu)&=(\Delta\tensor\id)\delta(x)\cdot (\Delta\tensor\id)\Delta(u)+(\Delta\tensor\id)\Delta(x)\cdot 
(\Delta\tensor\id)\delta(u)\\
&=(\id\tensor\delta)\Delta(x)\cdot (\id\tensor\Delta)\Delta(u)+
\tau_{2,3}(\delta\tensor\id)\Delta(x)\cdot(\Delta\tensor\id)\Delta(u)\\
&\phantom{=}+
(\id\tensor\Delta)\Delta(x)\cdot 
(\id\tensor \delta)\Delta(u)+
(\Delta\tensor\id)\Delta(x)\cdot 
\tau_{2,3}(\delta\tensor \id)\Delta(u)\\
&=(\id\tensor\delta)(\Delta(xu))+\tau_{2,3}(\delta\tensor\id)(\Delta(xu)),
\end{align*}
where we used~\eqref{eq:delta prod} and the cocommutativity and the co-associativity of~$\Delta$. Since~$U_n(\lie g)=U_{n-1}(\lie g)+\lie gU_{n-1}(\lie g)$, this proves the inductive step.
\end{proof}

Define $\cyb{\cdot}{\cdot}\in\Hom_\kk(\lie g^{\tensor 2}\tensor\lie g^{\tensor 2},U(\lie g)^{\tensor 3})$ via
$$
\cyb{s}{s'}=
[(\id_{\lie g}\tensor\Delta)(s),(\Delta\tensor\id_{\lie g})(s')]=[s_{1,2}+s_{1,3},s'_{1,3}+s'_{2,3}],\qquad s,s'\in \lie g\tensor\lie g.
$$
The following is well-known
(see e.g.~\cite{Majid}*{\S8.1} and~\cites{Drinf,EK-I,EK-II,EK-III}).
We provide a proof here for the reader's convenience, since some of the intermediate identities will be 
needed later. 
\begin{proposition}\label{prop:compat cond}
Let~$r\in \lie g\tensor\lie g$ and define~$\delta_{r}\in\Hom_\kk(\lie g, \lie g\tensor \lie g)$ by $\delta_{r}(x)=
[r,\Delta(x)]$, $x\in\lie g$.
\begin{enumalph}
\item\label{prop:compat cond.a} 
$\delta_{r}$ satisfies~\ref{cnd.3}
and so extends to a $\Delta$-derivation
$\delta_{r}:U(\lie g)\to U(\lie g)\tensor U(\lie g)$. In particular,
$\delta_{r}(u)=[r,\Delta(u)]$ for all~$u\in U(\lie g)$.
\item\label{prop:compat cond.b}
$\delta_{r}$ satisfies
\ref{cnd.1} if and only if $r+\tau_{1,2}(r)$ centralizes 
$\Delta(U(\lie g))\subset U(\lie g)\tensor U(\lie g)$ or, equivalently, if and only
if $r+\tau_{1,2}(r)$ is
$\lie g$-invariant with respect to the natural
diagonal action of~$\lie g$ on~$\lie g\tensor\lie g$; 
\item\label{prop:compat cond.c} 
Suppose that~$\delta_{r}$ satisfies~\ref{cnd.1}. Then
for all~$x\in\lie g$
$$
((\id_{\lie g^{\tensor 3}}+\tau_{1,2}\tau_{2,3}+\tau_{2,3}\tau_{1,2})\circ(\delta_{r}\tensor\id)\circ\delta_{r})(x)=
[\cybe{r},(\Delta\tensor\id)\circ\Delta(x)].
$$
In particular, under this assumption $\delta_{r}$ satisfies~\ref{cnd.2} if and only if
$\cybe{r}$ centralizes 
$(\Delta\tensor\id)\circ\Delta(U(\lie g))\subset U(\lie g)^{\tensor 3}$
or, equivalently, is $\lie g$-invariant
with respect to the natural diagonal 
$\lie g$-action on~$U(\lie g)^{\tensor 3}$.
\item\label{prop:compat cond.d}
$\cybe{r}=0$ if and only if $(\delta_{r}\tensor\id_{\lie g})(r)=[r_{2,3},r_{1,3}]$ if and only if
$(\id_{\lie g}\tensor \delta_{r})(r)=[r_{1,2},
r_{1,3}]$.
\end{enumalph}
\end{proposition}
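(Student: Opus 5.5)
We treat the four parts in order, working inside $U(\lie g)^{\tensor 2}$ and $U(\lie g)^{\tensor 3}$ and using throughout that $\Delta$ and $(\Delta\tensor\id)\circ\Delta$ are algebra homomorphisms.

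\emph{Part~\ref{prop:compat cond.a}.} The map $u\mapsto[r,\Delta(u)]$ is the composition of the algebra homomorphism $\Delta$ with the inner derivation $\ad r$. Hence it is a $\Delta$-derivation $U(\lie g)\to U(\lie g)^{\tensor 2}$. For $x\in\lie g$ we have $[r,\Delta(x)]\in\lie g\tensor\lie g$, so its restriction to $\lie g$ is $\delta_r$. Moreover $\delta_r([x,y])=[r,[\Delta(x),\Delta(y)]]$, and the Jacobi identity rewrites this as $[\delta_r(x),\Delta(y)]-[\delta_r(y),\Delta(x)]$, which is~\ref{cnd.3}. A $\Delta$-derivation on $U(\lie g)$ is determined by its values on generators, so the extension is unique and equals $[r,\Delta(\cdot)]$.

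\emph{Part~\ref{prop:compat cond.b}.} Since $\Delta$ is cocommutative, $\tau\circ\delta_r(x)=[\tau(r),\Delta(x)]$. Therefore $\delta_r+\tau\circ\delta_r=[r+\tau(r),\Delta(\cdot)]$, and~\ref{cnd.1} holds if and only if $r+\tau(r)$ commutes with $\Delta(\lie g)$. Because $\Delta(\lie g)$ generates $\Delta(U(\lie g))$, this is the same as commuting with all of $\Delta(U(\lie g))$. Commuting with $\Delta(x)$ means exactly that $x$ acts on $r+\tau(r)$ by the diagonal adjoint action with result zero, which gives the $\lie g$-invariance formulation.

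\emph{Part~\ref{prop:compat cond.c}.} This is the main computation. The first step is to use $(\delta_r\tensor\id)(a\tensor b)=[r_{1,2},(\Delta\tensor\id)(a\tensor b)]$ for $a,b\in\lie g$, which gives
\[
(\delta_r\tensor\id)\delta_r(x)=[r_{1,2},[r_{1,3}+r_{2,3},X]],\qquad X:=(\Delta\tensor\id)\Delta(x)=x_1+x_2+x_3.
\]
The second step is to apply the cyclic permutations, which relabel indices cyclically:
\[
[r_{2,3},[r_{2,1}+r_{3,1},X]],\qquad [r_{3,1},[r_{3,2}+r_{1,2},X]],
\]
since $X$ is symmetric. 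The third step is to use~\ref{cnd.1} in the form $[r_{i,j}+r_{j,i},X]=0$, i.e.\ $[r_{j,i},X]=-[r_{i,j},X]$, to replace each $r_{j,i}$ with $j>i$ inside the inner bracket. The remaining outer $r_{3,1}$ has to be handled through Jacobi, and this is the delicate point: one needs to know that $[r_{1,3}+r_{3,1},\cdot\,]$ kills the relevant terms. I would expand everything using the Jacobi identity $[A,[B,X]]-[B,[A,X]]=[[A,B],X]$. Pairing the terms in this way should produce
\[
\bigl[[r_{1,2},r_{1,3}]+[r_{1,2},r_{2,3}]+[r_{1,3},r_{2,3}],X\bigr]=[\cybe r,X],
\]
possibly together with correction terms involving $[r_{1,3}+r_{3,1},\ldots]$. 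Those correction terms vanish because $r_{1,3}+r_{3,1}$ is invariant, hence commutes with $x_1+x_3$, and trivially commutes with $x_2$. Also, $\cybe r$ expands to exactly these three commutators, since $[r_{1,3},r_{1,3}]=0$. Keeping track of the signs in this regrouping is the obstacle, and I would check it term by term. The ``in particular'' statement then follows as in part~\ref{prop:compat cond.b}, since $(\Delta\tensor\id)\Delta(\lie g)$ generates $(\Delta\tensor\id)\Delta(U(\lie g))$.

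\emph{Part~\ref{prop:compat cond.d}.} Write $r=\sum_t a_t\tensor b_t$. Then
\[
(\delta_r\tensor\id)(r)=\sum_t[r_{1,2},(a_t)_1+(a_t)_2](b_t)_3=[r_{1,2},r_{1,3}+r_{2,3}].
\]
Hence $\cybe r=(\delta_r\tensor\id)(r)+[r_{1,3},r_{2,3}]$, and $\cybe r=0$ if and only if $(\delta_r\tensor\id)(r)=[r_{2,3},r_{1,3}]$. Symmetrically,
\[
(\id\tensor\delta_r)(r)=[r_{2,3},r_{1,2}+r_{1,3}],
\]
so $\cybe r=[r_{1,2},r_{1,3}]-(\id\tensor\delta_r)(r)$. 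This gives the second equivalence.
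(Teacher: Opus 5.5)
Parts (a), (b) and (d) are correct and match the paper's argument. In (c), your first step is also fine: you obtain $(\delta_r\tensor\id)\delta_r(x)=[r_{1,2},[r_{1,3}+r_{2,3},X]]$ directly from $(\delta_r\tensor\id)(a\tensor b)=[r_{1,2},(\Delta\tensor\id)(a\tensor b)]$, which is shorter than the paper's route through Lemma~\ref{lem:prop Delta-der}.

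\textbf{The gap is at the step you call delicate.} After the cyclic relabelling, the third summand is $[r_{3,1},[r_{3,2}+r_{1,2},X]]$. To trade the outer $r_{3,1}$ for $-r_{1,3}$ you need $[r_{1,3}+r_{3,1},[r_{1,2}+r_{3,2},X]]=0$.

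Your reason, that $r_{1,3}+r_{3,1}$ commutes with $X$, does not give this. By Jacobi the term equals $[[r_{1,3}+r_{3,1},r_{1,2}+r_{3,2}],X]+[r_{1,2}+r_{3,2},[r_{1,3}+r_{3,1},X]]$, and you have only killed the second summand.

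The missing fact is $[r_{1,3}+r_{3,1},r_{1,2}+r_{3,2}]=0$. It holds because $r_{1,2}+r_{3,2}=\sum_t\bigl((a_t)_1+(a_t)_3\bigr)(b_t)_2$, and by (b) $r_{1,3}+r_{3,1}$ commutes with $a_1+a_3$ for every $a\in\lie g$.

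The paper packages this more efficiently. Since $r+\tau(r)$ centralizes all of $\Delta(U(\lie g))$, the element $r_{1,2}+r_{2,1}$ centralizes $(\Delta\tensor\id)(U(\lie g)^{\tensor 2})$, which contains $[r_{1,3}+r_{2,3},X]$. Hence also $(\delta_r\tensor\id)\delta_r(x)=-[r_{2,1},[r_{1,3}+r_{2,3},X]]$. Applying the cyclic permutation to this second form yields $-[r_{1,3},[r_{3,2}+r_{1,2},X]]$ with no correction term at all.

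Once that is in place, the sign bookkeeping you left open closes cleanly. Replacing $[r_{j,i},X]$ by $-[r_{i,j},X]$ in the inner brackets, the cyclic sum becomes
\[
[r_{1,2},[r_{1,3}+r_{2,3},X]]-[r_{2,3},[r_{1,2}+r_{1,3},X]]-[r_{1,3},[r_{1,2}-r_{2,3},X]].
\]
The six terms pair off under $[A,[B,X]]-[B,[A,X]]=[[A,B],X]$ to give $\bigl[[r_{1,2},r_{1,3}]+[r_{1,2},r_{2,3}]+[r_{1,3},r_{2,3}],X\bigr]=[\cybe{r},X]$. As submitted, however, part (c) only asserts that the regrouping ``should produce'' this. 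The one justification you give for the crucial cancellation is insufficient.
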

\begin{proof}
We need the following 
\begin{lemma}\label{lem:prop Delta-der}
    Suppose that~$\delta:U(\lie g)\to U(\lie g)\tensor U(\lie g)$ is a $\Delta$-derivation.
    Then for all~$u\in U(\lie g)$, $r\in\lie g\tensor\lie g$ we have in $U(\lie g)\tensor U(\lie g)$
    \begin{equation}\label{eq:cobrack commutator}(\delta\tensor\id)([r,\Delta(u)])=[(\delta\tensor\id)(r),(\Delta\tensor\id)\Delta(u)]+
    [r_{1,3}+r_{2,3},(\delta\tensor\id)\Delta(u)].
    \end{equation} 
\end{lemma}
\begin{proof}
Write~$r=r_1\tensor r_2$ in Sweedler-like notation. Then
\begin{align*}
(\delta\tensor\id)(r\cdot \Delta(u))&=(\delta\tensor\id)
(r_1 u_{(1)}\tensor r_2 u_{(2)})\\
&=\delta(r_1)\cdot \Delta(u_{(1)})\tensor r_2 u_{(2)}+\Delta(r_1)\cdot \delta(u_{(1)})\tensor r_2u_{(2)}\\
&=(\delta\tensor\id)(r)\cdot (\Delta\tensor\id)\Delta(u)+
(\Delta\tensor\id)(r)\cdot 
(\delta\tensor\id)\Delta(u)\\
&=(\delta\tensor\id)(r)\cdot (\Delta\tensor\id)\Delta(u)+
(r_{1,3}+r_{2,3})\cdot (\delta\tensor\id)\Delta(u).
\end{align*}
The identity for~$(\delta\tensor\id)(\Delta(u)\cdot r)$ is obtained similarly, and the assertion follows.
\end{proof}

To prove part~\ref{prop:compat cond.a}, note that for all~$x,y\in\lie g$,
\begin{align*}
\delta_{r}([x,y]_{\lie g})&=
[r,\Delta([x,y]_{\lie g})]=
[r,[\Delta(x),\Delta(y)]]
\\
&=
[[r,\Delta(x)],\Delta(y)]+[
\Delta(x),[r,\Delta(y)]]
=[\delta_{r}(x),\Delta(y)]-[\delta_{r}(y),\Delta(x)].
\end{align*}
Thus, $\delta_{r}$ satisfies~\ref{cnd.3} and hence extends to a $\Delta$-derivation $U(\lie g)\to U(\lie g)\tensor U(\lie g)$. The second assertion of~\ref{prop:compat cond.a} follows since~$\lie g$
generates~$U(\lie g)$ as an associative algebra.

Since~$\Delta$ is cocommutative,
$\tau_{1,2}\circ\delta_{r}(x)+\delta_{r}(x)=[\tau_{1,2}(r)+r,\Delta(x)]=0$ for all~$x\in\lie g$
if and only if~$\tau_{1,2}(r)+r$ commutes with~$\Delta(x)$ for all~$x\in\lie g$. Since~$\lie g$ generates~$U(\lie g)$ as an associative algebra, part~\ref{prop:compat cond.b} follows.

By Lemma~\ref{lem:prop Delta-der} we have
for all~$x\in\lie g$
\begin{align*}
(\delta_{r}&\tensor\id)\delta_{r}(x)
=[(\delta_{r}\tensor\id)(r),(\Delta\tensor\id)\Delta(x)]+[r_{1,3}+r_{2,3},(\delta_{r}\tensor \id)\Delta(x)]\\
&=[(\delta_{r}\tensor\id)(r),(\Delta\tensor\id)\Delta(x)]+[r_{1,3}+r_{2,3},\delta_{r}(x)_{1,2}].
\end{align*}
Furthermore, 
\begin{align}\label{eq:delta r of r}
(\delta_{r}\tensor\id)(r)=[r_{1,2},r_{1,3}+r_{2,3}]=-[r_{2,1},r_{1,3}+r_{2,3}]
\end{align}
and
$$
\delta_{r}(x)_{1,2}=
[r_{1,2},x\tensor 1\tensor 1+1\tensor x\tensor 1]=
[r_{1,2},(\Delta\tensor \id)\Delta(x)]=-[r_{2,1},(\Delta\tensor\id)\Delta(x)].
$$
Thus, 
\begin{align*}
(\delta_{r}\tensor\id)\delta_{r}(x)&=
[[r_{1,2},r_{1,3}+r_{2,3}],(\Delta\tensor\id)(x)]+
[r_{1,3}+r_{2,3},[r_{1,2},(\Delta\tensor\id)\Delta(x)]]\\
&=[r_{1,2},[r_{1,3}+r_{2,3},(\Delta\tensor\id)\Delta(x)]]\\
&=-[[r_{2,1},r_{1,3}+r_{2,3}],(\Delta\tensor\id)(x)]-
[r_{1,3}+r_{2,3},[r_{2,1},(\Delta\tensor\id)\Delta(x)]]\\
&=-[r_{2,1},[r_{1,3}+r_{2,3},(\Delta\tensor\id)\Delta(x)]]
\end{align*}
Note that, since $r+\tau_{1,2}(r)$ commutes with~$\Delta(\lie g)$, 
$r_{i,j}+r_{j,i}$ commutes with $(\Delta\tensor\id)\Delta(\lie g)$ for all $1\le i<j\le 3$. 
Applying $\id_{\lie g^{\tensor 3}}+\tau_{2,3}\tau_{1,2}+\tau_{1,2}\tau_{2,3}$ 
and taking into account that~$\Delta$ is cocommutative and coassociative we obtain
\begin{align*}
(\id_{\lie g^{\tensor 3}}&+\tau_{2,3}\tau_{1,2}+\tau_{1,2}\tau_{2,3})((\delta_{r}\tensor\id)\delta_{r}(x))\\
&=[r_{1,2},[r_{1,3}+r_{2,3},z]]
-[r_{1,3},[r_{3,2}+r_{1,2},z]]+[r_{2,3},[r_{2,1}+r_{3,1},z]]
\\
&=[r_{1,2},[r_{1,3}+r_{2,3},z]]-
[r_{1,3},[r_{1,2}-r_{2,3},z]]-[r_{2,3},[r_{1,2}+r_{1,3},z]]\\
&=([r_{12},[r_{2,3},z]]+[r_{2,3},[z,r_{12}]])
-([r_{1,3},[r_{1,2},z]]+
[r_{1,2},[z,r_{1,3}]])\\
&\phantom{=}-([r_{2,3},[r_{1,3},z]]+[r_{1,3},[z,r_{2,3}]])\\
&=[ [r_{1,2},r_{2,3}]
-[r_{1,3},r_{1,2}]
-[r_{2,3},r_{1,3}],z]
=[\cybe{r},z],
\end{align*}
where we abbreviated~$z=(\Delta\tensor\id)\Delta(x)$. Part~\ref{prop:compat cond.c} is now immediate.

Finally, note that by~\eqref{eq:delta r of r}
$$
\cybe{r}=[r_{1,2}+r_{1,3},r_{1,3}+r_{2,3}]
=(\delta_{r}\tensor\id_{\lie g})(r)+[r_{1,3},r_{2,3}].
$$
Similarly, since~$(\id_{\lie g}\tensor \delta_{r})(r)=
[r_{2,3},r_{1,2}+r_{1,3}]$,
$$
\cybe{r}=-(\id_{\lie g}\tensor\delta_{r})(r)+[r_{1,2},r_{1,3}].
$$
These identities prove part~\ref{prop:compat cond.d}.
\end{proof}

\subsection{Quasi-triangular Lie bialgebras}\label{subs:qtr Lie bialg}
Following~\cite{Drinf}, Lie bialgebra $(\lie g,\delta)$ is 
called {\em quasi-triangular} if there
is $r\in\lie g\tensor \lie g$, called a
classical r-matrix, such that
$\delta=\delta_{r}$
and~$\cybe{r}=0$. 

The equation~$\cybe{r}=0$ is called the {\em Classical Yang-Baxter equation} (CYBE). 
More generally, we have the following
\begin{proposition}\label{prop:color CYBE}
Let $(\lie g,\delta)$ be a Lie bialgebra and let 
$\{ r^{(c)}\}_{c\in C}\subset \lie g\tensor \lie g$ be a family
of classical r-matrices for~$(\lie g,\delta)$.
Then in $U(\lie g)^{\tensor 3}$
\begin{equation}\label{eq:color CYBE}
[r_{i,j}^{(c)},r_{i,k}^{(c')}]+[r_{i,j}^{(c)},r_{j,k}^{(c'')}]+[r_{i,k}^{(c')},r_{j,k}^{(c'')}]=0
\end{equation}
for all~$c'\in \{c,c''\}\subset C$, $\{i,j,k\}=\{1,2,3\}$.
\end{proposition}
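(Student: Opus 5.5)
The plan is to reduce to the case $(i,j,k)=(1,2,3)$ and then use Proposition~\ref{prop:compat cond}\ref{prop:compat cond.d} for the two r-matrices involved.

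First, the reduction. Applying $\widehat\sigma$ for a permutation $\sigma\in S_3$ is an algebra automorphism of $U(\lie g)^{\tensor 3}$ that permutes the positions of tensor factors. It maps $r_{1,2}\mapsto r_{\sigma(1),\sigma(2)}$, and so on, with $r_{b,a}=\tau(r)_{a,b}$ by convention. Hence it suffices to prove
$$[r^{(c)}_{1,2},r^{(c')}_{1,3}]+[r^{(c)}_{1,2},r^{(c'')}_{2,3}]+[r^{(c')}_{1,3},r^{(c'')}_{2,3}]=0 .$$

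Next, the key identity. Since every $r^{(c)}$ defines the same cobracket, we have $\delta=\delta_{r^{(c)}}$ for all $c$. The computation \eqref{eq:delta r of r} gives
$$(\delta\tensor\id)(s)=[r^{(a)}_{1,2},s_{1,3}+s_{2,3}]$$
for any $s$ and any $a\in C$. Likewise,
$$(\id\tensor\delta)(s)=[r^{(a)}_{2,3},s_{1,2}+s_{1,3}].$$
Each $r^{(a)}$ satisfies CYBE, so Proposition~\ref{prop:compat cond}\ref{prop:compat cond.d} yields
$$(\delta\tensor\id)(r^{(a)})=[r^{(a)}_{2,3},r^{(a)}_{1,3}],\qquad (\id\tensor\delta)(r^{(a)})=[r^{(a)}_{1,2},r^{(a)}_{1,3}].$$

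Case $c'=c''=:b$. Compute $(\delta\tensor\id)(r^{(b)})$ using the r-matrix $r^{(c)}$ in the first slot. This gives
$$[r^{(c)}_{1,2},r^{(b)}_{1,3}+r^{(b)}_{2,3}]=[r^{(b)}_{2,3},r^{(b)}_{1,3}].$$
Rearranged, this is exactly the required identity, since $[r^{(b)}_{1,3},r^{(b)}_{2,3}]=-[r^{(b)}_{2,3},r^{(b)}_{1,3}]$.

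Case $c'=c=:a$. Compute $(\id\tensor\delta)(r^{(a)})$ using $r^{(c'')}$ as the r-matrix. This gives
$$[r^{(c'')}_{2,3},r^{(a)}_{1,2}+r^{(a)}_{1,3}]=[r^{(a)}_{1,2},r^{(a)}_{1,3}].$$
This rearranges to the desired equality.

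The only delicate point is bookkeeping. One must check that the formula \eqref{eq:delta r of r} holds for an arbitrary argument $s$, which follows from $\delta_r(x)=[r,\Delta(x)]$ applied componentwise. One must also check that the symmetric-group reduction correctly accounts for the transposed copies $r_{j,i}$. I expect no real obstacle beyond that.
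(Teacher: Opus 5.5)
Your proposal is correct and is essentially the paper's own proof. After the $S_3$ reduction to $(i,j,k)=(1,2,3)$, the paper handles the case $c'=c''$ by evaluating $(\delta\tensor\id_{\lie g})(r^{(c'')})$ with $\delta=\delta_{r^{(c)}}$, and the case $c'=c$ by evaluating $(\id_{\lie g}\tensor\delta)(r^{(c)})$ with $\delta=\delta_{r^{(c'')}}$, comparing each against Proposition~\ref{prop:compat cond}\ref{prop:compat cond.d}, exactly as you do.
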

\begin{proof}
Since~$S_3$ acts by algebra automorphisms on~$U(\lie g)^{\tensor 3}$, it suffices
to prove~\eqref{eq:color CYBE} for $(i,j,k)=(1,2,3)$.
Let~$c,c''\in C$.
Suppose first that~$c'=c''$. Since~$\delta=\delta_{r^{(c)}}=
\delta_{r^{(c'')}}$, we have by Proposition~\ref{prop:compat cond}\ref{prop:compat cond.d}
$$
0=(\delta\tensor\id_{\lie g})(r^{(c'')})+[r^{(c'')}_{1,3},
r^{(c'')}_{2,3}]
=[r^{(c)}_{1,2},r^{(c'')}_{1,3}+r^{(c'')}_{2,3}]+
[r^{(c'')}_{1,3},r^{(c'')}_{2,3}],
$$
which is~\eqref{eq:color CYBE} with~$c'=c''$. Similarly, if~$c'=c$ then
by Proposition~\ref{prop:compat cond}\ref{prop:compat cond.d}
$$
0=[r^{(c)}_{1,2},
r^{(c)}_{1,3}]-(\id_{\lie g}\tensor\delta)(r^{(c)}) 
=[r^{(c)}_{1,2},r^{(c)}_{1,3}]-
[r^{(c'')}_{2,3},r^{(c)}_{1,2}+r^{(c)}_{1,3}],
$$
which yields~\eqref{eq:color CYBE} with~$c'=c$.
\end{proof}
The basic example of such a family is provided by the following
\begin{lemma}\label{lem:bas classical family}
Let~$(\lie g,\delta)$ be a quasi-triangular Lie bialgebra
with a classical r-matrix $r$.
Then $-\tau_{1,2}(r)$ is also a classical~r-matrix 
for $(\lie g,\delta)$. In particular, if $r^{(1)}=r$
and~$r^{(-1)}=-\tau_{1,2}(r)$ then 
$$
[r^{(\epsilon)}_{i,j},r^{(\epsilon')}_{i,k}]+
[r^{(\epsilon)}_{i,j},r^{(\epsilon'')}_{j,k}]+
[r^{(\epsilon')}_{i,k},r^{(\epsilon'')}_{j,k}]=0
$$
provided that~$\epsilon'\in\{\epsilon,\epsilon''\}\subset\{1,-1\}$ and~$\{i,j,k\}=\{1,2,3\}$.
\end{lemma}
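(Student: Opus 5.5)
The plan is to verify directly that $r':=-\tau_{1,2}(r)$ meets both requirements in the definition of a classical r-matrix for $(\lie g,\delta)$: first $\delta_{r'}=\delta$, and then $\cybe{r'}=0$. Once this is done, $\{r^{(1)},r^{(-1)}\}$ is a family of classical r-matrices for the same cobracket. The displayed identity is then exactly Proposition~\ref{prop:color CYBE} applied with $C=\{1,-1\}$.

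For the first requirement, note that $\delta=\delta_r$ satisfies~\ref{cnd.1}, since $(\lie g,\delta)$ is a Lie bialgebra. By Proposition~\ref{prop:compat cond}\ref{prop:compat cond.b}, the element $r+\tau_{1,2}(r)$ therefore commutes with $\Delta(x)$ for all $x\in\lie g$. Hence
$$
\delta_{r'}(x)=[-\tau_{1,2}(r),\Delta(x)]=[r,\Delta(x)]-[r+\tau_{1,2}(r),\Delta(x)]=\delta(x).
$$
Alternatively, one can write $\delta_{r'}(x)=-\tau_{1,2}([r,\Delta(x)])$ using the cocommutativity of $\Delta$, and then invoke~\ref{cnd.1}.

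For the second requirement, write $r'_{i,j}=-r_{j,i}$. Then
$$
\cybe{r'}=[r_{2,1},r_{3,1}]+[r_{2,1},r_{3,2}]+[r_{3,1},r_{3,2}].
$$
Apply the algebra automorphism of $U(\lie g)^{\tensor 3}$ that reverses the tensor factors, i.e.\ $\tau_{1,3}$. It sends $r_{i,j}$ to $r_{4-i,4-j}$, so the expression becomes
$$
[r_{2,3},r_{1,3}]+[r_{2,3},r_{1,2}]+[r_{1,3},r_{1,2}]=-\cybe{r}=0.
$$
This is a one-line index computation and should be the only place requiring care; the main obstacle is just keeping track of orderings in the commutators. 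With both facts established, Proposition~\ref{prop:color CYBE} gives the final assertion for all $\epsilon'\in\{\epsilon,\epsilon''\}$ and all $\{i,j,k\}=\{1,2,3\}$.
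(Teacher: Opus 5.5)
Your proof is correct and follows the paper's argument. The paper likewise shows $\delta_{-\tau_{1,2}(r)}=\delta$, using exactly your alternative (cocommutativity of $\Delta$ together with \ref{cnd.1}), then gets $\cybe{-\tau_{1,2}(r)}=0$ by permuting tensor factors to reach $-\cybe{r}$ (your $\tau_{1,3}$ is the right permutation), and finally invokes Proposition~\ref{prop:color CYBE}.
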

\begin{proof}
Suppose that~$\delta=\delta_{r}$ for some~$r\in\lie g\tensor\lie g$
satisfying~$\cybe{r}=0$.
Then, since~$\Delta$ is cocommutative and~$\tau_{1,2}\circ\delta=-\delta$, we have 
$\delta=\delta_{-\tau_{1,2}(r)}$. Furthermore,
\begin{align*}
\cybe{-\tau_{1,2}(r)}&=[(\id_{\lie g}\tensor\Delta)(\tau_{1,2}(r)),(\Delta\tensor\id)(
\tau_{1,2}(r)]\\
&=\tau_{2,3}\tau_{1,2}([(\Delta\tensor\id_{\lie g})(r),
(\id_{\lie g}\tensor\Delta)(r)])=-\tau_{2,3}\tau_{1,2}(\cybe{r})=0.
\end{align*}
Thus, $-\tau_{1,2}(r)$ is also a classical r-matrix for~$(\lie g,\delta)$.
The remaining assertion is then immediate from Proposition~\ref{prop:color CYBE}.
\end{proof}

\subsection{Classical Drinfeld twists}
Given~$r\in\lie g\tensor \lie g$, denote~$r^-:=r-\tau_{1,2}(r)$.

A {\em weak classical Drinfeld twist} $ j $ for~$(\lie g,\delta)$ is an element 
of~$\lie g\tensor\lie g$ such that $\tilde\delta_{ j }:=\delta+
\delta_{ j ^-}$
is a Lie cobracket.  
\begin{remark}
This definition is slightly different from that of the classical twist given in~\cite{Drinf} to emphasize the parallel with the quantum situation.
\end{remark}
The following is well-known (see~\cite{Drinf} and~\cite{Majid}*{\S8.1}).
\begin{proposition}\label{prop:cnd classical twist}
Let~$(\lie g,\delta)$ be a Lie bialgebra and $ j \in \lie g\tensor\lie g$.
\begin{enumalph}
\item \label{prop:cnd classical twist.a}
$ j $ is a weak classical Drinfeld twist if and only if 
\begin{equation}
\cybe{ j ^-}_{\delta}:=(\delta\tensor\id)( j ^-)-(\id\tensor\delta)( j ^-)
-\tau_{2,3}(\delta\tensor\id)( j ^-)
+\cybe{ j ^-}\label{eq:class Drinfeld twist}
\end{equation}
centralizes~$(\Delta\tensor\id)\Delta(U(\lie g))\subset U(\lie g)^{\tensor 3}$ or, equivalently, is~$\lie g$-invariant
with respect to the natural diagonal $\lie g$-action on~$U(\lie g)^{\tensor 3}$.

\item \label{prop:cnd classical twist.b}
Suppose that~$(\lie g,\delta)$ is quasi-triangular with a classical r-matrix
$r$. Then 
\begin{equation}\cybe{ j ^-}_\delta=\cybe{r+ j ^-}.
\label{eq:class Drinfeld twist'}
\end{equation}
In particular, $(\lie g,\delta+\delta_{ j ^-})$
is quasi-triangular with the classical r-matrix~$r+ j ^-$
if and only if~$\cybe{ j ^-}_\delta=0$.
\end{enumalph}
\end{proposition}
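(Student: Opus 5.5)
The plan for part~\ref{prop:cnd classical twist.a} is to check the three axioms~\ref{cnd.3}--\ref{cnd.2} for $\tilde\delta_j=\delta+\delta_s$ one at a time, where $s:=j^-=j-\tau_{1,2}(j)$. Axiom~\ref{cnd.3} holds because $\delta$ satisfies it by assumption and $\delta_s$ satisfies it by Proposition~\ref{prop:compat cond}\ref{prop:compat cond.a}; moreover $\tilde\delta_j$ is then a $\Delta$-derivation on $U(\lie g)$. Axiom~\ref{cnd.1} holds because $s$ is skew-symmetric, so $s+\tau_{1,2}(s)=0$ and Proposition~\ref{prop:compat cond}\ref{prop:compat cond.b} applies trivially. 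So everything reduces to co-Jacobi.

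Write $\mathrm{Alt}=\id+\tau_{1,2}\tau_{2,3}+\tau_{2,3}\tau_{1,2}$ and $z=(\Delta\tensor\id)\Delta(x)$. Then $\mathrm{Alt}\circ(\tilde\delta_j\tensor\id)\circ\tilde\delta_j$ splits into three pieces:
\begin{itemize}
\item the pure term $\mathrm{Alt}(\delta\tensor\id)\delta$, which vanishes since $\delta$ is a cobracket;
\item the quadratic term $\mathrm{Alt}(\delta_s\tensor\id)\delta_s(x)=[\cybe{s},z]$, by Proposition~\ref{prop:compat cond}\ref{prop:compat cond.c};
\item the mixed term $\mathrm{Alt}\bigl((\delta\tensor\id)\delta_s+(\delta_s\tensor\id)\delta\bigr)(x)$.
\end{itemize}

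For the mixed term, Lemma~\ref{lem:prop Delta-der} applied to the $\Delta$-derivation $\delta$ gives
\[
(\delta\tensor\id)[s,\Delta(x)]=[(\delta\tensor\id)(s),z]+[s_{1,3}+s_{2,3},(\delta\tensor\id)\Delta(x)].
\]
By definition, $(\delta_s\tensor\id)(y)=[s_{1,2},(\Delta\tensor\id)(y)]$ for $y=\delta(x)$. Next apply $\mathrm{Alt}$. Rewrite $(\delta\tensor\id)\Delta(x)$ and $(\Delta\tensor\id)\delta(x)$ through Lemma~\ref{lem:coLeib}, using cocommutativity to identify $(\delta\tensor\id)\Delta(x)$ with $\delta(x)_{1,2}$. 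I then expect all terms of the form $[s_{\bullet},\delta(x)_{\bullet}]$ to cancel in pairs, using skew-symmetry of both $s$ and $\delta(x)$. What remains should be
\[
\bigl[(\delta\tensor\id)(s)-(\id\tensor\delta)(s)-\tau_{2,3}(\delta\tensor\id)(s),\,z\bigr].
\]
Adding the quadratic term gives $\mathrm{Alt}(\tilde\delta_j\tensor\id)\tilde\delta_j(x)=[\cybe{s}_\delta,z]$. Since $\lie g$ generates $U(\lie g)$, this vanishes for all $x$ exactly when $\cybe{s}_\delta$ is invariant, which is~\ref{prop:cnd classical twist.a}.

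This cancellation bookkeeping is the main obstacle. I would first carry it out for $\mathrm{Alt}$ applied to each of the two mixed summands separately, tracking where each cyclic permutation sends $s_{i,j}$ and $\delta(x)_{i,j}$.

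For part~\ref{prop:cnd classical twist.b}, set $\delta=\delta_r$ and expand
\[
\cybe{r+s}=\cybe{r}+[r_{1,2}+r_{1,3},s_{1,3}+s_{2,3}]+[s_{1,2}+s_{1,3},r_{1,3}+r_{2,3}]+\cybe{s},
\]
where $\cybe{r}=0$. Compare this with the explicit expressions
\begin{align*}
(\delta_r\tensor\id)(s)&=[r_{1,2},s_{1,3}+s_{2,3}],\\
(\id\tensor\delta_r)(s)&=[r_{2,3},s_{1,2}+s_{1,3}],\\
\tau_{2,3}(\delta_r\tensor\id)(s)&=[r_{1,3},s_{1,2}+s_{3,2}].
\end{align*}
To reconcile the two sides, I will write $r_{j,i}=-r_{i,j}+\Omega_{i,j}$ with $\Omega=r+\tau(r)$, which is $\lie g$-invariant by Proposition~\ref{prop:compat cond}\ref{prop:compat cond.b}, and use $s_{3,2}=-s_{2,3}$. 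The leftover commutators should all be of the form $[\Omega_{i,k},s_{i,k}+s_{j,k}]$ or similar, where the second entry is a coproduct-type expression; these vanish by invariance, which yields~\eqref{eq:class Drinfeld twist'}. The final assertion of~\ref{prop:cnd classical twist.b} then follows by combining this identity with~\ref{prop:cnd classical twist.a} and the definition of a quasi-triangular structure. The result is the cobracket $\delta_r+\delta_s=\delta_{r+s}$, with $r+s$ satisfying CYBE exactly when $\cybe{s}_\delta=0$.
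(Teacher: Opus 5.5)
Your proposal is correct and follows the paper's proof essentially step for step. In (a) it uses the same splitting into pure, quadratic and mixed terms, with Proposition~\ref{prop:compat cond}\ref{prop:compat cond.c} for the quadratic term, Lemma~\ref{lem:prop Delta-der} for $(\delta\tensor\id)\circ\delta_{j^-}$, and pairwise cancellation of the $[j^-_{\bullet},\delta(x)_{\bullet}]$ terms; in (b) it uses the same expansion of $\cybe{r+j^-}$. The one adjustment is that (b) needs no invariance of $\Omega=r+\tau(r)$ at all, and no $r_{j,i}$ with $j>i$ even appears. Using only $j^-_{3,2}=-j^-_{2,3}$ and $\cybe{r}=0$, the difference $\cybe{r+j^-}-\cybe{j^-}_\delta$ collapses to $[r_{1,3},\,j^-_{1,3}+j^-_{2,3}-j^-_{1,2}-j^-_{1,3}+j^-_{1,2}-j^-_{2,3}]=0$. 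This matters because a commutator such as $[\Omega_{1,3},j^-_{1,3}+j^-_{2,3}]$ would not vanish by invariance in general, so that step of your plan could not have been relied on.
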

\begin{proof}
Note that both~\ref{cnd.3} and~\ref{cnd.1}
are linear in~$\delta$.
Since~$\delta$ satisfies~\ref{cnd.3} and~\ref{cnd.1} by assumption while~$\delta_{ j ^-}$ satisfies~\ref{cnd.3} by Proposition~\ref{prop:compat cond}\ref{prop:compat cond.a} and~\ref{cnd.1}
by Proposition~\ref{prop:compat cond}\ref{prop:compat cond.b} as~$\tau_{1,2}( j ^-)+ j ^-=0$, 
it follows 
that~$\tilde\delta_{ j }=\delta+\delta_{ j ^-}$ satisfies~\ref{cnd.3} and~\ref{cnd.1}.

We have 
\begin{align*}
(\tilde\delta_{ j }\tensor\id)\circ\tilde\delta_{ j }
=(\delta\tensor\id)\circ\delta 
+(\delta\tensor\id)\circ\delta_{ j ^-}
+(\delta_{ j ^-}\tensor\id)\circ\delta
+(\delta_{ j ^-}\tensor\id)\circ \delta_{ j ^-}.
\end{align*}
Note that~$\delta$ satisfies~\ref{cnd.2} by assumption and 
$$
(\id_{ \lie g^{\tensor 3}}+
\tau_{2,3}\tau_{1,2}+\tau_{1,2}\tau_{2,3})\circ 
(\delta_{ j ^-}\tensor \id)\circ \delta_{ j ^-}(x)=
[\cybe{ j ^-},(\Delta\tensor\id)\Delta(x)]
$$
by Proposition~\ref{prop:compat cond}\ref{prop:compat cond.c}. Since
$$
(\delta_{ j ^-}\tensor\id)\delta(x)=
[ j ^-_{1,2},(\Delta\tensor\id)\delta(x)]
=[ j ^-_{1,2},\delta(x)_{1,3}+\delta(x)_{2,3}],
$$
we have 
\begin{align}
(\id_{\tensor \lie g^{\tensor 3}}&+
\tau_{2,3}\tau_{1,2}+\tau_{1,2}\tau_{2,3})\circ(\delta_{ j ^-}\tensor\id)\circ \delta(x)\nonumber\\
&=[ j ^-_{1,2},\delta(x)_{1,3}+\delta(x)_{2,3}]+
[ j ^-_{3,1},\delta(x)_{3,2}+\delta(x)_{1,2}]
+[ j ^-_{2,3},\delta(x)_{2,1}+\delta(x)_{3,1}]\nonumber\\
&=-[ j ^-_{1,3}+ j ^-_{2,3},\delta(x)_{1,2}]+
[ j ^-_{1,2}- j ^-_{2,3},\delta(x)_{1,3}]
+[ j ^-_{1,2}+ j ^-_{1,3},\delta(x)_{2,3}].\label{eq:term III}
\end{align}
Since
$$
(\delta\tensor\id)\delta_{ j ^-}(x)=
[(\delta\tensor\id)( j ^-),(\Delta\tensor\id)\Delta(x)]+
[ j ^-_{1,3}+ j ^-_{2,3},\delta(x)_{1,2}]
$$
by Lemma~\ref{lem:prop Delta-der}, $\Delta$ is coassociative and cocommutative and $\tau_{1,2}( j ^-)=- j ^-$,
it follows that 
\begin{align*}
(&\id_{\lie g^{\tensor 3}}+
\tau_{2,3}\tau_{1,2}+\tau_{1,2}\tau_{2,3})\circ
(\delta\tensor\id)\circ \delta_{ j ^-}(x)\\
&=[(\delta\tensor\id)( j ^-)-\tau_{2,3}(\delta\tensor\id)( j ^-)-(\id\tensor\delta)( j ^-),(\Delta\tensor\id)\Delta(x)]\\
&\phantom{=}+[ j ^-_{1,3}+ j ^-_{2,3},\delta(x)_{1,2}]
+[ j ^-_{3,2}+ j ^-_{1,2},\delta(x)_{3,1}]
+[ j ^-_{2,1}+ j ^-_{3,1},\delta(x)_{2,3}]\\
&=[\cybe{ j ^-}_\delta-\cybe{ j ^-},(\Delta\tensor\id)\Delta(x)]-((\id_{\tensor \lie g^{\tensor 3}}+
\tau_{2,3}\tau_{1,2}+\tau_{1,2}\tau_{2,3})\circ(\delta_{ j ^-}\tensor\id)\circ\delta)(x),
\end{align*}
where we used~\eqref{eq:term III}. Thus,
$$
(\id_{\lie g^{\tensor 3}}+
\tau_{2,3}\tau_{1,2}+\tau_{1,2}\tau_{2,3})\circ(\tilde\delta_{ j }\tensor\id)\circ \tilde \delta_{ j }(x)=[ \cybe{ j ^-}_\delta,(\Delta\tensor\id)\Delta(x)],
$$
and part~\ref{prop:cnd classical twist.a} is now
immediate.

Suppose that~$\delta=\delta_{r}$ with~$\cybe{r}=0$. Then
\begin{align*}
&\cybe{r+ j ^-}=
\cybe{r}+\cyb{r}{ j ^-}+\cyb{ j ^-}{r}
+\cybe{ j ^-}\\
&\qquad=\cybe{ j ^-}+[r_{1,2}+r_{1,3}, j ^-_{1,3}+ j ^-_{2,3}]-[r_{1,3}+r_{2,3}, j ^-_{1,2}+ j ^-_{1,3}]\\
&\qquad=\cybe{ j ^-}+
[r_{1,2}, j ^-_{1,3}+ j ^-_{2,3}]
+[r_{1,3}, j ^-_{2,3}- j ^-_{1,2}]-[r_{2,3}, j ^-_{1,2}+ j ^-_{1,3}]
\\
&\qquad=\cybe{ j ^-}+(\delta_{r}\tensor\id)( j ^-)-(\id\tensor\delta_{r})( j ^-)-[r_{1,3}, j ^-_{1,2}+ j ^-_{3,2}]\\
&\qquad=\cybe{ j ^-}+(\delta_{r}\tensor\id)( j ^-)-(\id\tensor\delta_{r})( j ^-)-\tau_{2,3}([r_{1,2}, j ^-_{1,3}+ j ^-_{2,3}])=\cybe{ j ^-}_\delta.
\end{align*}
This proves the first assertion in part~\ref{prop:cnd classical twist.b}.
To prove the second, note that since~$\delta=\delta_{r}$,
$\tilde\delta_{ j }=\delta_{r+ j ^-}$. The assertion
is then immediate from~\eqref{eq:class Drinfeld twist'}.
\end{proof}
From now on, we call $ j \in\lie g\tensor\lie g$ satisfying
$\cybe{ j ^-}_\delta=0$ a {\em classical Drinfeld twist} for $(\lie g,\delta)$.

\subsection{Relative classical twist}\label{subs:rel clas twist}
Recall that $U(\lie g\oplus\lie h)$ is isomorphic 
to $U(\lie g)\tensor U(\lie h)$ as a bialgebra,
with $(x,y)\mapsto x\tensor 1+1\tensor y$, $x\in\lie g$, $y\in \lie h$. If~$(\lie g,\delta_{\lie g})$ and~$(\lie h,\delta_{\lie h})$ are Lie bialgebras, then 
$\lie g\oplus \lie h$ is a Lie bialgebra 
with $\delta_{\lie g\oplus\lie h}((x,y))=
\delta_{\lie g}(x)_{1,3}+
\delta_{\lie h}(y)_{2,4}$ in~$(U(\lie g)\tensor U(\lie h))^{\tensor 2}$ for all~$x\in\lie g$, $y\in\lie h$.
Note that if $(\lie g,\delta_{\lie g})$ and~$(\lie h,\delta_{\lie h})$
are quasi-triangular with respective classical r-matrices 
$r_{\lie g}$, $r_{\lie h}$ then~$\lie g\oplus \lie h$
is quasi-triangular, its classical r-matrix 
being $(r_{\lie g})_{1,3}+(r_{\lie h})_{2,4}$.

Let $(\lie g,\delta_{\lie g})$ and~$(\lie h,\delta_{\lie h})$ be Lie bialgebras. 
We say that $\mathbf f\in \lie h\tensor\lie g$
is a {\em relative classical Drinfeld twist} for $(\lie g\oplus\lie h,\delta_{\lie g\oplus\lie h})$
if $\mathbf f_{2,3}=1_{U(\lie g)}\tensor 
\mathbf f\tensor 1_{U(\lie h)}\in  
U(\lie g)\tensor \lie h\tensor \lie g\tensor U(\lie h)$ 
is a classical Drinfeld twist for~$(\lie g\oplus\lie h,\delta_{\lie g\oplus\lie h})$.
\begin{proposition}\label{prop:rel class Drinfeld}
Let $(\lie g,\delta_{\lie g})$ and~$(\lie h,\delta_{\lie h})$ be Lie bialgebras and let
$\mathbf f\in\lie h\tensor\lie g$. Then $\mathbf f$ is a relative 
Drinfeld twist if and only if 
\begin{multline*}
(\id_{(U(\lie g)\tensor U(\lie h))^{\tensor3}}-\tau_{2,4}\tau_{1,3}+\tau_{2,6}\tau_{3,5}\tau_{1,3})((\id_\lie h\tensor\delta_{\lie g})(\mathbf f)_{2,3,5})\\-
(\id_{(U(\lie g)\tensor U(\lie h))^{\tensor3}}-\tau_{4,6}\tau_{3,5}+\tau_{2,4}\tau_{4,6}\tau_{1,5})((\delta_{\lie h}\tensor \id_{\lie g})(\mathbf f)_{2,4,5})
\\
=[\mathbf f_{2,3},\mathbf f_{2,5}]+[\mathbf f_{2,5},\mathbf f_{4,5}]+
[\mathbf f_{4,5},\mathbf f_{4,1}]
+[\mathbf f_{4,1},\mathbf f_{6,1}]+[\mathbf f_{6,1},\mathbf f_{6,3}]
+[\mathbf f_{6,3},\mathbf f_{2,3}].
\end{multline*}
\end{proposition}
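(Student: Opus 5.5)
The plan is to unwind the definition: $\mathbf f$ is a relative twist iff $j:=\mathbf f_{2,3}$, viewed as an element of $(\lie g\oplus\lie h)^{\tensor 2}$, satisfies $\cybe{j^-}_\delta=0$ in $(U(\lie g)\tensor U(\lie h))^{\tensor 3}\cong U(\lie g\oplus\lie h)^{\tensor 3}$, where $\delta=\delta_{\lie g\oplus\lie h}$. Here I label the six tensor factors so that positions $1,3,5$ carry $U(\lie g)$ and $2,4,6$ carry $U(\lie h)$; this is the labelling implicit in the statement. I will compute each of the four terms of~\eqref{eq:class Drinfeld twist} explicitly and show that their sum equals the difference of the two sides of the displayed identity.

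\textbf{Step 1 (setup).} In the doubled indexing, $j=\mathbf f_{2,3}$ has its first leg in the $\lie h$-slot of the first copy and its second leg in the $\lie g$-slot of the second copy. Hence $j^-=\mathbf f_{2,3}-\mathbf f_{4,1}$ in $(U(\lie g)\tensor U(\lie h))^{\tensor 2}$. The comultiplication places a pair $(\mathbf a,\mathbf b)$ of copies into positions $(1,2)\mapsto(1,2),(3,4),(5,6)$. Thus in the triple tensor product, $j^-_{1,2}=\mathbf f_{2,3}-\mathbf f_{4,1}$, $j^-_{1,3}=\mathbf f_{2,5}-\mathbf f_{6,1}$ and $j^-_{2,3}=\mathbf f_{4,5}-\mathbf f_{6,3}$.

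\textbf{Step 2 (the quadratic term).} Expand $\cybe{j^-}=[j^-_{1,2}+j^-_{1,3},\,j^-_{1,3}+j^-_{2,3}]$. Two elements $\mathbf f_{a,b}$ and $\mathbf f_{c,d}$ commute whenever they share no position. The only nonvanishing brackets are therefore between terms sharing a leg: $\mathbf f_{2,3}$ with $\mathbf f_{2,5}$ and with $\mathbf f_{6,3}$; $\mathbf f_{2,5}$ with $\mathbf f_{4,5}$; $\mathbf f_{4,1}$ with $\mathbf f_{4,5}$ and with $\mathbf f_{6,1}$; and $\mathbf f_{6,1}$ with $\mathbf f_{6,3}$. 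Tracking signs, I expect exactly the hexagonal cyclic sum on the right-hand side, up to an overall sign. Some of these pairs also arise from $[j^-_{1,3},j^-_{1,3}]$-type cross terms, but those vanish, and the skew pieces combine so that each edge of the hexagon $2\!-\!3,\,2\!-\!5,\,4\!-\!5,\,4\!-\!1,\,6\!-\!1,\,6\!-\!3$ appears once.

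\textbf{Step 3 (linear terms).} Since $\delta=\delta_{\lie g}(\cdot)_{1,3}+\delta_{\lie h}(\cdot)_{2,4}$ acts leg-wise, we have $(\delta\tensor\id)(\mathbf f_{2,3})=(\delta_{\lie h}\tensor\id)(\mathbf f)_{2,4,5}$ and $(\delta\tensor\id)(\mathbf f_{4,1})=(\id\tensor\delta_{\lie g})(\mathbf f)$ placed at positions $(6;1,3)$ after reordering. Analogously, $(\id\tensor\delta)$ and $\tau_{2,3}(\delta\tensor\id)$ produce the remaining placements. Writing each placement as a permutation of the six factors applied to $(\id_\lie h\tensor\delta_{\lie g})(\mathbf f)_{2,3,5}$ or to $(\delta_{\lie h}\tensor\id_\lie g)(\mathbf f)_{2,4,5}$ yields the operators $\id-\tau_{2,4}\tau_{1,3}+\tau_{2,6}\tau_{3,5}\tau_{1,3}$ and $\id-\tau_{4,6}\tau_{3,5}+\tau_{2,4}\tau_{4,6}\tau_{1,5}$, where the middle signs come from the minus in $j^-$ and the skew-symmetry~\ref{cnd.1}. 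Here $\tau_{2,3}$ in the original triple corresponds to swapping the block pairs $(3,4)\leftrightarrow(5,6)$, that is, $\tau_{3,5}\tau_{4,6}$.

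\textbf{Step 4.} Summing the terms, $\cybe{j^-}_\delta=0$ becomes exactly the stated identity after moving the quadratic part to the other side. The main obstacle is the bookkeeping of Steps 2 and 3: matching each of the six placed cobracket terms to the claimed permutation, including the order of composition $\widehat{\sigma\rho}=\widehat\rho\circ\widehat\sigma$, and getting the signs right. I would verify this carefully on a Sweedler-notation representative $\mathbf f=f_1\tensor f_2$ and its cobrackets.
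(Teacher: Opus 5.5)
Your proposal is correct and is essentially the paper's proof. You write $\mathbf f_{2,3}^-=\mathbf f_{2,3}-\mathbf f_{4,1}$, distribute the cobracket terms of $\cybe{\mathbf f_{2,3}^-}_{\delta_{\lie g\oplus\lie h}}$ leg-wise, and expand $\cybe{\mathbf f_{2,3}-\mathbf f_{4,1}}$. The paper instead packages the three linear terms as the cyclic symmetrization of $(\delta_{\lie g\oplus\lie h}\tensor\id)(\mathbf f_{2,3}-\mathbf f_{4,1})$; this agrees with your direct computation because both $\mathbf f_{2,3}^-$ and the cobracket are skew.

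The sign you hedge on in Step~2 is exactly $+$: the expansion gives $[\mathbf f_{2,3},\mathbf f_{2,5}-\mathbf f_{6,3}]+[\mathbf f_{4,1},\mathbf f_{6,1}-\mathbf f_{4,5}]+[\mathbf f_{2,5},\mathbf f_{4,5}]+[\mathbf f_{6,1},\mathbf f_{6,3}]$, which is precisely the stated right-hand side.
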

\begin{proof}
Since~$\mathbf f_{23}^-=\mathbf f_{2,3}-\mathbf f_{4,1}$, we have 
\begin{align*}
(\delta_{\lie g\oplus\lie h}&\tensor\id_{U(\lie g)\tensor U(\lie h)})(\mathbf f_{2,3}-\mathbf f_{4,1})
=(\delta_{\lie h}\tensor \id_{\lie g})(\mathbf f)_{2,4,5}-
(\id_{\lie h}\tensor\delta_{\lie g})(\mathbf f)_{6,1,3},
\end{align*}
whence
\begin{align*}
&(\id_{(U(\lie g)\tensor U(\lie h))^{\tensor3}}+\tau_{\mathbf 2,\mathbf 3}\tau_{\mathbf 1,\mathbf 2}+\tau_{\mathbf 1,\mathbf 2}\tau_{\mathbf 2,\mathbf 3})\circ
(\delta_{\lie g\oplus\lie h}\tensor\id_{U(\lie g)\tensor U(\lie h)})(\mathbf f_{2,3}-\mathbf f_{4,1})
\\
&\qquad=(\delta_{\lie h}\tensor \id_{\lie g})(\mathbf f)_{2,4,5}
+(\delta_{\lie h}\tensor \id_{\lie g})(\mathbf f)_{6,2,3}
+(\delta_{\lie h}\tensor \id_{\lie g})(\mathbf f)_{4,6,1}\\
&\qquad\phantom{=}-
(\id_{\lie h}\tensor\delta_{\lie g})(\mathbf f)_{6,1,3}
-(\id_\lie h\tensor\delta_{\lie g})(\mathbf f)_{4,5,1}-(\id_\lie h\tensor\delta_{\lie g})(\mathbf f)_{2,3,5}\\
&\qquad=
(\id_{(U(\lie g)\tensor U(\lie h))^{\tensor3}}-\tau_{4,6}\tau_{3,5}+\tau_{2,4}\tau_{4,6}\tau_{1,5})((\delta_{\lie h}\tensor \id_{\lie g})(\mathbf f)_{2,4,5})
\\
&\qquad\phantom{=}-(\id_{(U(\lie g)\tensor U(\lie h))^{\tensor3}}-\tau_{2,4}\tau_{1,3}+\tau_{2,6}\tau_{3,5}\tau_{1,3})((\id_\lie h\tensor\delta_{\lie g})(\mathbf f)_{2,3,5})
\\
\intertext{where $\tau_{\mathbf 1,\mathbf 2}=\tau_{2,3}\tau_{1,2}\tau_{3,4}\tau_{2,3}$,
$\tau_{\mathbf 2,\mathbf 3}=\tau_{4,5}\tau_{3,4}\tau_{5,6}\tau_{4,5}$,
while}
&\cybe{\mathbf f_{2,3}-\mathbf f_{4,1}}=
[\mathbf f_{2,3}-\mathbf f_{4,1}+\mathbf f_{2,5}-\mathbf f_{6,1},
\mathbf f_{2,5}-\mathbf f_{6,1}+\mathbf f_{4,5}-\mathbf f_{6,3}]\\
&\qquad=[\mathbf f_{2,3},\mathbf f_{2,5}-\mathbf f_{6,3}]
+[\mathbf f_{4,1},\mathbf f_{6,1}-\mathbf f_{4,5}]+
[\mathbf f_{2,5},\mathbf f_{4,5}]+[\mathbf f_{6,1},\mathbf f_{6,3}].
\end{align*}
The assertion is now immediate.
\end{proof}
\begin{proposition}\label{prop:basic twist class}
Let~$(\lie g,\delta)$ be a quasi-triangular bialgebra with a classical r-matrix~$r$. Then $r$ is a relative classical Drinfeld twist.
In particular, in the notation of Lemma~\ref{lem:bas classical family},
$(\lie g\oplus\lie g,\delta_{\lie g\oplus\lie g})$ is quasi-triangular 
with the classical r-matrix $r_{2,3}^{(\epsilon)}+r_{1,4}^{(-\epsilon)}+r_{1,3}^{(\epsilon')}+r_{2,4}^{(\epsilon'')}$
for any~$\epsilon,\epsilon',\epsilon''\in \{1,-1\}$.
\end{proposition}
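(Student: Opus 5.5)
The plan is to prove the second assertion directly, and to read off the first from it. Write elements of $(\lie g\oplus\lie g)^{\tensor 2}$ inside $U(\lie g)^{\tensor 4}$, with slots $1,2$ for the first factor of $\lie g\oplus\lie g$ and slots $3,4$ for the second. By Lemma~\ref{lem:bas classical family}, both $r^{(1)}=r$ and $r^{(-1)}=-\tau_{1,2}(r)$ are classical r-matrices for the same $\delta$. Hence, by the remark opening \S\ref{subs:rel clas twist}, $(\lie g\oplus\lie g,\delta_{\lie g\oplus\lie g})$ is quasi-triangular with classical r-matrix $r^{(\epsilon')}_{1,3}+r^{(\epsilon'')}_{2,4}$ for any $\epsilon',\epsilon''\in\{1,-1\}$.

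Take $j=r_{2,3}$. The flip of $(\lie g\oplus\lie g)^{\tensor 2}$ exchanges the slot pairs $(1,2)\leftrightarrow(3,4)$, so $\tau(r_{2,3})=r_{4,1}$ and
$$j^-=r_{2,3}-r_{4,1}=r^{(1)}_{2,3}+r^{(-1)}_{1,4}.$$
By Proposition~\ref{prop:cnd classical twist}\ref{prop:cnd classical twist.b}, $\cybe{j^-}_\delta=\cybe{\mathbf s}$, where
$$\mathbf s=r^{(\epsilon')}_{1,3}+r^{(\epsilon'')}_{2,4}+r^{(1)}_{2,3}+r^{(-1)}_{1,4}.$$
So everything reduces to showing that $\mathbf s$ satisfies CYBE. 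This shows simultaneously that $r$ is a relative classical twist and that $\mathbf s$ is a classical r-matrix for $\lie g\oplus\lie g$ with the twisted cobracket, which is the case $\epsilon=1$. For $\epsilon=-1$, I would apply the same argument with $r$ replaced by the r-matrix $r^{(-1)}$. Since $-\tau_{1,2}(r^{(-1)})=r$, its twist contributes $r^{(-1)}_{2,3}+r^{(1)}_{1,4}$, as required.

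To verify CYBE for $\mathbf s$, I would encode it as $\mathbf s=\sum_{x,y\in\{1,2\}} r^{(a_{x,y})}_{x,y+2}$. Here $a_{1,1}=\epsilon'$, $a_{2,2}=\epsilon''$, $a_{2,1}=1$ and $a_{1,2}=-1$. In $U(\lie g)^{\tensor 6}$ the three tensor factors of $\lie g\oplus\lie g$ occupy slots $\{1,2\}$, $\{3,4\}$ and $\{5,6\}$. Expand $\cybe{\mathbf s}=[\mathbf s_{\mathbf 1\mathbf 2}+\mathbf s_{\mathbf 1\mathbf 3},\mathbf s_{\mathbf 1\mathbf 3}+\mathbf s_{\mathbf 2\mathbf 3}]$, where bold indices refer to the factors. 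Two terms whose $U(\lie g)$-slots are disjoint commute. Each surviving commutator involves exactly three slots $p_1\in\{1,2\}$, $p_2\in\{3,4\}$, $p_3\in\{5,6\}$, and it lies in exactly one such triple. Therefore $\cybe{\mathbf s}$ splits as a sum over the $8$ choices $(x_1,x_2,x_3)\in\{1,2\}^3$ of expressions
$$[r^{(c)}_{p_1,p_2},r^{(c')}_{p_1,p_3}]+[r^{(c)}_{p_1,p_2},r^{(c'')}_{p_2,p_3}]+[r^{(c')}_{p_1,p_3},r^{(c'')}_{p_2,p_3}],$$
with $c=a_{x_1,x_2}$, $c'=a_{x_1,x_3}$ and $c''=a_{x_2,x_3}$. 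By Proposition~\ref{prop:color CYBE}, together with Lemma~\ref{lem:bas classical family} and the $S_3$-symmetry, each such expression vanishes provided $c'\in\{c,c''\}$.

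The main obstacle is careful bookkeeping: checking that the cross terms genuinely organize into these triples with the correct slot order, and that the index conventions for $\tau$ on $(\lie g\oplus\lie g)^{\tensor 2}$ are right. The combinatorial condition itself reduces to transitivity of the $2\times 2$ matrix $(a_{x,y})$. Since $-1$ occurs off the diagonal only at $(1,2)$, one checks it case by case. If $(x_1,x_3)=(1,2)$, then $x_2=1$ gives $c''=a_{1,2}$ and $x_2=2$ gives $c=a_{1,2}$. The remaining pairs $(x_1,x_3)$, namely $(1,1)$, $(2,2)$ and $(2,1)$, each reduce to a short direct check. If transitivity holds in every case, Proposition~\ref{prop:cnd classical twist}\ref{prop:cnd classical twist.b} gives the statement.
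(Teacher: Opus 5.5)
Your proof is correct, but it runs in the opposite direction from the paper's.

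\textbf{The paper's route.} The paper checks the relative-twist criterion of Proposition~\ref{prop:rel class Drinfeld} for $\mathbf f=r$ directly. By Proposition~\ref{prop:compat cond}\ref{prop:compat cond.d}, $(\id\tensor\delta)(r)_{2,3,5}=[r_{2,3},r_{2,5}]$ and $(\delta\tensor\id)(r)_{2,4,5}=[r_{4,5},r_{2,5}]$. Applying the two permutation operators to these yields exactly the six commutators on the right-hand side of that criterion. Proposition~\ref{prop:cnd classical twist}\ref{prop:cnd classical twist.b} is used only at the end, to read off the r-matrices.

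\textbf{Your route.} You apply Proposition~\ref{prop:cnd classical twist}\ref{prop:cnd classical twist.b} first, with the base r-matrix $r^{(\epsilon')}_{1,3}+r^{(\epsilon'')}_{2,4}$, to turn the twist condition into CYBE for $\mathbf s$. You then prove CYBE by splitting $\cybe{\mathbf s}$ into slot triples $(x_1,x_2+2,x_3+4)$ and invoking Proposition~\ref{prop:color CYBE}. The bookkeeping you were worried about is fine:
\begin{itemize}
\item a term of $\mathbf s_{\mathbf{12}}$, $\mathbf s_{\mathbf{13}}$ or $\mathbf s_{\mathbf{23}}$ shares at most one slot with any other term;
\item the three slots always occur in increasing order, so you do not even need the $S_3$-symmetry;
\item the only non-trivial colour conditions are those with $x_1=x_3\ne x_2$. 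These hold because $\{a_{1,2},a_{2,1}\}=\{1,-1\}$; equivalently, $(a_{x,y})$ is almost skew-symmetric, so Lemma~\ref{lem:In trans to trans} applies.
\end{itemize}

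\textbf{What each buys.} The paper's route verifies an identity intrinsic to the relative twist and needs no r-matrix on $\lie g\oplus\lie g$. That is exactly the form iterated in Proposition~\ref{prop:key prop classical} through Corollary~\ref{cor:clas twist from rel twist}, and it mirrors the quantum Proposition~\ref{prop:basic twist}. Your route makes transparent that the only input is transitivity of the $2\times 2$ colour matrix. It also generalizes verbatim. For any $n$, any family satisfying~\eqref{eq:trans CYBE}, and any transitive $n\times n$ matrix $b$, the same triple decomposition shows that $\sum_{x,y\in[n]}r^{(b_{x,y})}_{x,y+n}$ solves CYBE. Since transitivity is preserved under transposition, this is precisely the assertion of Conjecture~\ref{conj:CYBE for trans}.
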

\begin{proof}
Using Proposition~\ref{prop:compat cond}\ref{prop:compat cond.d} we obtain
\begin{align*}
&(\id_{(\lie g\tensor \lie g)^{\tensor3}}-\tau_{2,4}\tau_{1,3}+\tau_{2,6}\tau_{3,5}\tau_{1,3})((\id_\lie g\tensor\delta)(r)_{2,3,5})\\&-
(\id_{(\lie g\tensor \lie g)^{\tensor3}}-\tau_{4,6}\tau_{3,5}+\tau_{2,4}\tau_{4,6}\tau_{1,5})((\delta\tensor \id_{\lie g})(r)_{2,4,5})\\
&\quad = (\id_{(\lie g\tensor \lie g)^{\tensor3}}-\tau_{2,4}\tau_{1,3}+\tau_{2,6}\tau_{3,5}\tau_{1,3})([r_{2,3},r_{2,5}])\\
&\quad\phantom{=}-(\id_{(\lie g\tensor \lie g)^{\tensor3}}-\tau_{4,6}\tau_{3,5}+\tau_{2,4}\tau_{4,6}\tau_{1,5})([r_{4,5},r_{2,5}])\\
&\quad = [r_{2,3},r_{2,5}]+[r_{4,5},r_{4,1}]
+[r_{6,1},r_{6,3}]
+[r_{2,5},r_{4,5}]+
[r_{6,3},r_{2,3}]+[r_{4,1},r_{6,1}].
\end{align*}
The first assertion with~$\epsilon=1$ is now immediate by Proposition~\ref{prop:rel class Drinfeld}. To prove it for~$\epsilon=-1$ it suffices to note that~$-\tau_{1,2}(r)$
is also a classical r-matrix for the same cobracket~$\delta$ by Lemma~\ref{lem:bas classical family}.
To prove the second assertion, it remains to apply Proposition~\ref{prop:cnd classical twist}\ref{prop:cnd classical twist.b} together with Lemma~\ref{lem:bas classical family}.
\end{proof}

\begin{remark}\label{rem:trivial legs}
Note that the identity in Proposition~\ref{prop:rel class Drinfeld}
can be rewritten as 
\begin{align*}
&(\id_{\lie h}\tensor\delta_{\lie g})(\mathbf f)_{2,3,5}-
(\id_{\lie h}\tensor\delta_{\lie g})(\mathbf f)_{4,1,5}-
(\delta_{\lie h}\tensor\id_{\lie g})(\mathbf f)_{2,4,5}
+[\mathbf f_{4,1},\mathbf f_{4,5}]+[\mathbf f_{4,5},\mathbf f_{2,5}]+[\mathbf f_{2,5},\mathbf f_{2,3}]\\
&=(\id_{\lie h}\tensor\delta_{\lie g})(\mathbf f)_{6,1,3}-(\delta_{\lie h}\tensor \id_{\lie g})(\mathbf f)_{6,2,3}
-(\delta_{\lie h}\tensor \id_{\lie g})(\mathbf f)_{4,6,1}\\
&\mskip400mu+
[\mathbf f_{4,1},\mathbf f_{6,1}]+[\mathbf f_{6,1},\mathbf f_{6,3}]+[\mathbf f_{6,3},
\mathbf f_{2,3}].
\end{align*}
The left hand side is contained in~$(U(\lie g)\tensor U(\lie h))^{\tensor 2}\tensor U(\lie g)\tensor 1_{U(\lie h)}$, while the right hand side is contained in 
$(U(\lie g)\tensor U(\lie h))^{\tensor 2}\tensor 1_{U(\lie g)}\tensor U(\lie h)$.
It follows that this reduces to an identity in~$(U(\lie g)\tensor U(\lie h))^{\tensor 2}$.
\end{remark}
\begin{proposition}\label{prop:tensortwist}
Let~$(\lie g,\delta_{\lie g})$, $(\lie h,\delta_{\lie h})$ be 
Lie bialgebras and let~$ j _{\lie g}$, $ j _{\lie h}$
be respective classical Drinfeld twists. Abbreviate $\delta'_{\lie g}=
\widetilde{\delta_{\lie g}}_{ j _{\lie g}}$, 
$\delta'_{\lie h}=\widetilde{\delta_{\lie h}}_{ j _{\lie h}}$
and let~$\delta'_{\lie g\oplus\lie h}$ be the Lie cobracket on~$\lie g\oplus\lie h$
obtained from Lie cobrackets $\delta'_{\lie g}$ and~$\delta'_{\lie h}$.
Let~$ j _{\lie g\oplus\lie h}\in (\lie g\tensor \lie h)^{\tensor 2}$
and set~$ j '_{\lie g\oplus\lie h}:= j _{\lie g\oplus\lie h}-( j _{\lie g})_{1,3}-( j _{\lie h})_{2,4}$. Then
$\cybe{ j ^-_{\lie g\oplus\lie h}}_{\delta_{\lie h\oplus\lie h}}=
\cybe{( j '_{\lie g\oplus\lie h})^-}_{\delta'_{\lie h\oplus\lie h}}$.
In particular, $ j _{\lie g\oplus \lie h}$ is a classical Drinfeld 
twist for~$(\lie g\oplus\lie h,\delta_{\lie g\oplus\lie h})$ if and only if
$ j _{\lie g\oplus\lie h}'$ is
a classical Drinfeld twist for $(\lie g\oplus\lie h,\delta'_{\lie g\oplus\lie h})$.
\end{proposition}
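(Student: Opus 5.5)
The plan is to reduce the statement to a general identity about how $\cybe{\cdot}_\delta$ behaves under a change of cobracket, valid for an arbitrary Lie bialgebra. Let $(\lie l,\delta)$ be a Lie bialgebra, let $a,b\in\lie l\tensor\lie l$ be skew-symmetric, and set $\delta'=\delta+\delta_a$. I will show
\begin{equation*}
\cybe{a+b}_\delta=\cybe{a}_\delta+\cybe{b}_{\delta'} .
\end{equation*}
I will then apply this with $\lie l=\lie g\oplus\lie h$, with
$a=k^-$ where $k=(j_{\lie g})_{1,3}+(j_{\lie h})_{2,4}$, and with $b=(j'_{\lie g\oplus\lie h})^-$. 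With these choices $a+b=j^-_{\lie g\oplus\lie h}$.

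To prove the general identity, expand both sides using~\eqref{eq:class Drinfeld twist}. The part $(\delta\tensor\id)(\cdot)-(\id\tensor\delta)(\cdot)-\tau_{2,3}(\delta\tensor\id)(\cdot)$ is linear both in its argument and in $\delta$. Also $\cybe{a+b}=\cybe a+\cyb ab+\cyb ba+\cybe b$. After cancelling the common terms, the identity reduces to
\begin{equation*}
\cyb ab+\cyb ba=(\delta_a\tensor\id)(b)-(\id\tensor\delta_a)(b)-\tau_{2,3}(\delta_a\tensor\id)(b).
\end{equation*}
This is exactly the chain of equalities in the proof of Proposition~\ref{prop:cnd classical twist}\ref{prop:cnd classical twist.b}, with $r$ replaced by $a$ and $ j ^-$ by $b$. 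That computation uses only the definition $\delta_a(x)=[a,\Delta(x)]$ and the skew-symmetry $\tau_{1,2}(b)=-b$; it never uses $\cybe{a}=0$. I will re-check this carefully, since it is the crux: every step there is a rewriting of commutators such as $[a_{1,2},b_{1,3}+b_{2,3}]=(\delta_a\tensor\id)(b)$, plus the $\tau_{2,3}$ manipulation of the $a_{1,3}$ term.

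Next I identify $\delta'$ with $\delta'_{\lie g\oplus\lie h}$. For $(x,y)\in\lie g\oplus\lie h$ we have $\Delta((x,y))=x_1+x_3+y_2+y_4$ in the four-fold tensor notation. Since $(j_{\lie g}^-)_{1,3}$ commutes with the $\lie h$-legs,
\begin{equation*}
\delta_{k^-}((x,y))=\delta_{j_{\lie g}^-}(x)_{1,3}+\delta_{j_{\lie h}^-}(y)_{2,4}.
\end{equation*}
Hence $\delta_{\lie g\oplus\lie h}+\delta_{k^-}$ is precisely the direct-sum cobracket built from $\delta'_{\lie g}$ and $\delta'_{\lie h}$.

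It remains to show $\cybe{k^-}_{\delta_{\lie g\oplus\lie h}}=0$.
\begin{enumrom}
\item The summands $(j_{\lie g}^-)_{1,3}$ and $(j_{\lie h}^-)_{2,4}$ act in different tensor legs and commute. Every mixed commutator in $\cybe{k^-}$ therefore vanishes, so $\cybe{k^-}$ splits as the image of $\cybe{j_{\lie g}^-}$ in the $\lie g$-legs plus the image of $\cybe{j_{\lie h}^-}$ in the $\lie h$-legs.
\item On $U(\lie g)\tensor 1$ the cobracket $\delta_{\lie g\oplus\lie h}$ restricts to $\delta_{\lie g}$ placed in legs $1,3$, and similarly for $\lie h$. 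The $\delta$-terms of $\cybe{k^-}_{\delta_{\lie g\oplus\lie h}}$ split in the same way.
\item Consequently $\cybe{k^-}_{\delta_{\lie g\oplus\lie h}}$ is the sum of the image of $\cybe{j_{\lie g}^-}_{\delta_{\lie g}}$ in $(U(\lie g)\tensor 1)^{\tensor 3}$ and the image of $\cybe{j_{\lie h}^-}_{\delta_{\lie h}}$ in $(1\tensor U(\lie h))^{\tensor 3}$. Both vanish because $j_{\lie g}$ and $j_{\lie h}$ are classical Drinfeld twists.
\end{enumrom}

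Combining these with the general identity gives the claimed equality. The ``in particular'' part follows immediately from the definition of a classical Drinfeld twist. The main obstacle is the bookkeeping of leg indices in the $\tau_{2,3}$ terms when transporting identities from $\lie g$ to legs $1,3,5$ of $(U(\lie g)\tensor U(\lie h))^{\tensor 3}$. I would handle this by working throughout with the algebra embeddings $\phi_{\mathbf i}$, which intertwine the relevant flips.
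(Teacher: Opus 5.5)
Your proof is correct, but it is organized differently from the paper's.

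The paper works by direct expansion in the direct-sum setting. It writes out $(\delta'_{\lie g\oplus\lie h}\tensor\id)((j'_{\lie g\oplus\lie h})^-)$ and $(\id\tensor\delta'_{\lie g\oplus\lie h})((j'_{\lie g\oplus\lie h})^-)$ in terms of the unprimed data. It then regroups everything into $\cybe{j^-_{\lie g\oplus\lie h}}_{\delta_{\lie g\oplus\lie h}}-\cybe{(j'_{\lie g\oplus\lie h})^-}$ plus leftover cross terms, discarding $(\cybe{j^-_{\lie g}}_{\delta_{\lie g}})_{1,3,5}$ and $(\cybe{j^-_{\lie h}}_{\delta_{\lie h}})_{2,4,6}$ by hypothesis. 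The leftover terms are killed by an auxiliary lemma $\bigl((\Delta\tensor\id)+\tau_{2,3}(\Delta\tensor\id)-(\id\tensor\Delta)\bigr)(s^-)=0$, which is nothing but the skew-symmetry of $s^-$.

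You instead isolate the general law $\cybe{a+b}_\delta=\cybe{a}_\delta+\cybe{b}_{\delta+\delta_a}$. It holds for any Lie bialgebra, any $a$ and any skew-symmetric $b$. Its only nontrivial content is
$\cyb{a}{b}+\cyb{b}{a}=(\delta_a\tensor\id)(b)-(\id\tensor\delta_a)(b)-\tau_{2,3}(\delta_a\tensor\id)(b)$.
This is exactly the cross-term computation in the proof of Proposition~\ref{prop:cnd classical twist}\ref{prop:cnd classical twist.b}. As you say, that computation uses skew-symmetry only of the second argument and never uses $\cybe{r}=0$, except to drop that single term.

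The direct-sum structure then enters in only two places, both of which you verify correctly:
\begin{enumerate}
\item the identification $\delta_{\lie g\oplus\lie h}+\delta_{k^-}=\delta'_{\lie g\oplus\lie h}$;
\item the leg-splitting $\cybe{k^-}_{\delta_{\lie g\oplus\lie h}}=(\cybe{j^-_{\lie g}}_{\delta_{\lie g}})_{1,3,5}+(\cybe{j^-_{\lie h}}_{\delta_{\lie h}})_{2,4,6}=0$.
\end{enumerate}

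Your route gives a reusable, conceptual statement: the classical composition law for Drinfeld twists. If $a$ is a twist for $\delta$, then $a+b$ is a twist for $\delta$ if and only if $b$ is a twist for $\delta+\delta_a$. It also avoids the six-leg bookkeeping of the direct expansion. The paper's route is fully explicit and self-contained, but it does not separate out this general mechanism from the specifics of $\lie g\oplus\lie h$.
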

\begin{proof}

Note that for all~$x\in\lie g$, $y\in\lie h$
\begin{align*}
\delta'_{\lie g\oplus\lie h}((x,y))&=\delta'_{\lie g}(x)_{1,3}
+\delta'_{\lie h}(y)_{2,4}=\delta_{\lie g}(x)_{1,3}+\delta_{\lie h}(y)_{2,4}
+
[ j ^-_{\lie g},\Delta(x)]_{1,3}+[ j ^-_{\lie h},\Delta(y)]_{2,4}\\
&=\delta_{\lie g\oplus\lie h}((x,y))+
[( j ^-_{\lie g})_{1,3}+( j ^-_{\lie h})_{2,4},\Delta_{U(\lie g)\tensor U(\lie h)}((x,y))].
\end{align*}
Thus,
\begin{align*}
(\delta'_{\lie g\oplus\lie h}\tensor&\id_{U(\lie g)\tensor U(\lie h)})( j ^-_{\lie g\oplus\lie h}-( j ^-_{\lie g})_{1,3}-( j ^-_{\lie h})_{2,4})
\\
&=(\delta_{\lie g\oplus\lie h}\tensor\id_{U(\lie g)\tensor U(\lie h)})( j ^-_{\lie g\oplus\lie h})-(\delta_{\lie g}\tensor\id_{\lie g})( j ^-_{\lie g})_{1,3,5}
-(\delta_{\lie h}\tensor\id_{\lie h})( j ^-_{\lie h})_{2,4,6}\\
&\phantom{=}
+[( j ^-_{\lie g})_{1,3}+( j ^-_{\lie h})_{2,4},(\Delta_{U(\lie g)\tensor U(\lie h)}\tensor\id_{U(\lie g)\tensor U(\lie h)})( j ^-_{\lie g\oplus\lie h})]\\
&\phantom{=}-[( j ^-_{\lie g})_{1,3},( j ^-_{\lie g})_{1,5}+( j ^-_{\lie g})_{3,5}]-[( j ^-_{\lie h})_{2,4},( j ^-_{\lie h})_{2,6}+( j ^-_{\lie g})_{4,6}]
\end{align*}
and
\begin{align*}
(\id_{U(\lie g)\tensor U(\lie h)}&\tensor\delta'_{\lie g\oplus\lie h})( j ^-_{\lie g\oplus\lie h}-( j ^-_{\lie g})_{1,3}-( j ^-_{\lie h})_{2,4})
\\
&=(\id_{U(\lie g)\tensor U(\lie h)}\tensor\delta_{\lie g\oplus\lie h})( j ^-_{\lie g\oplus\lie h})-(\id_{\lie g}\tensor\delta_{\lie g})( j ^-_{\lie g})_{1,3,5}
-(\id_{\lie h}\tensor\delta_{\lie h})( j ^-_{\lie h})_{2,4,6}\\
&\phantom{=}
+[( j ^-_{\lie g})_{3,5}+( j ^-_{\lie h})_{4,6},(\id_{U(\lie g)\tensor U(\lie h)}\tensor\Delta_{U(\lie g)\tensor U(\lie h)})( j ^-_{\lie g\oplus\lie h})]\\
&\phantom{=}-[( j ^-_{\lie g})_{3,5},( j ^-_{\lie g})_{1,3}+( j ^-_{\lie g})_{1,5}]-[( j ^-_{\lie h})_{4,6},( j ^-_{\lie h})_{2,4}+( j ^-_{\lie g})_{2,6}].
\end{align*}
Therefore
\begin{align*}
((\delta'&_{\lie g\oplus\lie h}\tensor\id_{U(\lie g)\tensor U(\lie h)})-
(\id_{U(\lie g)\tensor U(\lie h)}\tensor\delta'_{\lie g\oplus\lie h})-
\tau_{\mathbf 2,\mathbf 3}(\delta'_{\lie g\oplus\lie h}\tensor\id_{U(\lie g)\tensor U(\lie h)}))(( j '_{\lie g\oplus\lie h})^-)\\
&=((\delta_{\lie g\oplus\lie h}\tensor\id_{U(\lie g)\tensor U(\lie h)})-
(\id_{U(\lie g)\tensor U(\lie h)}\tensor\delta_{\lie g\oplus\lie h})-
\tau_{\mathbf 2,\mathbf 3}(\delta_{\lie g\oplus\lie h}\tensor\id_{U(\lie g)\tensor U(\lie h)}))( j _{\lie g\oplus\lie h}^-)
\\
&\phantom{=}-(((\delta_{\lie g}\tensor\id_{\lie g})-
(\id_{\lie g}\tensor\delta_{\lie g})-
\tau_{2,3}(\delta_{\lie g}\tensor\id_{\lie g}))( j _{\lie g}^-))_{1,3,5}\\
&\phantom{=}-(((\delta_{\lie h}\tensor\id_{\lie h})-
(\id_{\lie h}\tensor\delta_{\lie h})-
\tau_{2,3}(\delta_{\lie h}\tensor\id_{\lie h}))( j _{\lie h}^-))_{2,4,6}\\
&\phantom{=}
+[( j ^-_{\lie g})_{1,3}+( j ^-_{\lie h})_{2,4},(\Delta_{U(\lie g)\tensor U(\lie h)}\tensor\id_{U(\lie g)\tensor U(\lie h)})( j ^-_{\lie g\oplus\lie h})]\\
&\phantom{=}-[( j ^-_{\lie g})_{1,3},( j ^-_{\lie g})_{1,5}+( j ^-_{\lie g})_{3,5}]-[( j ^-_{\lie h})_{2,4},( j ^-_{\lie h})_{2,6}+( j ^-_{\lie g})_{4,6}]]\\
&\phantom{=}
-[( j ^-_{\lie g})_{1,5}+( j ^-_{\lie h})_{2,6},\tau_{\mathbf 2,\mathbf 3}(\Delta_{U(\lie g)\tensor U(\lie h)}\tensor\id_{U(\lie g)\tensor U(\lie h)})( j ^-_{\lie g\oplus\lie h})]\\
&\phantom{=}+[( j ^-_{\lie g})_{1,5},( j ^-_{\lie g})_{1,3}-( j ^-_{\lie g})_{3,5}]+[( j ^-_{\lie h})_{2,6},( j ^-_{\lie h})_{2,4}-( j ^-_{\lie g})_{4,6}]\\
&\phantom{=}-[( j ^-_{\lie g})_{3,5}+( j ^-_{\lie h})_{4,6},(\id_{U(\lie g)\tensor U(\lie h)}\tensor\Delta_{U(\lie g)\tensor U(\lie h)})( j ^-_{\lie g\oplus\lie h})]\\
&\phantom{=}+[( j ^-_{\lie g})_{3,5},( j ^-_{\lie g})_{1,3}+( j ^-_{\lie g})_{1,5}]+[( j ^-_{\lie h})_{4,6},( j ^-_{\lie h})_{2,4}+( j ^-_{\lie g})_{2,6}]\\
&=\cybe{ j ^-_{\lie g\oplus\lie h}}_{\delta_{\lie g\oplus\lie h}}-
\cybe{ j ^-_{\lie g\oplus\lie h}}-(\cybe{ j ^-_{\lie g}}_{\delta_{\lie g}})_{1,3,5}-(\cybe{ j ^-_{\lie h}}_{\delta_{\lie h}})_{2,4,6}\\
&\phantom{=}-\cybe{ j ^-_{\lie g}}_{1,3,5}-\cybe{ j ^-_{\lie h}}_{1,3,5}\\
&\phantom{=}
+[( j ^-_{\lie g})_{1,3}+( j ^-_{\lie h})_{2,4},(\Delta_{U(\lie g)\tensor U(\lie h)}\tensor\id_{U(\lie g)\tensor U(\lie h)})( j ^-_{\lie g\oplus\lie h})]\\
&\phantom{=}
-[( j ^-_{\lie g})_{1,5}+( j ^-_{\lie h})_{2,6},\tau_{\mathbf 2,\mathbf 3}(\Delta_{U(\lie g)\tensor U(\lie h)}\tensor\id_{U(\lie g)\tensor U(\lie h)})( j ^-_{\lie g\oplus\lie h})]\\
&\phantom{=}-[( j ^-_{\lie g})_{3,5}+( j ^-_{\lie h})_{4,6},(\id_{U(\lie g)\tensor U(\lie h)}\tensor\Delta_{U(\lie g)\tensor U(\lie h)})( j ^-_{\lie g\oplus\lie h})]\\
&=\cybe{ j ^-_{\lie g\oplus\lie h}}_{\delta_{\lie g\oplus\lie h}}-
\cybe{ j ^-_{\lie g\oplus\lie h}}
-\cybe{ j ^-_{\lie g}}_{1,3,5}-\cybe{ j ^-_{\lie h}}_{1,3,5}\\
&\phantom{=}
+\cyb{( j ^-_{\lie g})_{1,3}+( j ^-_{\lie h})_{2,4}}{ j ^-_{\lie g\oplus\lie h}}-
[( j ^-_{\lie g})_{1,5}+( j ^-_{\lie h})_{2,6},(\Delta_{U(\lie g)\tensor U(\lie h)}\tensor\id_{U(\lie g)\tensor U(\lie h)})( j ^-_{\lie g\oplus\lie h})]\\
&\phantom{=}
-[( j ^-_{\lie g})_{1,5}+( j ^-_{\lie h})_{2,6},\tau_{\mathbf 2,\mathbf 3}(\Delta_{U(\lie g)\tensor U(\lie h)}\tensor\id_{U(\lie g)\tensor U(\lie h)})( j ^-_{\lie g\oplus\lie h})]\\
&\phantom{=}+\cyb{ j ^-_{\lie g\oplus\lie h}}{( j ^-_{\lie g})_{1,3}+( j ^-_{\lie h})_{2,4}}-[(\id_{U(\lie g)\tensor U(\lie h)}\tensor\Delta_{U(\lie g)\tensor U(\lie h)})( j ^-_{\lie g\oplus\lie h}),( j ^-_{\lie g})_{1,5}+( j ^-_{\lie h})_{2,6}]\\
&=\cybe{ j ^-_{\lie g\oplus\lie h}}_{\delta_{\lie g\oplus\lie h}}-
\cybe{ j ^-_{\lie g\oplus\lie h}-( j ^-_{\lie g})_{1,3}-( j ^-_{\lie h})_{2,4}}\\
&\phantom{=}-[( j ^-_{\lie g})_{1,5}+( j ^-_{\lie h})_{2,6},((\Delta_{U(\lie g)\tensor U(\lie h)}\tensor\id_{U(\lie g)\tensor U(\lie h)})
+\tau_{\mathbf 2,\mathbf 3}(\Delta_{U(\lie g)\tensor U(\lie h)}\tensor\id_{U(\lie g)\tensor U(\lie h)})\\
&\phantom{=-[( j ^-_{\lie g})_{1,5}+( j ^-_{\lie h})_{2,6},}-(\id_{U(\lie g)\tensor U(\lie h)}\tensor\Delta_{U(\lie g)\tensor U(\lie h)}))
( j ^-_{\lie g\oplus\lie h})].
\end{align*}
We need the following
\begin{lemma}
Let~$\lie l$ be a Lie algebra and let~$s\in\lie l\tensor\lie l$.
Then $((\Delta\tensor\id_{\lie l})+\tau_{2,3}(\Delta\tensor\id_{\lie l})
-(\id_{\lie l}\tensor\Delta))(s^-)=0$ in~$U(\lie l)^{\tensor 3}$.
\end{lemma}
\begin{proof}
Indeed, 
\begin{align*}
((\Delta&\tensor\id_{\lie l})+\tau_{2,3}(\Delta\tensor\id_{\lie l})
-(\id_{\lie l}\tensor\Delta))(s^-)
=s^-_{1,3}+s^-_{2,3}+s^-_{1,2}+s^-_{3,2}-
s^-_{1,2}-s^-_{1,3}\\
&=s^-_{2,3}+s^-_{3,2}=0.\qedhere 
\end{align*}
\end{proof}
It follows from the Lemma applied to~$\lie l=\lie g\oplus\lie h$ and~$s=
 j _{\lie g\oplus\lie h}$ that 
\begin{multline*}
((\delta'_{\lie g\oplus\lie h}\tensor\id_{U(\lie g)\tensor U(\lie h)})-
(\id_{U(\lie g)\tensor U(\lie h)}\tensor\delta'_{\lie g\oplus\lie h})
-\tau_{\mathbf 2,\mathbf 3}(\delta'_{\lie g\oplus\lie h}\tensor\id_{U(\lie g)\tensor U(\lie h)}))(( j '_{\lie g\oplus\lie h})^-)\\
=\cybe{ j ^-_{\lie g\oplus\lie h}}_{\delta_{\lie g\oplus \lie h}}-
\cybe{( j '_{\lie g\oplus\lie h})^-}
\end{multline*}
or
$
\cybe{ j ^-_{\lie g\oplus\lie h}}_{\delta_{\lie g\oplus \lie h}}=
\cybe{( j '_{\lie g\oplus\lie h})^-}_{\delta'_{\lie g\oplus \lie h}}$, 
which is the first assertion of the Proposition. The second is immediate
from the first and the definition of a classical Drinfeld twist.
\end{proof}
\begin{corollary}\label{cor:clas twist from rel twist}
Let~$(\lie g,\delta_{\lie g})$ and~$(\lie h,\delta_{\lie h})$ be Lie bialgebras.
Suppose 
that~$\mathbf f\in\lie h\tensor \lie g$ is a relative classical Drinfeld twist for~$(\lie g\oplus\lie h,\delta_{\lie g\oplus \lie h})$ and that $\delta_{\lie g}$ (respectively, $\delta_{\lie h}$)
is obtained by twisting another cobracket $\delta'_{\lie g}$ (respectively, $\delta'_{\lie h})$ by some classical Drinfeld twist~$ j _{\lie g}$ (respectively, $ j _{\lie h}$). Then $\mathbf f_{2,3}+( j _{\lie g})_{1,3}+( j _{\lie h})_{2,4}$ 
 a Drinfeld twist for $(\lie g\oplus \lie h,\delta'_{\lie g\oplus\lie h})$.
\end{corollary}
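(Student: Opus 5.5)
The corollary will follow directly from Proposition~\ref{prop:tensortwist}, applied with the roles of the twisted and untwisted cobrackets interchanged. The setup is as follows. Let $\delta'_{\lie g}$ and $\delta'_{\lie h}$ be the original cobrackets, with $\delta_{\lie g}=\widetilde{\delta'_{\lie g}}_{ j _{\lie g}}$ and $\delta_{\lie h}=\widetilde{\delta'_{\lie h}}_{ j _{\lie h}}$. Here $ j _{\lie g}$ and $ j _{\lie h}$ are classical Drinfeld twists for $(\lie g,\delta'_{\lie g})$ and $(\lie h,\delta'_{\lie h})$ respectively. We then invoke Proposition~\ref{prop:tensortwist} with the primed and unprimed cobrackets swapped relative to its statement. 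That is, the ``untwisted'' sum bialgebra there is $(\lie g\oplus\lie h,\delta'_{\lie g\oplus\lie h})$, and the ``twisted'' one is $(\lie g\oplus\lie h,\delta_{\lie g\oplus\lie h})$, which is built from $\delta_{\lie g}$ and $\delta_{\lie h}$.

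Next we choose the candidate twist $ j _{\lie g\oplus\lie h}:=\mathbf f_{2,3}+( j _{\lie g})_{1,3}+( j _{\lie h})_{2,4}$. The corresponding shifted element from Proposition~\ref{prop:tensortwist} is
$ j '_{\lie g\oplus\lie h}= j _{\lie g\oplus\lie h}-( j _{\lie g})_{1,3}-( j _{\lie h})_{2,4}=\mathbf f_{2,3}$.
Proposition~\ref{prop:tensortwist} gives the identity
$\cybe{ j ^-_{\lie g\oplus\lie h}}_{\delta'_{\lie g\oplus\lie h}}=\cybe{(\mathbf f_{2,3})^-}_{\delta_{\lie g\oplus\lie h}}$.
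By definition of a relative classical Drinfeld twist, $\mathbf f_{2,3}$ is a classical Drinfeld twist for $(\lie g\oplus\lie h,\delta_{\lie g\oplus\lie h})$, so the right-hand side vanishes. Hence the left-hand side vanishes as well. This says exactly that $\mathbf f_{2,3}+( j _{\lie g})_{1,3}+( j _{\lie h})_{2,4}$ is a classical Drinfeld twist for $(\lie g\oplus\lie h,\delta'_{\lie g\oplus\lie h})$.

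The only point needing care is that the identity in Proposition~\ref{prop:tensortwist} is used with the correct orientation. Its hypotheses require $ j _{\lie g}$ and $ j _{\lie h}$ to be classical Drinfeld twists of the base cobrackets, and the resulting twisted cobrackets must be exactly the ones forming $\delta_{\lie g\oplus\lie h}$. With $\delta'_{\lie g}$ and $\delta'_{\lie h}$ playing the base role, both conditions hold by assumption, so no further computation is needed.
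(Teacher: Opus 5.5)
Your proposal is correct and is the argument the paper intends: the paper states the corollary without a separate proof, as an immediate consequence of Proposition~\ref{prop:tensortwist}. That proposition is applied, as you do, with $\delta'_{\lie g},\delta'_{\lie h}$ as the base cobrackets, so the shifted element is $\mathbf f_{2,3}$ and its obstruction $\cybe{(\mathbf f_{2,3})^-}_{\delta_{\lie g\oplus\lie h}}$ vanishes by the definition of a relative classical twist; your check that the proposition's hypotheses hold in this swapped orientation is the one point that needed care, and you handled it correctly.
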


\subsection{The dual picture: Poisson algebras}\label{subs:Poisson alg}
Consider~$U(\lie g)^*$ with the algebra structure defined by the convolution product.
The natural left and right $U(\lie g)$-actions on~$ U(\lie g)^*$ are given by
$$
(x\triangleright f)(u)=f(u x),\quad 
(f\triangleleft x)(u)=f(x u)
$$
for all~$x\in \lie g$, $f\in   U(\lie g)^* $ and~$u\in U(\lie g)$.
For an algebraic group~$G$ with~$\operatorname{Lie}(G)=\lie g$,
$U(\lie g)$ and~$\kk[G]$ admit a natural Hopf pairing via 
$\langle u,f\rangle=(u\triangleright f)(1_G)$ and so~$\kk[G]$ identifies
with a subalgebra of~$U(\lie g)^*$. The natural left 
and right actions of~$\lie g$ on~$\kk[G]$ are given, respectively, by
\begin{equation}\label{eq:g act k[G]}
(x\triangleright f)(g)=\frac{d}{dt}\Big|_{t=0}f(g\exp(tx)),
\qquad (f\triangleleft x)(g)=\frac{d}{dt}\Big|_{t=0}
f(\exp(tx)g)
\end{equation}
for all~$f\in\kk[G]$, $g\in G$ and~$x\in \lie g$.

If~$(\lie g,\delta)$ is a Lie bialgebra then~$\delta$ induces a Poisson structure on~$ U(\lie g)^*$
via
$$
\{f,f'\}:=(f\tensor f')\circ \delta,\qquad f,f'\in  U(\lie g)^*.
$$
Indeed, \ref{cnd.1} and~\ref{cnd.2} imply that~$\{\cdot,\cdot\}$
is skew-symmetric and satisfies the Jacobi identity. Furthermore, for all
$f,f',f''\in  U(\lie g)^*$
we have by Lemma~\ref{lem:coLeib}
\begin{align*}
\{f\cdot f',f''\}&=(f\cdot f'\tensor f'')\circ\delta=(f\tensor f'\tensor f'')\circ (\Delta\tensor\id_{U(\lie g)})\circ \delta
\\
&=(f\tensor f'\tensor f'')\circ(\id\tensor\delta)\circ\Delta+(f\tensor f''\tensor f')\circ (\delta\tensor\id)\circ\Delta
\\
&=f\cdot \{f',f''\}+\{f,f''\}\cdot f'.
\end{align*}

\begin{proposition}\label{prop:pois twist}
Let~$(\lie g,\delta)$ be a Lie bialgebra and let~$\{\cdot,\cdot\}$ be
the Poisson bracket on~$ U(\lie g)^*$ induced by~$\delta$.
Let $j \in\lie g\tensor\lie g$
be a weak classical Drinfeld twist. Then~$\{\cdot,\cdot\}_{ j }: U(\lie g)^*\tensor  U(\lie g)^*\to 
 U(\lie g)^*$ given by
\begin{equation}\label{eq:pois twist}
\{f,f'\}_{ j }=\{f,f'\}+\mu(  j ^-\bowtie (f\tensor f')),\qquad f,f'\in  U(\lie g)^*,
\end{equation}
where~$\mu:  U(\lie g)^* \tensor   U(\lie g)^* \to   U(\lie g)^* $ is the multiplication map and $$(x\tensor y)\bowtie (f\tensor f'):=(f\triangleleft x)\tensor (f'\triangleleft y)-(x\triangleright f)\tensor (y\triangleright f'),
\quad 
f,f'\in   U(\lie g)^* ,\, x,y\in\lie g,
$$
is a Poisson bracket on~$  U(\lie g)^* $. In particular, if $(\lie g,\delta)$
is quasi-triangular with a classical r-matrix~$r$ then
$$
\{f,f'\}_{ j }=\mu(((r+ j ^-)\bowtie (f\tensor f')),
\qquad f,f'\in   U(\lie g)^* 
$$
and restricts to a Poisson bracket on~$\kk[G]$.
\end{proposition}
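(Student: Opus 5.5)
The plan is to identify $\{\cdot,\cdot\}_{ j }$ with the Poisson bracket induced, as above, by the cobracket $\tilde\delta_{ j }=\delta+\delta_{ j ^-}$. Once that is done, the first assertion follows from the general construction preceding the Proposition. Since $ j $ is a weak classical Drinfeld twist, $\tilde\delta_{ j }$ is a Lie cobracket. It therefore satisfies \ref{cnd.1}, \ref{cnd.2} and \ref{cnd.3}, and by Proposition~\ref{prop:compat cond}\ref{prop:compat cond.a} it extends to a $\Delta$-derivation of $U(\lie g)$. Hence $(f\tensor f')\mapsto (f\tensor f')\circ\tilde\delta_{ j }$ is skew-symmetric and satisfies the Jacobi identity. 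The Leibniz rule follows from Lemma~\ref{lem:coLeib} applied to the Lie bialgebra $(\lie g,\tilde\delta_{ j })$, exactly as in the computation displayed before the Proposition.

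The key identity to verify is
$$(f\tensor f')\circ\delta_{s}=\mu(s\bowtie(f\tensor f'))$$
for any $s\in\lie g\tensor\lie g$ and $f,f'\in U(\lie g)^*$. By Proposition~\ref{prop:compat cond}\ref{prop:compat cond.a}, $\delta_s(u)=[s,\Delta(u)]=s\Delta(u)-\Delta(u)s$ for all $u\in U(\lie g)$. Write $s=x\tensor y$ (extended by linearity) and $\Delta(u)=u_{(1)}\tensor u_{(2)}$. Then
$$(f\tensor f')(s\Delta(u))=f(xu_{(1)})f'(yu_{(2)})=(f\triangleleft x)(u_{(1)})(f'\triangleleft y)(u_{(2)}),$$
which is $\mu((f\triangleleft x)\tensor(f'\triangleleft y))(u)$ since the product on $U(\lie g)^*$ is convolution. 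Similarly, $(f\tensor f')(\Delta(u)s)=\mu((x\triangleright f)\tensor(y\triangleright f'))(u)$. Subtracting gives the identity. Taking $s= j ^-$ yields $(f\tensor f')\circ\tilde\delta_{ j }=\{f,f'\}_{ j }$, as defined in~\eqref{eq:pois twist}.

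For the quasi-triangular case, $\delta=\delta_r$, so the same identity with $s=r$ gives $\{f,f'\}=\mu(r\bowtie(f\tensor f'))$. Since $\bowtie$ is bilinear in $s$, the two terms combine into $\mu((r+ j ^-)\bowtie(f\tensor f'))$. For the restriction to $\kk[G]$, recall that $\kk[G]\subset U(\lie g)^*$ is a subalgebra stable under the left and right $\lie g$-actions by~\eqref{eq:g act k[G]}. The formula then shows that $\{\kk[G],\kk[G]\}_{ j }\subset\kk[G]$, so $\{\cdot,\cdot\}_{ j }$ restricts to a Poisson bracket on $\kk[G]$.

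The main point requiring care is the convention check in the key identity. One must confirm that the right action $(f\triangleleft x)(u)=f(xu)$ matches left multiplication by $s$, and that the left action $(x\triangleright f)(u)=f(ux)$ matches right multiplication, so that the signs in the definition of $\bowtie$ come out as stated. Everything else reduces to results already established.
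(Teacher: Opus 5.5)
Your proposal is correct and follows essentially the same route as the paper: prove $(f\tensor f')\circ\delta_s=\mu(s\bowtie(f\tensor f'))$ for $s\in\lie g\tensor\lie g$, use it with $s=j^-$ to identify $\{\cdot,\cdot\}_{j}$ with the bracket induced by the cobracket $\tilde\delta_{j}=\delta+\delta_{j^-}$, and in the quasi-triangular case use $\delta=\delta_r$ to combine the two terms into $\mu((r+j^-)\bowtie(f\tensor f'))$. Your explicit argument for the restriction to $\kk[G]$ (closure of $\kk[G]$ under multiplication and under both $\lie g$-actions) spells out a step the paper leaves implicit.
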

\begin{proof}
We need the following 
\begin{lemma}
Let~$r\in\lie g\tensor\lie g$. Then
$$
(f\tensor f')\circ\delta_{r}=\mu(r\bowtie (f\tensor f')),\quad f,f'\in   U(\lie g)^* .
$$
\end{lemma}
\begin{proof}
For any~$u\in U$,
\begin{align*}
(f\tensor f')&(\delta_{r}(u))=
(f\tensor f')(r\cdot \Delta(u)-\Delta(u)\cdot r)\\
&=((f\tensor f')\triangleleft r)(\Delta(u))-
(r\triangleright (f\tensor f'))(\Delta(u))=\mu(r\bowtie (f\tensor f'))(u). \qedhere
\end{align*}
\end{proof}
Since~$ j $ is a weak classical Drinfeld twist, $\tilde\delta_{ j }=\delta+\delta_{ j ^-}$ is a cobracket and hence induces a Poisson bracket on~$  U(\lie g)^* $. We claim that this bracket coincides with~$\{\cdot,\cdot\}_{ j }$. 
Indeed, $(f\tensor f')\circ \tilde\delta_{ j }
=(f\tensor f')\circ \delta+(f\tensor f')\circ \delta_{ j ^-}=
\{f,f'\}+(f\tensor f')\circ\delta_{ j ^-}$ for all~$f,f'\in   U(\lie g)^* $, and it remains to
apply the Lemma. The second assertion follows from the first and Proposition~\ref{prop:cnd classical twist}\ref{prop:cnd classical twist.b}.
\end{proof}

\subsection{Quasi-triangular bialgebras}
Let~$B$ be a bialgebra. Recall (see e.g.~\cites{Drinf,CPbook,RSTS,Majid}) that~$B$ is called quasi-triangular if there is an invertible element $R$ 
in a (suitable completion~$B\wh\tensor B$ of) $B\tensor B$ satisfying 
$$
\Delta^{op}(b)=R\Delta(b)R^{-1},\qquad b\in B
$$
where $\Delta^{op}=\tau_{1,2}\circ\Delta$,
and
$$
(\Delta\tensor \id_B)(R)=R_{13}R_{23},\quad 
(\id_B\tensor\Delta)(R)=R_{13}R_{12}.
$$
Note that~$(\varepsilon_B\tensor\id_B)(R),(\id_B\tensor\varepsilon_B)(R)\in Z(B)$.
It is well-known (see e.g.~\cite{RSTS}*{Proposition~2.2}) that if~$B$ is quasi triangular then
$R$ satisfies the quantum Yang-Baxter equation (QYBE)
\begin{equation}
R_{12}R_{13}R_{23}=R_{23}R_{13}R_{12}.\label{eq:QYBE}
\end{equation}
More generally, we have the following
\begin{proposition}\label{prop:cQYBE}
Let~$C$ be a set and let 
$\{R^{(c)}\}_{c\in C}$ be a family of R-matrices for the 
same bialgebra~$(B,\Delta)$. Then for all~$i\not=j\not=k\in[n]$
\begin{equation}\label{eq:cQYBE}
R_{jk}^{(c)}R_{ik}^{(c')}R_{ij}^{(c'')}=
R_{ij}^{(c'')}R_{ik}^{(c')}R_{jk}^{(c)},\qquad 
c'\in\{c,c''\}\subset C.
\end{equation} 
\end{proposition}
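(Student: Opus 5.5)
The plan is to reduce to the case $(i,j,k)=(1,2,3)$ inside $B^{\wh\tensor 3}$ and then transport the identity. The algebra homomorphism $\phi_{(i,j,k)}:B^{\tensor 3}\to B^{\tensor n}$ extends to the completions. It sends $X_{1,2}$, $X_{1,3}$, $X_{2,3}$ to $X_{i,j}$, $X_{i,k}$, $X_{j,k}$ respectively, for any $X\in B\wh\tensor B$. Hence it suffices to prove
\begin{equation*}
R_{2,3}^{(c)}R_{1,3}^{(c')}R_{1,2}^{(c'')}=R_{1,2}^{(c'')}R_{1,3}^{(c')}R_{2,3}^{(c)}
\end{equation*}
in $B^{\wh\tensor 3}$. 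The only inputs are the two defining properties of each $R^{(c)}$, namely that it intertwines $\Delta$ and $\Delta^{op}$ and satisfies the hexagon-type identities. We treat the two cases $c'=c''$ and $c'=c$ separately.

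\emph{Case $c'=c$.} Apply $\id_{B^{\tensor 3}}$-conjugation by $R^{(c'')}_{1,2}$ to $(\Delta\tensor\id_B)(R^{(c)})=R^{(c)}_{1,3}R^{(c)}_{2,3}$. Since $R^{(c'')}\Delta(b)(R^{(c'')})^{-1}=\Delta^{op}(b)$ for all $b\in B$, we get
\begin{equation*}
R^{(c'')}_{1,2}\,(\Delta\tensor\id_B)(X)\,(R^{(c'')}_{1,2})^{-1}=(\Delta^{op}\tensor\id_B)(X)
\end{equation*}
for all $X\in B\tensor B$, and by continuity also on $B\wh\tensor B$. Moreover, $(\Delta^{op}\tensor\id_B)(R^{(c)})=\tau_{1,2}(R^{(c)}_{1,3}R^{(c)}_{2,3})=R^{(c)}_{2,3}R^{(c)}_{1,3}$. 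Therefore $R^{(c'')}_{1,2}R^{(c)}_{1,3}R^{(c)}_{2,3}=R^{(c)}_{2,3}R^{(c)}_{1,3}R^{(c'')}_{1,2}$, which is the required identity with $c'=c$.

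\emph{Case $c'=c''$.} Symmetrically, use $(\id_B\tensor\Delta)(R^{(c'')})=R^{(c'')}_{1,3}R^{(c'')}_{1,2}$ and conjugate by $R^{(c)}_{2,3}$. This turns $\id\tensor\Delta$ into $\id\tensor\Delta^{op}$, and $(\id_B\tensor\Delta^{op})(R^{(c'')})=R^{(c'')}_{1,2}R^{(c'')}_{1,3}$. So $R^{(c)}_{2,3}R^{(c'')}_{1,3}R^{(c'')}_{1,2}=R^{(c'')}_{1,2}R^{(c'')}_{1,3}R^{(c)}_{2,3}$, as needed.

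Note that neither case needs $R^{(c)}$ and $R^{(c'')}$ to be related beyond intertwining the same $\Delta$. This is exactly why the condition $c'\in\{c,c''\}$ appears: the middle factor must share its label with the factor whose coproduct is being used. The only point requiring care, and the mild obstacle, is the passage to completions. One must check that conjugation by an invertible element of $B\wh\tensor B$ and the maps $\Delta\tensor\id$, $\tau_{1,2}$, $\phi_{(i,j,k)}$ are continuous for the adic topologies of \S\ref{subs:compl}. This holds because they map $K^r$ into $K^r$ for the respective ideals $K_{B\tensor B}$ and $K_{B^{\tensor 3}}$, assuming $\Delta(K_B)\subset K_{B\tensor B}$ as in the counit setting of Lemma~\ref{lem:counit locally finite}. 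Granting this, the identities above extend from $B\tensor B$ to the completion, and the proposition follows.
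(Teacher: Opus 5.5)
Your proof is correct and is essentially the paper's argument. For $c'=c$ the paper also conjugates $(\Delta\tensor\id_B)(R^{(c)})=R^{(c)}_{1,3}R^{(c)}_{2,3}$ by $R^{(c'')}_{1,2}$ and applies $\tau_{1,2}$, and for $c'=c''$ it states that the same argument works with $\id_B\tensor\Delta^{op}$, as you do. The only differences are cosmetic: you reduce to $(i,j,k)=(1,2,3)$ through the embeddings $\phi_{(i,j,k)}$ rather than the $S_n$-action, and you add a continuity remark about completions that the paper leaves implicit.
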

\begin{proof}
Let~$c,c''\in C$. Since the symmetric group~$S_n$
acts on~$B^{\tensor n}$ by algebra automorphisms, it suffices to prove~\eqref{eq:cQYBE}
for~$(i,j,k)=(1,2,3)$.
We have
\begin{align*}
R_{23}^{(c)}R_{13}^{(c)}&=\tau_{1,2}(R_{13}^{(c)}R_{23}^{(c)})=\tau_{1,2}((\Delta\tensor\id_B)(R^{(c)}))\\
&=R_{12}^{(c'')}(\Delta\tensor\id_B)(R^{(c)})(R_{12}^{(c'')})^{-1}=R_{12}^{(c'')}R_{13}^{(c)}R_{23}^{(c)}(R_{12}^{(c'')})^{-1},
\end{align*}
which implies~\eqref{eq:cQYBE} for~$c'=c$. The identity for~$c'=c''$ is proved similarly by using~$\id_B\tensor\Delta^{op}$.
\end{proof}
The most basic example of such a family is provided by the following
\begin{lemma}\label{lem:Rop inv}
Let~$B$ be a quasi-triangular bialgebra with an R-matrix~$R$.
Then~$\tau_{1,2}(R)^{-1}$ is also an R-matrix for the same comultiplication. In particular, if $R^{(1)}=R$ and~$R^{(-1)}=\tau_{1,2}(R)^{-1}$ then
$$
R_{23}^{(\epsilon)}R_{13}^{(\epsilon')}R_{12}^{(\epsilon'')}
=R_{12}^{(\epsilon'')}R_{13}^{(\epsilon')}R_{23}^{(\epsilon)}
$$
provided that~$\epsilon'\in\{\epsilon,\epsilon''\}\subset\{1,-1\}$.
\end{lemma}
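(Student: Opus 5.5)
We have to show that $R':=\tau_{1,2}(R)^{-1}=R_{21}^{-1}$ satisfies the three axioms of an R-matrix for the same comultiplication~$\Delta$. The second assertion then follows immediately from Proposition~\ref{prop:cQYBE} applied to the family $\{R^{(1)},R^{(-1)}\}$ with $(i,j,k)=(1,2,3)$.

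\emph{Intertwining with $\Delta$.} We start from $\Delta^{op}(b)=R\Delta(b)R^{-1}$ and apply the algebra automorphism $\tau_{1,2}$ of $B\wh\tensor B$, using $\tau_{1,2}\circ\Delta^{op}=\Delta$. This gives $\Delta(b)=R_{21}\Delta^{op}(b)R_{21}^{-1}$, which can be rewritten as
\[
\Delta^{op}(b)=R_{21}^{-1}\Delta(b)R_{21}=R'\Delta(b)(R')^{-1}.
\]

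\emph{Coproduct identities.} We apply $\tau_{1,2}$ to $(\id_B\tensor\Delta)(R)=R_{13}R_{12}$, viewed in the appropriate legs. Concretely, $(\Delta\tensor\id_B)(R_{21})$ is obtained from $(\id_B\tensor\Delta)(R)=R_{13}R_{12}$ by the cyclic relabeling of legs $(1,2,3)\mapsto(2,3,1)$. This yields
\[
(\Delta\tensor\id_B)(R_{21})=R_{31}R_{32}.
\]
Since $\Delta\tensor\id_B$ is an algebra homomorphism, inverting gives
\[
(\Delta\tensor\id_B)(R')=R_{32}^{-1}R_{31}^{-1}=R'_{23}R'_{13},
\]
but the axiom requires $R'_{13}R'_{23}$. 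The order has to be repaired by the intertwining property just proved, which is the only nontrivial step. The cleanest route is the usual one: apply the intertwining property in the form $R'_{12}\,(\Delta\tensor\id)(X)\,(R'_{12})^{-1}=(\Delta^{op}\tensor\id)(X)$ with $X=R'$. We have $(\Delta^{op}\tensor\id)(R')=\tau_{1,2}\bigl((\Delta\tensor\id)(R')\bigr)=R'_{13}R'_{23}$.

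Alternatively, a more direct route avoids this: compute $(\Delta\tensor\id)(R_{21})$ via $(\id\tensor\Delta^{op})$-type identities. Indeed $\tau_{1,2}(R)$ under $\Delta\tensor\id$ equals $\tau$ applied to $(\id\tensor\Delta)(R)$ with legs relabeled. I expect the bookkeeping of legs here to be the main (and only) obstacle. To settle it, I will carefully write $R=\sum a_t\tensor b_t$, so that $R_{21}=\sum b_t\tensor a_t$. Then
\[
(\Delta\tensor\id)(R_{21})=\sum \Delta(b_t)\tensor a_t=\bigl((\id\tensor\Delta)(R)\bigr)_{\text{legs }(3,1,2)}=R_{21}\text{-type product}.
\]
From $(\id\tensor\Delta)(R)=R_{13}R_{12}$, placing leg $1\mapsto3$, $2\mapsto1$, $3\mapsto2$ gives $R_{32}R_{31}$. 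Inverting yields
\[
(\Delta\tensor\id)(R')=R_{31}^{-1}R_{32}^{-1}=R'_{13}R'_{23},
\]
as required, so no correction is even needed once the legs are tracked correctly.

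Symmetrically, from $(\Delta\tensor\id)(R)=R_{13}R_{23}$ with relabeling $1\mapsto2$, $2\mapsto3$, $3\mapsto1$ we get $(\id\tensor\Delta)(R_{21})=R_{21}R_{31}$. Inverting gives
\[
(\id\tensor\Delta)(R')=R_{31}^{-1}R_{21}^{-1}=R'_{13}R'_{12}.
\]
Invertibility of $R'$ is clear, which completes the verification of the three axioms. Finally, Proposition~\ref{prop:cQYBE} gives the displayed identity for $\epsilon'\in\{\epsilon,\epsilon''\}$.
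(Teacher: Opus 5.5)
Your final argument is correct and is essentially the paper's: transport the intertwining relation by $\tau_{1,2}$, obtain $(\Delta\tensor\id_B)(R_{21})=R_{32}R_{31}$ and $(\id_B\tensor\Delta)(R_{21})=R_{21}R_{31}$ by relabeling the legs of the original coproduct identities, invert, and get the displayed identity from Proposition~\ref{prop:cQYBE}. You should, however, delete your first leg computation ($R_{31}R_{32}$, which comes from a wrong relabeling) and the proposed ``repair'' via $(\Delta^{op}\tensor\id_B)(R')$; the repair is unnecessary, and in any case it would not yield $(\Delta\tensor\id_B)(R')=R'_{13}R'_{23}$.
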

\begin{proof}
Let~$b\in B$. Then 
$$
\tau_{1,2}(R)^{-1}\Delta(b)=\tau_{1,2}(R^{-1}\Delta^{op}(b))=\tau_{1,2}(\Delta(R^{-1}))=\Delta^{op}(b)
\tau_{1,2}(R)^{-1}.
$$
Furthermore, 
\begin{align*}
(\Delta\tensor\id_B)(\tau_{1,2}(R)^{-1})&=((\Delta\tensor\id_B)(\tau_{1,2}(R))^{-1}=(\tau_{2,3}\tau_{1,2}(\id_B\tensor \Delta)(R))^{-1}\\
&=\tau_{2,3}\tau_{1,2}(R_{12}^{-1}R_{13}^{-1})=(\tau_{1,2}(R)^{-1})_{12} (\tau_{1,2}(R)^{-1})_{23}.
\end{align*}
The remaining identity is proved similarly.
\end{proof}
\begin{lemma}
Let~$B$ be a quasi-triangular bialgebra with an R-matrix~$R$ and let $\sigma:B\to B$ be a bialgebra automorphism. Then~$(\sigma\tensor\sigma)(R)$
is also an R-matrix for~$B$.
\end{lemma}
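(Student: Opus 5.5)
The plan is to verify the three defining properties of an R-matrix for $R':=(\sigma\tensor\sigma)(R)$ directly, transporting each identity satisfied by $R$ through $\sigma$. Two preliminary remarks come first.

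First, $\sigma\tensor\sigma$ is an algebra automorphism of $B\tensor B$, with inverse $\sigma^{-1}\tensor\sigma^{-1}$. It extends to the completion $B\wh\tensor B$ as well, provided $\sigma$ preserves the ideal used there. This is automatic in the standard situation where $K_B=\ker\varepsilon_B$, because a coalgebra map satisfies $\varepsilon\circ\sigma=\varepsilon$. Consequently $R'$ is invertible, with $(R')^{-1}=(\sigma\tensor\sigma)(R^{-1})$.

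Second, since $\sigma$ is a coalgebra map, we have $\Delta\circ\sigma=(\sigma\tensor\sigma)\circ\Delta$. By Lemma~\ref{lem:op coalg end}, also $\Delta^{op}\circ\sigma=(\sigma\tensor\sigma)\circ\Delta^{op}$.

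For the intertwining property, let $b\in B$ and write $b=\sigma(b')$. Then
$$
R'\Delta(b)(R')^{-1}=(\sigma\tensor\sigma)\bigl(R\Delta(b')R^{-1}\bigr)=(\sigma\tensor\sigma)(\Delta^{op}(b'))=\Delta^{op}(\sigma(b'))=\Delta^{op}(b).
$$
Surjectivity of $\sigma$ is used here. If only an endomorphism were given, the same computation would still work provided $\sigma$ is invertible, which holds by assumption.

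For the first quasitriangularity identity,
$$
(\Delta\tensor\id)(R')=(\Delta\sigma\tensor\sigma)(R)=(\sigma\tensor\sigma\tensor\sigma)\bigl((\Delta\tensor\id)(R)\bigr)=(\sigma^{\tensor3})(R_{13}R_{23})=R'_{13}R'_{23}.
$$
The last step uses that $\sigma^{\tensor 3}$ is an algebra map and commutes with the leg embeddings $\phi_{\mathbf i}$; the latter holds because $\sigma(1)=1$. The identity $(\id\tensor\Delta)(R')=R'_{13}R'_{12}$ follows in exactly the same way.

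The only point requiring care is the continuity of $\sigma\tensor\sigma$ and $\sigma^{\tensor3}$ on the completions, which is needed so that the identities pass to $B\wh\tensor B$. I expect this to be routine: it follows from $\sigma(K_B)\subseteq K_B$ together with the description of $K_{A\tensor B}$ in Lemma~\ref{lem:tens completion}.
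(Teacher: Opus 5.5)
Your proof is correct and follows essentially the same route as the paper's. The paper likewise conjugates by $\sigma\tensor\sigma$ applied to $\sigma^{-1}(b)$, using Lemma~\ref{lem:op coalg end}, to get the intertwining property, and pushes $(\Delta\tensor\id_B)(R)=R_{13}R_{23}$ through $\sigma^{\tensor 3}$ for the remaining identities; your remarks on the invertibility of $(\sigma\tensor\sigma)(R)$ and on continuity over the completions are extra care that the paper leaves implicit.
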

\begin{proof}
Let~$R^\sigma=(\sigma\tensor\sigma)(R)$.
Then for any~$b\in B$
\begin{align*}
R^\sigma &\Delta(b)(R^{\sigma})^{-1}=
(\sigma\tensor\sigma)(R\Delta(\sigma^{-1}(b))R^{-1})
=(\sigma\tensor\sigma)\Delta^{op}(\sigma^{-1}(b))
=\Delta^{op}(b),
\end{align*}
where we used Lemma~\ref{lem:op coalg end}. 

Furthermore,
\begin{align*}
(\Delta\tensor\id_B)(R^\sigma)=(\sigma\tensor\sigma\tensor\sigma)((\Delta\tensor\id_B)(R))
=(\sigma\tensor\sigma\tensor\sigma)(R_{13}R_{23})=R_{13}^\sigma R_{23}^{\sigma},
\end{align*}
and similarly for the remaining identity.
\end{proof}

\subsection{Drinfeld twists}
Let~$B$ be a bialgebra with the comultiplication~$\Delta$. We say
that an invertible~$J\in B\wh \tensor B$ is a (right) {\em weak Drinfeld twist} if $\Delta_J:B\to B\wh\tensor B$, where  $\Delta_J(b)=J^{-1}\Delta(b) J$, $b\in B$,
is a (topological)
comultiplication.

We will need the following standard facts (see for example~\cite{Majid}*{Theorem~2.3.4}). We provide proofs here for the reader's convenience
and to introduce the notation that will be used later.
\begin{proposition}\label{prop:Drinfeld twist}
Let~$B$ be a bialgebra with the comultiplication~$\Delta:B\to B\tensor B$ and the counit~$\varepsilon:B\to\kk$.
Let $J$ be an invertible element of (a suitable completion 
$B\wh\tensor B$ of) $B\tensor B$. Then
\begin{enumalph}
    \item \label{prop:Drinfeld twist.a} 
$J$ is a weak Drinfeld twist if and only if 
$(\Delta\tensor \id_B)(J)\cdot (J\tensor  J^{-1})\cdot (\id_B\tensor\Delta)(J^{-1})$
centralizes $(\Delta\tensor\id_B)\Delta(B)$
in~$B^{\tensor 3}$
and $(\varepsilon\tensor\id_B)(J),(\id_B\tensor\varepsilon)(J)\in Z(B)$.
In particular, if $(\varepsilon\tensor\id_B)(J),(\id_B\tensor\varepsilon)(J)\in Z(B)$ and 
\begin{equation}
(\Delta\tensor \id_B)(J)\cdot (J\tensor 1)=(\id_B\tensor\Delta)(J)\cdot (1\tensor J)
\label{eq:Drinfeld twist}    
\end{equation}
then~$J$ is a weak Drinfeld twist. 

\item\label{prop:Drinfeld twist.b} 
Suppose that~$B$ is quasi-triangular with an R-matrix $R$.
If~$J$ satisfies~\eqref{eq:Drinfeld twist} 
then $B$ is a (topological) quasi-triangular bialgebra with respect to~$\Delta_J$ with the R-matrix $R_J=J_{21}^{-1}RJ$.
\item\label{prop:Drinfeld twist.c}
Let $\sigma$ be a bialgebra automorphism of~$B$. If~$J$
satisfies~\eqref{eq:Drinfeld twist} then so does~$(\sigma\tensor\sigma)(J)$.
\end{enumalph}
\end{proposition}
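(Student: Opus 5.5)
We treat the three parts in order; throughout write $\Phi_J:=(\Delta\tensor\id_B)(J)\cdot(J\tensor J^{-1})\cdot(\id_B\tensor\Delta)(J^{-1})$.

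\emph{Part (a).} Since $\Delta_J$ is conjugation of the algebra homomorphism $\Delta$ by an invertible element, it is automatically an algebra homomorphism. It remains to check coassociativity and the counit axioms. We compute
\[
(\Delta_J\tensor\id_B)\Delta_J(b)=(J^{-1}\tensor 1)\,(\Delta\tensor\id_B)(J^{-1})\,(\Delta\tensor\id_B)\Delta(b)\,(\Delta\tensor\id_B)(J)\,(J\tensor 1),
\]
and similarly
\[
(\id_B\tensor\Delta_J)\Delta_J(b)=(1\tensor J^{-1})\,(\id_B\tensor\Delta)(J^{-1})\,(\id_B\tensor\Delta)\Delta(b)\,(\id_B\tensor\Delta)(J)\,(1\tensor J).
\]
By coassociativity of $\Delta$ the middle factors coincide. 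Equating the two expressions and moving the outer factors across shows that coassociativity of $\Delta_J$ is equivalent to $\Phi_J$ commuting with $(\Delta\tensor\id_B)\Delta(B)$. Note that $J\tensor J^{-1}$ here means $(J\tensor 1)(1\tensor J^{-1})$, which gives exactly the displayed element after regrouping. For the counit, $(\varepsilon\tensor\id_B)\Delta_J(b)=u^{-1}bu$ with $u=(\varepsilon\tensor\id_B)(J)$. This equals $b$ for all $b$ iff $u$ is central, and the other side is handled likewise. The ``in particular'' follows because \eqref{eq:Drinfeld twist} says precisely that $\Phi_J=1$.

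\emph{Part (b).} First we check $\Delta_J^{op}(b)=R_J\Delta_J(b)R_J^{-1}$. Indeed, $\Delta_J^{op}(b)=J_{21}^{-1}\Delta^{op}(b)J_{21}=J_{21}^{-1}R\Delta(b)R^{-1}J_{21}$, and substituting $\Delta(b)=J\Delta_J(b)J^{-1}$ gives the claim.

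Next we verify $(\Delta_J\tensor\id)(R_J)=(R_J)_{13}(R_J)_{23}$. Expand the left side as
\[
(J_{12}^{-1})\,(\Delta\tensor\id)(J_{21}^{-1})\,(\Delta\tensor\id)(R)\,(\Delta\tensor\id)(J)\,J_{12}.
\]
Then use \eqref{eq:Drinfeld twist} in the two forms
\[
(\Delta\tensor\id)(J)J_{12}=(\id\tensor\Delta)(J)J_{23}
\]
and its image under $\tau$-permutations. The key manipulation, and the main obstacle, is moving $R_{13}R_{23}$ past the twisted coproducts. For this we use quasi-triangularity in the form $R_{12}(\Delta\tensor\id)(X)=(\Delta^{op}\tensor\id)(X)R_{12}$, which lets $R_{23}$ and $R_{13}$ act on $(\id\tensor\Delta)(J)$ and convert it to $(\id\tensor\Delta^{op})(J)$. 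Then apply the permuted cocycle identity $(\Delta^{op}\tensor\id)(J_{\cdot})\cdots$. I would follow the standard bookkeeping (as in Majid, Thm.~2.3.4), writing every expression as a word in $J_{ab},R_{ab}$ and coproducts, and reducing with the cocycle identity twice. The other identity, $(\id\tensor\Delta_J)(R_J)=(R_J)_{13}(R_J)_{12}$, is symmetric.

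\emph{Part (c).} Since $\sigma$ is an algebra and coalgebra automorphism, $(\Delta\tensor\id)\circ(\sigma\tensor\sigma)=(\sigma^{\tensor 3})\circ(\Delta\tensor\id)$, and likewise for $\id\tensor\Delta$. Applying $\sigma^{\tensor 3}$, which is multiplicative, to \eqref{eq:Drinfeld twist} therefore yields the identity for $(\sigma\tensor\sigma)(J)$. Invertibility is preserved. Centrality of $(\varepsilon\tensor\id)(J^\sigma)=\sigma((\varepsilon\tensor\id)(J))$ holds since $\varepsilon\circ\sigma=\varepsilon$ and $\sigma$ maps the center to itself.
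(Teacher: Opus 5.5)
Your treatment of (a) and (c), and of the relation $\Delta_J^{op}(b)=R_J\Delta_J(b)R_J^{-1}$ in (b), is correct and follows the same route as the paper.

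\textbf{The gap.} The one substantial step of (b) is the identity $(\Delta_J\tensor\id_B)(R_J)=(R_J)_{13}(R_J)_{23}$. Here you only name ingredients and defer to Majid, without carrying out the reduction. The sketch you do give is also inaccurate in two places.
\begin{enumerate}
\item Only $R_{23}$ converts $(\id_B\tensor\Delta)(J)$ into $(\id_B\tensor\Delta^{op})(J)$. The factor $R_{13}$ cannot act this way, since legs $1,3$ of $(\id_B\tensor\Delta)(J)$ are not a coproduct pair. Instead $R_{13}$ must be moved past $\tau_{2,3}\bigl((\Delta\tensor\id_B)(J)\bigr)$, which only appears after a second use of \eqref{eq:Drinfeld twist}.
\item A third use of \eqref{eq:Drinfeld twist} is needed to finish, not two in total.
\end{enumerate}

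\textbf{The missing chain.} Write $J_{12,3}=(\Delta\tensor\id_B)(J)$, $J_{1,23}=(\id_B\tensor\Delta)(J)$, $J_{21,3}=(\Delta^{op}\tensor\id_B)(J)$ and $J_{1,32}=(\id_B\tensor\Delta^{op})(J)$. The goal is
\[
J_{12}^{-1}(\Delta\tensor\id_B)(J_{21})^{-1}R_{13}R_{23}J_{12,3}J_{12}=J_{31}^{-1}R_{13}J_{13}J_{32}^{-1}R_{23}J_{23}.
\]
\begin{enumerate}
\item By \eqref{eq:Drinfeld twist}, replace $J_{12,3}J_{12}$ with $J_{1,23}J_{23}$. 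Use $R_{23}J_{1,23}=J_{1,32}R_{23}$, then cancel $R_{23}J_{23}$ on the right.
\item Multiply on the right by $J_{32}$. Apply the $\tau_{2,3}$-image of \eqref{eq:Drinfeld twist}, namely $J_{1,32}J_{32}=\tau_{2,3}(J_{12,3})J_{13}$, and cancel $J_{13}$.
\item Use $R_{13}\tau_{2,3}(J_{12,3})=\tau_{2,3}(R_{12}J_{12,3})=\tau_{2,3}(J_{21,3})R_{13}$, and cancel $R_{13}$.
\end{enumerate}
What remains is
\[
\tau_{2,3}(J_{21,3})\,J_{31}=(\Delta\tensor\id_B)(J_{21})\,J_{12}.
\]
The four factors here are the images of $J_{12,3}$, $J_{12}$, $J_{1,23}$, $J_{23}$ under the algebra automorphism $x\tensor y\tensor z\mapsto y\tensor z\tensor x$ of $B^{\tensor 3}$. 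So this is just \eqref{eq:Drinfeld twist} transported.

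For $(\id_B\tensor\Delta_J)(R_J)=(R_J)_{13}(R_J)_{12}$, run the mirror-image chain:
\begin{enumerate}
\item start from $J_{1,23}J_{23}=J_{12,3}J_{12}$ and move $R_{12}$ past $J_{12,3}$;
\item use the $\tau_{1,2}$-image of \eqref{eq:Drinfeld twist};
\item move $R_{13}$ past $\tau_{1,2}(J_{1,23})$;
\item finish with the image of \eqref{eq:Drinfeld twist} under $x\tensor y\tensor z\mapsto z\tensor x\tensor y$.
\end{enumerate}
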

\begin{proof}
Clearly, $\Delta_J$ is a homomorphism of algebras. 
Abbreviate $J_{12,3}=(\Delta\tensor \id_B)(J)$ and~$J_{1,23}=(\id_B\tensor\Delta)(J)$. In this notation~\eqref{eq:Drinfeld twist} becomes
\begin{equation}\label{eq:Drinfeld twist'}
J_{12,3}J_{12}=J_{1,23}J_{23}.
\end{equation}
We have, for all~$x\in B$
\begin{align*}
(\Delta_J\tensor &\id_B)\Delta_J(x)=
(\Delta_J\tensor \id_H)(J^{-1}\Delta(x)J)\\
&=(J^{-1}\tensor 1)\cdot (\Delta\tensor \id_B)(J^{-1}\Delta_H(h)J)\cdot (J\tensor 1)\\
&=(J^{-1}\tensor 1)\cdot ((\Delta\tensor\id_B)(J^{-1}))\cdot (\Delta\tensor\id_B)\Delta(x)\cdot 
(\Delta\tensor\id_B)(J)(J\tensor 1)\\
&=J_{12}^{-1}J_{12,3}^{-1}(\Delta\tensor\id_B)\Delta(x)
J_{12,3}J_{12}
\\
\intertext{and similarly}
(\id_B\tensor &\Delta_J)\Delta_J(x)=
(\id_B\tensor \Delta_J)(J^{-1}\Delta(x)J)\\
&=(1\tensor J^{-1})\cdot (\id_B\tensor \Delta)(J^{-1}\Delta(x)J)\cdot (1\tensor J)\\
&=(1\tensor J^{-1})\cdot ((\id_B\tensor\Delta)(J^{-1}))\cdot (\id_B\tensor\Delta)\Delta(x)\cdot 
(\id_B\tensor\Delta)(J)(1\tensor J)\\
&=J_{23}^{-1}J_{1,23}^{-1}(\id_B\tensor\Delta)\Delta(x)
J_{1,23}J_{23}.
\end{align*}
Since~$\Delta$ is coassociative, it follows that~$\Delta_J$
is coassociative if and only $J_{12,3}J_{12}J_{23}^{-1}J_{1,23}^{-1}$
commutes with~$(\Delta\tensor\id_B)\Delta(x)$
for all~$x\in B$.
We also have, for all~$x\in B$, 
$(\varepsilon\tensor\id_B)(J^{-1}\Delta(x)J)=
(\varepsilon\tensor \id_B)(J^{-1})x (\varepsilon\tensor\id_B)(J)$, which is equivalent to~$(\varepsilon\tensor\id_B)(J)\in Z(B)$. The remaining
identity is proven similarly.

To prove~\ref{prop:Drinfeld twist.b}, note first that, for all $x\in B$, 
\begin{align*}
R_J\Delta_J(x)&=J_{21}{}^{-1} R\Delta(x)J=J_{21}^{-1}\Delta^{op}(x)R J=\Delta^{op}_J(x) R_J.
\end{align*}
Furthermore, we need to prove that
\begin{equation}\label{eq:RJ qtriang}
(\Delta_J\tensor \id_B)(R_J)=(R_J)_{13}(R_J)_{23},\quad 
(\id_B\tensor\Delta_J)(R_J)=(R_J)_{13}(R_J)_{12}.
\end{equation}
We have 
\begin{align*}
(\Delta_J\tensor \id_B)(R_J)&=J_{12}^{-1}(\Delta\tensor \id_B)(J_{21}^{-1}R J)J_{12}=J_{12}^{-1}(\Delta\tensor \id_B)(J_{21}^{-1})R_{13}R_{23}J_{12,3}J_{12}\\
&=J_{12}^{-1}(\Delta\tensor \id_B)(J_{21}^{-1})R_{13}R_{23}J_{1,23}J_{23}
\end{align*}
while~$
(R_J)_{13}(R_J)_{23}=J_{31}^{-1}R_{13}J_{13}J_{32}^{-1}R_{23}J_{23}
$.
Thus, the first identity in~\eqref{eq:RJ qtriang} is equivalent to
$$
J_{12}^{-1}(\Delta\tensor \id_B)(J_{21}^{-1})R_{13}R_{23}J_{1,23}=J_{31}^{-1}R_{13}J_{13}J_{32}^{-1}R_{23}
$$
or
\begin{equation}\label{eq:interm 0}
J_{12}^{-1}((\Delta\tensor \id_B)(J_{21}))^{-1}R_{13}J_{1,32}=J_{31}^{-1}R_{13}J_{13}J_{32}^{-1}
\end{equation}
where $J_{1,32}:=(\id_B\tensor\Delta^{op})(J)$; here we used that $R\Delta(x)R^{-1}=\Delta^{op}(x)$ for all $x\in B$.
We also have, by~\eqref{eq:Drinfeld twist'}
$
J_{1,32}J_{32}=\tau_{2,3}(J_{1,23}J_{23})
=\tau_{2,3}(J_{12,3}J_{12})=\tau_{2,3}(J_{12,3})J_{13}$,
whence~\eqref{eq:interm 0} is equivalent to
$$
J_{12}^{-1}((\Delta\tensor \id_B)(J_{21}))^{-1}R_{13}\tau_{2,3}(J_{12,3})=J_{31}^{-1}R_{13}
$$
Since~$R_{13}=\tau_{2,3}(R_{12})$,
the last identity is equivalent to
$$
J_{12}^{-1}((\Delta\tensor \id_B)(J_{21}))^{-1}\tau_{2,3}(R_{12}J_{12,3}R_{12}^{-1})=J_{31}^{-1}
$$
or 
$$
J_{12}^{-1}((\Delta\tensor \id_B)(J_{21}))^{-1}\tau_{2,3}(J_{21,3})=J_{31}^{-1}
$$
where~$J_{21,3}=(\Delta^{op}\tensor\id_B)(J)=\tau_{1,2}(J_{12,3})$.
This in turn is equivalent to
\begin{equation}\label{eq:intermed-I}
\tau_{2,3}\tau_{1,2}(J_{12,3})J_{31}=(\Delta\tensor\id_B)(J_{21})J_{12}.
\end{equation}
Furthermore,
$
(\Delta\tensor \id_B)(J_{21})=\tau_{2,3}\tau_{1,2}((\id_B\tensor\Delta)(J))=\tau_{2,3}\tau_{1,2}(J_{1,23})
$, 
and so~\eqref{eq:intermed-I} is equivalent to
$$
\tau_{2,3}\tau_{1,2}(J_{12,3})J_{31}=\tau_{2,3}\tau_{1,2}(J_{1,23})J_{12}.
$$
Applying~$\tau_{1,2}\tau_{2,3}$ to both sides we conclude that~\eqref{eq:intermed-I} and hence
the first identity in~\eqref{eq:RJ qtriang} are equivalent to
$$
J_{12,3}\tau_{1,2}\tau_{2,3}(J_{31})=J_{1,23}\tau_{1,2}\tau_{2,3}(J_{12})
$$
which is manifestly equivalent
to~\eqref{eq:Drinfeld twist'}. The argument for the second identity in~\eqref{eq:RJ qtriang} is similar and is omitted.

To prove part~\ref{prop:Drinfeld twist.c}, denote~$J^\sigma=(\sigma\tensor\sigma)(J)$. Then
$$
(\Delta\tensor\id_B)(J^\sigma)
\cdot(J^\sigma\tensor 1)
=(\sigma\tensor\sigma\tensor\sigma)(
(\Delta\tensor \id_B)(J)\cdot (J\tensor 1)).
$$
Similarly, 
$$
(\id_B\tensor\Delta)(J^\sigma)\cdot(1\tensor J^\sigma)
=(\sigma\tensor\sigma\tensor\sigma)((\id_H\tensor\Delta)(J)\cdot(1\tensor J)).
$$
Thus, $J^{\sigma}$ satisfies~\eqref{eq:Drinfeld twist}. 
Furthermore, $(\varepsilon\tensor\id_B)(J^\sigma)=(\varepsilon\circ\sigma\tensor \sigma)(J)=\sigma((\varepsilon\tensor\id_B)(J))$ which is central in~$B$ since~$\sigma $ is an automorphism of~$B$ and~$(\varepsilon\tensor\id_B)(J)$ is central. 
\end{proof}

\begin{remark}\label{rem:left twist}
In the literature, the term ``Drinfeld twist'' is also used for the {\em left} Drinfeld twist, that is, an invertible element in (a suitable completion of) $B\tensor B$ such that $\Delta'_J:B\to B\tensor B$ defined by
$\Delta'_J(b)=J \Delta(b) J^{-1}$, $b\in B$, is a comultiplication. 
Clearly, $J$ is a (right) Drinfeld twist if and only if~$J^{-1}$
is a left Drinfeld twist, and if~$B$ is quasi-triangular
with an R-matrix $R$ then $R$ is a left Drinfeld twist.
\end{remark}
From now on, we refer to a weak Drinfeld twist satisfying~\eqref{eq:Drinfeld twist} as a {\em Drinfeld twist}. 

\subsection{Relative Drinfeld twists}\label{subs:rel twist}
Let~$A$ and~$B$ be bialgebras with respective comultiplications $\Delta_A$ and~$\Delta_B$ and 
let~$F$ in (a suitable completion~$B\wh\tensor A$ of) $B\tensor A$ be invertible. We say that~$F$ is a {\em relative Drinfeld twist} if $1_A\tensor F\tensor 1_B$ is a Drinfeld twist for~$A\tensor B$ with respect to its standard comultiplication~$\Delta_{A\tensor B}$.
The following is a quantum analogue of Proposition~\ref{prop:rel class Drinfeld}.
\begin{proposition}\label{prop:rel Drinfeld twist}
Let $(A,\Delta_A)$, $(B,\Delta_B)$ be 
bialgebras and let~$F\in B\tensor A$ be invertible. Then $F$
is a relative Drinfeld twist if and only if
$(\varepsilon_B\tensor\id_A)(F)\in Z(A)$, $(\id_B\tensor\varepsilon_A)(F)\in Z(B)$
and
\begin{equation}\label{eq:rel Drinfeld twist}
F_{13,4}F_{12}=F_{1,24}F_{34}
\end{equation}
in~$B\tensor A\tensor B\tensor A$ where~$F_{13,4}=
[(\Delta_B\tensor \id_A)(F)]_{1,3,4}$ and~$
F_{1,24}=
[(\id_B\tensor \Delta_A)(F)]_{1,2,4}$.
\end{proposition}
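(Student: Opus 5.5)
By definition, $F$ is a relative Drinfeld twist if and only if $\mathbf J:=1_A\tensor F\tensor 1_B=F_{2,3}\in A\tensor B\tensor A\tensor B$ is a Drinfeld twist for $(A\tensor B,\Delta_{A\tensor B})$. By Proposition~\ref{prop:Drinfeld twist} this means two things: the counit conditions $(\varepsilon_{A\tensor B}\tensor\id)(\mathbf J)\in Z(A\tensor B)$ and $(\id\tensor\varepsilon_{A\tensor B})(\mathbf J)\in Z(A\tensor B)$, and the cocycle identity~\eqref{eq:Drinfeld twist'} $\mathbf J_{\mathbf{12},\mathbf 3}\mathbf J_{\mathbf{12}}=\mathbf J_{\mathbf 1,\mathbf{23}}\mathbf J_{\mathbf{23}}$ in $(A\tensor B)^{\tensor 3}$. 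I will translate each of these into the stated conditions on $F$, tracking which tensor legs are occupied.

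For the counit conditions, write $F=F_1\tensor F_2$ with $F_1\in B$, $F_2\in A$. Then
\[
(\varepsilon_A\tensor\varepsilon_B\tensor\id_A\tensor\id_B)(1\tensor F_1\tensor F_2\tensor 1)=1_A\tensor 1_B\cdot \bigl((\varepsilon_B\tensor\id_A)(F)\tensor 1_B\bigr),
\]
which is central in $A\tensor B$ if and only if $(\varepsilon_B\tensor\id_A)(F)\in Z(A)$. The other counit condition similarly becomes $(\id_B\tensor\varepsilon_A)(F)\in Z(B)$.

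For the cocycle identity, label the six legs of $(A\tensor B)^{\tensor 3}$ as $1,\dots,6$, with the odd legs in $A$ and the even legs in $B$. Since $\Delta_{A\tensor B}=(\Delta_A)_{1,3}\circ(\Delta_B)_{2,4}$, I compute each factor:
\begin{itemize}
\item $\mathbf J_{\mathbf{12}}=F_{2,3}$;
\item $\mathbf J_{\mathbf{23}}=F_{4,5}$;
\item $\mathbf J_{\mathbf{12},\mathbf 3}=(\Delta_{A\tensor B}\tensor\id)(F_{2,3})$, which applies $\Delta_B$ to the $B$-leg $2$ and gives $[(\Delta_B\tensor\id_A)(F)]_{2,4,5}$;
\item $\mathbf J_{\mathbf 1,\mathbf{23}}=(\id\tensor\Delta_{A\tensor B})(F_{2,3})$, which applies $\Delta_A$ to the $A$-leg $3$ and gives $[(\id_B\tensor\Delta_A)(F)]_{2,3,5}$.
\end{itemize}
The cocycle identity therefore reads
\[
[(\Delta_B\tensor\id_A)(F)]_{2,4,5}\,F_{2,3}=[(\id_B\tensor\Delta_A)(F)]_{2,3,5}\,F_{4,5}.
\]
Every term lives in legs $2,3,4,5$, with legs $1$ and $6$ trivial. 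Restricting along the embedding $B\tensor A\tensor B\tensor A\to(A\tensor B)^{\tensor3}$, $(1,2,3,4)\mapsto(2,3,4,5)$, which is injective, turns this into exactly~\eqref{eq:rel Drinfeld twist}: $F_{13,4}F_{12}=F_{1,24}F_{34}$.

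The only genuinely delicate point is that Proposition~\ref{prop:Drinfeld twist} gives only a sufficient condition for weak twists. However, the definition of \emph{Drinfeld twist} adopted after Remark~\ref{rem:left twist} is precisely the counit conditions together with~\eqref{eq:Drinfeld twist}, so the equivalence is exact. The remaining care is bookkeeping: checking that the coproduct leg conventions ($\Delta_{A\tensor B}$ sending leg $i$ to legs $i,i+2$) produce the stated leg labels. In the completed setting, the same argument applies verbatim using injectivity of the leg embeddings on completions.
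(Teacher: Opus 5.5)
Your proof is correct and follows the same route as the paper. You write the cocycle identity for $F_{2,3}$ in $(A\tensor B)^{\tensor 3}$ as $[(\Delta_B\tensor\id_A)(F)]_{2,4,5}F_{2,3}=[(\id_B\tensor\Delta_A)(F)]_{2,3,5}F_{4,5}$, note that it lives in legs $2,\dots,5$, and relabel to obtain~\eqref{eq:rel Drinfeld twist}; you also spell out the leg bookkeeping and the counit conditions, which the paper dismisses as straightforward.
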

\begin{proof}
The identity
$$
(\Delta_{A\tensor B}\tensor \id_{A\tensor B})(F_{23})(F_{23}\tensor 1_{A\tensor B})=
(\id_{A\tensor B}\tensor\Delta_{A\tensor B})(F_{23})(1_{A\tensor B}\tensor F_{23})
$$
in~$(A\tensor B)^{\tensor 3}$ is equivalent to 
$$
[(\Delta_B\tensor\id_A)(F)]_{2,4,5}F_{23}=
[(\id_B\tensor\Delta_A)(F)]_{2,3,5}F_{45}
$$
where both sides are in $1_A\tensor B\tensor A\tensor B\tensor A\tensor 1_B\subset (A\tensor B)^{\tensor 3}$. This 
is clearly equivalent to~\eqref{eq:rel Drinfeld twist}. The remaining assertions are straightforward.
\end{proof}
Note that $F_{12}\in B\tensor A\tensor 1_B\tensor 1_A$ and $F_{34}\in 
1_B\tensor 1_A\tensor B\tensor A$ commute in $(B\tensor A)^{\tensor 2}$. Thus, \eqref{eq:rel Drinfeld twist} is equivalent to 
\begin{equation}\label{eq:rel twist divided form}
F_{13,4}F_{34}^{-1}=F_{1,24}F_{12}^{-1}
\end{equation}
It follows that if~$F$ is a relative Drinfeld twist then $F_{13,4}F_{34}^{-1}$ and
$F_{1,24}F_{12}^{-1}$ are in $B\tensor 1_A\tensor 1_B\tensor A$ and thus~\eqref{eq:rel twist divided form} is essentially an identity in~$B\tensor A$. It should be noted that, by Remark~\ref{rem:trivial legs},  the corresponding identity in the classical case encompasses the entire~$(U(\lie g)\tensor U(\lie h))^{\tensor 2}$ and cannot be collapsed to just two of its tensor factors.

The following recovers~\cite{RSTS}*{Theorem~2.9 and~(2.26)}.
\begin{proposition}\label{prop:basic twist}
Let~$B$ be quasi-triangular with an R-matrix~$R$. Then 
$J_{\epsilon}=1\tensor R^{(\epsilon)}\tensor 1$, $\epsilon\in\{1,-1\}$ in
the notation from Lemma~\ref{lem:Rop inv}, is a Drinfeld twist for~$B^{\tensor 2}\tensor B^{\tensor 2}$. In particular, for any~$\epsilon,\epsilon',\epsilon''\in\{1,-1\}$,
$B^{\tensor 2}$
with the comultiplication twisted by~$J_\epsilon$ is quasi-triangular
with the R-matrix $R_{14}^{(-\epsilon)}R_{13}^{(\epsilon')} R_{24}^{(\epsilon'')} R_{23}^{(\epsilon)}$.
\end{proposition}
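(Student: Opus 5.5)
The plan is to reduce the first assertion to Proposition~\ref{prop:rel Drinfeld twist} with $A=B$ and $F=R^{(\epsilon)}\in B\tensor B$. Then the second assertion follows from Proposition~\ref{prop:Drinfeld twist}\ref{prop:Drinfeld twist.b} applied to a suitable R-matrix of the tensor square $B^{\tensor 2}$.

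For $\epsilon=1$, take $F=R$. The quasi-triangularity axioms give $(\Delta\tensor\id_B)(R)=R_{13}R_{23}$, so that $F_{13,4}=R_{14}R_{34}$. They also give $(\id_B\tensor\Delta)(R)=R_{13}R_{12}$, so that $F_{1,24}=R_{14}R_{12}$. Hence the two sides of \eqref{eq:rel Drinfeld twist} are
$$R_{14}R_{34}R_{12}\qquad\text{and}\qquad R_{14}R_{12}R_{34}.$$
These are equal because $R_{12}$ and $R_{34}$ act on disjoint tensor legs and therefore commute. The counit conditions $(\varepsilon\tensor\id_B)(R),(\id_B\tensor\varepsilon)(R)\in Z(B)$ were already observed right after the definition of a quasi-triangular bialgebra. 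For $\epsilon=-1$ the same argument applies verbatim, since $R^{(-1)}=\tau_{1,2}(R)^{-1}$ is again an R-matrix for the same comultiplication by Lemma~\ref{lem:Rop inv}. Thus $1\tensor R^{(\epsilon)}\tensor 1$ is a Drinfeld twist for $B^{\tensor 2}$.

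For the R-matrix statement, I would first check the routine fact that $B^{\tensor 2}$ with $\Delta_{B\tensor B}=\Delta_{1,3}\circ\Delta_{2,4}$ is quasi-triangular with R-matrix $R^{(\epsilon')}_{13}R^{(\epsilon'')}_{24}$, for any $\epsilon',\epsilon''$. This holds because each factor $R^{(\epsilon')}$, $R^{(\epsilon'')}$ is an R-matrix for $B$ by Lemma~\ref{lem:Rop inv}, and the two factors act on disjoint legs. This is the analogue of the classical remark in \S\ref{subs:rel clas twist}, and the verification is a leg-by-leg check.

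Next apply Proposition~\ref{prop:Drinfeld twist}\ref{prop:Drinfeld twist.b} with $J=R^{(\epsilon)}_{23}$. The only bookkeeping point is computing $J_{21}$, where the flip now exchanges the two $B^{\tensor 2}$-blocks, i.e.\ $1\leftrightarrow 3$ and $2\leftrightarrow 4$. This gives $J_{21}=R^{(\epsilon)}_{41}$. By the definition of $R^{(\pm1)}$ we have $(R^{(\epsilon)}_{41})^{-1}=R^{(-\epsilon)}_{14}$. Therefore
$$R_J=J_{21}^{-1}\,R^{(\epsilon')}_{13}R^{(\epsilon'')}_{24}\,J=R^{(-\epsilon)}_{14}R^{(\epsilon')}_{13}R^{(\epsilon'')}_{24}R^{(\epsilon)}_{23},$$
as claimed.

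The main point of care is this index bookkeeping for $J_{21}$ and its inverse; everything else is a direct application of the quasi-triangularity identities.
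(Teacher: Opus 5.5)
Your proposal is correct and follows essentially the same route as the paper: you reduce to Proposition~\ref{prop:rel Drinfeld twist} with $A=B$ and $F=R^{(\epsilon)}$, then apply Proposition~\ref{prop:Drinfeld twist}\ref{prop:Drinfeld twist.b} to the R-matrix $R^{(\epsilon')}_{13}R^{(\epsilon'')}_{24}$, using the block swap $J_{21}=R^{(\epsilon)}_{41}$ and $(R^{(\epsilon)}_{41})^{-1}=R^{(-\epsilon)}_{14}$. The only cosmetic difference is that you verify \eqref{eq:rel Drinfeld twist} directly, via commutation of $R_{12}$ and $R_{34}$, whereas the paper checks the equivalent divided form \eqref{eq:rel twist divided form}.
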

\begin{proof}
Let~$\epsilon=1$.
We apply Proposition~\ref{prop:rel Drinfeld twist} with~$A=B$ and $F=R$. Since
$F_{13,4}=(R_{13}R_{23})_{(1,3,4)}=R_{14}R_{34}$, while $F_{1,24}=
(R_{13}R_{12})_{(1,2,4)}=R_{14}R_{12}$,  
it follows that
$F_{13,4}F_{34}^{-1}=R_{14}=F_{1,24}F_{12}^{-1}$
and so~\eqref{eq:rel twist divided form} holds. 
Since~$(\varepsilon\tensor\id_B)(R)=(\id_B\tensor \varepsilon)(R)$ are central, the second condition in Proposition~\ref{prop:rel Drinfeld twist} follows. It remains to apply Proposition~\ref{prop:Drinfeld twist}\ref{prop:Drinfeld twist.b} and Lemma~\ref{lem:Rop inv},
taking into account that in this case~$J_{\mathbf{21}}=J_{(3,4,1,2)}$ is obtained from~$J$ by 
applying $\tau_{1,3}\tau_{2,4}$. The case~$\epsilon=-1$ is similar with~$R$ replaced
by~$R_{21}^{-1}$, which is also an~R-matrix for~$B$ by Lemma~\ref{lem:Rop inv}.
\end{proof}
\begin{remark}
Taking~$\epsilon=\epsilon''=1$, $\epsilon'=-1$ yields~\cite{RSTS}*{(2.6)}.
\end{remark}

The next two results are quantum analogues of, respectively, Proposition~\ref{prop:tensortwist} and
Corollary~\ref{cor:clas twist from rel twist}.
\begin{lemma}\label{lem:rel twist from twist}
Let~$A$ (respectively, $B$) be unital bialgebras with
respective comultiplications $\Delta_A$, $\Delta_B$. Let~$J_A$, $J_B$ be Drinfeld twists on~$(A,\Delta_A)$, $(B,\Delta_B)$, respectively. Let~$\Delta'_A$, $\Delta'_B$ be comultiplications 
twisted by, respectively, $J_A$ and~$J_B$. 
Then
$J_{A\tensor B}$ is a Drinfeld twist for~$(A\tensor B,\Delta_{A\tensor B})$ if and only if~$J'_{A\tensor B}:=(J_A)_{1,3}^{-1}
(J_B)_{2,4}^{-1}J_{A\tensor B}$ is a Drinfeld twist for~$(A\tensor B,\Delta'_{A\tensor B})$.
\end{lemma}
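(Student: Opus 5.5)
Set $K:=(J_A)_{1,3}(J_B)_{2,4}$, so that $J_{A\tensor B}=K J'_{A\tensor B}$ and $\Delta'_{A\tensor B}(x)=K^{-1}\Delta_{A\tensor B}(x)K$. The idea is to reduce the statement to the cocycle identity~\eqref{eq:Drinfeld twist'} and note that twisting by a composite twist splits into two successive twists. The key point is that $K$ is itself a Drinfeld twist for $(A\tensor B,\Delta_{A\tensor B})$. Indeed, since $\Delta_{A\tensor B}=(\Delta_A)_{1,3}\circ(\Delta_B)_{2,4}$ and the $A$- and $B$-legs commute, we get
$(\Delta_{A\tensor B}\tensor\id)(K)\,(K\tensor 1)=[(\Delta_A\tensor\id)(J_A)(J_A\tensor 1)]_{1,3,5}\,[(\Delta_B\tensor\id)(J_B)(J_B\tensor1)]_{2,4,6}$.
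The analogous factorization holds for the other side, so~\eqref{eq:Drinfeld twist} for $K$ follows from the identities for $J_A$ and $J_B$. The counit conditions also factor, since $(\varepsilon\tensor\id)(K)=(\varepsilon_A\tensor\id)(J_A)\tensor(\varepsilon_B\tensor\id)(J_B)$ is central. Moreover $\Delta'_{A\tensor B}$ is exactly the twist of $\Delta_{A\tensor B}$ by $K$, as it should be.

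The main step is then a general lemma: if $K$ is a Drinfeld twist for $(H,\Delta)$ with twisted coproduct $\Delta_K$, then $J$ is a Drinfeld twist for $\Delta$ if and only if $K^{-1}J$ is a Drinfeld twist for $\Delta_K$. To prove it, put $J'=K^{-1}J$ and compute
\begin{align*}
(\Delta_K\tensor\id)(J')\,(J'\tensor 1)&=(K^{-1}\tensor1)(\Delta\tensor\id)(K^{-1})(\Delta\tensor\id)(J)(K\tensor1)(K^{-1}\tensor1)(J\tensor1)\\
&=\bigl[(\Delta\tensor\id)(K)(K\tensor1)\bigr]^{-1}(\Delta\tensor\id)(J)(J\tensor1),
\end{align*}
and symmetrically
$(\id\tensor\Delta_K)(J')(1\tensor J')=[(\id\tensor\Delta)(K)(1\tensor K)]^{-1}(\id\tensor\Delta)(J)(1\tensor J)$.
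Because $K$ satisfies~\eqref{eq:Drinfeld twist}, the two bracketed prefactors coincide. Hence~\eqref{eq:Drinfeld twist} for $J'$ with respect to $\Delta_K$ is equivalent to~\eqref{eq:Drinfeld twist} for $J$ with respect to $\Delta$.

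It remains to match the counit/centrality conditions. The counit of $\Delta_K$ is still $\varepsilon$, which one checks by applying $\varepsilon\tensor\id$ to $K^{-1}\Delta(x)K$ and using centrality of $(\varepsilon\tensor\id)(K)$. Then $(\varepsilon\tensor\id)(J')=(\varepsilon\tensor\id)(K)^{-1}(\varepsilon\tensor\id)(J)$, since $\varepsilon$ is multiplicative, and this is central if and only if $(\varepsilon\tensor\id)(J)$ is. The same holds on the other side.

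I expect the only real obstacle to be bookkeeping in the completion $B\wh\tensor B$: one must check that the maps $\Delta\tensor\id$ and $\varepsilon\tensor\id$ extend continuously and multiplicatively. This is implicit throughout the paper, so I will treat it as given.
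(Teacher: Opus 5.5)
Your proof is correct, and its computational core matches the paper's. The prefactor $[(J_A^{-1}\tensor 1)(\Delta_A\tensor\id)(J_A^{-1})]_{1,3,5}[(J_B^{-1}\tensor 1)(\Delta_B\tensor\id)(J_B^{-1})]_{2,4,6}$ that the paper peels off is exactly your $[(\Delta_{A\tensor B}\tensor\id)(K)(K\tensor 1)]^{-1}$. The paper likewise turns it into its mirror image using the cocycle identities for $J_A$ and $J_B$.

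The difference is in organization:
\begin{itemize}
\item The paper works with the specific $K=(J_A)_{1,3}(J_B)_{2,4}$. It chains equalities from $(\Delta'_{A\tensor B}\tensor\id)(J'_{A\tensor B})(J'_{A\tensor B}\tensor 1)$ to $(\id\tensor\Delta'_{A\tensor B})(J'_{A\tensor B})(1\tensor J'_{A\tensor B})$, invoking the cocycle identity for $J_{A\tensor B}$ as a hypothesis along the way. So it proves only the forward implication. It then gets the converse by a symmetry argument: interchange $\Delta$ and $\Delta'$, i.e.\ twist back by $J_A^{-1}$, $J_B^{-1}$, with an appeal to Remark~\ref{rem:left twist}.
\item You first isolate two facts: $K$ is itself a Drinfeld twist, and $\Delta'_{A\tensor B}=(\Delta_{A\tensor B})_K$. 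You then prove a composition lemma valid for any twist $K$ of any bialgebra. In it, both sides of the cocycle identity for $K^{-1}J$ relative to $\Delta_K$ come from the corresponding sides for $J$ relative to $\Delta$ by left multiplication by the same invertible element. This gives both implications at once, so no separate converse step is needed.
\end{itemize}

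The counit part can be shortened further: $(\Delta_K)_{K^{-1}J}=\Delta_J$ on the nose. Your formula $(\varepsilon\tensor\id)(J')=(\varepsilon\tensor\id)(K)^{-1}(\varepsilon\tensor\id)(J)$ is also the correct form of the paper's corresponding display. That display drops the inverses, which is harmless since only centrality matters.
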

\begin{proof}
We have
\begin{align*}
(\Delta'&_{A\tensor B}\tensor\id_{A\tensor B})(J'_{A\tensor B})(J'_{A\tensor B}\tensor 1_A\tensor 1_B)\\&=
(J_A^{-1})_{1,3}(J_B^{-1})_{2,4}(\Delta_{A\tensor B}\tensor 
\id_{A\tensor B})((J_A^{-1})_{1,3}(J_B^{-1})_{2,4}J_{A\tensor B})
(J_{A\tensor B}\tensor 1_A\tensor 1_B)\\
&=[(J_A^{-1}\tensor 1_A)\cdot(\Delta_A\tensor\id_A)(J_A^{-1})]_{(1,3,5)}[(J_B^{-1}\tensor 1_B)\cdot (\Delta_B\tensor\id_B)(J_B^{-1})]_{(2,4,6)}\cdot\\
&\mskip300mu
(\Delta_{A\tensor B}\tensor 
\id_{A\tensor B})(J_{A\tensor B})
(J_{A\tensor B}\tensor 1_A\tensor 1_B)\\
&=[(1_A \tensor J_A^{-1})\cdot(\id_A\tensor\Delta_A)(J_A^{-1})]_{(1,3,5)}[(1_B\tensor J_B^{-1})\cdot (\id_B\tensor\Delta_B)(J_B^{-1})]_{(2,4,6)}\cdot\\
&\mskip300mu
(\id_{A\tensor B}\tensor 
\Delta_{A\tensor B})(J_{A\tensor B})
(1_A\tensor 1_B\tensor J_{A\tensor B})\\
&=[(1_A \tensor J_A^{-1})\cdot(\id_A\tensor\Delta_A)(J_A^{-1})]_{(1,3,5)}[(1_B\tensor J_B^{-1})\cdot (\id_B\tensor\Delta_B)(J_B^{-1})]_{(2,4,6)}\cdot\\
&\mskip300mu
(\id_{A\tensor B}\tensor 
\Delta_{A\tensor B})(J_{A\tensor B})
(1_A\tensor 1_B\tensor J_{A\tensor B})\\
&=(J_A^{-1})_{3,5}(J_B^{-1})_{4,6}(\id_{A\tensor B}\tensor\Delta_{A\tensor B})(J'_{A\tensor B})(1_A\tensor 1_B\tensor J_{A\tensor B})\\
&=(J_A^{-1})_{3,5}(J_B^{-1})_{4,6}(\id_{A\tensor B}\tensor\Delta_{A\tensor B})(J'_{A\tensor B})(J_A)_{3,5}(J_B)_{4,6}
(1_A\tensor 1_B\tensor J'_{A\tensor B})\\
&=(\id_{A\tensor B}\tensor\Delta'_{A\tensor B})(J'_{A\tensor B})(1_A\tensor 1_B\tensor J'_{A\tensor B}),
\end{align*}
where we used~\eqref{eq:Drinfeld twist} for~$J_A$, $J_B$
and~$J_{A\tensor B}$. 
Furthermore, 
\begin{align*}
(\varepsilon_{A\tensor B}&\tensor\id_{A\tensor B})(J'_{A\tensor B})=((\varepsilon_A\tensor\id_A)(J_A)\tensor
(\varepsilon_A\tensor\id_B)(J_B))\cdot \\
&\mskip300mu(\varepsilon_A\tensor\varepsilon_B\tensor\id_A\tensor\id_B)(J_{A\tensor B}).
\end{align*}
The first factor is contained in $Z(A)\tensor Z(B)\subset Z(A\tensor B)$ while the second is contained in~$Z(A\tensor B)$.
The assertion for $\id_{A\tensor B}\tensor\varepsilon_{A\tensor B}$ is proved similarly. The converse follows by interchanging the role of~$\Delta$ and~$\Delta'$ and using Remark~\ref{rem:left twist}.
\end{proof}
\begin{corollary}\label{cor:twist from rel twist}
Let~$A$ and~$B$ be bialgebras with respective comultiplications~$\Delta_A:A\to A\tensor A$ and~$\Delta_B:B\to B\tensor B$.  
Suppose 
that~$F\in B\tensor A$ is a relative Drinfeld twist for~$(A\tensor B,\Delta_{A\tensor B})$ and that~$\Delta_A$ (respectively, $\Delta_B$)
is obtained by twisting another comultiplication $\Delta'_A:A\to A\tensor A$ (respectively, $\Delta'_B:B\to B\tensor B$)
by some Drinfeld twist~$J_A\in A\tensor A$ (respectively, $J_B\in B\tensor B$).
Then $(J_A)_{13}(J_B)_{24}F_{23}$ is a Drinfeld twist for~$((A\tensor B)^{\tensor 2},\Delta'_{A\tensor B})$ where $\Delta'_{A\tensor B}=(\Delta'_A)_{1,3}\circ(\Delta'_B)_{2,4}$.
\end{corollary}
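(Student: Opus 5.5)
The plan is to deduce this from Lemma~\ref{lem:rel twist from twist}, applied to the pair $(A\tensor B,\Delta'_{A\tensor B})$ with the twists $J_A$, $J_B$. The setup is as follows. By hypothesis $\Delta_A=(\Delta'_A)_{J_A}$ and $\Delta_B=(\Delta'_B)_{J_B}$, where $J_A$, $J_B$ are Drinfeld twists for $\Delta'_A$, $\Delta'_B$. Also by hypothesis, $F$ is a relative Drinfeld twist for $(A\tensor B,\Delta_{A\tensor B})$. By the definition in \S\ref{subs:rel twist}, this means $F_{23}=1_A\tensor F\tensor 1_B$ is a Drinfeld twist for $(A\tensor B,\Delta_{A\tensor B})$, with $\Delta_{A\tensor B}=(\Delta_A)_{1,3}\circ(\Delta_B)_{2,4}$.

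The key observation is that $\Delta_{A\tensor B}$ is itself the twist of $\Delta'_{A\tensor B}$ by $(J_A)_{1,3}(J_B)_{2,4}$. Indeed, for $a\in A$, $b\in B$,
$$
\Delta_{A\tensor B}(a\tensor b)=(J_A^{-1}\Delta'_A(a)J_A)_{1,3}(J_B^{-1}\Delta'_B(b)J_B)_{2,4}.
$$
Since legs $\{1,3\}$ and $\{2,4\}$ are disjoint, these factors commute past each other, and the right-hand side equals $((J_A)_{1,3}(J_B)_{2,4})^{-1}\Delta'_{A\tensor B}(a\tensor b)(J_A)_{1,3}(J_B)_{2,4}$.

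Now apply Lemma~\ref{lem:rel twist from twist} in the role where the ``original'' comultiplications are $\Delta'_A$, $\Delta'_B$ and the twisted ones are $\Delta_A$, $\Delta_B$. Take $J_{A\tensor B}:=(J_A)_{1,3}(J_B)_{2,4}F_{23}$. Then
$$
J'_{A\tensor B}=(J_A)_{1,3}^{-1}(J_B)_{2,4}^{-1}J_{A\tensor B}=F_{23}.
$$
The lemma says $J_{A\tensor B}$ is a Drinfeld twist for $\Delta'_{A\tensor B}$ if and only if $J'_{A\tensor B}=F_{23}$ is a Drinfeld twist for the twisted comultiplication $\Delta_{A\tensor B}$. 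The latter holds by assumption, which gives the claim.

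The main point to check is that the lemma is really stated in the direction needed, and that its "converse" part, proved via Remark~\ref{rem:left twist} by swapping the roles of $\Delta$ and $\Delta'$, covers this case. If it does not, I would instead redo the computation in the proof of Lemma~\ref{lem:rel twist from twist} with $\Delta'$ in place of $\Delta$. That computation expands $(\Delta'_{A\tensor B}\tensor\id)(J_{A\tensor B})(J_{A\tensor B}\tensor1)$ using the cocycle identity~\eqref{eq:Drinfeld twist} for $J_A$, $J_B$, and uses $\Delta_{A\tensor B}=(\Delta'_{A\tensor B})_{(J_A)_{13}(J_B)_{24}}$ to convert $(\Delta'\tensor\id)(F_{23})$ into a conjugate of $(\Delta\tensor\id)(F_{23})$. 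One then applies~\eqref{eq:Drinfeld twist} for $F_{23}$ with respect to $\Delta_{A\tensor B}$, and reassembles the result symmetrically.

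The counit conditions are straightforward, since $(\varepsilon\tensor\id)$ of a product of factors on disjoint legs factorizes into central pieces, as at the end of the proof of Lemma~\ref{lem:rel twist from twist}.
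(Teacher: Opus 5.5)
Your proposal is correct and matches the paper's intent: the corollary is stated without proof as an immediate consequence of Lemma~\ref{lem:rel twist from twist}, applied with the roles of $\Delta$ and $\Delta'$ swapped and $J_{A\otimes B}=(J_A)_{13}(J_B)_{24}F_{23}$, so that $J'_{A\otimes B}=F_{23}$. The direction you need is the lemma's converse. It is indeed available: $J_A^{-1}$ and $J_B^{-1}$ are Drinfeld twists for the twisted comultiplications, and twisting by them undoes the original twist, which is exactly the role-swap the paper invokes via Remark~\ref{rem:left twist}.
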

We conclude this section with a natural generalization
of the defining property of an R-matrix.
\begin{proposition}\label{prop:rel twist comult prop}
Let~$A$, $B$ be bialgebras with respective comultiplications $\Delta_A$, $\Delta_B$
and let~$F\in B\wh\tensor A$ be a relative Drinfeld twist. Let $U$
be a coalgebra with the comultiplication $\Delta_U$
and let $\psi_A:U\to A$, $\psi_B:U\to B$ be homomorphisms of coalgebras. Then $(\psi_A\tensor\psi_B)\circ \Delta_U:U\to A\tensor B$ is a homomorphism of coalgebras,
where the natural comultiplication on~$A\tensor B$ is twisted by~$F_{2,3}$, if and only if
$$
F\cdot((\psi_B\tensor\psi_A)\circ\Delta_U)(u))=(\tau\circ(\psi_A\tensor\psi_B)\circ\Delta_U(u))\cdot F,\qquad u\in U.
$$
\end{proposition}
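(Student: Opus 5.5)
The plan is to unwind both sides of the coalgebra-homomorphism condition in Sweedler notation and compare them leg by leg. Write $\varphi:=(\psi_A\tensor\psi_B)\circ\Delta_U$. The comultiplication on $A\tensor B$ twisted by $F_{2,3}$ is $\Delta'(x)=F_{2,3}^{-1}\Delta_{A\tensor B}(x)F_{2,3}$. Here the four tensor legs of $(A\tensor B)^{\tensor 2}$ are ordered $A,B,A,B$, so $F_{2,3}$ occupies the middle $B\tensor A$.

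First I would dispose of the counit. Since $\psi_A,\psi_B$ are coalgebra maps,
$$(\varepsilon_A\tensor\varepsilon_B)\circ\varphi=(\varepsilon_U\tensor\varepsilon_U)\circ\Delta_U=\varepsilon_U.$$
The counit of the twisted structure is still $\varepsilon_{A\tensor B}$, so counit compatibility is automatic. Hence $\varphi$ is a coalgebra homomorphism if and only if $\Delta_{A\tensor B}(\varphi(u))\,F_{2,3}=F_{2,3}\,(\varphi\tensor\varphi)(\Delta_U(u))$ for all $u\in U$.

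Next I would compute both sides using that $\psi_A,\psi_B$ are coalgebra maps together with coassociativity of $\Delta_U$:
$$\Delta_{A\tensor B}(\varphi(u))=\psi_A(u_{(1)})\tensor\psi_B(u_{(3)})\tensor\psi_A(u_{(2)})\tensor\psi_B(u_{(4)}),$$
$$(\varphi\tensor\varphi)\Delta_U(u)=\psi_A(u_{(1)})\tensor\psi_B(u_{(2)})\tensor\psi_A(u_{(3)})\tensor\psi_B(u_{(4)}).$$
Regroup by applying $\Delta_U$ twice so that the outer legs carry $u_{(1)}$ and $u_{(3)}$ and the middle legs carry $\Delta_U(u_{(2)})$. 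In the first expression the middle two legs become $\tau\circ(\psi_A\tensor\psi_B)\circ\Delta_U(u_{(2)})$, and in the second they become $(\psi_B\tensor\psi_A)\circ\Delta_U(u_{(2)})$. Since $F_{2,3}$ acts only on the middle legs, the condition reads
$$\psi_A(u_{(1)})\tensor\bigl[\tau\bigl((\psi_A\tensor\psi_B)\Delta_U(u_{(2)})\bigr)\cdot F\bigr]\tensor\psi_B(u_{(3)})=\psi_A(u_{(1)})\tensor\bigl[F\cdot(\psi_B\tensor\psi_A)\Delta_U(u_{(2)})\bigr]\tensor\psi_B(u_{(3)}).$$

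Sufficiency of the stated identity is then immediate: substitute it for $u_{(2)}$. For necessity I would apply $\varepsilon_A\tensor\id_B\tensor\id_A\tensor\varepsilon_B$ to both sides. This does not touch $F$, and $\varepsilon_A\circ\psi_A=\varepsilon_U=\varepsilon_B\circ\psi_B$, so the counit axiom collapses $u_{(1)},u_{(2)},u_{(3)}$ back to $u$. What remains is exactly $F\cdot(\psi_B\tensor\psi_A)\Delta_U(u)=\tau\bigl((\psi_A\tensor\psi_B)\Delta_U(u)\bigr)\cdot F$.

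The only delicate point is the bookkeeping of Sweedler indices under the leg ordering $A,B,A,B$, specifically correctly identifying that the middle legs of $\Delta_{A\tensor B}(\varphi(u))$ carry the flipped pair. If $F$ lives in a completion, I expect the identities above to hold verbatim in $(A\tensor B)^{\wh\tensor 2}$, since multiplication by $F_{2,3}$ and the counit maps extend continuously.
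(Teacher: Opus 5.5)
Your proposal is correct and is essentially the paper's proof: the same reduction to $\Delta_{A\tensor B}(\varphi(u))\,F_{2,3}=F_{2,3}\,(\varphi\tensor\varphi)(\Delta_U(u))$, followed by the same Sweedler expansion of both sides. You are more complete than the paper, which asserts the final equivalence without comment; you also check counit compatibility and justify both directions, using substitution at $u_{(2)}$ for sufficiency and applying $\varepsilon_A\tensor\id_B\tensor\id_A\tensor\varepsilon_B$ to the outer legs for necessity.
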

\begin{proof}
Abbreviate~$\boldsymbol{\Delta}=(\psi_A\tensor\psi_B)\circ\Delta_U$. Then $\boldsymbol{\Delta}:U\to A\tensor B$ is
a homomorphism of coalgebras if and only if 
\begin{equation}\label{eq:Bdelt hom coalg}
(\Delta_{A\tensor B}(\boldsymbol{\Delta}(u)))\cdot F_{2,3}=F_{2,3}\cdot (\boldsymbol{\Delta}\tensor\boldsymbol{\Delta})(\Delta_U(u)),\qquad u\in U.
\end{equation}
Since
\begin{align*}
\Delta_{A\tensor B}(\boldsymbol{\Delta}(u))&=
\Delta_{A\tensor B}(\psi_A(u_{(1)})\tensor \psi_B(u_{(2)}))=
\Delta_A(\psi_A(u_{(1)}))_{1,3}\cdot 
\Delta_B(\psi_B(u_{(2)}))_{2,4}\\
&=(\psi_A\tensor\psi_A)(u_{(1)}\tensor u_{(2)})_{1,3}
\cdot(\psi_B\tensor\psi_B)(u_{(3)}\tensor u_{(4)})_{2,4}\\
&=\psi_A(u_{(1)})\tensor\psi_B(u_{(3)})\tensor \psi_A(u_{(2)})\tensor\psi_B(u_{(4)}),
\end{align*}
while
\begin{align*}
  (\boldsymbol{\Delta}\tensor\boldsymbol{\Delta})(\Delta_U(u))&=\boldsymbol{\Delta}(u_{(1)})\tensor 
  \boldsymbol{\Delta}(u_{(2)})
  =
  \psi_A(u_{(1)})\tensor \psi_B(u_{(2)})\tensor \psi_A(u_{(3)})\tensor \psi_B(u_{(4)}),
\end{align*}
it follows that~\eqref{eq:Bdelt hom coalg} is equivalent to
$$
F\cdot (\psi_B(u_{(1)})\tensor \psi_A(u_{(2)}))=
(\psi_B(u_{(2)})\tensor \psi_A(u_{(1)}))\cdot F,\qquad u\in U
$$
which is the assertion.
\end{proof}

\subsection{Duality in the quantum case}\label{subs:quant dual}
In this section we go over the dual analogues 
of various constructions discussed above. Since most 
arguments are very similar to those in previous
quantum sections, we omit most of them for the sake of brevity.

We say that
$\mathcal R\in\Hom_\kk(B\tensor B,\kk)$ is a 
{\em co-quasi-triangular structure} on a bialgebra~$B$
if~$\mathcal R$ is $\ast$-invertible
and
\begin{align}
&\mathcal R(a_{(1)},a'_{(1)})a_{(2)}a'_{(2)}=\mathcal R(a_{(2)},a'_{(2)})a'_{(1)}a_{(1)},\qquad a,a'\in B
\label{eq:coquas triang 1}\\
&\mathcal R\circ (m_B\tensor\id_B)=\mathcal R_{1,3}\ast \mathcal R_{2,3},\quad  
\mathcal R\circ (\id_B\tensor m_B)=\mathcal R_{1,3}\ast \mathcal R_{1,2},\label{eq:coquas triang 2}
\end{align}
where~$m_B:B\tensor B\to B$ is the multiplication map. Henceforth we will often use the ``bilinear form'' notation for elements of~$\Hom_\kk(B\tensor B,\kk)$.
A particularly important case is when~$\mathcal R$
is {\em counital}, that is
\begin{equation}\label{eq:counital}
\mathcal R(a,1)=\varepsilon(a)=\mathcal R(1,a),\qquad a\in B,
\end{equation}
which holds, for example, for quantized coordinate algebras on
reductive groups. 
The following is the co-quasi-triangular 
analogue of Proposition~\ref{prop:cQYBE}. 
\begin{proposition}\label{prop:dual cQYBE}
Let~$\{\mathcal R^{(c)}\}_{c\in C}\subset \Hom_\kk(B\tensor B,\kk)$ be a family 
of co-quasi-triangular structures on~$B$.
Then for all~$\{i,j,k\}=\{1,2,3\}$
\begin{equation}\label{eq:coquas cQYBE}
\mathcal R_{i,j}^{(c)}\ast \mathcal R_{i,k}^{(c')}
\ast \mathcal R_{j,k}^{(c'')}=\mathcal R_{j,k}^{(c'')}
\ast \mathcal R_{i,k}^{(c')}\ast \mathcal R_{i,j}^{(c)},\qquad 
c'\in \{c,c''\}\subset C.
\end{equation}
\end{proposition}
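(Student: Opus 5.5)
We dualize the proof of Proposition~\ref{prop:cQYBE}. By the symmetry of the statement under permutations of the three tensor factors of~$B^{\tensor 3}$, it suffices to treat $(i,j,k)=(1,2,3)$. The general case follows by precomposing both sides of~\eqref{eq:coquas cQYBE} with the permutation~$\widehat\sigma$ of factors, which is a coalgebra automorphism of~$B^{\tensor 3}$. By~\eqref{eq:hom coalg}, precomposition with it respects~$\ast$ and sends $\mathcal R_{a,b}$ to $\mathcal R_{\sigma(a),\sigma(b)}$. We fix $c,c''\in C$ and treat the cases $c'=c$ and $c'=c''$ separately.

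For $c'=c$ the goal is
\[
\mathcal R^{(c)}_{1,2}\ast\mathcal R^{(c)}_{1,3}\ast\mathcal R^{(c'')}_{2,3}=\mathcal R^{(c'')}_{2,3}\ast\mathcal R^{(c)}_{1,3}\ast\mathcal R^{(c)}_{1,2}.
\]
We first use the second identity in~\eqref{eq:coquas triang 2} for $\mathcal R^{(c)}$, which gives $\mathcal R^{(c)}_{1,3}\ast\mathcal R^{(c)}_{1,2}=\mathcal R^{(c)}\circ(\id_B\tensor m_B)$. The quantity to compute is then
\[
\mathcal R^{(c'')}(b_{(1)},b'_{(1)})\,\mathcal R^{(c)}(a,b_{(2)}b'_{(2)}),
\]
evaluated on $a\tensor b\tensor b'$ with Sweedler indices arranged by the convolution. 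Next we apply~\eqref{eq:coquas triang 1} for $\mathcal R^{(c'')}$, namely $\mathcal R^{(c'')}(b_{(1)},b'_{(1)})\,b_{(2)}b'_{(2)}=\mathcal R^{(c'')}(b_{(2)},b'_{(2)})\,b'_{(1)}b_{(1)}$, inside the second argument of $\mathcal R^{(c)}$. This moves $\mathcal R^{(c'')}$ to the right and turns $b_{(2)}b'_{(2)}$ into $b'_{(1)}b_{(1)}$. Finally, $\mathcal R^{(c)}(a,b'b)$ is again expanded by~\eqref{eq:coquas triang 2}, which gives $\mathcal R^{(c)}(a_{(1)},b)\,\mathcal R^{(c)}(a_{(2)},b')$ with the appropriate ordering. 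Tracking Sweedler indices, the result is the left-hand side $\mathcal R^{(c)}_{1,2}\ast\mathcal R^{(c)}_{1,3}\ast\mathcal R^{(c'')}_{2,3}$.

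The case $c'=c''$ is symmetric. We use the first identity in~\eqref{eq:coquas triang 2} for $\mathcal R^{(c'')}$ to combine $\mathcal R^{(c'')}_{1,3}\ast\mathcal R^{(c'')}_{2,3}$ into $\mathcal R^{(c'')}\circ(m_B\tensor\id_B)$. We then apply~\eqref{eq:coquas triang 1} for $\mathcal R^{(c)}$ to the product $a_{(\cdot)}b_{(\cdot)}$ in the first argument, and split again with~\eqref{eq:coquas triang 2}. Note that~\eqref{eq:coquas triang 1} is used for one structure while~\eqref{eq:coquas triang 2} is used for the other. This is exactly why only the case $c'\in\{c,c''\}$ works, in parallel with the proof of Proposition~\ref{prop:cQYBE}, where conjugation by $R^{(c'')}_{12}$ and the factorization of $(\Delta\tensor\id)(R^{(c)})$ played those roles.

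The main obstacle is purely bookkeeping. We must check that the convolution orderings $\mathcal R_{a,b}\ast\mathcal R_{c,d}$ correspond to the correct distribution of Sweedler indices, so that after applying~\eqref{eq:coquas triang 1} the reversed product $b'_{(1)}b_{(1)}$ (rather than $b_{(1)}b'_{(1)}$) lands in the slot that~\eqref{eq:coquas triang 2} splits into the required order. We will handle this by writing every term explicitly as a scalar on $a\tensor b\tensor b'$ with fully indexed coproducts, using coassociativity to renumber. No $\ast$-invertibility is needed in this direction: we never cancel a factor, and only rewrite one side into the other.
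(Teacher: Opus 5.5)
Your proposal is correct and is essentially the paper's proof. For $c'=c''$ you merge $\mathcal R^{(c'')}_{1,3}\ast\mathcal R^{(c'')}_{2,3}$ via the first identity in \eqref{eq:coquas triang 2}, commute using \eqref{eq:coquas triang 1} for $\mathcal R^{(c)}$, and split again; for $c'=c$ you run the mirror argument, which the paper carries out by applying $\mathcal R^{(c'')}(-,a'')$ (resp.\ $\mathcal R^{(c)}(a'',-)$) to both sides of \eqref{eq:coquas triang 1}. Your reduction to $(i,j,k)=(1,2,3)$ via \eqref{eq:hom coalg} and the coalgebra automorphism $\widehat\sigma$ just spells out what the paper covers with ``as before''.
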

\begin{proof}
As before, it suffices to prove~\eqref{eq:coquas cQYBE} for $(i,j,k)=(1,2,3)$.
Let~$a,a',a''\in B$.
Applying~$\mathcal R^{(c'')}(-,a'')$
to the left hand side of~\eqref{eq:coquas triang 1} 
with~$\mathcal R=\mathcal R^{(c)}$ yields 
\begin{align*}
\mathcal R^{(c)}(&a_{(1)},a'_{(1)})\mathcal R^{(c'')}(a_{(2)}a'_{(2)},a'')=
\mathcal R^{(c)}(a_{(1)},a'_{(1)})\mathcal R^{(c'')}(a_{(2)},a''_{(1)})\mathcal R^{(c'')}(a'_{(2)},a''_{(2)})
\\
&=\mathcal R_{1,2}^{(c)}(a_{(1)}\tensor a'_{(1)}\tensor a''_{(1)})
\mathcal R_{1,3}^{(c'')}(a_{(2)}\tensor a'_{(2)}\tensor a''_{(2)})\mathcal R_{2,3}^{(c'')}(a_{(3)}\tensor a'_{(3)}\tensor a''_{(3)})
\\
&=(\mathcal R_{1,2}^{(c)}\ast \mathcal R_{1,3}^{(c'')}\ast 
\mathcal R_{2,3}^{(c'')})(a\tensor a'\tensor a''),
\end{align*}
where we used the first equality in~\eqref{eq:coquas triang 2}. 
On the other hand, applying~$\mathcal R^{(c'')}(-,a'')$
to the right hand side of~\eqref{eq:coquas triang 1}
with the same convention
we obtain 
\begin{align*}
\mathcal R^{(c)}(&a_{(2)},a'_{(2)})\mathcal R^{(c'')}(a'_{(1)}a_{(1)},a'')=\mathcal R^{(c)}(a_{(2)},a'_{(2)})\mathcal R^{(c'')}(a'_{(1)},a''_{(1)})\mathcal R^{(c'')}(a_{(1)},a''_{(2)})
\\
&=\mathcal R_{2,3}^{(c'')}(a_{(1)}\tensor a'_{(1)}\tensor a''_{(1)})\mathcal R_{1,3}^{(c'')}(a_{(2)}\tensor a'_{(2)}\tensor a''_{(2)})
\mathcal R_{1,2}^{(c)}(a_{(3)}\tensor a'_{(3)}\tensor a''_{(3)})\\
&=(\mathcal R_{2,3}^{(c'')}\ast\mathcal R_{1,3}^{(c'')}
\ast\mathcal R_{1,2}^{(c)})(a\tensor a'\tensor a''),
\end{align*}
which yields~\eqref{eq:coquas cQYBE} with~$c'=c''$. The
case~$c=c'$ is proven similarly
by applying $\mathcal R^{(c)}(a'',-)$ to both sides of \eqref{eq:coquas triang 1} with~$\mathcal R=\mathcal R^{(c'')}$ and using
the second equality in~\eqref{eq:coquas triang 2}.
\end{proof}
Like in the case of R-matrices, the basic example of such a family is provided by
\begin{lemma}\label{lem:basic coquas family}
Let~$B$ be a bialgebra and let~$\mathcal R\in\Hom_\kk(B\tensor B,\kk)$ be a co-quasi-triangular
structure. Then~$\mathcal R^{(-1)}:=\mathcal R^{\ast-1}\circ\tau$ is
also a co-quasi-triangular structure.
\end{lemma}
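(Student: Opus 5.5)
We dualize the proof of Lemma~\ref{lem:Rop inv}, checking the defining identities \eqref{eq:coquas triang 1} and \eqref{eq:coquas triang 2} for $\mathcal R^{(-1)}=\mathcal R^{\ast-1}\circ\tau$ one at a time. First, $\ast$-invertibility is immediate. The flip $\tau:B\tensor B\to B\tensor B$ is a coalgebra automorphism of $B\tensor B$ with its tensor product coalgebra structure, so by \eqref{eq:hom coalg inv} the $\ast$-inverse of $\mathcal R^{\ast-1}\circ\tau$ is $\mathcal R\circ\tau$.

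Next comes \eqref{eq:coquas triang 1} for $\mathcal R^{(-1)}$, which in bilinear-form notation reads
$\mathcal R^{\ast-1}(a'_{(1)},a_{(1)})a_{(2)}a'_{(2)}=\mathcal R^{\ast-1}(a'_{(2)},a_{(2)})a'_{(1)}a_{(1)}$.
We obtain it from \eqref{eq:coquas triang 1} for $\mathcal R$ with the roles of $a,a'$ exchanged:
$\mathcal R(a'_{(1)},a_{(1)})a'_{(2)}a_{(2)}=\mathcal R(a'_{(2)},a_{(2)})a_{(1)}a'_{(1)}$.
This is condition~\ref{lem:old mult from twisted.i} of Lemma~\ref{lem:old mult from twisted} on the coalgebra $C=B\tensor B$ with $\mathcal S=\mathcal R\circ\tau$, $f=m\circ\tau$ and $g=m$. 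Hence condition~\ref{lem:old mult from twisted.iii} holds, which says exactly that
$m(a\tensor a')$ convolved on the right with $\mathcal S^{\ast-1}$ equals $\mathcal S^{\ast-1}$ convolved with $m\circ\tau$. Since $\mathcal S^{\ast-1}=\mathcal R^{\ast-1}\circ\tau=\mathcal R^{(-1)}$, this is the desired identity up to relabeling. We will need to be careful about which of the two sides carries $m$ and which carries $m\circ\tau$ in the Sweedler indices.

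Finally we check the multiplicativity conditions \eqref{eq:coquas triang 2}. We have
$\mathcal R^{(-1)}\circ(m\tensor\id)(a,a',a'')=\mathcal R^{\ast-1}(a'',aa')$.
Taking $\ast$-inverses in the second identity of \eqref{eq:coquas triang 2}, composed with the coalgebra map $m$, uses \eqref{eq:hom coalg inv} together with the fact that $m$ is a coalgebra map. This gives
$\mathcal R^{\ast-1}\circ(\id\tensor m)=\mathcal R_{1,2}^{\ast-1}\ast\mathcal R_{1,3}^{\ast-1}$.
Here we use that $(\mathcal R_{i,j})^{\ast-1}=(\mathcal R^{\ast-1})_{i,j}$, which again follows from \eqref{eq:hom coalg inv} applied to the coalgebra projections $B^{\tensor3}\to B^{\tensor2}$ given by applying $\varepsilon$ to the remaining leg. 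Precomposing with the cyclic permutation $a\tensor a'\tensor a''\mapsto a''\tensor a\tensor a'$, which is a coalgebra automorphism and so preserves $\ast$, turns $\mathcal R^{\ast-1}_{1,2}\ast\mathcal R^{\ast-1}_{1,3}$ into $\mathcal R^{(-1)}_{1,3}\ast\mathcal R^{(-1)}_{2,3}$, as required. The second identity in \eqref{eq:coquas triang 2} is obtained symmetrically from the first identity for $\mathcal R$.

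The main obstacle is the bookkeeping in the step verifying \eqref{eq:coquas triang 1}, where the leg permutations must match exactly. Everything else is formal inversion under convolution. If Lemma~\ref{lem:old mult from twisted} proves awkward to apply, the fallback is to verify the identity directly: multiply \eqref{eq:coquas triang 1} for $\mathcal R$ on both sides by $\mathcal R^{\ast-1}$ evaluated on the outer Sweedler components, and cancel using $\mathcal R\ast\mathcal R^{\ast-1}=\varepsilon$.
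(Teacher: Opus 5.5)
Your proposal is correct and is essentially the paper's argument. For \eqref{eq:coquas triang 1} you use Lemma~\ref{lem:old mult from twisted} on $C=B\tensor B$; your choice $\mathcal S=\mathcal R\circ\tau$, $f=m_B\circ\tau$, $g=m_B$ is just the paper's $\mathcal S=\mathcal R$, $f=m_B$, $g=m_B\circ\tau$ with $a,a'$ interchanged before rather than after applying the lemma. For \eqref{eq:coquas triang 2} you invert the other identity via \eqref{eq:hom coalg inv} for the coalgebra map $\id_B\tensor m_B$ (not $m$, as you wrote) and precompose with $\tau_{1,2}\circ\tau_{2,3}$, exactly as the paper does. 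The only slip is your verbal paraphrase of condition~\ref{lem:old mult from twisted.iii}: with your $f,g$ it reads $(f\tensor\mathcal S^{\ast-1})\circ\Delta=(\mathcal S^{\ast-1}\tensor g)\circ\Delta$, i.e.\ $\mathcal R^{\ast-1}(a'_{(2)},a_{(2)})a'_{(1)}a_{(1)}=\mathcal R^{\ast-1}(a'_{(1)},a_{(1)})a_{(2)}a'_{(2)}$, which is exactly the target with no relabeling needed, whereas the version you described (with $m_B$ and $m_B\circ\tau$ swapped) would be the wrong identity.
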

\begin{proof}
Using~\eqref{eq:coquas triang 1} with $a$ and~$a'$
interchanged and 
Lemma~\ref{lem:old mult from twisted} with~$C=B\tensor B$,
$\mathcal S=\mathcal R$, $V=B$,
$f=m_B$ and~$g=m_B\circ\tau$ we obtain for all $a,a'\in B$,
\begin{align*}
\mathcal R^{\ast-1}(a'_{(2)},a_{(2)})a'_{(1)}a_{(1)}
=\mathcal R^{\ast-1}(a'_{(1)},a_{(1)})a_{(2)}a'_{(2)},
\end{align*}
which is~\eqref{eq:coquas triang 1} for~$\mathcal R^{(-1)}$. Furthermore, $\mathcal R^{(-1)}_{1,3}\ast \mathcal R^{(-1)}_{2,3}=(\mathcal R^{\ast-1}_{1,2}\ast\mathcal R^{\ast-1}_{1,3})\circ\tau_{1,2}\circ\tau_{2,3}
=(\mathcal R_{1,3}\ast\mathcal R_{1,2})^{\ast-1}\circ
\tau_{1,2}\circ\tau_{2,3}$, and so to prove
that~$\mathcal R^{(-1)}$ satisfies the first 
identity in~\eqref{eq:coquas triang 2}, 
it suffices
to prove that~$\mathcal R^{(-1)}\circ (m_B\tensor\id_B)
\circ\tau_{2,3}\circ\tau_{1,2}=(\mathcal R_{1,2}\ast\mathcal R_{1,3})^{\ast-1}$, or,
equivalently, that~$\mathcal R^{\ast-1}\circ (\id_B\tensor m_B)=(\mathcal R_{1,3}\ast\mathcal R_{1,2})^{\ast-1}$. The latter
follows from~\eqref{eq:coquas triang 2} and~\eqref{eq:hom coalg inv} since~$\id_B\tensor m_B:B^{\tensor 3}\to B^{\tensor 2}$
is a homomorphism of coalgebras. The second identity in~\eqref{eq:coquas triang 2} for~$\mathcal R^{(-1)}$ is proven similarly.
\end{proof}

Note that any (restricted) dual~$B^\star\subset B^\smallsquare$ of~$B$ is also a bialgebra, the product being~$\ast$
while~$\Delta(f)(b\tensor b')=f(bb')$,
$f\in B^\star$, $b,b'\in B$ and~$\varepsilon(f)=f(1)$. 
\begin{lemma}\label{lem:natural pairing}
Let~$B$ be a finitely generated bialgebra, let
$R,R'\in B^{\wh\tensor n}$ and let~$B^\star\subset B^\smallsquare$ be a bialgebra. Define $f,f':B^\star{}^{\tensor n}\to\kk$ by, respectively,
$f(a):=a(R)$, $f'(a):=a(R')$, $a\in B^\star{}^{\tensor n}$. Then
$(f\ast f')(a)=a(RR')$ for all~$a\in B^\star{}^{\tensor n}$.
In particular, if~$R$ is invertible then~$f$ is~$\ast$-invertible and~$f^{\ast-1}(a)=a(R^{-1})$, $a\in B^{\star\tensor n}$.
\end{lemma}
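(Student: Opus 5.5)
The identity $(f\ast f')(a)=a(RR')$ should follow from the definition of the coproduct on $B^\star$ as the transpose of multiplication in $B$. The natural order is: first pure tensors in $B^{\tensor n}$, then the completion $B^{\wh\tensor n}$ by continuity, then $n$ factors, and finally invertibility.

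\emph{Pure tensors.} Take $a=g_1\tensor\cdots\tensor g_n$ with $g_i\in B^\star$. The comultiplication on $B^\star{}^{\tensor n}$ is the standard one, so
$$
(f\ast f')(a)=f(a_{(1)})f'(a_{(2)})=\prod_{i}\;(g_i)_{(1)}(R_i)\,(g_i)_{(2)}(R'_i)
$$
for $R=R_1\tensor\cdots\tensor R_n$ and $R'=R'_1\tensor\cdots\tensor R'_n$ written in Sweedler-like notation. Since $\Delta(g)(b\tensor b')=g(bb')$, each factor collapses to $g_i(R_iR'_i)$, and the product is $a(RR')$. This settles the case $R,R'\in B^{\tensor n}$, because both sides are bilinear in $(R,R')$ and linear in $a$.

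\emph{Passage to the completion.} This is the main obstacle, since one has to make sense of $\Delta(g)$ paired against elements of the completion and make sure nothing diverges. Each $g_i\in B^\star\subset B^\smallsquare$ vanishes on $K_B^{r}$ for some $r$. The evaluation $a(X)$ for $X\in B^{\wh\tensor n}$ depends only on the image of $X$ in $B^{\tensor n}/K^r$, where $K=K_{B^{\tensor n}}$ and $r$ is large (Lemma~\ref{lem:tens completion}). Multiplication is compatible with the filtration because $K^r\supset\sum_{s+t=r}K^sK^t$, so $K^{2r}$ absorbs every product in which one factor lies in $K^r$. Hence the image of $RR'$ modulo $K^{r}$ is determined by the images of $R$ and $R'$ modulo $K^{r}$.

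So I will choose truncations $R^{\circ},R'^{\circ}\in B^{\tensor n}$ agreeing with $R,R'$ modulo $K^{N}$, with $N$ large enough that $a$ kills $K^N$. For the left side I also need $f(a_{(1)})$ and $f'(a_{(2)})$ to see only these truncations. Here I will use that $\Delta(g)$, for $g$ vanishing on $K_B^N$, is a finite sum of terms $h\tensor h'$ with $h,h'$ vanishing on a fixed power of $K_B$. This holds because $g$ factors through the finite-dimensional algebra $B/K_B^N$ (locally finite pair, Lemma~\ref{lem:counit locally finite}), so $\Delta(g)=g\circ m$ vanishes on $K_B^N\tensor B+B\tensor K_B^N$. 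With that, both sides reduce to the already proved polynomial case.

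\emph{Invertibility.} If $R$ is invertible in $B^{\wh\tensor n}$, apply the identity to $R'=R^{-1}$ and to the pair $(R^{-1},R)$. This gives $(f\ast f^{-})(a)=a(1)=\varepsilon(a)$ and likewise in the other order, where $f^{-}(a):=a(R^{-1})$ and the counit of $B^\star{}^{\tensor n}$ is evaluation at $1$. So $f^{-}$ is the two-sided $\ast$-inverse of $f$.
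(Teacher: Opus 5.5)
Your proof is correct and follows the paper's argument: the paper's proof is the one-line computation $(f\ast f')(a)=f(a_{(1)})f'(a_{(2)})=a_{(1)}(R)\,a_{(2)}(R')=a(RR')$, which uses $\Delta(g)(b\tensor b')=g(bb')$ on $B^\star$. Your truncation step spells out the passage to $B^{\wh\tensor n}$, which the paper leaves tacit; it could be shortened by choosing one exponent large enough for the finitely many terms of $\Delta(a)$. One phrase should be fixed: a product with a factor in $K^r$ lies in $K^r$, not in $K^{2r}$, but it is exactly the two-sided ideal property of $K^r$ that gives your conclusion that $RR'$ modulo $K^r$ depends only on $R$ and $R'$ modulo $K^r$.
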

\begin{proof}
Let~$a\in B^\star{}^{\tensor n}$ and 
write~$\Delta_{B^\star{}^{\tensor n}}(a)=a_{(1)}\tensor a_{(2)}$ in Sweedler notation.
Then~$(f\ast f')(a)=f(a_{(1)})f'(a_{(2)})=
a_{(1)}(R)a_{(2)}(R')=a(RR')$ for all~$a\in B^\star$. The
last assertion is immediate.
\end{proof}
The following is immediate from the definition,
Lemma~\ref{lem:natural pairing}
and Corollary~\ref{cor:complet tensor}.
\begin{lemma}
Let~$B$ be a (topological) quasi-triangular bialgebra with an R-matrix~$R$. Then for any (restricted) dual bialgebra~$B^\star\subset B^\smallsquare$,
$\mathcal R\in\Hom_\kk(B^\star\tensor B^\star,\kk)$
defined by~$\mathcal R(f,f')=(f\tensor f')(R)$,
$f,f'\in B^\star$, is a co-quasi-triangular structure.
\end{lemma}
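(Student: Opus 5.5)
The plan is to verify the three defining properties of a co-quasi-triangular structure for $\mathcal R(f,f')=(f\tensor f')(R)$, namely $\ast$-invertibility, \eqref{eq:coquas triang 1} and \eqref{eq:coquas triang 2}. Each will be obtained by pairing the corresponding axiom for $R$ against elements of $B^\star{}^{\tensor 2}$ or $B^\star{}^{\tensor 3}$. Corollary~\ref{cor:complet tensor} guarantees that these pairings with elements of the completions $B^{\wh\tensor n}$ are well defined. Throughout I use that the structure maps of $B^\star$ are transposes of those of $B$. Thus the product on $B^\star$ is $\ast$, dual to $\Delta$. The coproduct is $\Delta(f)(b\tensor b')=f(bb')$, dual to $m_B$. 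The counit $\varepsilon(f)=f(1)$ is dual to the unit.

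The first step is $\ast$-invertibility. Lemma~\ref{lem:natural pairing} with $n=2$ and the invertible $R\in B\wh\tensor B$ shows directly that $\mathcal R$ is $\ast$-invertible, with $\mathcal R^{\ast-1}(f,f')=(f\tensor f')(R^{-1})$. For \eqref{eq:coquas triang 2}, I compute $\mathcal R\circ(m_{B^\star}\tensor\id)$ at $f\tensor f'\tensor f''$. This equals $((f\ast f')\tensor f'')(R)=(f\tensor f'\tensor f'')((\Delta\tensor\id_B)(R))=(f\tensor f'\tensor f'')(R_{13}R_{23})$. Next I note that $\mathcal R_{1,3}$ is the functional $a\mapsto a(R_{13})$ on $B^\star{}^{\tensor 3}$, where the inserted leg $1$ is evaluated by $\varepsilon(f)=f(1)$. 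Similarly, $\mathcal R_{2,3}$ is the functional $a\mapsto a(R_{23})$. Then Lemma~\ref{lem:natural pairing} with $n=3$ identifies the last expression with $(\mathcal R_{1,3}\ast\mathcal R_{2,3})(f\tensor f'\tensor f'')$. The second identity in \eqref{eq:coquas triang 2} follows in exactly the same way from $(\id_B\tensor\Delta)(R)=R_{13}R_{12}$.

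The remaining step, which I expect to need the most care, is \eqref{eq:coquas triang 1}. This is an identity in $B^\star$, so I check it by evaluating both sides at an arbitrary $b\in B$. Recall that $\Delta(f)=f_{(1)}\tensor f_{(2)}$ with $f_{(1)}(x)f_{(2)}(y)=f(xy)$. On the left side, $\mathcal R(f_{(1)},f'_{(1)})(f_{(2)}\ast f'_{(2)})(b)$ becomes $\mathcal R(f_{(1)},f'_{(1)})f_{(2)}(b_{(1)})f'_{(2)}(b_{(2)})$. This in turn equals $(f\tensor f')(R\cdot\Delta(b))$. On the right side, the same manipulation gives $(f\tensor f')(\Delta^{op}(b)\cdot R)$. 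The two agree because $R\Delta(b)=\Delta^{op}(b)R$.

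The only delicate point is bookkeeping. I must confirm that the convolution order in $f_{(2)}\ast f'_{(2)}$ versus $f'_{(1)}\ast f_{(1)}$ matches which factor of $R$ sits to the left. I must also confirm that the manipulations are legitimate for elements of the completion. The latter holds because the pairing is continuous by Lemma~\ref{lem:tens completion}, so it suffices to argue on truncations $B^{\tensor 2}/K^r$.
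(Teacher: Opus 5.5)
Your proposal is correct and follows the same route as the paper, which simply declares the lemma immediate from the definition, Lemma~\ref{lem:natural pairing} (giving $\ast$-invertibility and turning $(f\tensor f'\tensor f'')(R_{13}R_{23})$ into $(\mathcal R_{1,3}\ast\mathcal R_{2,3})(f\tensor f'\tensor f'')$) and Corollary~\ref{cor:complet tensor} (making the pairings with completions well defined). Your computation $(f\tensor f')(R\Delta(b))=(f\tensor f')(\Delta^{op}(b)R)$ is exactly the verification of \eqref{eq:coquas triang 1} that the paper leaves implicit; one small wording fix is that in $\mathcal R_{1,3}$ it is the middle leg, where $R_{13}$ carries the unit, that is evaluated by the counit $f'\mapsto f'(1)$.
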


We say that a $\ast$-invertible
~$\mathcal J\in\Hom_\kk(B\tensor B,\kk)$ is a {\em dual Drinfeld twist} if
\begin{align}
&\mathcal J(a_{(1)},1)a_{(2)}=\mathcal J(a_{(2)},1)a_{(1)},\qquad 
\mathcal J(1,a_{(1)})a_{(2)}=\mathcal J(1,a_{(2)})a_{(1)},\label{eq:dual Drinf eq 1}\\
&(\mathcal J\circ (m_B\tensor \id_B))\ast \mathcal J_{1,2}
=(\mathcal J\circ (\id_B\tensor m_B))\ast \mathcal J_{2,3}.\label{eq:dual Drinf eq 2}
\end{align}
The following is immediate from the definition.
\begin{lemma}
Let~$B$ be a (topological) bialgebra and let $J\in B\wh\tensor B$ be a Drinfeld twist. Let~$B^\star\subset B^\smallsquare$ be a (restricted) dual of~$B$.
Then
$\mathcal J\in\Hom_\kk(B^\star\tensor B^\star,\kk)$ defined by $\mathcal J(f,f')=(f\tensor f')(J)$,
$f,f'\in B^\star$,
is a dual Drinfeld twist. 
\end{lemma}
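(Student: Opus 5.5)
We will verify the two defining conditions \eqref{eq:dual Drinf eq 1} and \eqref{eq:dual Drinf eq 2} for $\mathcal J(f,f')=(f\tensor f')(J)$ directly, by transporting the corresponding identities for $J$ through the evaluation pairing of Lemma~\ref{lem:restr dual} and Lemma~\ref{lem:tens completion}. First, $\ast$-invertibility is immediate from Lemma~\ref{lem:natural pairing} with $n=2$: since $J$ is invertible in $B\wh\tensor B$, $\mathcal J$ is $\ast$-invertible, and $\mathcal J^{\ast-1}(f,f')=(f\tensor f')(J^{-1})$. Here we use that the coproduct on $B^\star$ is dual to the multiplication in $B$, so convolution on $\Hom_\kk(B^{\star\tensor n},\kk)$ corresponds to multiplication in $B^{\wh\tensor n}$.

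For \eqref{eq:dual Drinf eq 2}, note that the multiplication $m_{B^\star}=\ast$ is dual to $\Delta$. This gives
\[
\mathcal J\circ(m_{B^\star}\tensor\id)(f\tensor f'\tensor f'')=(f\tensor f'\tensor f'')\bigl((\Delta\tensor\id_B)(J)\bigr),
\]
and similarly $\mathcal J\circ(\id\tensor m_{B^\star})$ corresponds to $(\id_B\tensor\Delta)(J)$. Also $\mathcal J_{1,2}$ corresponds to $J\tensor 1$, because the unit of $B$ pairs with $f''$ to give $\varepsilon_{B^\star}(f'')=f''(1)$; likewise $\mathcal J_{2,3}$ corresponds to $1\tensor J$. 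Applying Lemma~\ref{lem:natural pairing} with $n=3$ converts the convolution products into products in $B^{\wh\tensor 3}$. Hence \eqref{eq:dual Drinf eq 2} is the pairing of both sides of \eqref{eq:Drinfeld twist} with $f\tensor f'\tensor f''$.

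For \eqref{eq:dual Drinf eq 1}, observe that $\mathcal J(a,1)=(a\tensor\varepsilon_B)(J)=a(z)$, where $z=(\id_B\tensor\varepsilon)(J)\in Z(B)$. Evaluating $\mathcal J(a_{(1)},1)a_{(2)}$ at $b\in B$ gives $a(zb)$, while $\mathcal J(a_{(2)},1)a_{(1)}$ evaluated at $b$ gives $a(bz)$. These agree because $z$ is central. The second identity in \eqref{eq:dual Drinf eq 1} follows in the same way from $(\varepsilon\tensor\id_B)(J)\in Z(B)$.

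The only delicate point is justifying that the evaluations extend to the completion and respect products and coproducts there. This is exactly what Corollary~\ref{cor:complet tensor} and Lemma~\ref{lem:natural pairing} provide, together with the fact that $\Delta$ and $m$ are continuous for the $K$-adic topologies. We expect this bookkeeping to be the main, though routine, obstacle.
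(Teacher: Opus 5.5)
Your proposal is correct and is exactly the direct verification the paper has in mind when it calls this lemma ``immediate from the definition'': $\ast$-invertibility and \eqref{eq:dual Drinf eq 2} come from pairing $J^{-1}$ and both sides of \eqref{eq:Drinfeld twist} against $f\tensor f'\tensor f''$ via Lemma~\ref{lem:natural pairing}. Likewise, \eqref{eq:dual Drinf eq 1} is the dual form of the centrality of $(\id_B\tensor\varepsilon)(J)$ and $(\varepsilon\tensor\id_B)(J)$, and the paper also leaves the completion bookkeeping implicit.
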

Define $\bullet_{\mathcal J}\in\Hom_\kk(B\tensor B,B)$ by
$$
a\bullet_{\mathcal J} b=\mathcal J^{\ast-1}(a_{(1)},b_{(1)})\mathcal J(a_{(3)},b_{(3)})
a_{(2)}b_{(2)},\qquad a,b\in B
$$
The following is well-known (see, e.g.~\cite{Majid}).
\begin{lemma}\label{lem:twisted product}
Let~$B$ be a bialgebra and let~$\mathcal J\in\Hom_\kk(B^{\tensor 2},\kk)$ be a dual Drinfeld twist. Then $B$ is a bialgebra with the multiplication~$\bullet_{\mathcal J}$
and with the comultiplication~$\Delta:B\to B\tensor B$.
\end{lemma}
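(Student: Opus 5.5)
The plan is to verify directly that $\bullet_{\mathcal J}$ is associative and unital, and that it is compatible with the unchanged coalgebra structure $(\Delta,\varepsilon)$. This is the dual of the argument for Proposition~\ref{prop:Drinfeld twist}\ref{prop:Drinfeld twist.a}: there conjugation by $J$ was shown to preserve coassociativity using \eqref{eq:Drinfeld twist}; here we twist the product and use \eqref{eq:dual Drinf eq 2} instead. Throughout, Sweedler notation is used, and repeated use is made of \eqref{eq:hom coalg} and \eqref{eq:hom coalg inv} for the coalgebra maps $m_B\tensor\id_B$ and $\id_B\tensor m_B$ on $B^{\tensor 3}$.

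\textbf{Compatibility with $\Delta$ and $\varepsilon$.} First I will check that $\Delta$ is multiplicative for $\bullet_{\mathcal J}$. Writing out $\Delta(a\bullet_{\mathcal J} b)$, we get $\mathcal J^{\ast-1}(a_{(1)},b_{(1)})\mathcal J(a_{(4)},b_{(4)})\,a_{(2)}b_{(2)}\tensor a_{(3)}b_{(3)}$. For the other side, $(a_{(1)}\bullet_{\mathcal J} b_{(1)})\tensor(a_{(2)}\bullet_{\mathcal J} b_{(2)})$ contains an inner pair $\mathcal J(a_{(3)},b_{(3)})\mathcal J^{\ast-1}(a_{(4)},b_{(4)})$. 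This pair collapses to $\varepsilon$ by $\ast$-invertibility of $\mathcal J$, using that $\Delta_{B\tensor B}$ is the comultiplication of $B\tensor B$. The counit property $\varepsilon(a\bullet_{\mathcal J} b)=\varepsilon(a)\varepsilon(b)$ is similar: it reduces to $\mathcal J^{\ast-1}\ast\mathcal J=\varepsilon$.

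\textbf{Associativity.} Define $\Phi:=(\mathcal J\circ(m_B\tensor\id_B))\ast\mathcal J_{1,2}$, which by \eqref{eq:dual Drinf eq 2} equals $(\mathcal J\circ(\id_B\tensor m_B))\ast\mathcal J_{2,3}$. Expanding $(a\bullet_{\mathcal J} b)\bullet_{\mathcal J} c$ gives factors $\mathcal J^{\ast-1}(a_{(1)}b_{(1)},c_{(1)})\,\mathcal J^{\ast-1}(a_{(2)},b_{(2)})$ on the left and $\mathcal J(a_{(4)},b_{(4)})\,\mathcal J(a_{(5)}b_{(5)},c_{(3)})$ on the right, around $a_{(3)}b_{(3)}c_{(2)}$. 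Here I use that $\Delta$ is multiplicative for the original product, so that $\mathcal J^{\ast-1}$ and $\mathcal J$ evaluate on products $a_{(i)}b_{(i)}$. After reindexing, the left factors form $\Phi^{\ast-1}$. Indeed, the order of the convolution factors reverses under inversion, and \eqref{eq:hom coalg inv} gives $(\mathcal J\circ(m\tensor\id))^{\ast-1}=\mathcal J^{\ast-1}\circ(m\tensor\id)$. The right factors form $\Phi$. Hence
\[
(a\bullet_{\mathcal J} b)\bullet_{\mathcal J} c=\Phi^{\ast-1}(a_{(1)},b_{(1)},c_{(1)})\,\Phi(a_{(3)},b_{(3)},c_{(3)})\,a_{(2)}b_{(2)}c_{(2)}.
\]
The symmetric computation for $a\bullet_{\mathcal J}(b\bullet_{\mathcal J} c)$ yields the same expression, with $\Phi$ written in its second form. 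So \eqref{eq:dual Drinf eq 2} gives associativity.

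\textbf{Unit.} Put $u:=\mathcal J(1,1)$. Applying $\varepsilon$ to \eqref{eq:dual Drinf eq 1} is not directly helpful, so instead I will show that $1$ is a unit up to this scalar. Using \eqref{eq:dual Drinf eq 1}, the element $\mathcal J(a_{(1)},1)a_{(2)}$ equals $\mathcal J(a_{(2)},1)a_{(1)}$. Then
\[
a\bullet_{\mathcal J} 1=\mathcal J^{\ast-1}(a_{(1)},1)\mathcal J(a_{(3)},1)a_{(2)}.
\]
The functional $a\mapsto\mathcal J(a,1)$ is $\ast$-invertible with inverse $a\mapsto\mathcal J^{\ast-1}(a,1)$, because $\Delta(1)=1\tensor 1$. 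Condition \eqref{eq:dual Drinf eq 1} says this functional is ``central'' in the sense of Lemma~\ref{lem:old mult from twisted} with $f=g=\id_B$. That lemma then gives $a\bullet_{\mathcal J}1=a$, and similarly $1\bullet_{\mathcal J}a=a$.

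The main obstacle is the bookkeeping in the associativity step: matching the Sweedler indices so that the outer factors assemble exactly into $\Phi^{\ast-1}$ and $\Phi$. The key point is the correct use of the reversal of order in $(\mathcal J\circ(m\tensor\id))\ast\mathcal J_{1,2}$ under $\ast$-inversion.
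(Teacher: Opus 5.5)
Your overall route matches the paper's proof. You get the unit from \eqref{eq:dual Drinf eq 1}, associativity from \eqref{eq:dual Drinf eq 2} after pushing $(\Delta\tensor\id_B)\Delta$ through $a\bullet_{\mathcal J}b$, and multiplicativity of $\Delta$ by cancelling the inner pair $\mathcal J(a_{(3)},b_{(3)})\mathcal J^{\ast-1}(a_{(4)},b_{(4)})$. Your unit argument via Lemma~\ref{lem:old mult from twisted} with $f=g=\id_B$ is a valid repackaging of the paper's one-line computation. It gives $a\bullet_{\mathcal J}1=a$ exactly, so the scalar $\mathcal J(1,1)$ plays no role and the ``up to this scalar'' remark should be dropped.

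However, the associativity expansion is misindexed, and this is the step you yourself call the crux. The twist from $a\bullet_{\mathcal J}b$ is applied first, and $(\Delta\tensor\id_B)\Delta$ is then pushed through it. So it occupies the outermost Sweedler indices:
\[
(a\bullet_{\mathcal J}b)\bullet_{\mathcal J}c=\mathcal J^{\ast-1}(a_{(1)},b_{(1)})\,\mathcal J^{\ast-1}(a_{(2)}b_{(2)},c_{(1)})\,\mathcal J(a_{(4)}b_{(4)},c_{(3)})\,\mathcal J(a_{(5)},b_{(5)})\,a_{(3)}b_{(3)}c_{(2)}.
\]
In this expression the left factors are $(\mathcal J^{\ast-1})_{1,2}\ast(\mathcal J^{\ast-1}\circ(m_B\tensor\id_B))=\Phi^{\ast-1}$; this is where the reversal of order under $\ast$-inversion is actually used. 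The right factors are $(\mathcal J\circ(m_B\tensor\id_B))\ast\mathcal J_{1,2}=\Phi$.

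In your version the Sweedler index assignments are swapped on both sides. Taken literally, your left and right factors are $\bigl(\mathcal J_{1,2}\ast(\mathcal J\circ(m_B\tensor\id_B))\bigr)^{\ast-1}$ and $\mathcal J_{1,2}\ast(\mathcal J\circ(m_B\tensor\id_B))$. That is the cocycle in the opposite convolution order, to which \eqref{eq:dual Drinf eq 2} does not apply. Your claims ``the left factors form $\Phi^{\ast-1}$'' and ``the right factors form $\Phi$'' are false for the expression as you wrote it.

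Your final displayed formula for $(a\bullet_{\mathcal J}b)\bullet_{\mathcal J}c$ is nevertheless correct. With the expansion above, the argument closes exactly as in the paper. The corresponding expansion of $a\bullet_{\mathcal J}(b\bullet_{\mathcal J}c)$ has $\mathcal J^{\ast-1}(b_{(1)},c_{(1)})\,\mathcal J^{\ast-1}(a_{(1)},b_{(2)}c_{(2)})$ on the left and $\mathcal J(a_{(3)},b_{(4)}c_{(4)})\,\mathcal J(b_{(5)},c_{(5)})$ on the right, which give $\Phi^{\ast-1}$ and $\Phi$ in their second form.
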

\begin{proof}
We have, for all~$a\in B$,
$$
a\bullet_{\mathcal J}1=\mathcal J^{\ast-1}(a_{(1)},1)\mathcal J(a_{(3)},1)a_{(2)}
=\mathcal J^{\ast-1}(a_{(1)},1)\mathcal J(a_{(2)},1)a_{(3)}=\varepsilon(a_{(1)})a_{(2)}=a.
$$
Similarly, $1\bullet_{\mathcal J}a=a$. 
Furthermore, we have, for all $a,b\in B$
$$
(\Delta\tensor\id_B)\Delta(a\bullet_{\mathcal J} b)=
\mathcal J^{\ast-1}(a_{(1)},b_{(1)})\mathcal J(a_{(5)},b_{(5)})a_{(2)}b_{(2)}\tensor a_{(3)}b_{(3)}\tensor a_{(4)}b_{(4)}.
$$
Then, 
for all $a,b,c\in \mathcal J$,
\begin{align*}
(a\bullet_{\mathcal J} b)\bullet_{\mathcal J} c&=
\mathcal J^{\ast-1}((a\bullet_{\mathcal J}b)_{(1)},c_{(1)})
\mathcal J((a\bullet_{\mathcal J}b)_{(3)},c_{(3)})(a\bullet_{\mathcal J}b)_{(2)}c_{(2)}\\
&=\mathcal J^{\ast-1}(a_{(1)},b_{(1)})\mathcal J(a_{(5)},b_{(5)})
\mathcal J^{\ast-1}(a_{(2)}b_{(2)},c_{(1)})\mathcal J(a_{(4)}b_{(4)},c_{(3)})a_{(3)}b_{(3)}c_{(2)}\\
&=\mathcal J^{\ast-1}(b_{(1)},c_{(1)})\mathcal J^{\ast-1}(a_{(1)},b_{(2)}c_{(2)})
\mathcal J(b_{(5)},c_{(5)})\mathcal J(a_{(3)},b_{(4)}c_{(4)})a_{(2)}b_{(3)}c_{(3)}
\\
&=\mathcal J^{\ast-1}(a_{(1)},(b\bullet_{\mathcal J}c)_{(1)})
\mathcal J(a_{(3)},(b\bullet_{\mathcal J}c)_{(3)}) a_{(2)}(b\bullet_{\mathcal J}c)_{(2)}=a\bullet_{\mathcal J}(b\bullet_{\mathcal J}c).
\end{align*}
Finally, for all~$a,b\in B$
\begin{align*}
\Delta(a)\bullet_{\mathcal J}\Delta(b)&=
a_{(1)}\bullet_{\mathcal J}b_{(1)}\tensor 
a_{(2)}\bullet_{\mathcal J}b_{(2)}
\\
&=\mathcal J^{\ast-1}(a_{(1)},b_{(1)})
\mathcal J^{\ast-1}(a_{(4)},b_{(4)})\mathcal J(a_{(3)},b_{(3)})\mathcal J(a_{(6)},b_{(6)})a_{(2)}b_{(2)}\tensor a_{(5)}b_{(5)}\\
&=\mathcal J^{\ast-1}(a_{(1)},b_{(1)})
\mathcal J(a_{(4)},b_{(4)})a_{(2)}b_{(2)}\tensor a_{(3)}b_{(3)}=\Delta(a\bullet_{\mathcal J}b).\qedhere
\end{align*}
\end{proof}
Note the following analogue of Lemma~\ref{lem:rel twist from twist}.
\begin{lemma}\label{lem:dual rel twist from twist}
Let~$A$ (respectively, $B$) be unital bialgebras and let~$\mathcal J_A$, $\mathcal J_B$ be Drinfeld twists on~$A$ (respectively, $B$). 
Then $\mathcal J_{A\tensor B}:(A\tensor B)^{\tensor 2}\to \kk$ is a Drinfeld twist for~$A\tensor B$ if and only if~$\mathcal J'_{A\tensor B}:=(\mathcal J_A)_{1,3}^{\ast-1}
\ast (\mathcal J_B)_{2,4}^{\ast-1}\ast \mathcal J_{A\tensor B}$ is a Drinfeld twist for~$A\tensor B$
with the multiplication defined by~$(a\tensor b)\cdot (a'\tensor b')=(a\bullet_{\mathcal J_A} a')\tensor 
(b\bullet_{\mathcal J_B}b')$, $a,a'\in A$, $b,b'\in B$.
\end{lemma}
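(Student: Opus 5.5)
The plan is to dualize the proof of Lemma~\ref{lem:rel twist from twist} line by line. Products in $(A\tensor B)^{\wh\tensor 2}$ become convolution products $\ast$ in $\Hom_\kk((A\tensor B)^{\tensor 2},\kk)$, and $\Delta\tensor\id$ becomes precomposition with $m\tensor\id$. Here "Drinfeld twist" means dual Drinfeld twist, i.e.\ \eqref{eq:dual Drinf eq 1} and \eqref{eq:dual Drinf eq 2}.

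Write $m$ for the untwisted multiplication on $A\tensor B$ and $m'$ for the twisted one, so that $m'=(\bullet_{\mathcal J_A})_{1,3}\circ(\bullet_{\mathcal J_B})_{2,4}$. Unwinding the definition of $\bullet$, we get
\[
\phi\circ(m'\tensor\id)=(\mathcal J_A^{\ast-1})_{1,3}\ast(\mathcal J_B^{\ast-1})_{2,4}\ast\bigl(\phi\circ(m\tensor\id)\bigr)\ast(\mathcal J_A)_{1,3}\ast(\mathcal J_B)_{2,4}
\]
for any $\phi\in\Hom_\kk((A\tensor B)^{\tensor 2},\kk)$, where the outer factors live in the first two tensor legs of $(A\tensor B)^{\tensor 3}$. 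The analogous identity holds for $\id\tensor m'$, with the outer factors living in legs $2,3$. This is the dual of $\Delta_J(x)=J^{-1}\Delta(x)J$, and it is the key bookkeeping step.

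I will also use \eqref{eq:hom coalg} and \eqref{eq:hom coalg inv}, which say that precomposition with the coalgebra maps $m\tensor\id$ and $m'\tensor\id$ is multiplicative for $\ast$. The coalgebra structure is the same for both multiplications, so the convolution algebras coincide. Moreover $(\mathcal J_A)_{1,3}$ and $(\mathcal J_B)_{2,4}$ commute under $\ast$, because they involve disjoint legs and counits.

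Now I substitute $\mathcal J'=(\mathcal J_A)_{1,3}^{\ast-1}\ast(\mathcal J_B)_{2,4}^{\ast-1}\ast\mathcal J$ into the left side of \eqref{eq:dual Drinf eq 2} for $m'$. The left side is $(\mathcal J'\circ(m'\tensor\id))\ast\mathcal J'_{\mathbf 1,\mathbf 2}$. Expanding it, the factor $(\mathcal J_{A})_{1,3}\ast(\mathcal J_B)_{2,4}$ placed in legs $\mathbf 1,\mathbf 2$ cancels against the inverse factor inside $\mathcal J'_{\mathbf 1,\mathbf 2}$. What remains is
\[
\bigl[(\mathcal J_A^{\ast-1})_{12}\ast\mathcal J_A^{\ast-1}\circ(m_A\tensor\id)\bigr]\ast\bigl[\text{same for }B\bigr]\ast\bigl(\mathcal J\circ(m\tensor\id)\bigr)\ast\mathcal J_{\mathbf 1,\mathbf 2},
\]
with the $A$-factors placed in legs $(1,3,5)$ and the $B$-factors in legs $(2,4,6)$. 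Taking $\ast$-inverses of the twist identity \eqref{eq:dual Drinf eq 2} for $\mathcal J_A$ and $\mathcal J_B$ turns these brackets into their $\id\tensor m$ counterparts. The twist identity for $\mathcal J$ then turns the last two factors into $(\mathcal J\circ(\id\tensor m))\ast\mathcal J_{\mathbf 2,\mathbf 3}$. Reinserting a cancelling pair $(\mathcal J_A)_{3,5}\ast(\mathcal J_B)_{4,6}$ and its inverse, and applying the $\id\tensor m'$ identity from the previous paragraph, yields the right side of \eqref{eq:dual Drinf eq 2} for $\mathcal J'$ and $m'$.

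Condition \eqref{eq:dual Drinf eq 1} for $\mathcal J'$ reduces to evaluating at $1$. By counitality of the convolution legs, $\mathcal J'(x,1)$ is a convolution of $\mathcal J_A(\cdot,1)$, $\mathcal J_B(\cdot,1)$ and $\mathcal J(\cdot,1)$, each of which satisfies the cocommutation property \eqref{eq:dual Drinf eq 1}. I then check that $\ast$-products and inverses of functionals with this property retain it. The converse direction follows by symmetry: $\mathcal J_A^{\ast-1}$ is a dual twist for the $\bullet_{\mathcal J_A}$ multiplication that recovers the original one, and similarly for $B$.

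The main obstacle is the careful leg bookkeeping in the conjugation identity for $m'\tensor\id$: correctly ordering the convolution factors and identifying which legs of $(A\tensor B)^{\tensor 3}$ they occupy. I will verify it directly from the definition of $\bullet_{\mathcal J}$ using coassociativity.
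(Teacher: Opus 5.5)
Your proposal is correct and follows essentially the same route as the paper. The paper leaves this lemma as an exercise, saying only that the argument mimics the proof of Lemma~\ref{lem:rel twist from twist}, and your line-by-line dualization of that proof is exactly that exercise carried out. The dualized steps are the conjugation identity expressing $m'$ through $m$, the cancellation of $(\mathcal J_A)_{1,3}\ast(\mathcal J_B)_{2,4}$, and the inverted twist identities for $\mathcal J_A$, $\mathcal J_B$ together with the twist identity for $\mathcal J$. Cocentrality being preserved under $\ast$ and $\ast$-inverses is the dual of centrality, and the converse follows by symmetry.
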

The argument mimics that in the proof of Lemma~\ref{lem:rel twist from twist} and is left to the reader as an exercise. 

Let~$A$, $B$ be bialgebras. We say that~$\mathcal F\in\Hom_\kk(B\tensor A,\kk)$ is a {\em relative dual Drinfeld twist} if~$\mathcal F_{2,3}\in\Hom_\kk((A\tensor B)^{\tensor 2},\kk)$
is a dual Drinfeld twist. 
The following is easily checked. 
\begin{lemma}\label{eq:}
Let~$A$ and~$B$ be bialgebras. 
A $\ast$-invertible~$\mathcal F\in\Hom_\kk(B\tensor A,\kk)$
is a relative dual Drinfeld twist is and only if
$\mathcal F(b_{(1)},1_A)b_{(2)}=\mathcal F(b_{(2)},1_A)b_{(1)}$ for all~$b\in B$,
$\mathcal F(1_B,a_{(1)})a_{(2)}=
\mathcal F(1_B,a_{(2)})a_{(1)}$ for all~$a\in A$ and
\begin{equation}\label{eq:rel twist eq'}
\mathcal F_{13,4}\ast \mathcal F_{1,2}=\mathcal F_{1,24}\ast \mathcal F_{3,4},
\end{equation}
where~$\mathcal F_{13,4}(b\tensor a\tensor b'\tensor a')=
\mathcal F(bb',a')\varepsilon(a)$ and~$\mathcal F_{1,24}(b\tensor a
\tensor b'\tensor a')=
\mathcal F(b,aa')\varepsilon(b')$, $a,a'\in A$, $b,b'\in B$.
\end{lemma}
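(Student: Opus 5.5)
The plan is to unwind the definition of a dual Drinfeld twist, that is \eqref{eq:dual Drinf eq 1} and \eqref{eq:dual Drinf eq 2}, for $\mathcal J:=\mathcal F_{2,3}$ on $A\tensor B$ with its standard tensor product bialgebra structure. This dualizes the proof of Proposition~\ref{prop:rel Drinfeld twist}. By definition,
$$
\mathcal F_{2,3}(a\tensor b\tensor a'\tensor b')=\varepsilon_A(a)\,\mathcal F(b,a')\,\varepsilon_B(b').
$$
The multiplication on $A\tensor B$ is componentwise, and the comultiplication is $\Delta_{A\tensor B}=(\Delta_A)_{1,3}\circ(\Delta_B)_{2,4}$. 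Hence convolution in $\Hom_\kk((A\tensor B)^{\tensor 3},\kk)$ is computed by applying Sweedler notation independently in each of the six tensor legs. Note also that $\mathcal F_{2,3}$ is $\ast$-invertible with inverse $(\mathcal F^{\ast-1})_{2,3}$, since the map $\varepsilon_A\tensor\id\tensor\id\tensor\varepsilon_B$ is a coalgebra homomorphism; this follows from \eqref{eq:hom coalg inv}.

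Next I treat the unit conditions \eqref{eq:dual Drinf eq 1}. For $x=a\tensor b$ we have $\mathcal J(x_{(1)},1)=\varepsilon_A(a_{(1)})\mathcal F(b_{(1)},1_A)$. So the first condition reads
$$
a\tensor \mathcal F(b_{(1)},1_A)b_{(2)}=a\tensor \mathcal F(b_{(2)},1_A)b_{(1)}.
$$
This is exactly the stated condition on $B$, because one may apply $\varepsilon_A$ to the first leg or tensor with a fixed $a$. In the same way, the second condition in \eqref{eq:dual Drinf eq 1} reduces to the stated condition on $A$.

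Finally I treat the cocycle condition \eqref{eq:dual Drinf eq 2}. Evaluate both sides on $x\tensor y\tensor z$, where $x=a\tensor b$, $y=a'\tensor b'$ and $z=a''\tensor b''$.
\begin{itemize}
\item On the left, $\mathcal J\circ(m\tensor\id)$ gives $\varepsilon(aa')\mathcal F(bb',a'')\varepsilon(b'')$, and $\mathcal J_{1,2}$ gives $\varepsilon(a)\mathcal F(b,a')\varepsilon(b')$. Their convolution is $\varepsilon(a)\varepsilon(b'')\,\bigl(\mathcal F_{13,4}\ast\mathcal F_{1,2}\bigr)(b\tensor a'\tensor b'\tensor a'')$.
\item On the right, $\mathcal J\circ(\id\tensor m)$ gives $\varepsilon(a)\mathcal F(b,a'a'')\varepsilon(b'b'')$, and $\mathcal J_{2,3}$ gives $\varepsilon(a')\mathcal F(b',a'')\varepsilon(b'')$. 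Their convolution is $\varepsilon(a)\varepsilon(b'')\,\bigl(\mathcal F_{1,24}\ast\mathcal F_{3,4}\bigr)(b\tensor a'\tensor b'\tensor a'')$.
\end{itemize}
The legs $a$ and $b''$ only ever enter through counits. Therefore \eqref{eq:dual Drinf eq 2} is equivalent to \eqref{eq:rel twist eq'}; for the converse direction, take $a=1_A$ and $b''=1_B$.

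The only delicate point is the bookkeeping of Sweedler indices in these convolutions, in particular checking that the counit factors collapse correctly across the six legs. There is no conceptual obstacle.
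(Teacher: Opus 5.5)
Your proof is correct and is the same direct unwinding of \eqref{eq:dual Drinf eq 1} and \eqref{eq:dual Drinf eq 2} for $\mathcal J=\mathcal F_{2,3}$ that the paper leaves as ``easily checked''; it dualizes the proof of Proposition~\ref{prop:rel Drinfeld twist}. Your remark that $\mathcal F_{2,3}$ is automatically $\ast$-invertible by \eqref{eq:hom coalg inv} also covers the one point the statement passes over silently.
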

\begin{lemma}\label{lem:basic rel dual}
Let~$A=B$ be a bialgebra and let~$\mathcal R\in\Hom_\kk(B\tensor B,\kk)$ be a co-quasi\-tri\-angular structure. Then~$\mathcal R$
is a relative dual Drinfeld twist.
\end{lemma}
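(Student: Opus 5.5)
The plan is to verify directly the three conditions of the characterization of relative dual Drinfeld twists stated just before this lemma, with $A=B$ and $\mathcal F=\mathcal R$. The model is the quantum case, Proposition~\ref{prop:basic twist}, where $F=R$ satisfied $F_{13,4}F_{34}^{-1}=R_{14}=F_{1,24}F_{12}^{-1}$. The dual argument should be the same computation with products replaced by convolution and with the identities~\eqref{eq:coquas triang 2} used in place of $(\Delta\tensor\id)(R)=R_{13}R_{23}$ and $(\id\tensor\Delta)(R)=R_{13}R_{12}$.

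First, the identity~\eqref{eq:rel twist eq'}. For $b,a,b',a'\in B$ we have
$$
\mathcal R_{13,4}(b\tensor a\tensor b'\tensor a')=\mathcal R(bb',a')\varepsilon(a).
$$
By the first identity in~\eqref{eq:coquas triang 2} this equals
$$
\mathcal R(b_{(1)},a'_{(1)})\mathcal R(b'_{(1)},a'_{(2)})\varepsilon(a),
$$
that is, $\mathcal R_{13,4}=\mathcal R_{1,4}\ast\mathcal R_{3,4}$ as functionals on $B^{\tensor 4}$, with indices read on the four legs. Similarly, the second identity in~\eqref{eq:coquas triang 2} gives
$$
\mathcal R_{1,24}(b\tensor a\tensor b'\tensor a')=\mathcal R(b,aa')\varepsilon(b')=\mathcal R(b_{(1)},a'_{(1)})\mathcal R(b_{(2)},a_{(1)})\varepsilon(b'),
$$
so $\mathcal R_{1,24}=\mathcal R_{1,4}\ast\mathcal R_{1,2}$. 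The identity~\eqref{eq:rel twist eq'} then reads
$$
\mathcal R_{1,4}\ast\mathcal R_{3,4}\ast\mathcal R_{1,2}=\mathcal R_{1,4}\ast\mathcal R_{1,2}\ast\mathcal R_{3,4}.
$$
This reduces to the claim that $\mathcal R_{3,4}$ and $\mathcal R_{1,2}$ $\ast$-commute. That holds because they act on disjoint legs, so their convolution factors as $\mathcal R(b_{(1)},a_{(1)})\mathcal R(b'_{(1)},a'_{(1)})$ in either order; this is the dual of the commutation of $F_{12}$ and $F_{34}$. The main bookkeeping point is checking the leg conventions, namely that $\mathcal F_{1,24}$ pairs $b$ with $aa'$ in the order $a\cdot a'$. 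This order is exactly what the second identity in~\eqref{eq:coquas triang 2} produces, with $\mathcal R_{1,3}$, the leg coming from $a'$, appearing first.

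Second, the counit-type conditions $\mathcal R(b_{(1)},1)b_{(2)}=\mathcal R(b_{(2)},1)b_{(1)}$ and its mirror. Since $1\cdot 1=1$, the first identity in~\eqref{eq:coquas triang 2} gives $\mathcal R(b,1)=\mathcal R(b_{(1)},1)\mathcal R(b_{(2)},1)$, so $\phi:=\mathcal R(-,1)$ is a $\ast$-idempotent. Because $\mathcal R$ is $\ast$-invertible, $\phi$ is also $\ast$-invertible: its inverse is $\mathcal R^{\ast-1}(-,1)$, obtained by restricting along the coalgebra map $b\mapsto b\tensor 1$ via~\eqref{eq:hom coalg inv}. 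An invertible idempotent is the unit, so $\phi=\varepsilon$. Hence both sides of the first condition equal $b$. The mirror condition follows in the same way from the second identity in~\eqref{eq:coquas triang 2}.

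I expect no real obstacle here. The only care needed is in the Sweedler/leg-index bookkeeping in the first step.
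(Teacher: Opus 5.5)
Your proof is correct. For \eqref{eq:rel twist eq'} it is the paper's Sweedler computation repackaged as $\mathcal R_{13,4}=\mathcal R_{1,4}\ast\mathcal R_{3,4}$, $\mathcal R_{1,24}=\mathcal R_{1,4}\ast\mathcal R_{1,2}$ and the commutation of $\mathcal R_{1,2}$ with $\mathcal R_{3,4}$; the expansions should read $\mathcal R(b,a'_{(1)})\mathcal R(b',a'_{(2)})$ and $\mathcal R(b_{(1)},a')\mathcal R(b_{(2)},a)$, without dangling Sweedler indices.

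Your idempotent argument, which in fact shows $\mathcal R(-,1)=\varepsilon=\mathcal R(1,-)$ for every co-quasi-triangular structure, supplies the counit-type conditions that the paper passes over. However, you have swapped the citations there. The identity $\mathcal R(b,1)=\mathcal R(b_{(1)},1)\mathcal R(b_{(2)},1)$ is the \emph{second} identity of \eqref{eq:coquas triang 2} evaluated on $b\tensor 1\tensor 1$; the first only gives $\mathcal R(xy,1)=\mathcal R(x,1)\mathcal R(y,1)$. Correspondingly, the mirror statement comes from the first identity.
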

\begin{proof}
We only need to check that~\eqref{eq:rel twist eq'} holds.
Indeed, for any~$a,a',b,b'\in B$ we have 
\begin{align*}
(\mathcal R_{13,4}\ast &\mathcal R_{1,2})(b\tensor a\tensor b'\tensor a')=\mathcal R_{13,4}(b_{(1)}\tensor a_{(1)}\tensor b'\tensor a')\mathcal R(b_{(2)}\tensor a_{(2)})\\
&=\mathcal R(b_{(1)}b',a')\mathcal R(b_{(2)},a)
=\mathcal R(b_{(1)},a'_{(1)})\mathcal R(b',a'_{(2)})
\mathcal R(b_{(2)},a)\\
&=\mathcal R(b,aa'_{(1)})\mathcal R(b',a'_{(2)})
=\mathcal R_{1,24}(b\tensor a\tensor b'_{(1)}\tensor a'_{(1)})\mathcal R(b'_{(2)},a'_{(2)})\\
&=(\mathcal R_{1,24}\ast \mathcal R_{3,4})(b\tensor a\tensor b'\tensor a').\qedhere
\end{align*}
\end{proof}
The following analogue
of Corollary~\ref{cor:twist from rel twist} is immediate from Lemma~\ref{lem:dual rel twist from twist}.
\begin{corollary}\label{cor:dual twist from rel twist}
Let~$A$ and~$B$ be bialgebras and suppose
that~$\mathcal F\in\Hom_\kk(B\tensor A,\kk)$ is a relative dual Drinfeld twist where multiplications 
on~$A$ and~$B$ are twisted by~$\mathcal J_A\in\Hom_\kk(A\tensor A,\kk)$ and~$\mathcal J_B\in 
\Hom_\kk(B\tensor B,\kk)$, respectively. 
Then $\mathcal J_{\mathcal F}:=(\mathcal J_A)_{1,3}\ast (\mathcal J_B)_{2,4}\ast \mathcal F_{2,3}$ is a dual Drinfeld twist for~$A\tensor B$ 
with respect to the non-twisted multiplication and
$$
(a\tensor b)\bullet_{\mathcal J_{\mathcal F}}(a'\tensor b')=(\bullet_{\mathcal J_A}\tensor\bullet_{\mathcal J_B})
(\id_A\tensor \Psi_{\mathcal F}\tensor \id_B)(a\tensor b\tensor a'\tensor b'),
$$
where~$\Psi_{\mathcal F}:B\tensor A\to A\tensor B$ is defined by
$$
\Psi_{\mathcal F}(b\tensor a)=\mathcal F^{\ast-1}(b_{(1)},a_{(1)})\mathcal F(b_{(3)},a_{(3)})a_{(2)}\tensor b_{(2)},\qquad a\in A,\,b\in B.
$$
\end{corollary}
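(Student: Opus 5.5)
The plan is to deduce both assertions from Lemma~\ref{lem:dual rel twist from twist}, and then to verify the product formula by a direct Sweedler computation. Throughout, $A$ and $B$ carry their original (untwisted) multiplications, and $\bullet_{\mathcal J_A}$, $\bullet_{\mathcal J_B}$ are the twisted ones. By hypothesis, $\mathcal F$ is a relative dual Drinfeld twist for $A\tensor B$ equipped with the multiplication $(a\tensor b)\cdot(a'\tensor b')=(a\bullet_{\mathcal J_A}a')\tensor(b\bullet_{\mathcal J_B}b')$. That is, $\mathcal F_{2,3}$ is a dual Drinfeld twist for this twisted algebra structure.

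For the first assertion I apply Lemma~\ref{lem:dual rel twist from twist} with $\mathcal J_{A\tensor B}:=\mathcal J_{\mathcal F}$. The associated element is
$$
\mathcal J'_{A\tensor B}=(\mathcal J_A)^{\ast-1}_{1,3}\ast(\mathcal J_B)^{\ast-1}_{2,4}\ast(\mathcal J_A)_{1,3}\ast(\mathcal J_B)_{2,4}\ast\mathcal F_{2,3}.
$$
The key observation is that functionals supported on disjoint sets of tensor legs commute under convolution. This holds because the comultiplication on $(A\tensor B)^{\tensor 2}$ is the tensor product of the comultiplications on the factors, so the convolution of $f_{1,3}$ and $g_{2,4}$ is just $f\tensor g$ placed on the appropriate legs, in either order. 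Consequently $(\mathcal J_B)^{\ast-1}_{2,4}$ commutes with $(\mathcal J_A)_{1,3}$, the expression above collapses, and $\mathcal J'_{A\tensor B}=\mathcal F_{2,3}$. Since $\mathcal F_{2,3}$ is a dual twist for the twisted multiplication, the ``if and only if'' in the Lemma shows that $\mathcal J_{\mathcal F}$ is a dual Drinfeld twist for the untwisted multiplication. I also need $\ast$-invertibility, which is immediate because each factor is $\ast$-invertible.

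For the product formula, I first write both $\mathcal J_{\mathcal F}$ and its inverse explicitly:
\begin{align*}
\mathcal J_{\mathcal F}(a\tensor b,a'\tensor b')&=\mathcal J_A(a_{(1)},a'_{(1)})\,\mathcal J_B(b_{(1)},b'_{(1)})\,\mathcal F(b_{(2)},a'_{(2)}),\\
\mathcal J_{\mathcal F}^{\ast-1}(a\tensor b,a'\tensor b')&=\mathcal F^{\ast-1}(b_{(1)},a'_{(1)})\,\mathcal J_B^{\ast-1}(b_{(2)},b'_{(1)})\,\mathcal J_A^{\ast-1}(a_{(1)},a'_{(2)}),
\end{align*}
where the inverse comes from reversing the convolution order. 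I then substitute these into the definition
$$
x\bullet_{\mathcal J_{\mathcal F}}y=\mathcal J_{\mathcal F}^{\ast-1}(x_{(1)},y_{(1)})\,\mathcal J_{\mathcal F}(x_{(3)},y_{(3)})\,x_{(2)}y_{(2)}
$$
with $x=a\tensor b$ and $y=a'\tensor b'$, using $\Delta_{A\tensor B}$ to distribute the Sweedler indices over $a$, $b$, $a'$ and $b'$.

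After renumbering, the coproduct legs of $a'$ and $b$ split into three groups. The outermost legs, $b_{(1)},a'_{(1)}$ and $b_{(5)},a'_{(5)}$, feed only $\mathcal F^{\ast-1}$ and $\mathcal F$. The middle legs of $a'$ and $b$ combine with $a$ and $b'$ through $\mathcal J_A^{\pm1}$ and $\mathcal J_B^{\pm1}$. By coassociativity, this is exactly $\bullet_{\mathcal J_A}\tensor\bullet_{\mathcal J_B}$ applied to $a\tensor\big(\mathcal F^{\ast-1}(b_{(1)},a'_{(1)})\mathcal F(b_{(3)},a'_{(3)})\,a'_{(2)}\tensor b_{(2)}\big)\tensor b'$, which is the claimed formula with $\Psi_{\mathcal F}$.

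The main obstacle is this index bookkeeping: one must make sure that the legs consumed by $\mathcal F$ are indeed the outermost ones relative to those consumed by $\mathcal J_A^{\pm1}$ and $\mathcal J_B^{\pm1}$. This depends precisely on the order of the factors in $\mathcal J_{\mathcal F}$ (and the reversed order in its inverse). I would check this first on the $a'$-legs, writing them as $a'_{(1)},\dots,a'_{(5)}$ explicitly, and then treat the $b$-legs symmetrically.
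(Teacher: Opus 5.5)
Your proposal is correct and follows the paper's route. The paper obtains the corollary as an immediate consequence of Lemma~\ref{lem:dual rel twist from twist}, and your observation that $(\mathcal J_A)_{1,3}$ and $(\mathcal J_B)_{2,4}$ commute under $\ast$, so that $\mathcal J'_{A\tensor B}$ collapses to $\mathcal F_{2,3}$, is exactly what that application needs; the product formula is then the routine Sweedler computation you outline, with the minor caveat that the legs of $a$ and $b'$ in your formulas for $\mathcal J_{\mathcal F}^{\pm1}$ are absorbed by counits, e.g.\ $\mathcal J_{\mathcal F}(a\tensor b,a'\tensor b')=\mathcal J_A(a,a'_{(1)})\,\mathcal J_B(b_{(1)},b')\,\mathcal F(b_{(2)},a'_{(2)})$.
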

We conclude this section with a natural generalization of~\eqref{eq:coquas triang 1} and the counterpart of Proposition~\ref{prop:rel twist comult prop}.
\begin{proposition}\label{prop:twist hom alg}
Let~$A$, $B$ be bialgebras and let~$\mathcal F\in\Hom_\kk(B\tensor A,\kk)$ be a relative dual Drinfeld twist. Let~$U$ be a unital algebra with
the multiplication~$m_U:U\tensor U\to U$ and 
let~$\varphi_A:A\to U$, $\varphi_B:B\to U$ be homomorphisms of unital algebras.
Then~$m_U\circ(\varphi_A\tensor\varphi_B)$ is 
homomorphism of algebras
$(A\tensor B,\bullet_{\mathcal F_{2,3}})\to U$ if and only if
\begin{equation}
\mathcal F(b_{(1)},a_{(1)})\varphi_B(b_{(2)})\varphi_A(a_{(2)})=
\mathcal F(b_{(2)},a_{(2)})\varphi_A(a_{(1)})\varphi_B(b_{(2)}),
\qquad a\in A,\,b\in B.\label{eq:cnd hom F}
\end{equation}
\end{proposition}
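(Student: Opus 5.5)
The plan is to compute $\bullet_{\mathcal F_{2,3}}$ on $A\tensor B$ explicitly, compose with $m_U\circ(\varphi_A\tensor\varphi_B)$, and compare with the product of the images. This is the dual counterpart of the proof of Proposition~\ref{prop:rel twist comult prop}. Set $\Phi:=m_U\circ(\varphi_A\tensor\varphi_B)$, i.e.\ $\Phi(a\tensor b)=\varphi_A(a)\varphi_B(b)$. Unitality of $\Phi$ is automatic, and $1$ is the unit for $\bullet_{\mathcal F_{2,3}}$ by Lemma~\ref{lem:twisted product}. So everything reduces to checking multiplicativity on pure tensors.

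First I would note that $\mathcal F_{2,3}$, evaluated on $(a\tensor b)\tensor(a'\tensor b')$, equals $\varepsilon(a)\mathcal F(b,a')\varepsilon(b')$. Its $\ast$-inverse is $(\mathcal F^{\ast-1})_{2,3}$, by~\eqref{eq:hom coalg inv} applied to the coalgebra map that forgets the outer legs. Using the comultiplication on $A\tensor B$, the definition of $\bullet$ gives
$$
(a\tensor b)\bullet_{\mathcal F_{2,3}}(a'\tensor b')=\mathcal F^{\ast-1}(b_{(1)},a'_{(1)})\mathcal F(b_{(3)},a'_{(3)})\,a\,a'_{(2)}\tensor b_{(2)}\,b'.
$$
This is the case $\mathcal J_A=\varepsilon\tensor\varepsilon$, $\mathcal J_B=\varepsilon\tensor\varepsilon$ of Corollary~\ref{cor:dual twist from rel twist}. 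Since $\varphi_A,\varphi_B$ are algebra homomorphisms, applying $\Phi$ yields
$$
\varphi_A(a)\Bigl[\mathcal F^{\ast-1}(b_{(1)},a'_{(1)})\mathcal F(b_{(3)},a'_{(3)})\varphi_A(a'_{(2)})\varphi_B(b_{(2)})\Bigr]\varphi_B(b').
$$
On the other hand, $\Phi(a\tensor b)\Phi(a'\tensor b')=\varphi_A(a)\varphi_B(b)\varphi_A(a')\varphi_B(b')$.

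Hence $\Phi$ is a homomorphism if and only if the bracketed expression equals $\varphi_B(b)\varphi_A(a')$ for all $a'\in A$, $b\in B$. For the forward direction, specialize to $a=1_A$ and $b'=1_B$. For the converse, multiply by $\varphi_A(a)$ on the left and $\varphi_B(b')$ on the right. Now apply Lemma~\ref{lem:old mult from twisted} with coalgebra $C=B\tensor A$, $\mathcal S=\mathcal F$, $V=U$, $f(b\tensor a')=\varphi_B(b)\varphi_A(a')$ and $g(b\tensor a')=\varphi_A(a')\varphi_B(b)$. The bracketed identity is exactly condition~\ref{lem:old mult from twisted.ii} of that lemma. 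It is therefore equivalent to condition~\ref{lem:old mult from twisted.i}, which reads $\mathcal F(b_{(1)},a'_{(1)})\varphi_B(b_{(2)})\varphi_A(a'_{(2)})=\mathcal F(b_{(2)},a'_{(2)})\varphi_A(a'_{(1)})\varphi_B(b_{(1)})$. That is~\eqref{eq:cnd hom F}, whose last factor should read $\varphi_B(b_{(1)})$.

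The main obstacle is bookkeeping. One must confirm that the Sweedler indices in the twisted product match the form $(\mathcal S^{\ast-1}\tensor g\tensor\mathcal S)\circ(\Delta\tensor\id)\circ\Delta$ on $B\tensor A$, with the tensor-product comultiplication and with $g$ in the correct order. One must also confirm that the $\ast$-inverse of $\mathcal F_{2,3}$ is indeed $(\mathcal F^{\ast-1})_{2,3}$. The hypotheses that $\mathcal F$ is a relative dual twist are only needed to know that $\bullet_{\mathcal F_{2,3}}$ is an associative product; the equivalence itself uses only $\ast$-invertibility.
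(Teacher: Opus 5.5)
Your proposal is correct and follows the paper's proof essentially verbatim. It computes the twisted product via Corollary~\ref{cor:dual twist from rel twist} with trivial $\mathcal J_A,\mathcal J_B$, reduces multiplicativity of $m_U\circ(\varphi_A\tensor\varphi_B)$ to an identity in $b$ and $a$ alone, and concludes by Lemma~\ref{lem:old mult from twisted} with $C=B\tensor A$, $\mathcal S=\mathcal F$ and the same $f,g$. You are also right that the last factor in~\eqref{eq:cnd hom F} should be $\varphi_B(b_{(1)})$.
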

\begin{proof}
Note that $\mathbf m:=m_U\circ(\varphi_A\tensor\varphi_B)$ is a homomorphism of algebras
if and only if
$$
\mathbf m((a'\tensor b)\bullet(a\tensor b'))=
\varphi_A(a')\varphi_B(b)\varphi_A(a)\varphi_B(b'),
\quad a,a'\in A,\,b,b'\in B.
$$
Using Corollary~\ref{cor:dual twist from rel twist}
with trivial twists~$\mathcal J_A$ and~$\mathcal J_B$ we obtain
\begin{align*}
\mathbf m((a'\tensor b)\bullet(a\tensor b'))&=
\mathcal F^{\ast-1}(b_{(1)},a_{(1)})\mathcal F(b_{(3)},a_{(3)})
\mathbf m(a'a_{(2)}\tensor b_{(2)}b')
\\
&=\mathcal F^{\ast-1}(b_{(1)},a_{(1)})\mathcal F(b_{(3)},a_{(3)})
\varphi_A(a'a_{(2)})\varphi_B(b_{(2)}b').
\end{align*}
Thus, $\mathbf m$ is a homomorphism of algebras if and only if
$$
\varphi_A(a')\mathbf m(\Psi_{\mathcal F}(b\tensor a))
\varphi(b')=\varphi_A(a')\varphi_B(b)\varphi_A(a)\varphi_B(b')
$$
for all $a,a'\in A$, $b,b'\in B$ or, equivalently,
if and only if
$$
\varphi_B(b)\varphi_A(a)=\mathcal F^{\ast-1}(b_{(1)},a_{(1)})\mathcal F(b_{(3)},a_{(3)})
\varphi_A(a_{(2)})\varphi_B(b_{(2)}),\qquad a\in A,\,b\in B,
$$
which in turn is equivalent to~\eqref{eq:cnd hom F} by
Lemma~\ref{lem:old mult from twisted} applied with~$C=B\tensor A$, $\mathcal S=\mathcal F$,
$V=U$, 
$f=m_U\circ (\varphi_B\tensor\varphi_A)$ and~$g=m_U\circ\tau\circ (\varphi_B\tensor\varphi_A)$.
\end{proof}

\section{Combinatorial background}

\subsection{Transitivity and permutations}\label{subs:trans}
Let~$n\in\ZZ_{>0}$.
We say that~$S\subset [n]\times [n]$ is {\em transitive} if $(i,j),(j,k)\in S$
implies that~$(i,k)\in S$ (see e.g.~\cite{ABGJ}*{\S4.2}). We 
can identify $[n]\times[n]$ with a complete bioriented graph~$\overset{\leftrightarrow\circlearrowleft}{K_n}$ with vertex set~$[n]$
and with loops. In this language, a transitive subset corresponds 
to a subgraph of~$\overset{\leftrightarrow\circlearrowleft}{K_n}$ which contains the oriented edge $i\to k$
provided that it contains oriented edges $i\to j$ and~$j\to k$, $i,j,k\in[n]$.

Let~$S\subset [n]\times[n]$ be transitive and let~$C$ be any set. We say that 
$\mathbf c:S\to C$
is {\em transitive} (cf.~\cite{ABGJ}*{Definition~1.1} and~\cite{ABGJepr}*{Definition~1.2}) if 
$\mathbf c(i,k)\in \{ \mathbf c(i,j),\mathbf c(j,k)\}$ for all $(i,j),(j,k)\in S$. Clearly, if $S'\subset S$ is transitive
and~$\mathbf c:S\to C$ is transitive, then so is~$\mathbf c|_{S'}$.

Given a permutation~$w\in S_n$, denote~$\Inv(w)=\{ (i,j)\,:\,
1\le i<j\le n,\, w(i)>w(j)\}$. In particular, we abbreviate 
$I_n=\Inv(w_\circ)=\{ (i,j)\,:\, 1\le i<j\le n\}$ where 
$w_\circ$ is the longest element of~$S_n$.
It is well-known
that~$S\subset I_n$ is equal to~$\Inv(w)$ for some~$w\in S_n$
if and only if both~$S$ and~$I_n\setminus S$ are transitive. Moreover, the map~$w\mapsto \Inv(w)$, $w\in S_n$, is injective. 

The following was established in~\cite{ABGJ}*{Observation~4.5}.
\begin{lemma}\label{lem:trans sign}
For any~$w\in S_n$, $\sgna{w}:I_n\to \{1,-1\}$ 
defined by
$\sgna{w}(i,j)=\sign(w(j)-w(i))$, $1\le i<j\le n$
(cf.~\eqref{eq:sgna defn}) is transitive. Moreover, the assignments 
$w\mapsto \sgna{w}$, $w\in S_n$ define a bijection from~$S_n$
onto the set of all transitive maps $I_n\to \{1,-1\}$.
\end{lemma}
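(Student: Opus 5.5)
We need to prove Lemma~\ref{lem:trans sign}: that $\sgna{w}$ is transitive, and that $w\mapsto\sgna{w}$ is a bijection from $S_n$ onto the transitive maps $I_n\to\{1,-1\}$. The plan is to reduce everything to the characterization of inversion sets recalled just before the statement. Note that $\sgna{w}(i,j)=-1$ exactly when $(i,j)\in\Inv(w)$. Hence a map $\mathbf c:I_n\to\{1,-1\}$ is determined by the set $S=\mathbf c^{-1}(-1)\subset I_n$, whose complement in $I_n$ is $\mathbf c^{-1}(1)$.

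The key observation is that, for $\mathbf c:I_n\to\{1,-1\}$, transitivity of $\mathbf c$ is equivalent to transitivity of both $S$ and $I_n\setminus S$. In $I_n$, a pair $(i,j),(j,k)$ composes only when $i<j<k$, and then $(i,k)\in I_n$.

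\begin{enumalph}
\item Suppose $\mathbf c$ is transitive and $(i,j),(j,k)\in S$. Then $\mathbf c(i,k)\in\{-1,-1\}$, so $(i,k)\in S$. The same argument with the value $1$ shows that $I_n\setminus S$ is transitive.
\item Conversely, suppose $S$ and $I_n\setminus S$ are both transitive, and take $i<j<k$. If $\mathbf c(i,j)=\mathbf c(j,k)$, then both pairs lie in the same set, so transitivity of that set forces $\mathbf c(i,k)$ to be that common value. If $\mathbf c(i,j)\ne\mathbf c(j,k)$, then $\{\mathbf c(i,j),\mathbf c(j,k)\}=\{1,-1\}$, and the condition $\mathbf c(i,k)\in\{\mathbf c(i,j),\mathbf c(j,k)\}$ holds automatically.
\end{enumalph}

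With this equivalence, transitivity of $\sgna{w}$ follows from the known fact that $\Inv(w)$ and its complement in $I_n$ are both transitive. Alternatively one can check it directly: if $w(i)>w(j)>w(k)$ then $w(i)>w(k)$, and similarly with $<$.

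Injectivity follows from injectivity of $w\mapsto\Inv(w)$, since $\sgna{w}$ determines $\Inv(w)=\sgna{w}^{-1}(-1)$. For surjectivity, take a transitive $\mathbf c$. By the equivalence, $S=\mathbf c^{-1}(-1)$ and $I_n\setminus S$ are both transitive. The quoted characterization then gives $S=\Inv(w)$ for some $w\in S_n$, and so $\mathbf c=\sgna{w}$.

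The only substantive input is the classical characterization of inversion sets, which the paper states as well known. If a self-contained argument were wanted, the main obstacle would be constructing $w$ from $S$. I would define a relation $i\prec j$ by: either $i<j$ and $(i,j)\notin S$, or $i>j$ and $(j,i)\in S$. Transitivity of $S$ and of its complement makes $\prec$ a strict total order. The check splits into cases according to the relative order of three indices, and the mixed cases use co-transitivity, i.e.\ transitivity of the complement. One then takes $w(i)$ to be the rank of $i$ under $\prec$.
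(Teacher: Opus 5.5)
Your proof is correct and follows the route the paper sets up: the paper gives no argument of its own and simply cites \cite{ABGJ}*{Observation~4.5}, but immediately before the statement it recalls exactly the characterization of inversion sets ($S=\Inv(w)$ if and only if both $S$ and $I_n\setminus S$ are transitive) and the injectivity of $w\mapsto\Inv(w)$ on which your reduction rests. A minor remark on your optional self-contained sketch: the mixed orderings need both properties, not only co-transitivity (e.g.\ $a<c<b$ uses transitivity of $S$, while $b<c<a$ uses transitivity of $I_n\setminus S$), though your conclusion that $\prec$ is a strict total order is right.
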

\begin{lemma}\label{lem:In trans to trans}
Let~$w\in S_n$, $\boldsymbol{\alpha}:[n]\to \{1,-1\}$ and define $\boldsymbol\epsilon(w,\boldsymbol{\alpha}):[n]\times[n]\to \{1,-1\}$ by
$$
\boldsymbol\epsilon(w,\boldsymbol{\alpha})(i,j)=\delta_{i,j}\alpha(i)+
\sign(w(j)-w(i)),\qquad i,j\in [n]
$$
(cf.~\eqref{eq:eps w alpha defn}).
Then~$\boldsymbol\epsilon(w,\boldsymbol{\alpha})$ is transitive. Moreover,
all transitive maps~$\mathbf c:[n]\times[n]\to\{1,-1\}$ satisfying
$\mathbf c(j,i)=-\mathbf c(i,j)$, $(i,j)\in I_n$, are obtained this way.
In particular, there are $n! 2^n$ such maps.
\end{lemma}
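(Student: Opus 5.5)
Fix $w\in S_n$ and $\boldsymbol\alpha$, and write $\boldsymbol\epsilon=\boldsymbol\epsilon(w,\boldsymbol\alpha)$. On the diagonal, $\boldsymbol\epsilon(i,i)=\alpha(i)$. Off the diagonal, $\boldsymbol\epsilon(i,j)=1$ if $w(i)<w(j)$ and $-1$ if $w(i)>w(j)$, since the sign term is $\pm1$ there and the $\delta$ term vanishes. Almost skew-symmetry is therefore immediate.

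\emph{Transitivity.} We check $\boldsymbol\epsilon(i,k)\in\{\boldsymbol\epsilon(i,j),\boldsymbol\epsilon(j,k)\}$ by cases on coincidences among $i,j,k$.
\begin{enumerate}
\item If $i=j$ or $j=k$, the condition reads $\boldsymbol\epsilon(i,k)\in\{\alpha(i),\boldsymbol\epsilon(i,k)\}$ (or the symmetric version), which is trivial.
\item If $i=k\ne j$, then $\boldsymbol\epsilon(i,j)=-\boldsymbol\epsilon(j,i)$, so $\{\boldsymbol\epsilon(i,j),\boldsymbol\epsilon(j,k)\}=\{1,-1\}$ contains $\alpha(i)$.
\item If $i,j,k$ are pairwise distinct, consider the values $w(i),w(j),w(k)$. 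If $w(i)<w(k)$, then $w(j)<w(k)$ or $w(j)>w(k)>w(i)$, so $\boldsymbol\epsilon(j,k)=1$ or $\boldsymbol\epsilon(i,j)=1$, and either way the value $1$ occurs. The case $w(i)>w(k)$ is symmetric.
\end{enumerate}
Alternatively, for $i<j<k$ the last case follows from Lemma~\ref{lem:trans sign}, but the direct check covers all orderings at once.

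\emph{Surjectivity.} Let $\mathbf c$ be a transitive almost skew-symmetric map, and set $\alpha(i)=\mathbf c(i,i)$. Restricting $\mathbf c$ to $I_n$ gives a transitive map, so by Lemma~\ref{lem:trans sign} there is a unique $w$ with $\mathbf c|_{I_n}=\sgna w$. For $i>j$, skew-symmetry gives $\mathbf c(i,j)=-\sign(w(i)-w(j))=\sign(w(j)-w(i))$. Hence $\mathbf c=\boldsymbol\epsilon(w,\boldsymbol\alpha)$. Note that transitivity of the restriction uses only triples $i<j<k$, so no further hypothesis is needed here.

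\emph{Counting.} The map $(w,\boldsymbol\alpha)\mapsto\boldsymbol\epsilon(w,\boldsymbol\alpha)$ is injective: $\boldsymbol\alpha$ is read off the diagonal, and $w$ is recovered from the restriction to $I_n$ by the injectivity part of Lemma~\ref{lem:trans sign}. This gives exactly $n!\,2^n$ such maps.

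The only step needing any care is the distinct-index case of transitivity for arbitrary orderings of $i,j,k$, and the direct argument via comparing $w$-values handles it.
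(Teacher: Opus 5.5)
Your proof is correct and follows the paper's structure. You use the same case split on coincidences among $i,j,k$, and the same reduction of the converse (and of the count) to Lemma~\ref{lem:trans sign}. The one real difference is the pairwise-distinct case: you argue directly from the total order on $w$-values ($w(i)<w(k)$ forces $w(i)<w(j)$ or $w(j)<w(k)$), whereas the paper applies Lemma~\ref{lem:trans sign} to the increasing rearrangement of $i,j,k$ and transports the conclusion via skew-symmetry, ordering by ordering. Your version is shorter and makes the first assertion independent of that lemma.
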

\begin{proof}
Abbreviate $\epsilon_{ij}=\boldsymbol\epsilon(w,\boldsymbol{\alpha})(i,j)$,
$i,j\in [n]$. We need to prove that~$\epsilon_{ik}\in\{\epsilon_{ij},
\epsilon_{jk}\}$ for all~$i,j,k\in [n]$. Note that~$\epsilon_{ij}=-\epsilon_{ji}$
if~$i\not=j$. 

If~$i=j=k$ then there is nothing to prove. Suppose that~$|\{i,j,k\}|=2$. If~$i\not=k$ then~$j\in\{i,k\}$ and the assertion is obvious.
If~$i=k$ then~$i\not=j$ and so~$\{\epsilon_{ij},\epsilon_{jk}\}=\{\epsilon_{ij},\epsilon_{ji}\}
=\{1,-1\}\ni\epsilon_{ik}$.

Assume now that~$|\{i,j,k\}|=3$. Suppose first that $i<k$. If~$i<j<k$ then~$\epsilon_{ik}\in\{\epsilon_{ij},\epsilon_{jk}\}$ by Lemma~\ref{lem:trans sign}.
If~$k<j$ then~$\epsilon_{ij}\in \{\epsilon_{ik},\epsilon_{kj}\}$ by Lemma~\ref{lem:trans sign}. Thus either
$\epsilon_{ik}=\epsilon_{ij}$ or~$\epsilon_{ik}=-\epsilon_{ij}=-\epsilon_{kj}
=\epsilon_{jk}$. 
If~$j<i$ then~$\epsilon_{jk}\in\{\epsilon_{ji},\epsilon_{ik}\}$ by Lemma~\ref{lem:trans sign} and so either~$\epsilon_{ik}=\epsilon_{jk}$
or~$\epsilon_{ik}=-\epsilon_{jk}=-\epsilon_{ji}=\epsilon_{ij}$.
Finally, if~$k<i$ then $\epsilon_{ki}\in \{ \epsilon_{kj},\epsilon_{ji}\}$ by the above and so~$\epsilon_{ik}=-\epsilon_{ki}\in \{-\epsilon_{ji},-\epsilon_{kj}\}=
\{\epsilon_{ij},\epsilon_{jk}\}$.

To prove the converse, note that~$\mathbf c:[n]\times[n]\to \{1,-1\}$
satisfying~$\mathbf c(j,i)=-\mathbf c(i,j)$, $(i,j)\in I_n$,  
is uniquely determined by $(\mathbf c(i,i))_{i\in[n]}$ and
by~$\mathbf c|_{I_n}\to \{1,-1\}$ which is transitive. It remains to apply Lemma~\ref{lem:trans sign}.
\end{proof}

\subsection{Admissible functors and polynomiality}\label{subs:polynom}
Let~$M$ be a set and 
denote by $\Part(M)$ the set of all partitions of~$M$ into nonempty disjoint subsets. Given~$m\in M$ and~$\mathcal P\in 
\Part(M)$, let $\mathcal P[m]$ be the necessarily unique set in~$\mathcal P$
containing~$m$. 
Clearly, if~$M$ is finite then $\Part(M)=\bigsqcup\limits_{k\ge 1} \Part_k(M)$ where  $\Part_k(M)$ is the set of all 
$\PP\in \Part(M)$ with exactly $k$ parts.

Given sets $M$ and $C$,  
denote~$\Map(M,C)$ the set of all maps from~$M$ to~$C$ and let $\Inj(M,C)\subset \Map(M,C)$
be the set of all injective maps from~$M$
to~$C$.
Clearly,
each~$f\in \Map(M,C)$ defines a partition $\PP_f=\{f^{-1}(c)\}_{c\in f(M)}\in\Part(M)$.

Denote $\Map_k(M,C):=\{\varepsilon\in \Map(M,C) : |\varepsilon(M)|=k\}$ for any  sets $M$, $C$ and $k\ge 1$. Clearly,  $\Map_k(M,C)=\emptyset$ if and only if $k>|C|$ and if either $M$ or $C$ is finite, then $\Map(M,C)=\bigsqcup\limits_{k\ge 1} \Map_k(M,C)$. Note that $\PP_\varepsilon\in \Part_k(M)$ if and only $\varepsilon\in \Map_k(M,C)$.
Given~$\EE\subset\Map(M,C)$ and~$k\in\ZZ_{>0}$ we denote $$
\EE_k:=\EE\cap \Map_k(M,C),\qquad \Part_k^\EE(M):=\{\PP_\varepsilon\in \Part_k(M) : \varepsilon \in \EE\}.
$$
It is immediate from definitions that $\Part_k^\EE(M)=\Part_k^{\EE_k}(M)$ and $\Part_k^\EE(M)\subset \Part_k^{\EE'}(M)$ whenever $\EE_k\subset \EE'_k\subset \Map_k(M,C)$.

\begin{lemma} 
\label{le:injective equality}
$\Part_k^\EE(M)=\Part_k^{f\circ \EE}(M)$ for any $\EE\subset \Map(M,C)$ and any $f\in \Inj(C,C')$, where we abbreviated $f\circ \EE=\{f\circ \varepsilon : \varepsilon \in \EE\}$. 
\end{lemma}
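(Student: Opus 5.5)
The claim is that the partition $\PP_{f\circ\varepsilon}$ coincides with $\PP_\varepsilon$ for every $\varepsilon\in\Map(M,C)$, and that $f\circ\varepsilon$ and $\varepsilon$ have the same number of values. Both sides of the asserted equality are images of a map $\varepsilon\mapsto\PP_\varepsilon$ restricted to a $k$-part, so this suffices.

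\emph{Step 1: the partitions agree.} Fix $\varepsilon\in\Map(M,C)$. For $m,m'\in M$, injectivity of $f$ gives $f(\varepsilon(m))=f(\varepsilon(m'))$ if and only if $\varepsilon(m)=\varepsilon(m')$. Hence the fibres of $f\circ\varepsilon$ are exactly the fibres of $\varepsilon$. Concretely, for $c\in\varepsilon(M)$ we have $(f\circ\varepsilon)^{-1}(f(c))=\varepsilon^{-1}(c)$, and every element of $(f\circ\varepsilon)(M)$ is of the form $f(c)$ with $c\in\varepsilon(M)$. Therefore $\PP_{f\circ\varepsilon}=\PP_\varepsilon$.

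\emph{Step 2: the value counts agree.} The map $f$ restricts to a bijection $\varepsilon(M)\to (f\circ\varepsilon)(M)$. So $|(f\circ\varepsilon)(M)|=|\varepsilon(M)|$, which means $\varepsilon\in\Map_k(M,C)$ if and only if $f\circ\varepsilon\in\Map_k(M,C')$. It follows that $(f\circ\EE)_k=f\circ(\EE_k)$.

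\emph{Step 3: conclude.} We compute
$$\Part_k^{f\circ\EE}(M)=\{\PP_{f\circ\varepsilon}:\varepsilon\in\EE_k\}=\{\PP_\varepsilon:\varepsilon\in\EE_k\}=\Part_k^\EE(M).$$
Here the first equality uses Step 2, together with the remark in the paper that $\Part_k^\EE(M)=\Part_k^{\EE_k}(M)$. The second equality uses Step 1.

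There is no real obstacle. The only point needing care is that $f\circ\EE$ is a subset of $\Map(M,C')$ rather than of $\Map(M,C)$, so $\Map_k$ on that side is taken with target $C'$. The definitions of $\PP_\varepsilon$ and $\Part_k$ are insensitive to the target set, so this causes no difficulty.
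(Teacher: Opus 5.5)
Your proof is correct and matches the paper's, which simply notes that $\PP_{f\circ\varepsilon}=\PP_\varepsilon$ for every $\varepsilon\in\Map(M,C)$ when $f$ is injective. Your Step~2 is harmless but redundant: the number of parts of $\PP_\varepsilon$ is $|\varepsilon(M)|$, so equality of the partitions already forces equal value counts.
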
 
\begin{proof}
It is sufficient to observe that if~$f\in\Inj(C,C')$ then $\PP_\varepsilon=\PP_{f\circ \varepsilon}$ for any $\varepsilon\in \Map(M,C)$.
\end{proof}

\begin{proposition}\label{prop:general enumeration} For any finite sets $M$ and $C$, $\EE\subset \Map(M,C)$, one has
\begin{align}
\label{eq:general enumeration}
\begin{split}
& |\EE|=\sum_{k\ge 1} |\Part_k^\EE(M)|\cdot (|C|)_k,
\end{split}
\end{align}
where $(x)_k := x(x-1) \cdots (x-k+1)$. 
\end{proposition}

\begin{proof} 

For any sets $M,C$ and any $\mathcal P\in \Part(M)$ and any $\EE\subset \Map(M,C)$ denote
\begin{equation}
\EE_\PP=\{\varepsilon \in \EE  :  \PP_\varepsilon = \PP\} \ .
\end{equation}
Clearly, $\EE_\PP= \emptyset$ for $\PP\in \Part_k(M)$ if and only if $\Part_k^\EE(M)=\emptyset$.

\begin{lemma} $|\EE_\PP|=(|C|)_k$ for any finite $M$ and $C$, $\EE\subset \Map(M,C)$ and $\mathcal P\in \Part_k^\EE(M)$, $k\ge 1$. 
\end{lemma}

\begin{proof}
Given $\PP\in \Part_k(M)$, fix a surjective map $\pi:M\twoheadrightarrow [k]$ such that $\PP_\pi=\PP$. 
Clearly, $\varepsilon\in \EE_\PP$  if and only if~$\varepsilon=\underline \varepsilon\circ \pi$ for some $\underline \varepsilon\in  \Inj([k],C)$. Thus, the assignments $\varepsilon\mapsto \underline \varepsilon$ define a 
bijection $\EE_\PP\xrightarrow{\sim} \Inj([k],C)$.
Finally, note that $|\Inj([k],C)|=(|C|)_k$ for any finite set $C$.
\end{proof}
Since~$\mathcal E=\bigsqcup_{\mathcal P\in\Part(M)} \EE_\PP$, we have 
\begin{align*}|\EE| &=\sum\limits_{\PP\in \Part(M)}|\EE_\PP|=\sum\limits_{k\ge 1} \sum\limits_{\PP\in \Part_k^\EE(P)} (|C|)_k=\sum\limits_{k\ge 1} |\Part_k^\EE(M)|\cdot (|C|)_k,
\end{align*}
which is~\eqref{eq:general enumeration}.
\end{proof}

We now describe a categorical framework for constructing $\EE(C)\subset \Map(M,C)$ for a given $M$ and various $C$ such that $\Part_k^{\EE(C)}(M)$ does not depend on $C$. If this is the case then $|\EE(C)|$ is a polynomial in $|C|$ by Proposition~\ref{prop:general enumeration}. More precisely, $|\EE(C)|=\sum_{k\ge 1} 
a_{M,k}(|C|)_k$ where~$a_{M,k}\in\ZZ_{\ge 0}$
and $a_{M,k}=0$ for~$k\gg0$.

Given a category  ${\mathcal C}$  and $M\in \Ob({\mathcal C})$, we say that a  family $\{F(C)\subset \Hom_{\mathcal C}(M,C)\,:\,C\in \Ob({\mathcal C})\}$ 
is an $M$-{\it admissible} functor  $F:{\mathcal C}\to \mathbf{Set}$ if 
\begin{equation}
\label{eq:functoriality of F}
f\circ F(C)\subset F(C')
\end{equation}
for any $f\in \Hom_{\mathcal C}(C,C')$ and any $C,C'\in \Ob({\mathcal C})$.

Let $\mathbf{Set}_0$ be a full subcategory of $\mathbf{Set}$ containing all the $[c]$, $c\in \ZZ_{\ge 0}$ and closed with respect to subsets.
Given a finite set $M$ and an $M$-admissible functor $F:{\bf Set_0}\to \mathbf{Set}$  we denote
${\tstirling{M}{k}}_F:=|\Part_k^{F([k])}(M)|$, $\quad k\ge 1$. Clearly, $\tstirling{M}{k}_F=0$ if~$k>|M|$
and~$\tstirling{M}{1}_F=1$. Define
$$
p_F=  \displaystyle\sum\limits _{k\ge 1}  {\stirling{M}{k}}_F\cdot (x)_k\in \ZZ_{\ge 0}[x].
$$

\begin{theorem}
\label{th:polynomial} 
Let~$M$ be a finite set and $F$ be an $M$-admissible functor $\mathbf{Set}_0\to \mathbf{Set}$. Then
$|F(C)|=p_F(|C|)$ 
for any finite $C\in \Ob(\mathbf{Set}_0)$.
\end{theorem}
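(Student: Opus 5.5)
The plan is to reduce to Proposition~\ref{prop:general enumeration} applied with $\EE=F(C)\subset\Map(M,C)$. That proposition gives $|F(C)|=\sum_{k\ge1}|\Part_k^{F(C)}(M)|\,(|C|)_k$, so it suffices to show
\[
\Part_k^{F(C)}(M)=\Part_k^{F([k])}(M)
\]
for every finite $C\in\Ob(\mathbf{Set}_0)$ and every $k\le |C|$. For $k>|C|$ the factor $(|C|)_k$ vanishes, so those terms agree automatically; note that $\Part_k^{F(C)}(M)$ is empty for such $k$ anyway. Here morphisms in $\mathbf{Set}_0$ are just maps of sets, since it is a full subcategory of $\mathbf{Set}$.

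For the inclusion $\Part_k^{F(C)}(M)\subset\Part_k^{F([k])}(M)$, take $\varepsilon\in F(C)$ with $|\varepsilon(M)|=k$. Choose a bijection $g\colon\varepsilon(M)\to[k]$ and extend it to a map $f\colon C\to[k]$ arbitrarily outside $\varepsilon(M)$; this is possible since $k\ge1$. Admissibility~\eqref{eq:functoriality of F} gives $f\circ\varepsilon\in F([k])$. Since $f$ is injective on the image of $\varepsilon$, we have $\PP_{f\circ\varepsilon}=\PP_\varepsilon$, and $f\circ\varepsilon$ has exactly $k$ values. Hence $\PP_\varepsilon\in\Part_k^{F([k])}(M)$.

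For the reverse inclusion with $k\le|C|$, take $\varepsilon'\in F([k])$ with $k$ values and pick an injection $f\colon[k]\to C$. Admissibility gives $f\circ\varepsilon'\in F(C)$. By the argument of Lemma~\ref{le:injective equality}, $\PP_{f\circ\varepsilon'}=\PP_{\varepsilon'}$ and the image still has size $k$.

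Combining the two inclusions gives $|\Part_k^{F(C)}(M)|=\tstirling{M}{k}_F$ for $k\le|C|$. Substituting into~\eqref{eq:general enumeration} yields $|F(C)|=p_F(|C|)$. The only delicate points are two bookkeeping checks: the case $k>|C|$, where $(|C|)_k=0$ so the differing coefficients are harmless, and the existence of the extension $f$, which needs $[k]\neq\emptyset$. I expect no real obstacle; the key observation is that non-injective post-composition still preserves the partition as long as $f$ is injective on the image of $\varepsilon$.
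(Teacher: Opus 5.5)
Your proof is correct and is essentially the paper's argument. The paper likewise reduces to Proposition~\ref{prop:general enumeration} through the invariance $\Part_k^{F(C)}(M)=\Part_k^{F([k])}(M)$ for $|C|\ge k$. It proves that invariance by post-composing with an injection $[k]\to C$ and with a left inverse $g\colon C\to[k]$ of an injection onto $\varepsilon(M)$; that $g$ is exactly your map that is bijective on $\varepsilon(M)$.
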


\begin{proof}We need the following result.
\begin{proposition} 
\label{prop:F-invariance}
Let $F:\mathbf{Set}_0\to \mathbf{Set}$ be any $M$-admissible functor. Then
$$\Part^{F(C)}_k(M)=\Part_k^{F([k])}(M)$$ 
for all $C\in \Ob(\mathbf{Set}_0)$ such that $|C|\ge k$.
\end{proposition}
\begin{proof} 
We need the following
\begin{lemma} Let~$F$ be an $M$-admissible functor. Then
$F(C)_k=\bigcup\limits_{f\in \Inj([k],C)} f\circ F([k])$ for all $C\in \Ob(\mathbf{Set}_0)$, $k\ge 1$.
\end{lemma}
\begin{proof} Denote  $F(C)'_k:=\bigcup\limits_{f\in \Inj([k],C)} f\circ F([k])$. Clearly, $F(C)'_k\subset F(C)_k$ by  \eqref{eq:functoriality of F} which guarantees that $f\circ F([k])\subset F(C)_k$ for any $f\in \Inj([k],C)$. 

To prove the opposite inclusion, given $\varepsilon\in F(C)_k$, choose $f\in \Inj([k],C)$ such that $f([k])=\varepsilon(M)$ and let $g$ be a (necessarily surjective) map 
$C\twoheadrightarrow [k]$ such that $g\circ f=\id_{[k]}$. By construction, both $f$ and $g$ are morphisms in $\mathbf{Set}_0$, such that $f\circ g\circ \varepsilon=\varepsilon$.

Denote $\underline \varepsilon:=g\circ \varepsilon$. Clearly, $\underline \varepsilon\in F([k])$ by \eqref{eq:functoriality of F} and 
$f\circ \underline \varepsilon=\varepsilon$. That is, $\varepsilon\in f\circ F([k])$.
Therefore, $F(C)_k\subset F(C)'_k$, which completes the proof of the Lemma.
\end{proof}

We have
$$\Part_k^{F(C)}(M)=\Part_k^{F(C)_k}(M)=\bigcup\limits_{f\in \Inj([k],C)} 
\{\PP_\varepsilon\,:\,\varepsilon\in f\circ F([k])\}=\Part_k^{F([k])}(M).
$$
This completes the proof of Proposition~\ref{prop:F-invariance}.
\end{proof}
By Proposition \ref{prop:F-invariance}, ${\tstirling{M}{k}}_F=|\Part_k^{F(C)}(M)|$ for any object $C\in \mathbf{Set}_0$. 
Using this  and   \eqref{eq:general enumeration} with $\EE=F(C)$ we obtain
\begin{align*}
|F(C)|=\sum_{k\ge 1} |\Part_k^{F(C)}|\, (|C|)_k=\sum_{k\ge 1} {\stirling{M}{k}}_F\, (|C|)_k=p_F(|C|).
\end{align*}
Theorem \ref{th:polynomial} is proved.
\end{proof}
\begin{corollary}\label{cor:stirling F2}
$\tstirling{M}{2}_F=\frac{1}{2}|F([2])|-1$ under the assumptions of Theorem \ref{th:polynomial}. 
\end{corollary}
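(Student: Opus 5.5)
We specialize Theorem~\ref{th:polynomial} to $C=[2]$. By that theorem, $|F([2])|=p_F(2)=\sum_{k\ge1}\tstirling{M}{k}_F\,(2)_k$. Since $(2)_k=0$ for $k\ge3$, while $(2)_1=2$ and $(2)_2=2$, this reduces to
\[
|F([2])|=2\tstirling{M}{1}_F+2\tstirling{M}{2}_F .
\]

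It remains to justify $\tstirling{M}{1}_F=1$, which was asserted just before the definition of $p_F$. By definition $\tstirling{M}{1}_F=|\Part_1^{F([1])}(M)|$, and $\Part_1(M)$ has exactly one element, the trivial partition $\{M\}$, assuming $M\neq\emptyset$. So $\tstirling{M}{1}_F\le 1$. We get equality precisely when $F([1])$ is nonempty, since the unique map $M\to[1]$ then lies in $F([1])$ and produces the partition $\{M\}$.

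This nonemptiness is the one point needing care, and the corollary implicitly relies on it, following the paper's ``Clearly, $\tstirling{M}{1}_F=1$''. If the functor were empty, the formula would read $0=-1$. So I would take the remark before the theorem as given, or note that it holds whenever $F([1])\ne\emptyset$, equivalently whenever $F(C)\neq\emptyset$ for some nonempty $C$, by admissibility applied to the map $C\to[1]$.

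Substituting $\tstirling{M}{1}_F=1$ and solving gives $\tstirling{M}{2}_F=\tfrac12|F([2])|-1$.
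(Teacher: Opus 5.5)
Your argument is correct and is exactly the intended one: evaluate $p_F$ at $x=2$, where only the $k=1,2$ terms survive with $(2)_1=(2)_2=2$, and then use $\tstirling{M}{1}_F=1$. Your caveat is also well taken. That last identity, which the paper asserts as ``clear'', does require $M\neq\emptyset$ and $F([1])\neq\emptyset$; this holds in all the paper's applications, since for instance a constant coloring is always transitive.
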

\begin{example}\label{example:ordinary Stirling number} Let $M$ be a finite set. The assignments $F(C):=\Map(M,C)$ for all sets $C$ define an $M$-admissible functor $F:\mathbf{Set}\to \mathbf{Set}$.
Then $|F(C)|=|C|^{|M|}$ hence $p_F=x^{|M|}$. In particular, $\tstirling{[n]}{k}_F=\tstirling{n}{k}$ is the number of partitions of $[n]$ into $k$ disjoint nonempty subsets, that is, the Stirling number of the second kind. 
Thus, we can view the $\tstirling{M}{k}_F$ as  generalized Stirling numbers which justifies the notation.
\end{example}

\subsection{Polynomiality and transitivity}\label{subs:pol trans}
Let~$\Gamma=(V,E)$ be a directed graph. A function~$\mathbf c:E\to C$ is called a {\em transitive coloring} of~$\Gamma$ (cf.~\cite{ABGJ}) if for any vertices~$i,j,k\in V$ such that $(i,j),(j,k),(i,k)\in E$,
$\mathbf c(i,k)\in\{\mathbf c(i,j),\mathbf c(j,k)\}$.
For the directed graph~$\vec K_n=([n],I_n)$ (an acyclic tournament),
a transitive coloring is precisely a transitive map
$I_n\to C$. 
For an unoriented graph, the corresponding notion is a {\em Gallai coloring} (cf.~\cite{ABGJ}). Given a graph~$\Gamma$, 
let~$\EE_\Gamma(C)$ be the set of transitive (respectively, Gallai) colorings of~$\Gamma$ with values in~$C$.

\begin{proposition} \label{prop:EGamma is an M admissible functor}
Given a (directed) graph~$\Gamma=(V,E)$, the assignments $C\mapsto \EE_{\Gamma}(C)$ for all sets $C$ define an $E$-admissible functor $\EE_{\Gamma}: \mathbf{Set}\to \mathbf{Set}$.
\end{proposition}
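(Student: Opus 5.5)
We need to verify that $\EE_\Gamma(C)\subset\Map(E,C)=\Hom_{\mathbf{Set}}(E,C)$ for every set~$C$, and that condition~\eqref{eq:functoriality of F} holds, i.e.\ that $f\circ\EE_\Gamma(C)\subset\EE_\Gamma(C')$ for every map $f:C\to C'$. The first assertion is immediate from the definition: a transitive (respectively, Gallai) coloring is by definition a function $\mathbf c:E\to C$. So the whole content lies in the second assertion, which we prove directly from the definitions.

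Let $\mathbf c\in\EE_\Gamma(C)$ and let $f:C\to C'$ be an arbitrary map of sets, and put $\mathbf c'=f\circ\mathbf c:E\to C'$. In the directed case, take vertices $i,j,k\in V$ with $(i,j),(j,k),(i,k)\in E$. By transitivity of $\mathbf c$ we have $\mathbf c(i,k)\in\{\mathbf c(i,j),\mathbf c(j,k)\}$. Applying $f$ to both sides of this membership gives
$$
\mathbf c'(i,k)=f(\mathbf c(i,k))\in\{f(\mathbf c(i,j)),f(\mathbf c(j,k))\}=\{\mathbf c'(i,j),\mathbf c'(j,k)\}.
$$
Hence $\mathbf c'$ is a transitive coloring of~$\Gamma$ with values in~$C'$.

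In the unoriented (Gallai) case the argument is the same in spirit. The defining condition of a Gallai coloring is that no triangle receives three distinct colors, i.e.\ for every triangle $\{e_1,e_2,e_3\}\subset E$ some color of one edge equals that of another; equivalently, each edge's color lies in the set of colors of the other two. This is again a condition asserting equalities among the values of~$\mathbf c$ on edges. Such equalities are preserved by post-composition with any map~$f$, since $\mathbf c(e)=\mathbf c(e')$ implies $f(\mathbf c(e))=f(\mathbf c(e'))$. Therefore $f\circ\mathbf c$ is again a Gallai coloring.

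There is no real obstacle here. The only point needing care is that the defining conditions involve only equalities (set-membership among finitely many values) and never inequalities. If they did, non-injective maps~$f$ could destroy them. Since they do not, $C\mapsto\EE_\Gamma(C)$ is $E$-admissible on all of~$\mathbf{Set}$, and a fortiori on~$\mathbf{Set}_0$, so Theorem~\ref{th:polynomial} applies with $M=E$.
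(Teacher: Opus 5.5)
Your proof is correct and essentially the paper's: in the directed case you apply $f$ to the membership $\mathbf c(i,k)\in\{\mathbf c(i,j),\mathbf c(j,k)\}$, and in the Gallai case you use that ``at most two distinct colors on a triangle'' survives post-composition with $f$, just as the paper does. One small slip: ``each edge's color lies in the set of colors of the other two'' is not equivalent to the Gallai condition, since it forces monochromatic triangles (a triangle colored $a,a,b$ violates it); you mean ``some edge'', but your argument never uses this reformulation, so nothing breaks.
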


\begin{proof} 
Let~$\Gamma=(V,E)$ be a graph, $C$ be a set and $\mathbf c\in \EE_{\Gamma}(C)$. Let~$f\in\Map(C,C')$.

Suppose first that~$\Gamma$ is directed. Then for any $i,j,k\in V$
such that the oriented edges $(i,j),(j,k),(i,k)\in E$, 
$\mathbf c(i,k)\in \{\mathbf c(i,j),\mathbf c(j,k)\}$ whence 
\begin{align*}
f(\mathbf c(i,k)) \in \{ f(\mathbf c(i,j)), f(\mathbf c(j,k)) \},
\end{align*}
That is, $f\circ \mathbf c \in \EE_{\Gamma}(C')$. Thus, \eqref{eq:functoriality of F} holds and $\EE_{\Gamma}$ is an $E$-admissible functor $\mathbf{Set}\to \mathbf{Set}$.

Similarly, if~$V$ is unoriented then $|\{\mathbf c(i,j),
\mathbf c(j,k),\mathbf c(i,k)\}|\le 2$
for any vertices $i,j,k\in V$
and edges $(i,j),(j,k),(i,k)\in V$, whence 
\begin{align*}
|\{ f(\mathbf c(i,j)), f(\mathbf c(j,k)), f(\mathbf c(i,k)) \}| \le 2.
\end{align*}
Again, $f\circ \mathbf c \in \EE_{\Gamma}(C')$ and so $\EE_{\Gamma}$ is an $E$-admissible functor $\mathbf{Set}\to \mathbf{Set}$.
\end{proof}
\begin{corollary}[cf.~\cite{ABGJepr}*{Proposition~2.13}]\label{cor:enum transitive}
For any finite set~$C$ and any graph~$\Gamma=(V,E)$,
$$
|\mathcal E_\Gamma(C)|=\sum_{k\ge 1}\stirling{\Gamma}{k}\, (|C|)_k
$$
where~$\tstirling{\Gamma}{k}=|\Part_k^{\mathcal E_{\Gamma}([k])}(V)|\in\ZZ_{\ge 0}$.
\end{corollary}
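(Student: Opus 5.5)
The plan is to obtain the Corollary as a direct specialization of Theorem~\ref{th:polynomial}, with Proposition~\ref{prop:EGamma is an M admissible functor} providing the admissibility hypothesis. Fix the graph $\Gamma=(V,E)$, directed or not. By Proposition~\ref{prop:EGamma is an M admissible functor}, $C\mapsto\EE_\Gamma(C)$ is an admissible functor $\mathbf{Set}\to\mathbf{Set}$. Since $\mathbf{Set}$ contains every $[c]$ and is closed under subsets, we may take $\mathbf{Set}_0=\mathbf{Set}$ in Theorem~\ref{th:polynomial}, and the theorem applies to every finite $C$.

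First I will identify the coefficients. A coloring $\mathbf c\in\EE_\Gamma(C)$ determines its fibre partition $\PP_{\mathbf c}=\{\mathbf c^{-1}(x)\}_{x\in\mathbf c(E)}$. For each $k$, the set $\Part_k^{\EE_\Gamma([k])}(V)$ in the statement is the set of partitions $\PP_{\mathbf c}$ with exactly $k$ blocks, where $\mathbf c$ ranges over the colorings $\mathbf c\in\EE_\Gamma([k])$. The fibres of a coloring live on the set the coloring is defined on, which is the edge set; so I read the $V$ in the statement's notation as that ground set. With this reading, $\tstirling{\Gamma}{k}$ is exactly the generalized Stirling number $\tstirling{\cdot}{k}_{\EE_\Gamma}$ of \S\ref{subs:polynom} for the functor $F=\EE_\Gamma$. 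Consequently $p_{\EE_\Gamma}=\sum_{k\ge1}\tstirling{\Gamma}{k}(x)_k$.

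Next I apply Theorem~\ref{th:polynomial} to get $|\EE_\Gamma(C)|=p_{\EE_\Gamma}(|C|)=\sum_k\tstirling{\Gamma}{k}(|C|)_k$, which is the claimed identity. Each $\tstirling{\Gamma}{k}$ is a cardinality of a finite set, so it lies in $\ZZ_{\ge0}$. The sum is finite because $\tstirling{\Gamma}{k}=0$ once $k$ exceeds the size of the ground set.

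The only point needing care, which I expect to be the main obstacle, is the bookkeeping between the set indexing the partitions and the domain of the colorings. I will handle it as follows. I will spell out that, via Proposition~\ref{prop:F-invariance}, the set of $k$-block fibre partitions realized by colorings with values in $C$ coincides with the set realized by colorings with values in $[k]$. This independence of $C$ is exactly what allows the count of the statement's coefficient to be computed in $\EE_\Gamma([k])$. Everything else is a direct substitution into Proposition~\ref{prop:general enumeration}.
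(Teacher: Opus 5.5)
Your proposal is correct and matches the paper. There the Corollary is stated without a separate proof, as an immediate consequence of Theorem~\ref{th:polynomial} applied to the $E$-admissible functor $\EE_\Gamma$ of Proposition~\ref{prop:EGamma is an M admissible functor} with $\mathbf{Set}_0=\mathbf{Set}$. You are right to read the $V$ in $\Part_k^{\EE_\Gamma([k])}(V)$ as the edge set $E$, since the partitions are fibre partitions of colorings $E\to C$ and finiteness of $E$ is implicitly assumed; your final appeal to Proposition~\ref{prop:F-invariance} just re-traces the proof of Theorem~\ref{th:polynomial}, so it is harmless but unnecessary.
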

Let~$\Gamma=\vec K_n$. Then
$\Part^{\EE_{\Gamma}([r])}_r(I_n)$
is the set of all~$\mathcal P\in\Part_r(I_n)$
such that for any $(i,k)\in I_n$,
$\mathcal P[(i,k)]\cap \{ (i,j),(j,k)\}\not=\emptyset$ or, equivalently $\mathcal P[(i,k)]$ coincides
with one of the~$\mathcal P[(i,j)]$ and~$\mathcal P[(j,k)]$, 
for all~$1\le i<j<k\le n$. By~\cite{ABGJ}*{Observation~2.15}, $\tstirling{\vec K_n}{k}=0$
if~$k\ge n$. By~\cite{ABGJ}*{Theorem~1.4}), $\tstirling{\vec K_{n}}{n-1}$ is the~$(n-1)$th 
Catalan number~$\frac{1}{n}\binom{2(n-1)}{n-1}$, while~$\tstirling{\vec K_n}{2}=\frac12n!-1$ by~Lemma~\ref{lem:trans sign} or by~Corollary~\ref{cor:stirling F2}. 
\begin{conjecture}\label{conj:K n n-2}
$\tstirling{\vec K_n}{n-2}=(n-2)\binom{2n-3}n+\binom{2(n-2)}n$
for all~$n\ge 3$.
\end{conjecture}
The numbers~$\tstirling{\vec K_n}{k}$ for small values of~$n$ are shown in Table~\ref{tab:I}.
\begin{table}[b]
\begin{tabular}{|c|cccccccc|}
\hline
\diagbox{$n$}{$k$}
&1&2&3&4&5&6&7&8\\
\hline
2&1&&&&&&&\\
3&1&2&&&&&&\\
4&1&11&5&&&&&\\
5&1&59&69&14&&&&\\
6&1&359&756&364&42&&&\\
7&1&2519&7954&6700&1770&132&&\\
8&1&20159&84444&109032&49215&8217&429&\\
9&1&181439&919572&1683550&1150105&321937&37037&1430\\
\hline
\end{tabular}
\caption{$\tstirling{\vec K_n}{k}$ for~$1\le k<n\le 9$}\label{tab:I}
\end{table}
In particular, for~$|C|=3$ (respectively, $|C|=4$), the number of transitive maps~$\mathbf c:I_n\to C$ is given by the sequences
$3$, $15$, $99$, $771$, $6693$, $62841$, $627621,\dots$
(respectively, $4$, $28$, $256$, $2704$, $31192$, $381928$, $4885336,\dots$).

\begin{proposition}\label{prop:enum trans}
For~$\Gamma=\overset{\leftrightarrow\circlearrowleft}{K_n}$, we have $$
\stirling{\Gamma}{k}=\begin{cases}2\stirling{n}{k},& k\ge 3,\\\tfrac12 B_n-1,&k=2,
\end{cases}
$$
where
$B_n :=\sum\limits_{1\le k\le n} \tstirling{n}{k}2^k k!$
is the number of transitive relations~$\mathscr R$ on~$[n]$ such that~$([n]\times [n])\setminus\mathscr R$ is also transitive.
\end{proposition}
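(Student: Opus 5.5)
The plan is to apply Corollary~\ref{cor:enum transitive} to $\Gamma=\overset{\leftrightarrow\circlearrowleft}{K_n}$, with edge set $E=[n]\times[n]$. Then $\tstirling{\Gamma}{k}=|\Part_k^{\EE_\Gamma([k])}([n]\times[n])|$, where $\EE_\Gamma(C)$ is the set of maps $\mathbf c:[n]\times[n]\to C$ with $\mathbf c(i,k)\in\{\mathbf c(i,j),\mathbf c(j,k)\}$ for all $i,j,k\in[n]$.

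\emph{The case $k=2$.} I would use Corollary~\ref{cor:stirling F2}, which gives $\tstirling{\Gamma}{2}=\frac12|\EE_\Gamma([2])|-1$. It then suffices to show $|\EE_\Gamma([2])|=B_n$. A map $\mathbf c:[n]\times[n]\to\{1,2\}$ is transitive if and only if both $\mathscr R=\mathbf c^{-1}(1)$ and its complement $\mathbf c^{-1}(2)$ are transitive relations. Indeed, if $\mathbf c(i,j)=\mathbf c(j,k)=a$, the condition forces $\mathbf c(i,k)=a$; conversely, if the condition failed, then $\mathbf c(i,j)=\mathbf c(j,k)\ne\mathbf c(i,k)$, contradicting transitivity of the relation coloured $\mathbf c(i,j)$. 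So $\mathbf c\mapsto\mathbf c^{-1}(1)$ is a bijection onto the bitransitive relations. The closed formula $B_n=\sum_k\tstirling nk2^kk!$ for the number of such relations I would quote from \cite{FZ} (cf.\ \cites{HK,Wag}); alternatively it will also follow from the count below.

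\emph{The case $k\ge3$.} I would exhibit two families of transitive colourings. A \emph{row-constant} colouring has $\mathbf c(i,k)=f(i)$; it is transitive because $\mathbf c(i,k)=\mathbf c(i,j)$. A \emph{column-constant} colouring has $\mathbf c(i,k)=g(k)$; it is transitive because $\mathbf c(i,k)=\mathbf c(j,k)$. The partition $\PP_{\mathbf c}$ of $[n]\times[n]$ induced by a row-constant colouring is the union of the rows $\{i\}\times[n]$ grouped according to $\PP_f\in\Part([n])$, and similarly for columns. Thus each family contributes exactly $\tstirling nk$ partitions with $k$ parts. The two families of partitions are disjoint unless $k=1$, since a union of whole rows is a union of whole columns only when it is everything. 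This gives at least $2\tstirling nk$ partitions.

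The remaining and main step is the converse: every transitive $\mathbf c$ taking at least three values is row-constant or column-constant. I would argue by contradiction. Suppose row $i$ is non-constant, say $\mathbf c(i,k)\ne\mathbf c(i,k')$, and column $l$ is non-constant, say $\mathbf c(m,l)\ne\mathbf c(m',l)$. I would first extract local constraints from the transitivity condition with repeated indices, for example $\mathbf c(i,i)\in\{\mathbf c(i,j),\mathbf c(j,i)\}$, and from $4$-index configurations $\mathbf c(a,b)\in\{\mathbf c(a,x),\mathbf c(x,b)\}$ applied for varying $x$. From these I would try to show that every entry lies in $\{\mathbf c(i,k),\mathbf c(i,k')\}$, so that only two colours occur, which is a contradiction. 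My intended route is to first treat $n=2,3$ by direct inspection, which is also checkable against the count $q_n$. I would then show that the property ``all entries lie in a fixed pair of colours'' propagates from any $3\times3$ principal subarray containing the witnesses to the whole array, because every triple $(i,j,k)$ lies in such a subarray. This case analysis is where I expect the real difficulty; if direct propagation fails, I would instead induct on $n$, restricting $\mathbf c$ to $[n-1]\times[n-1]$ (which is again transitive) and analysing how a new row and column can be attached.

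Combining the two cases yields the stated values of $\tstirling{\Gamma}{k}$. As a consistency check, the $k\ge3$ count together with $k=1,2$ recovers $q_n$ in Corollary~\ref{cor:poly q_n}.
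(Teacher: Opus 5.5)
\textbf{Verdict: genuine gap.} Your $k=2$ case is fine: you use Corollary~\ref{cor:stirling F2}, the bijection between transitive $\{1,2\}$-colourings and bitransitive relations, and the closed formula for $B_n$ quoted from \cite{FZ}. Your lower bound $\tstirling{\Gamma}{k}\ge 2\tstirling{n}{k}$ for $k\ge3$, from row-constant and column-constant colourings, is also fine. The gap is the converse: that every transitive $\mathbf c$ with at least three colours is row- or column-constant. This is the entire content of the case $k\ge 3$, and the proposal only announces a strategy for it.

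The strategy as described would not work. The witnesses $\mathbf c(i,k)\ne\mathbf c(i,k')$ and $\mathbf c(m,l)\ne\mathbf c(m',l)$ involve up to six indices, so in general no $3\times3$ principal subarray contains them. Moreover, the hypothesis ``some row and some column are non-constant'' is not inherited by principal subarrays, since a restriction can become row-constant. For example, $\boldsymbol\epsilon(\id,(1,-1,-1))$ is neither row- nor column-constant, yet its restriction to $\{1,2\}$ is row-constant. So the cases $n=2,3$ give you nothing to propagate. Separately, your remark that the formula for $B_n$ ``will also follow from the count below'' is wrong. A classification of colourings with at least three colours says nothing about $|\EE_\Gamma([2])|$.

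\textbf{The missing idea: a quotient construction.} For transitive $\mathbf c$, put $i\sim j$ if and only if $\mathbf c(i,j)=\mathbf c(j,i)$.
\begin{enumerate}
\item Since $\mathbf c(i,i),\mathbf c(j,j)\in\{\mathbf c(i,j),\mathbf c(j,i)\}$, a symmetric pair has a monochromatic $2\times2$ block.
\item From this, $\sim$ is an equivalence relation and $\mathbf c(i,j)$ depends only on the classes of $i$ and $j$ (Lemma~\ref{lem:prop sim_P}).
\item Hence $\mathbf c$ is the pull-back of a transitive colouring $\bar{\mathbf c}$ of $[r]\times[r]$ with the same set of colours and with $\bar{\mathbf c}(a,b)\ne\bar{\mathbf c}(b,a)$ for all $a\ne b$. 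Each pair $(\mathcal J,\bar{\mathbf c})$ with $\mathcal J\in\Part_r([n])$ arises from exactly one $\mathbf c$ (Lemma~\ref{lem:collapse part}).
\end{enumerate}
For such a $\bar{\mathbf c}$ there are two cases.
\begin{itemize}
\item \emph{Two diagonal entries coincide,} say $\bar{\mathbf c}(i,i)=\bar{\mathbf c}(j,j)=\bar{\mathbf c}(i,j)=\alpha\ne\beta=\bar{\mathbf c}(j,i)$. A three-index argument puts $\bar{\mathbf c}(i,k)$ and $\bar{\mathbf c}(k,i)$ in $\{\alpha,\beta\}$ for every $k$. Then $\bar{\mathbf c}(k,l)\in\{\bar{\mathbf c}(k,i),\bar{\mathbf c}(i,l)\}$ shows that only two colours occur.
\item \emph{All diagonal entries are distinct.} Then $\{\bar{\mathbf c}(a,a),\bar{\mathbf c}(b,b)\}=\{\bar{\mathbf c}(a,b),\bar{\mathbf c}(b,a)\}$, and checking a third index forces the same orientation (row or column) for every pair. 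So $\bar{\mathbf c}$ is row- or column-constant with exactly $r$ colours.
\end{itemize}
With $k\ge3$ colours only the second case is possible, and $r=k$. This gives exactly $2\tstirling{n}{k}$ partitions.

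The same construction also yields $B_n$ directly. For $C=\{1,-1\}$, the quotients are exactly the almost skew-symmetric transitive matrices, of which there are $2^r r!$ by Lemma~\ref{lem:In trans to trans}. Therefore $|\EE_\Gamma([2])|=\sum_r\tstirling{n}{r}2^r r!$.
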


\begin{proof}
Given~$i,j\in[n]$ and~$\mathcal P\in\Part([n]\times[n])$,
abbreviate~$\mathcal P(i,j):=\mathcal P[(i,j)]$.
Let~$\mathscr T_n$ be the set of all $\mathcal P\in \Part([n]\times[n])$
such that~$\mathcal P(i,k)\in\{\mathcal P(i,j),\mathcal P(j,k)\}$ for all~$i,j,k\in[n]$. 
It is immediate from the definition that~$\Part_r^{\mathcal E_\Gamma([r])}([n]\times[n])
=\mathscr T_n\cap \Part_r([n]\times[n]):=\mathscr T_{n,r}$.

Given~$\mathcal P\in\Part([n]\times[n])$, define a relation~$\sim_{\mathcal P}$
on~$[n]$ by~$i\sim_{\mathcal P}j$ if and only if $\mathcal P(i,j)=\mathcal P(j,i)$,
$i,j\in[n]$. This relation is clearly reflexive and symmetric.
\begin{lemma}\label{lem:prop sim_P}
Let~$\mathcal P\in\mathscr T_n$. Then
\begin{enumalph}
\item\label{lem:prop sim_P.a} $\sim_{\mathcal P}$ is an equivalence relation;
\item\label{lem:prop sim_P.c} If~$i\sim_{\mathcal P}i'$,
$j\sim_{\mathcal P}j'$ then~$\mathcal P(i,j)=\mathcal P(i',j')$.
\end{enumalph}
\end{lemma}
\begin{proof}
Note that if~$\mathcal P\in\mathscr T_n$ and~$\mathcal P(i,j)=\mathcal P(j,i)$
then~$\mathcal P(i,i)=\mathcal P(i,j)=\mathcal P(j,i)=\mathcal P(j,j)$.
Suppose that~$i\sim_{\mathcal P}j$, $j\sim_{\mathcal P}k$, $i,j,k\in[n]$. Then~$\mathcal P(i,i)=\mathcal P(j,j)=\mathcal P(k,k)$,
$\mathcal P(i,k)\in\{\mathcal P(i,j),\mathcal P(j,k)\}=\{\mathcal P(j,j)\}$
and~$\mathcal P(k,i)\in\{\mathcal P(k,j),\mathcal P(j,i)\}=\{\mathcal P(j,j)\}$.
Therefore $\mathcal P(i,k)=\mathcal P(k,i)$, that is~$i\sim_{\mathcal P}k$.
To prove~\ref{lem:prop sim_P.c}, 
suppose that~$\mathcal P(i,j)\not=\mathcal P(i',j')$. Since
$\mathcal P(i,j)\in\{\mathcal P(i,i'),\mathcal P(i',j)\}$ while
$\mathcal P(i',j')\in\{\mathcal P(i',i),\mathcal P(i,j')\}=
\{\mathcal P(i,i'),\mathcal P(i,j')\}$, it follows that~$\mathcal P(i,j)=
\mathcal P(i',j)\not=\mathcal P(i,j')=\mathcal P(i',j')$. Yet,
$\mathcal P(i',j)\in\{\mathcal P(i',j'),\mathcal P(j',j)\}$, whence~$\mathcal P(i',j)=\mathcal P(j,j')$, while~$\mathcal P(i,j')\in\{\mathcal P(i,j),\mathcal P(j,j')\}$ whence~$\mathcal P(i,j')=\mathcal P(j,j')$. Thus, $\mathcal P(i,j')=\mathcal P(i',j)$ which is a contradiction.
\end{proof}
We need the following
\begin{lemma}\label{lem:surj map part}
Let~$\pi:[n]\to[r]$
be surjective. 
\begin{enumalph}
    \item\label{lem:surj map part.a} Suppose that $\mathcal P\in\Part([n]\times[n])$
    satisfies 
    $\mathcal P(i,j)=\mathcal P(i',j')$ for all $i,i',j,j'\in[n]$ such that $\pi(i)=\pi(i')$ and~$\pi(j)=\pi(j')$.
Then~$(\pi\times\pi)(\mathcal P):=\{ (\pi\times\pi)(P)\,:\, P\in\mathcal P\}\in \Part([r]\times[r])$;
\item\label{lem:surj map part.b} Suppose that~$\mathcal Q\in \Part([r]\times[r])$.
Then~$(\pi\times\pi)^{-1}(\mathcal Q):=\{ (\pi\times\pi)^{-1}(Q)\,:\, Q\in\mathcal Q\}$
is a partition of~$[n]\times[n]$ and~$(\pi\times\pi)((\pi\times\pi)^{-1}(\mathcal Q))=\mathcal Q$.
\end{enumalph}
\end{lemma}
\begin{proof}
Since~$\bigcup_{P\in\mathcal P} P=[n]\times[n]$ and~$\pi$
is surjective, $\bigcup_{P\in\mathcal P}
(\pi\times\pi)(P)=[r]\times[r]$. Suppose that~$(i,j)\in 
(\pi\times\pi)(P)\cap(\pi\times\pi)(P')$, $i,j\in[r]$. Then~$i=(\pi(i'),
\pi(j'))=(\pi(i''),\pi(j''))$ for some~$i',i'',j',j''\in[n]$
and~$P=\mathcal P(i',j')=\mathcal P(i'',j'')=P'$
whence~$(\pi\times\pi)(P)=(\pi\times\pi)(P')$.
Thus, $(\pi\times\pi)(\mathcal P)\in\Part([r]\times[r])$.

Part~\ref{lem:surj map part.b} is immediate.
\end{proof}

Given~$\mathcal P\in\mathscr T_n$, let~$\mathcal C(\mathcal P)$ be
the partition of~$[n]$ into equivalence classes for~$\sim_{\mathcal P}$. Denote~$\underline{\mathscr T}_n$ the set of 
all~$\mathcal P\in\mathscr T_n$ satisfying~$\mathcal P(i,j)\not=\mathcal P(j,i)$ for all~$i\not=j\in[n]$.
\begin{lemma}\label{lem:surj trans part}
Let~$\mathcal P\in\mathscr T_n$ and suppose that~$\mathcal C(\mathcal P)\in\Part_r([n])$. Then
$(\pi\times\pi)(\mathcal P)\in \underline{\mathscr T}_r$ 
for any surjective~$\pi:[n]\to[r]$ such that~$\mathcal C(\mathcal P)=\{ \pi^{-1}(i)\}_{i\in[r]}$.
\end{lemma}
\begin{proof}
By
Lemmata~\ref{lem:prop sim_P}\ref{lem:prop sim_P.c} and~\ref{lem:surj map part}, 
$(\pi\times\pi)(\mathcal P)\in\Part([r]\times[r])$. Let~$i=\pi(i'),j=\pi(j'),k=\pi(k')\in[r]$, $i',j',k'\in[n]$.
Then~$\mathcal P(i',k')\in \{ \mathcal P(i',j'),\mathcal P(j',k')\}$,
and so $(\pi\times\pi)(\mathcal P)(i,k)\in\{(\pi\times\pi)(\mathcal P)(i,j),
(\pi\times\pi)(\mathcal P)(j,k)\}$, that is~$(\pi\times\pi)(\mathcal P)\in\mathscr T_r$.
Finally, suppose that~$(\pi\times\pi)(\mathcal P)(i,j)=(\pi\times\pi)(\mathcal P)(j,i)$ 
for some~$i\not=j\in[r]$. Then
$i=\pi(i')$, $j=\pi(j')$, $i',j'\in[n]$ with~$i'\not\sim_{\mathcal P}j'$
yet~$(\pi\times\pi)(\mathcal P(i',j'))=(\pi\times\pi)(\mathcal P(j',i'))$, that
is, $(i,j)=(\pi(i''),\pi(j''))$ for some~$i'',j''\in[n]$ such that~$\mathcal P(i'',j'')=
\mathcal P(j',i')\not=\mathcal P(i',j')$. Since~$\mathcal P(i'',j'')=\mathcal P(i',j')$
by Lemma~\ref{lem:prop sim_P}\ref{lem:prop sim_P.c}, this is a contradiction.
\end{proof}

Let~$\mathcal J\in\Part_r([n])$.
Define~$\mathcal J_k=\mathcal J[m_k]$ where~$m_k=\min\big([n]
\setminus \bigcup_{1\le t<k} \mathcal J_t\big)$; thus, $m_1=1$
and~$\mathcal J_1=\mathcal J[1]$ and so on.
Let~$\pi_{\mathcal J}$ be the unique surjective map~$\pi:[n]\to[r]$
satisfying~$\pi^{-1}(t)=\mathcal J_t$, $t\in[r]$. 
By Lemma~\ref{lem:surj trans part} 
we obtain a well-defined map~$\Pi_n:\mathscr T_n\to \bigsqcup_{1\le r\le n}
\underline{\mathscr T}_r$, $\mathcal P\mapsto (\pi_{\mathcal C(\mathcal P)}\times \pi_{\mathcal C(\mathcal P)})(\mathcal P)$,
$\mathcal P\in\mathscr T_n$.
\begin{lemma}\label{lem:collapse part}
The map~$\Pi_n$ is surjective and~$|\Pi_n^{-1}(\mathcal P)|=\tstirling{n}{r}$ 
for any~$\mathcal P\in\underline{\mathscr T}_r$. 
\end{lemma}
\begin{proof}
Let~$\mathcal P\in\underline{\mathscr T}_r$, 
$\mathcal J\in\Part_r([n])$ and define~$\mathcal Q:=\mathcal Q(\mathcal P,\mathcal J)=(\pi_{\mathcal J}\times\pi_{\mathcal J})^{-1}(\mathcal P)\in \Part([n]\times[n])$ by Lemma~\ref{lem:surj map part}\ref{lem:surj map part.b} and $(\pi_{\mathcal J}\times\pi_{\mathcal J})(\mathcal Q)=\mathcal P$.
We claim that~$\mathcal Q\in \mathscr T_n$. Indeed,
let~$i,j,k\in[n]$. Then
$$
(\pi_{\mathcal J}\times\pi_{\mathcal J})(\mathcal Q(i,k))=\mathcal P(\pi_{\mathcal J}(i),\pi_{\mathcal J}(k))\in 
\{ \mathcal P(\pi_{\mathcal J}(i),\pi_{\mathcal J}(j)),\mathcal P(\pi_{\mathcal J}(j),\pi_{\mathcal J}(k))\},
$$
whence~$\mathcal Q(i,k)\in\{ \mathcal Q(i,j),
\mathcal Q(j,k)\}$. 

Next, we prove that~$\mathcal C(\mathcal Q)=\mathcal J$,
that is, $\mathcal Q(i,j)=\mathcal Q(j,i)$ if and only if~$\pi_{\mathcal J}(i)=\pi_{\mathcal J}(j)$. Suppose first that~$\mathcal Q(i,j)=\mathcal Q(j,i)$. Then $\mathcal P(\pi_{\mathcal J}(i),\pi_{\mathcal J}(j))=
\mathcal P(\pi_{\mathcal J}(j),\pi_{\mathcal J}(i))
$ which forces~$\pi_{\mathcal J}(i)=\pi_{\mathcal J}(j)$
since~$\mathcal P\in\bigsqcup_{r\in[n]}\underline{\mathscr T}_r$. 
The converse is obvious
since~$\mathcal Q(i,j)=(\pi_{\mathcal J}\times\pi_{\mathcal J})^{-1}(
\mathcal P(\pi_{\mathcal J}(i),\pi_{\mathcal J}(j))$, $i,j\in[n]$.

Thus, we proved that~$\Pi_n(\mathcal Q(\mathcal P,\mathcal J))=
\mathcal P$, that is, $\Pi_n$ is surjective and, moreover,
that~$\Pi^{-1}_n(\mathcal P)=\{ \mathcal Q(\mathcal P,\mathcal J)\,:\,
\mathcal J\in \Part_r([n])\}$. It remains to observe that,
since $\mathcal J=\mathcal C(\mathcal Q(\mathcal P,\mathcal J))$,
the map $\Part_r([n])\to \Pi_n^{-1}(\mathcal P)$, $\mathcal J\mapsto 
\mathcal Q(\mathcal P,\mathcal J)$, $\mathcal J\in\Part_r([n])$
is also injective.
\end{proof}
\begin{lemma}\label{lem:trans count last}
Let~$\mathcal P\in \underline{\mathscr T}_n$.
\begin{enumalph}
\item\label{lem:trans count last.a} If~$\mathcal P(i,i)\not=\mathcal P(j,j)$ for all~$i\not=j\in[n]$
then $\mathcal P=\{ \mathcal P_1,\dots,\mathcal P_n\}$ where 
either~$\mathcal P_i=\{ (i,j)\,:\, j\in[n]\}$ for all~$i\in [n]$
or~$\mathcal P_i=\{ (j,i)\,:\, j\in [n]\}$ for all~$i\in[n]$.

\item\label{lem:trans count last.b} If~$\mathcal P(i,i)=\mathcal P(j,j)$ for some~$i\not=j$ then~$\mathcal P\in \Part_2([n])$. 
\end{enumalph}
\end{lemma}
\begin{proof}
To prove~\ref{lem:trans count last.a}, 
note that for~$\mathcal P\in\underline{\mathscr T}_n$
and satisfying~$\mathcal P(i,i)\not=\mathcal P(j,j)$ for all $j\not=i\in[n]$, since
$\{\mathcal P(i,i),\mathcal P(j,j)\}\subset \{\mathcal P(i,j),\mathcal P(j,i)\}$
it follows that, without loss of generality, 
$\mathcal P(i,i)=\mathcal P(i,j)$ and~$\mathcal P(j,j)=\mathcal P(j,i)$.
If~$n=2$ we are done. If~$n>2$, let~$k\in[n]\setminus\{i,j\}$
and suppose first that $\mathcal P(i,k)=\mathcal P(k,k)$, whence $\mathcal P(k,i)=\mathcal P(i,i)$. Then $\mathcal P(i,i)=\mathcal P(k,i)\in
\{\mathcal P(k,j),\mathcal P(j,i)\}=\{\mathcal P(k,j),\mathcal P(j,j)\}
\subset \{\mathcal P(j,j),\mathcal P(k,k)\}$ which is a contradiction. Thus,
$\mathcal P(i,k)=\mathcal P(i,i)$ and then $\mathcal P(k,i)=\mathcal P(k,k)$. As $\mathcal P(k,k)=\mathcal P(k,i)\in\{\mathcal P(k,j),\mathcal P(j,i)\}
=\{\mathcal P(k,j),\mathcal P(j,j)\}$ and $\mathcal P(k,j)\in\{\mathcal P(j,j),\mathcal P(k,k)\}$ this forces $\mathcal P(k,j)=\mathcal P(k,k)$ and $\mathcal P(j,k)=\mathcal P(j,j)$.
Therefore, $\mathcal P(i,i)=\mathcal P(i,j)=\mathcal P(i,k)$, $\mathcal P(j,i)=\mathcal P(j,j)=\mathcal P(j,k)$ and $\mathcal P(k,i)=\mathcal P(k,j)=\mathcal P(k,k)$.
This implies that $\mathcal P(r,s)=\mathcal P(r,r)$ for all $r,s\in[n]$. 

To prove part~\ref{lem:trans count last.b}, suppose that
$\mathcal P(i,i)=\mathcal P(j,j)$ for some $i\not=j$.
Since $\mathcal P(i,i)\in\{\mathcal P(i,j),\mathcal P(j,i)\}$
and $\mathcal P(i,j)\not=\mathcal P(j,i)$, we may assume without loss of generality that $\mathcal P(i,j)=\mathcal P(i,i)=\mathcal P(j,j)\not=\mathcal P(j,i)$.

Let $k\not=i$. 
Suppose that $\mathcal P(i,k)\notin\{\mathcal P(i,j),\mathcal P(j,i)\}$. Then $\mathcal P(i,k)\in\{\mathcal P(i,j),\mathcal P(j,k)\}$
forces $\mathcal P(j,k)=\mathcal P(i,k)$ while $\mathcal P(i,j)=\mathcal P(i,i)\in\{\mathcal P(i,k),\mathcal P(k,i)\}$ yields $\mathcal P(k,i)=\mathcal P(i,j)$.
But then $\mathcal P(j,i)\in\{\mathcal P(j,k),\mathcal P(k,i)\}=\{\mathcal P(i,j),
\mathcal P(i,k)\}$, which is a contradiction.
If $\mathcal P(k,i)\notin\{\mathcal P(i,j),\mathcal P(j,i)\}$, then
$\mathcal P(j,i)\in\{\mathcal P(j,k),\mathcal P(k,i)\}$ forces $\mathcal P(j,k)=\mathcal P(j,i)$.
Since $\mathcal P(i,j)=\mathcal P(j,j)\in\{\mathcal P(j,k),\mathcal P(k,j)\}$ we have $\mathcal P(k,j)=\mathcal P(i,j)$. Then $\mathcal P(k,i)\in\{\mathcal P(k,j),\mathcal P(j,i)\}
=\{\mathcal P(i,j),\mathcal P(j,i)\}$ which is a contradiction. 

Therefore, for any~$k\in[n]\setminus\{i,j\}$, $\mathcal P(i,k),\mathcal P(k,i)\in \{\mathcal P(i,j),\mathcal P(j,i)\}$.
But then $\mathcal P(k,l)\in\{\mathcal P(k,i),\mathcal P(i,l)\}\subset \{\mathcal P(i,j),\mathcal P(j,i)\}$ for all $k,l\in [n]$. Thus, $\mathcal P\in\Part_2([n]\times[n])$. 
\end{proof}
It follows from Corollary~\ref{cor:stirling F2} and Lemmata~\ref{lem:collapse part}, \ref{lem:trans count last}\ref{lem:trans count last.a} and~\ref{lem:In trans to trans} that~$|\mathscr T_{n,r}|=2\tstirling{n}{r}$,
while~$\mathcal E_{\Gamma}([2])=\sum_{1\le r\le n}\tstirling{n}{r}2^r r!=B_n$. The assertion is now immediate.
\end{proof}
\begin{corollary}\label{cor:poly q_n}
Let~$n\ge 2$. Then for any finite~$C$, $|\mathcal E_{\overset{\leftrightarrow\circlearrowleft}{K_n}}(C)|=q_n(|C|)$ where~$q_n(x)=
2x^n-x+(\frac12B_n-2^n+1)x(x-1)$. In particular, $|\mathcal E_{\overset{\leftrightarrow\circlearrowleft}{K_n}}(\{1,-1\})|=q_n(2)=B_n$.
\end{corollary}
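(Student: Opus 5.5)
By Corollary~\ref{cor:enum transitive} applied to $\Gamma=\overset{\leftrightarrow\circlearrowleft}{K_n}$, we have $|\mathcal E_\Gamma(C)|=\sum_{k\ge1}\tstirling{\Gamma}{k}(|C|)_k$. The plan is to substitute the values of $\tstirling{\Gamma}{k}$ from Proposition~\ref{prop:enum trans} and simplify to $q_n$.

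First, record $\tstirling{\Gamma}{1}=1$. This holds because there is exactly one partition of $[n]\times[n]$ into one part, and it is realized by a constant coloring, which is transitive.

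Next, use Proposition~\ref{prop:enum trans}: for $k\ge3$ we have $\tstirling{\Gamma}{k}=2\tstirling{n}{k}$, and $\tstirling{\Gamma}{2}=\frac12B_n-1$. This gives
$$|\mathcal E_\Gamma(C)|=x+\bigl(\tfrac12B_n-1\bigr)(x)_2+2\sum_{k\ge3}\tstirling{n}{k}(x)_k,\qquad x=|C|.$$
To evaluate the last sum, invoke the classical identity $x^n=\sum_{k\ge1}\tstirling{n}{k}(x)_k$ (Example~\ref{example:ordinary Stirling number}). Together with $\tstirling{n}{1}=1$ and $\tstirling{n}{2}=2^{n-1}-1$, it yields
$$\sum_{k\ge3}\tstirling{n}{k}(x)_k=x^n-x-(2^{n-1}-1)x(x-1).$$
Substituting, the result is $2x^n-2x+x-(2^n-2)x(x-1)+(\frac12B_n-1)x(x-1)=2x^n-x+(\frac12B_n-2^n+1)x(x-1)$, which is $q_n(x)$. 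The computation needs $n\ge2$ so that $\tstirling{n}{2}=2^{n-1}-1$ is the right count; for $n=2$ there are no terms with $k\ge3$, and the formula still holds since $\tstirling{2}{2}=1$ and $x^2-x-x(x-1)=0$.

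For the final assertion, evaluate $q_n(2)=2^{n+1}-2+2(\frac12B_n-2^n+1)=B_n$. Alternatively, this is Corollary~\ref{cor:stirling F2}, which reads $\tstirling{\Gamma}{2}=\frac12|\mathcal E_\Gamma([2])|-1$, combined with the value of $\tstirling{\Gamma}{2}$ above.

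There is no real obstacle, since the statement is bookkeeping once Proposition~\ref{prop:enum trans} is available. The only point needing care is the $k=1$ term and the Stirling numbers $\tstirling{n}{1}$ and $\tstirling{n}{2}$ in the regrouping.
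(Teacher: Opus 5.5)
Your proposal is correct and is essentially the paper's own proof: substitute the values of $\tstirling{\Gamma}{k}$ from Proposition~\ref{prop:enum trans} into Corollary~\ref{cor:enum transitive}, eliminate $\sum_{k\ge3}\tstirling{n}{k}(x)_k$ using $x^n=\sum_{k}\tstirling{n}{k}(x)_k$ and $\tstirling{n}{2}=2^{n-1}-1$, and simplify. One small correction to a side remark: $\tstirling{n}{2}=2^{n-1}-1$ holds for every $n\ge1$, so the hypothesis $n\ge2$ is not what makes that count correct.
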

\begin{proof}
Since $\tstirling{n}{2}=2^{n-1}-1$, $n\ge 1$,
by Corollary~\ref{cor:enum transitive} 
and Proposition~\ref{prop:enum trans}
\begin{align*}
q_n(x)&=\sum_{k\ge 1} \stirling{\overset{\leftrightarrow\circlearrowleft}{K_n}}{k}
(x)_k=x+\Big(\tfrac12B_n-1\Big)x(x-1)+2\sum_{3\le k\le n}\stirling{n}{k}(x)_k\\
&=x+\Big(\tfrac12B_n-1\Big)x(x-1)+2x^n-2x-2\stirling{n}{2}x(x-1)\\
&=2x^n-x+\Big(\tfrac12B_n-2^n+1\Big)x(x-1).\qedhere
\end{align*} 
\end{proof}

\subsection{Extensions and restrictions}
Given~$\mathbf c:I_n\to C$, denote $\mathbf c^-=\mathbf c|_{I_{n-1}}$ and define $\mathbf c^+:I_{n-1}\to C$ by $\mathbf c^+(i,j)=\mathbf c(i+1,j+1)$, $(i,j)\in I_{n-1}$.
Furthermore, given~$\boldsymbol\alpha:[n]\to C$, denote $\boldsymbol{\alpha}^-
=\boldsymbol{\alpha}|_{[n-1]}$ and define $\boldsymbol\alpha^+:[n-1]\to C$ by
$\boldsymbol{\alpha}^+(i)=\boldsymbol{\alpha}(i+1)$, $i\in [n-1]$. When convenient, 
we identify $\boldsymbol{\alpha}:[n]\to C$ with $(\boldsymbol{\alpha}(1),\dots,
\boldsymbol{\alpha}(n))\in C^n$. 
Finally, for any~$\mathbf c:I_n\to C$, $\boldsymbol{\alpha}:[n]\to C$ we
define~$\mathbf c^{\boldsymbol{\alpha}}:I_{n+1}\to C$ by $\mathbf c^{\boldsymbol{\alpha}}|_{I_n}=
\mathbf c$ and $\mathbf c^{\boldsymbol\alpha}(i,n+1)=\boldsymbol{\alpha}(i)$, $i\in[n]$.
\begin{lemma}\label{lem:rest ext trans}
Let~$\mathbf c:I_n\to C$, $\boldsymbol{\alpha}:[n]\to C$.
\begin{enumalph} 
\item \label{lem:rest ext trans.c}
$\mathbf c=(\mathbf c^-)^{\boldsymbol\alpha_{\mathbf c}}$ where~$\boldsymbol\alpha_{\mathbf c}(i)=\mathbf c(i,n)$,
$i\in [n-1]$. 
\item \label{lem:rest ext trans.a}
$\mathbf c^{\boldsymbol{\alpha}}$ is transitive if and only if
$\mathbf c$ is transitive and~$\boldsymbol\alpha(i)\in\{\mathbf c(i,j),
\boldsymbol\alpha(j)\}$ for all~$1\le i<j\le n$. 
\item \label{lem:rest ext trans.b}
If~$\mathbf c^{\boldsymbol\alpha}$ is transitive, then so are~$(\mathbf c^\pm)^{\boldsymbol\alpha^\pm}:I_n\to C$.
\end{enumalph}
\end{lemma}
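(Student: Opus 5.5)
Since everything here is directly checkable from the definitions of $\mathbf c^{\boldsymbol\alpha}$, $\mathbf c^\pm$, $\boldsymbol\alpha^\pm$ and of transitivity for maps $I_n\to C$, I will argue by unwinding definitions and sorting the transitivity conditions by their indices.

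For part~\ref{lem:rest ext trans.c}, compare values pointwise. Both $\mathbf c$ and $(\mathbf c^-)^{\boldsymbol\alpha_{\mathbf c}}$ are maps $I_n\to C$. On $I_{n-1}$ the latter equals $\mathbf c^-=\mathbf c|_{I_{n-1}}$. On pairs $(i,n)$ it equals $\boldsymbol\alpha_{\mathbf c}(i)=\mathbf c(i,n)$. Since $I_n=I_{n-1}\sqcup\{(i,n):i\in[n-1]\}$, the two maps agree.

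For part~\ref{lem:rest ext trans.a}, I will split the transitivity conditions for $\mathbf c^{\boldsymbol\alpha}$ into two groups. These conditions are indexed by triples $1\le i<j<k\le n+1$. If $k\le n$, the condition is exactly the transitivity condition for $\mathbf c$, because $\mathbf c^{\boldsymbol\alpha}|_{I_n}=\mathbf c$. If $k=n+1$, the condition reads $\mathbf c^{\boldsymbol\alpha}(i,n+1)\in\{\mathbf c^{\boldsymbol\alpha}(i,j),\mathbf c^{\boldsymbol\alpha}(j,n+1)\}$, that is, $\boldsymbol\alpha(i)\in\{\mathbf c(i,j),\boldsymbol\alpha(j)\}$ for $1\le i<j\le n$. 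Hence the conditions for $\mathbf c^{\boldsymbol\alpha}$ are precisely the conditions for $\mathbf c$ together with the stated conditions on $\boldsymbol\alpha$, which gives both directions at once.

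For part~\ref{lem:rest ext trans.b}, I will compute $(\mathbf c^\pm)^{\boldsymbol\alpha^\pm}$ as restrictions of $\mathbf c^{\boldsymbol\alpha}$ and then use the fact, noted in~\S\ref{subs:trans}, that restricting a transitive map to a transitive subset keeps it transitive. Equivalently, I can apply~\ref{lem:rest ext trans.a}.
\begin{itemize}
\item In the minus case, $(\mathbf c^-)^{\boldsymbol\alpha^-}$ agrees with $\mathbf c$ on $I_{n-1}$ and sends $(i,n)$ to $\boldsymbol\alpha(i)=\mathbf c^{\boldsymbol\alpha}(i,n+1)$. So it is the restriction of $\mathbf c^{\boldsymbol\alpha}$ to the pairs of $I_{n+1}$ with both entries in $[n-1]\cup\{n+1\}$, after relabeling $n+1$ as $n$. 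This relabeling is increasing, so its conditions are a subset of those for $\mathbf c^{\boldsymbol\alpha}$.
\item In the plus case, $(\mathbf c^+)^{\boldsymbol\alpha^+}(i,j)=\mathbf c^{\boldsymbol\alpha}(i+1,j+1)$ for all $(i,j)\in I_n$. This is the restriction of $\mathbf c^{\boldsymbol\alpha}$ to pairs with entries in $[2,n+1]$, shifted down by one, and again the shift is order-preserving.
\end{itemize}
In both cases each transitivity condition of the new map is one of the conditions for $\mathbf c^{\boldsymbol\alpha}$, so the new map is transitive.

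I do not expect a real obstacle. The only point needing care is the index bookkeeping in~\ref{lem:rest ext trans.b}: I will check that both relabelings are order-preserving, so that triples $i<j<k$ are sent to triples $i'<j'<k'$.
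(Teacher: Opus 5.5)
Your proof is correct and follows the paper's for parts (c) and (a), including the same split of the triple conditions for $\mathbf c^{\boldsymbol\alpha}$ into those with $k\le n$ and those with $k=n+1$. For (b) the paper instead goes through (a): it checks that $\mathbf c^\pm$ are transitive and that $\boldsymbol\alpha^\pm$ satisfy the compatibility condition. Your pullback of $\mathbf c^{\boldsymbol\alpha}$ along the order-preserving embeddings $[n]\hookrightarrow[n+1]$ (namely $i\mapsto i$ for $i<n$ with $n\mapsto n+1$, and $i\mapsto i+1$) handles both at once. It is equivalent bookkeeping, just slightly more uniform.
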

\begin{proof}
Part~\ref{lem:rest ext trans.c} is immediate from definitions.
To prove part~\ref{lem:rest ext trans.a}, note that
if~$\mathbf c^{\boldsymbol\alpha}$ is transitive then so is~$\mathbf c=\mathbf c^{\boldsymbol\alpha}|_{I_n}$. Furthermore, for all~$1\le i<j\le n$
\begin{align*}
\boldsymbol\alpha(i)=\mathbf c^{\boldsymbol{\alpha}}(i,n+1)\in \{ \mathbf c^{\boldsymbol{\alpha}}(i,j),
\mathbf c^{\boldsymbol{\alpha}}(j,n+1)\}=\{\mathbf c(i,j),\boldsymbol{\alpha}(j)\}.
\end{align*}
Conversely, since~$\mathbf c$ is transitive, $\mathbf c^{\boldsymbol\alpha}(i,k)=
\mathbf c(i,k)\in \{\mathbf c(i,j),\mathbf c(j,k)\}=
\{\mathbf c^{\boldsymbol\alpha}(i,j),\mathbf c^{\boldsymbol\alpha}(j,k)\}$ for 
all~$1\le i<j<k\le n$, while 
$$
\mathbf c^{\boldsymbol\alpha}(i,n+1)=\boldsymbol{\alpha}(i)\in\{\mathbf c(i,j),
\boldsymbol{\alpha}(j)\}
=\{\mathbf c^{\boldsymbol{\alpha}}(i,j),\mathbf c^{\boldsymbol{\alpha}}(j,n+1)\},
\qquad 1\le i<j\le n.
$$

To prove part~\ref{lem:rest ext trans.b}, note that if~$\mathbf c^{\boldsymbol{\alpha}}$ is transitive, then $\mathbf c$ is transitive by part~\ref{lem:rest ext trans.a}, hence~$\mathbf c^-$ is also transitive, while for all $1\le i<j<k\le n-1$ we have 
$$
\mathbf c^+(i,k)=\mathbf c(i+1,k+1)\in \{\mathbf c(i+1,j+1),\mathbf c(j+1,k+1)\}=
\{\mathbf c^+(i,j),\mathbf c^+(j,k)\},
$$
whence~$\mathbf c^+$ is transitive. 
Furthermore,  for all~$1\le i<j\le n-1$,
$\boldsymbol{\alpha}(i)\in\{\mathbf c(i,j),\boldsymbol{\alpha}(j)\}=\{\mathbf c^-(i,j),\boldsymbol{\alpha}(j)\}$ while $\boldsymbol\alpha^+(i)=\boldsymbol{\alpha}(i+1)\in\{\mathbf c(i+1,j+1),\boldsymbol{\alpha}(j+1)\}=\{\mathbf c^+(i,j),\boldsymbol\alpha^+(j)\}$. 
Then $(\mathbf c^-)^{\boldsymbol{\alpha}^-}$ and~$(\mathbf c^+)^{\boldsymbol{\alpha}^+}$ are transitive by part~\ref{lem:rest ext trans.a}.
\end{proof}

\section{Main results: the classical case}\label{sect:main classical}

\subsection{\texorpdfstring{$C$}{C}-quasi-triangular Lie algebra}\label{subs:Cqtr}
Given a set~$C$ and $n\ge 2$, let~$\lie{qtr}_n(C)$ be the Lie algebra with generators $\mathsf r_{i,j}^{(c)}$,
$i\not=j\in [n]$, $c\in C$ subject to relations 
\begin{alignat}{3}
&[\sr{i}{j}{c},\sr{k}{l}{c'}]=0,\label{eq:CYB rels comm}
&&i\not=j\not=k\not=l\in[n],&\quad &c,c'\in C,\\
\label{eq:CYB rels}
&[\sr ij{c},\sr ik{c'}]+[\sr ijc,\sr jk{c''}]+[\sr ik{c'},\sr jk{c''}]=0,&\quad &
i\not=j\not=k\in [n],&&c'\in\{c,c''\}\subset C.
\end{alignat}
We call~$\lie{qtr}_n(C)$ a $C$-{\em quasi-triangular Lie algebra} since 
it generalizes the quasi-triangular Lie algebra~$\lie{qtr}_n$ defined in~\cite{BEER} which
is~$\lie{qtr}_n(C)$ with~$|C|=1$. Clearly, $S_n$ acts 
on~$\lie{qtr}_n(C)$ by Lie algebra automorphisms via
$\sigma(\sr ijc)=\sr{\sigma(i)}{\sigma(j)}{c}$, $i\not=j\in[n]$, $c\in C$, $\sigma\in S_n$.

Let $N\ge n$, $C'\supset C$ and fix a sequence $\mathbf i=(i_1,\dots,i_n)\in [N]^n$, with~$i_k\not=i_l$ for all~$1\le k<l\le n$. By~\eqref{eq:CYB rels comm} and~\eqref{eq:CYB rels}, the assignments $\mathsf r_{k,l}^{(c)}\mapsto \mathsf r_{i_k,i_l}^{(c)}$, $k,l\in[n]$, $c\in C$,
define a homomorphism of Lie algebras~$\psi_{\mathbf i}:\lie{qtr}_n(C)\to 
\lie{qtr}_N(C')$.
The following is an immediate consequence of Proposition~\ref{prop:color CYBE}.
\begin{lemma}
Let~$(\lie g,\delta)$ be a quasi-triangular Lie algebra with a family of classical 
r-matrices $\boldsymbol r=\{r^{(c)}\}_{c\in C}\subset \lie g\tensor\lie g$.
The assignments $\sr{i}{j}{c}\mapsto r_{i,j}^{(c)}$, $i\not=j\in[n]$, $c\in C$,
define a homomorphism of Lie algebras~$\Psi_{\boldsymbol r}^{(n)}:
\lie{qtr}_n(C)\to U(\lie g)^{\tensor n}$.
\end{lemma}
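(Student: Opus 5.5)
The plan is to use the universal property of the Lie algebra $\lie{qtr}_n(C)$, which is defined by generators and relations. By that property, the assignment $\sr ijc\mapsto r^{(c)}_{i,j}$ extends to a Lie algebra homomorphism into $U(\lie g)^{\tensor n}$, equipped with the commutator bracket, exactly when the images satisfy the defining relations \eqref{eq:CYB rels comm} and \eqref{eq:CYB rels}. The proof therefore reduces to checking these two families of identities for the elements $r^{(c)}_{i,j}\in U(\lie g)^{\tensor n}$. Here $r^{(c)}_{i,j}$ is defined through the notation $X_{\mathbf i}=\phi_{\mathbf i}(X)$ for all ordered pairs $i\ne j$, with $r_{j,i}=(\tau(r))_{i,j}$ for $i<j$.

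For \eqref{eq:CYB rels comm}, we interpret the condition $i\ne j\ne k\ne l$ as saying that $\{i,j\}\cap\{k,l\}=\emptyset$; otherwise the relation would contradict \eqref{eq:CYB rels}. In that case $r^{(c)}_{i,j}$ and $r^{(c')}_{k,l}$ are supported in disjoint tensor legs. By the remark in \S2.1 that $X_{\mathbf i}$ and $Y_{\mathbf j}$ commute for disjoint index sequences, the two elements commute.

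For \eqref{eq:CYB rels}, fix pairwise distinct $i,j,k\in[n]$. Consider the injective algebra homomorphism $\phi_{(i,j,k)}:U(\lie g)^{\tensor 3}\to U(\lie g)^{\tensor n}$. It sends $r^{(c)}_{1,2}\mapsto r^{(c)}_{i,j}$, $r^{(c')}_{1,3}\mapsto r^{(c')}_{i,k}$ and $r^{(c'')}_{2,3}\mapsto r^{(c'')}_{j,k}$. The same holds for reversed index pairs, since $\phi_{(i,j,k)}\circ\phi_{(2,1)}$-type compositions respect the convention $r_{b,a}=\tau(r)_{a,b}$. So it suffices to prove the identity in $U(\lie g)^{\tensor 3}$ with $(i,j,k)=(1,2,3)$, whenever $c'\in\{c,c''\}$. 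This is precisely Proposition~\ref{prop:color CYBE}, which in fact gives the identity for every $\{i,j,k\}=\{1,2,3\}$. Applying $\phi_{(i,j,k)}$ then yields \eqref{eq:CYB rels} in $U(\lie g)^{\tensor n}$.

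The only point requiring care is the bookkeeping of the convention for $r_{j,i}$ when the indices are not increasing. One must check that $\phi_{\mathbf i}$ is compatible with this convention, that is, $\phi_{(i,j,k)}(X_{a,b})=X_{\mathbf i_a,\mathbf i_b}$ for all $a\ne b$. This follows directly from the definition of $\phi_{\mathbf i}$ on elementary tensors. Once that is verified, the rest is immediate.
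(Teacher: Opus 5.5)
Your proof is correct and follows the paper's approach: the paper simply calls this lemma an immediate consequence of Proposition~\ref{prop:color CYBE}. Your argument unpacks that claim exactly — the universal property reduces the statement to the defining relations, disjoint tensor legs give \eqref{eq:CYB rels comm}, and the $(1,2,3)$ case of Proposition~\ref{prop:color CYBE} is transported along $\phi_{(i,j,k)}$ to give \eqref{eq:CYB rels}.

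One small quibble: reading $i\ne j\ne k\ne l$ as ``pairwise distinct'' is right, but your stated reason is not. Commutation of overlapping generators would not contradict \eqref{eq:CYB rels}. It would instead force identities such as $[r^{(c)}_{1,2},r^{(c')}_{1,3}]=0$, which fail in general.
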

Given~$\boldsymbol\gamma:I_n\to C$, $n\ge 2$ define $\lie j_{\boldsymbol\gamma}\in\lie{qtr}_{2n}(C)$
by
\begin{equation}\label{eq:jc defn}
\lie j_{\boldsymbol\gamma}:=\sum_{1\le i<j\le n}
\sr{j}{n+i}{\boldsymbol\gamma(i,j)}.
\end{equation}
The following is immediate.
\begin{lemma}\label{lem:rec class j}
Let~$n\ge 3$. Then
for all~$\boldsymbol\gamma:I_n\to C$
\begin{align}\label{eq:rec jc}
\lie j_{\boldsymbol\gamma}&=\psi_{[1,2n-1]\setminus\{n\}}(\lie j_{\boldsymbol\gamma^-})+
\sum_{1\le i\le n-1}
\sr{n}{i+n}{\boldsymbol\gamma(i,n)}
\\
&=\psi_{[2,2n]\setminus\{n+1\}}(\lie j_{\boldsymbol\gamma^+})+\sum_{2\le i\le n} \sr{i}{n+1}{\boldsymbol\gamma(1,i)}.\label{eq:rec jc-2}
\end{align}
\end{lemma}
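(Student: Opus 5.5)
The plan is to verify both identities term by term. Each one splits the double sum in \eqref{eq:jc defn} by isolating the pairs $(i,j)$ that involve the largest index $n$ (for \eqref{eq:rec jc}) or the smallest index $1$ (for \eqref{eq:rec jc-2}). Everything reduces to the action of $\psi_{\mathbf i}$ on generators, $\psi_{\mathbf i}(\sr{k}{l}{c})=\mathsf r^{(c)}_{i_k,i_l}$. Here $\psi_{\mathbf i}:\lie{qtr}_{2n-2}(C)\to\lie{qtr}_{2n}(C)$ with $\mathbf i$ an increasing sequence of length $2n-2$ in $[2n]$.

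For \eqref{eq:rec jc}, take $\mathbf i=[1,2n-1]\setminus\{n\}=(1,\dots,n-1,n+1,\dots,2n-1)$, so that $i_k=k$ for $k\le n-1$ and $i_{n-1+m}=n+m$ for $1\le m\le n-1$. Then
$$\lie j_{\boldsymbol\gamma^-}=\sum_{1\le i<j\le n-1}\sr{j}{n-1+i}{\boldsymbol\gamma(i,j)}$$
is mapped by $\psi_{\mathbf i}$ to $\sum_{1\le i<j\le n-1}\sr{j}{n+i}{\boldsymbol\gamma(i,j)}$. These are precisely the terms of $\lie j_{\boldsymbol\gamma}$ with $j\le n-1$. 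The remaining terms, those with $j=n$, are $\sum_{1\le i\le n-1}\sr{n}{n+i}{\boldsymbol\gamma(i,n)}$, which is the second summand.

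For \eqref{eq:rec jc-2}, take $\mathbf i=[2,2n]\setminus\{n+1\}=(2,\dots,n,n+2,\dots,2n)$, so that $i_k=k+1$ for $k\le n-1$ and $i_{n-1+m}=n+1+m$. Then $\psi_{\mathbf i}$ sends $\sr{j}{n-1+i}{\boldsymbol\gamma^+(i,j)}=\sr{j}{n-1+i}{\boldsymbol\gamma(i+1,j+1)}$ to $\sr{j+1}{n+(i+1)}{\boldsymbol\gamma(i+1,j+1)}$. After reindexing $i\mapsto i+1$, $j\mapsto j+1$, these are exactly the terms of $\lie j_{\boldsymbol\gamma}$ with $2\le i<j\le n$. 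The leftover terms, those with $i=1$, are $\sum_{2\le j\le n}\sr{j}{n+1}{\boldsymbol\gamma(1,j)}$, matching the second summand.

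There is no real obstacle. The only point needing care is the index bookkeeping for $\mathbf i$: one must check that the positions $n-1+i$ in $[2n-2]$ are sent to $n+i$ (respectively $n+1+i$). The hypothesis $n\ge3$ ensures that $\boldsymbol\gamma^\pm$ are defined on $I_{n-1}$ with $n-1\ge2$, so that $\lie j_{\boldsymbol\gamma^\pm}$ is defined.
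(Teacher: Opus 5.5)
Your proposal is correct and is essentially the paper's argument: the paper simply calls this lemma ``immediate'', and your term-by-term bookkeeping (splitting off the summands with $j=n$, respectively $i=1$, and tracking where $\psi_{\mathbf i}$ sends the positions $n-1+i$) is exactly that verification. It mirrors the proof of the quantum analogue, Lemma~\ref{lem:Jc rec}, minus the commutation steps that are unnecessary in the Lie-algebra setting since the sum is unordered.
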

We now prove an identity which is the key ingredient in our proof of Theorem~\ref{thm:main thm trans}.
\begin{proposition}\label{prop:identity jc}
For any~$m\ge 1$, $\boldsymbol\gamma:I_m\to C$ and $\boldsymbol\alpha=(\alpha_1,\dots,\alpha_m)\in C^m$ such that~$\boldsymbol\gamma^{\boldsymbol{\alpha}}:I_{m+1}\to C$ is transitive
\begin{align}
&\sum_{1\le i\le m}\Big([\psi_{[m+2,2m+1]\cup[2m+3,3m+2]}(\lie j_{\boldsymbol\gamma}^-),
\sr{m+1}{m+1+i}{\alpha_{i}}+\sr{m+1}{2m+2+i}{\alpha_{i}}]\nonumber\\
&\phantom{\sum_{1\le i\le m}}-[\psi_{[1,m]\cup[2m+3,3m+2]}(\lie j_{\boldsymbol\gamma}^-),
\sr{2m+2}{i}{\alpha_{i}}+\sr{2m+2}{2m+2+i}{\alpha_{i}}]\nonumber\\
&\phantom{\sum_{1\le i\le m}}+[\psi_{[1,m]\cup[m+2,2m+1]}(\lie j_{\boldsymbol\gamma}^-),
\sr{3m+3}{i}{\alpha_{i}}+\sr{3m+3}{m+1+i}{\alpha_{i}}]\Big)\nonumber\\
&=\sum_{1\le i\not=j\le m} [\sr{m+1}{i+m+1}{\alpha_{i}},\sr{m+1}{j+2m+2}{\alpha_{j}}]+[\sr{2m+2}{i+2m+2}{\alpha_{i}},\sr{2m+2}{j}{\alpha_{j}}]+[\sr{3m+3}{i}{\alpha_{i}}
\sr{3m+3}{j+m+1}{\alpha_{j}}]\label{eq:key id jc}
\end{align}
in~$\lie{qtr}_{3m+3}(C)$, 
where~$\lie j^-_{\boldsymbol\gamma}=\lie j_{\boldsymbol\gamma}-\sigma_m(\lie j_{\boldsymbol\gamma})$ and
$\sigma_m\in S_{2m}$ is the Grassmann permutation, that is $\sigma_m(i)=i+m$ if~$i\in[n]$, $\sigma_m(i)=i-m$ if~$i\in[m+1,2m]$.
\end{proposition}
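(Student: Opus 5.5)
The plan is to expand both sides of \eqref{eq:key id jc} into commutators of generators and kill or regroup terms using only the two sets of defining relations \eqref{eq:CYB rels comm} and \eqref{eq:CYB rels}. Transitivity of $\boldsymbol\gamma^{\boldsymbol\alpha}$ will enter exactly once, to license each use of \eqref{eq:CYB rels}.

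\textbf{Reducing to one line.} Split $[3m+3]$ into three blocks $[1,m+1]$, $[m+2,2m+2]$, $[2m+3,3m+3]$. In each block the last index ($m+1$, $2m+2$, $3m+3$) is special and the first $m$ indices carry copies of~$\lie j_{\boldsymbol\gamma}$. I would first note that $\sigma_m(\lie j^-_{\boldsymbol\gamma})=-\lie j^-_{\boldsymbol\gamma}$. Hence swapping the roles of two blocks under the $S_{3m+3}$-action changes $\psi_{\dots}(\lie j^-_{\boldsymbol\gamma})$ only by a sign. Consequently the three lines on the left of \eqref{eq:key id jc}, with their signs $+,-,+$, are images of one another under block-permuting elements of $S_{3m+3}$. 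The same permutations should permute the three families of terms on the right. Since $S_{3m+3}$ acts on $\lie{qtr}_{3m+3}(C)$ by automorphisms, it suffices to prove the identity "line by line": the first line on the left equals the first family of terms on the right. I would check this matching of lines carefully first, since it is where signs could go wrong.

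\textbf{Expanding the first line.} By \eqref{eq:jc defn},
$$\psi_{[m+2,2m+1]\cup[2m+3,3m+2]}(\lie j^-_{\boldsymbol\gamma})=\sum_{1\le i<j\le m}\Big(\sr{m+1+j}{2m+2+i}{\boldsymbol\gamma(i,j)}-\sr{2m+2+j}{m+1+i}{\boldsymbol\gamma(i,j)}\Big).$$
Its commutator with $\sr{m+1}{m+1+k}{\alpha_k}+\sr{m+1}{2m+2+k}{\alpha_k}$ vanishes by \eqref{eq:CYB rels comm} unless $k\in\{i,j\}$. The surviving terms are commutators on three indices of the form $\{m+1,\,m+1+a,\,2m+2+b\}$ with $\{a,b\}=\{i,j\}$. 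For each $i<j$ I would collect the four survivors into two groups:
\begin{itemize}
\item triples $(m+1,\;m+1+j,\;2m+2+i)$, involving colors $\alpha_j$, $\boldsymbol\gamma(i,j)$, $\alpha_i$;
\item triples $(m+1,\;2m+2+j,\;m+1+i)$, with the same colors.
\end{itemize}

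\textbf{Applying the relation.} Each group should be two of the three terms of a suitably relabelled instance of \eqref{eq:CYB rels}, with $c=\boldsymbol\gamma(i,j)$ (up to orientation), $c'=\alpha_i$ and $c''=\alpha_j$, or a permutation of these roles. The hypothesis that $\boldsymbol\gamma^{\boldsymbol\alpha}$ is transitive gives $\alpha_i\in\{\boldsymbol\gamma(i,j),\alpha_j\}$ by Lemma~\ref{lem:rest ext trans}\ref{lem:rest ext trans.a}. This is exactly the condition $c'\in\{c,c''\}$. Replacing the two terms by minus the third leaves
$$[\sr{m+1}{m+1+i}{\alpha_i},\sr{m+1}{2m+2+j}{\alpha_j}]\quad\text{and}\quad[\sr{m+1}{m+1+j}{\alpha_j},\sr{m+1}{2m+2+i}{\alpha_i}],$$
that is, the $(i,j)$ and $(j,i)$ summands of the first family on the right. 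Summing over $i<j$ then gives the full sum over $i\ne j$.

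\textbf{Main obstacle.} The hard step is fitting each group to \eqref{eq:CYB rels} with the correct orientation and sign. In $\lie{qtr}_n(C)$ there is no relation between $\sr ij c$ and $\sr ji c$, so every generator must appear with exactly the index order the relation demands. I would first write \eqref{eq:CYB rels} for a general ordered triple $(p,q,s)$ (obtained from the $(i,j,k)$ form via the $S_n$-action). I would then choose $(p,q,s)$ so that the generator from $\lie j^-$ is $\sr pq{\cdot}$ or $\sr qs{\cdot}$ as needed, and confirm that the color sitting in the middle slot $\sr ps{c'}$ is the one the transitivity constraint controls, namely $\alpha_i$. If the middle slot instead turns out to need $\boldsymbol\gamma(i,j)$, I would try the two alternative groupings of the four survivors before suspecting the statement.
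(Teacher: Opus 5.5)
Your argument is correct, and it takes a different and shorter route than the paper's. Both points you flagged go through.

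\textbf{The symmetry step.} The shift $\pi\colon x\mapsto x+m+1 \pmod{3m+3}$ permutes the three blocks cyclically. It satisfies $\pi\circ\psi_{[m+2,2m+1]\cup[2m+3,3m+2]}=\psi_{[1,m]\cup[2m+3,3m+2]}\circ\sigma_m$. Combined with $\sigma_m(\lie j^-_{\boldsymbol\gamma})=-\lie j^-_{\boldsymbol\gamma}$, it sends the first line on the left to the second, minus sign included, and the second to the third. It also cycles the three families on the right.

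\textbf{The one-line computation.} No hedging is needed here. For the ordered triples $(m+1,m+1+j,2m+2+i)$ and $(m+1,2m+2+j,m+1+i)$, take $(c,c',c'')=(\alpha_j,\alpha_i,\boldsymbol\gamma(i,j))$. The generator coming from $\lie j^-_{\boldsymbol\gamma}$ then sits in the $\sr{q}{s}{c''}$ slot, with exactly the orientation and sign it has in $\psi(\lie j^-_{\boldsymbol\gamma})$. The middle colour is $\alpha_i$, and $c'\in\{c,c''\}$ is precisely Lemma~\ref{lem:rest ext trans}\ref{lem:rest ext trans.a}. The two groups collapse to $[\sr{m+1}{m+1+j}{\alpha_j},\sr{m+1}{2m+2+i}{\alpha_i}]$ and $[\sr{m+1}{m+1+i}{\alpha_i},\sr{m+1}{2m+2+j}{\alpha_j}]$, as you claimed.

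\textbf{Comparison with the paper.} The paper instead inducts on $m$.
\begin{enumerate}
\item Using Lemma~\ref{lem:rec class j}, it isolates the part $\mathscr L'_m$ of the left side that contains the last-column generators carrying the colours $\boldsymbol\gamma(t,m)$.
\item It evaluates $\mathscr L'_m$ on all three lines at once, using six instances of \eqref{eq:CYB rels} per $t$ with colours $(\alpha_m,\alpha_t,\boldsymbol\gamma(t,m))$. Its triples $(m+1,2m+1,2m+t+2)$ and $(m+1,3m+2,m+t+1)$ are exactly your two groups for the pair $(t,m)$.
\item It identifies the remainder with the image of the $(m-1)$ identity under $\psi_{[1,3m+3]\setminus\{m,2m+1,3m+2\}}$, invoking Lemma~\ref{lem:rest ext trans}\ref{lem:rest ext trans.b} for the induction hypothesis.
\end{enumerate}
So both proofs use the same relations. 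Your observation that, by \eqref{eq:CYB rels comm}, distinct pairs $(i,j)$ never interact removes the need for induction. The cyclic block symmetry then cuts the bookkeeping by a factor of three.

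Your version also makes visible that only the conditions $\alpha_i\in\{\boldsymbol\gamma(i,j),\alpha_j\}$ for $i<j$ are used, not transitivity of $\boldsymbol\gamma$ itself. The paper's inductive format has its own merit: it mirrors the proof of the quantum analogue, Proposition~\ref{prop:key relation}. There the identity is multiplicative, so a pair-by-pair decoupling is not available and the induction via Lemma~\ref{lem:move} does real work.
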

\begin{proof}
We use induction on~$m$. The induction base is trivial since both sides of the identity are equal to zero for~$m=1$.

To prove the inductive step, abbreviate~$\gamma_{i,j}=\boldsymbol\gamma(i,j)$, $1\le i<j\le m$ and  denote~$\mathscr L_m(\boldsymbol\gamma,\boldsymbol\alpha)$ (respectively, $\mathscr R_m(\boldsymbol \alpha)$) the left (respectively, the right) hand side of~\eqref{eq:key id jc}, the latter being independent of~$\boldsymbol\gamma$.
By~\eqref{eq:rec Jc} we have 
$$
\lie j_{\boldsymbol\gamma}^-=\lie j'_{\boldsymbol\gamma}+
\sum_{1\le i\le m-1} \sr m{i+m}{\gamma_{i,m}}-\sum_{1\le i\le m-1} \sr{2m}{i}{\gamma_{i,m}}
$$
where we abbreviate $
\lie j'_{\boldsymbol\gamma}=\psi_{[1,m-1]\cup[m+1,2m-1]}((\lie j_{\boldsymbol\gamma^-})^-)$.
Since
\begin{align*}
\psi_{[m+2,2m+1]\cup[2m+3,3m+2]}(\sr m{i+m}{\gamma_{i,m}}-\sr{2m}{i}{\gamma_{i,m}})&=\sr{2m+1}{2m+i+2}{\gamma_{i,m}}-\sr{3m+2}{i+m+1}{\gamma_{i,m}},\\
\psi_{[1,m]\cup[2m+3,3m+2]}(\sr m{i+m}{\gamma_{i,m}}-\sr{2m}{i}{\gamma_{i,m}})&=\sr m{2m+i+2}{\gamma_{i,m}}
-\sr{3m+2}{i}{\gamma_{i,m}},\\
\psi_{[1,m]\cup[m+2,2m+1]}(\sr m{i+m}{\gamma_{i,m}}-\sr{2m}{i}{\gamma_{i,m}})&=\sr m{m+i+1}{\gamma_{i,m}}
-\sr{2m+1}{i}{\gamma_{i,m}}
\end{align*}
we can write 
\begin{align*}
\mathscr L_m(\boldsymbol\gamma,\boldsymbol \alpha)&=\sum_{1\le i\le m}\Big([\psi_{[m+2,2m+1]\cup[2m+3,3m+2]}(\lie j'_{\boldsymbol\gamma}),
\sr{m+1}{m+1+i}{\alpha_{i}}+\sr{m+1}{2m+2+i}{\alpha_{i}}]\nonumber\\
&\phantom{\sum_{1\le i\le m}}-[\psi_{[1,m]\cup[2m+3,3m+2]}(\lie j'_{\boldsymbol\gamma}),
\sr{2m+2}{i}{\alpha_{i}}+\sr{2m+2}{2m+2+i}{\alpha_{i}}]\nonumber\\
&\phantom{\sum_{1\le i\le m}}+[\psi_{[1,m]\cup[m+2,2m+1]}(\lie j'_{\boldsymbol\gamma}),
\sr{3m+3}{i}{\alpha_{i}}+\sr{3m+3}{m+1+i}{\alpha_{i}}]\Big)
+\mathscr L'_m(\boldsymbol\gamma,\boldsymbol\alpha),
\end{align*}
where 
\begin{align*}
\mathscr L'_m(\boldsymbol\gamma,\boldsymbol\alpha):=\sum_{\substack{1\le t\le m-1\\1\le s\le m}}
\Big(&[\sr{2m+1}{2m+t+2}{\gamma_{t,m}}-\sr{3m+2}{t+m+1}{\gamma_{t,m}},\sr{m+1}{m+1+s}{\alpha_{s}}+\sr{m+1}{2m+2+s}{\alpha_{s}}]\\
&-[\sr m{2m+t+2}{\gamma_{t,m}}-\sr{3m+2}{t}{\gamma_{t,m}},\sr{2m+2}{s}{\alpha_{s}}+\sr{2m+2}{2m+2+s}{\alpha_{s}}]\\
&+[\sr m{m+t+1}{\gamma_{t,m}}-\sr{2m+1}{t}{\gamma_{t,m}},\sr{3m+3}{s}{\alpha_{s}}+\sr{3m+3}{m+1+s}{\alpha_{s}}]\Big)\\
&\mskip-100mu=\sum_{1\le t\le m-1} \Big(-[\sr{m+1}{2m+t+2}{\alpha_{t}},\sr{2m+1}{2m+t+2}{\gamma_{t,m}}]-
[\sr{m+1}{2m+1}{\alpha_m},\sr{2m+1}{2m+t+2}{\gamma_{t,m}}]\\
&\mskip-100mu\phantom{=\sum_{1\le t\le m-1}\Big(}+
[\sr{m+1}{m+t+1}{\alpha_{t}},\sr{3m+2}{m+t+1}{\gamma_{t,m}}]
+[\sr{m+1}{3m+2}{\alpha_{m}},\sr{3m+2}{m+t+1}{\gamma_{t,m}}]\\
&\mskip-100mu\phantom{=\sum_{1\le t\le m-1}\Big(}
+[\sr{2m+2}{2m+t+2}{\alpha_{t}},\sr m{2m+t+2}{\gamma_{t,m}}]+[\sr{2m+2}{m}{\alpha_{m}},\sr m{2m+t+2}{\gamma_{t,m}}]\\
&\mskip-100mu\phantom{=\sum_{1\le t\le m-1}\Big(}
-[\sr{2m+2}{t}{\alpha_{t}},\sr{3m+2}{t}{\gamma_{t,m}}]-[\sr{2m+2}{3m+2}{\alpha_{m}},\sr{3m+2}{t}{\gamma_{t,m}}]\\
&\mskip-100mu\phantom{=\sum_{1\le t\le m-1}\Big(}
-[\sr{3m+3}{m+t+1}{\alpha_{t}},\sr m{m+t+1}{\gamma_{t,m}}]-[\sr{3m+3}{m}{\alpha_{m}},\sr m{m+t+1}{\gamma_{t,m}}]\\
&\mskip-100mu\phantom{=\sum_{1\le t\le m-1}\Big(}
+[\sr{3m+3}{t}{\alpha_{t}},\sr{2m+1}{t}{\gamma_{t,m}}]+[\sr{3m+3}{2m+1}{\alpha_{m}},\sr{2m+1}{t}{\gamma_{t,m}}]\Big).
\end{align*}
Since~$\boldsymbol\gamma^{\boldsymbol\alpha}$ is transitive,
$\alpha_t\in\{\gamma_{t,m},\alpha_m\}$, $1\le t\le m-1$ by Lemma~\ref{lem:rest ext trans}\ref{lem:rest ext trans.a}. Using~\eqref{eq:CYB rels}
with $(c,c',c'')=(\alpha_m,\alpha_t,\gamma_{t,m})$ and, respectively,
\begin{multline*}(i,j,k)\in\{ (m+1,2m+1,2m+t+2),(m+1,3m+2,m+t+1),
(2m+2,m,2m+t+2),\\(2m+2,3m+2,t),(3m+3,m,m+t+1),(3m+3,2m+1,t)\}
\end{multline*}
we obtain
\begin{align*}
\mathscr L'_m&(\boldsymbol\gamma,\boldsymbol\alpha)=\sum_{1\le t\le m-1} [\sr{m+1}{2m+1}{\alpha_m},
\sr{m+1}{2m+t+2}{\alpha_t}]-[\sr{m+1}{3m+2}{\alpha_m},\sr{m+1}{m+t+1}{\alpha_t}]
\\&
-[\sr{2m+2}{m}{\alpha_m},\sr{2m+2}{2m+t+2}{\alpha_t}]+[\sr{2m+2}{3m+2}{\alpha_m},
\sr{2m+2}{t}{\alpha_t}]\\
&+[\sr{3m+3}{m}{\alpha_m},\sr{3m+3}{m+t+1}{\alpha_t}]-[\sr{3m+3}{2m+1}{\alpha_m},
\sr{3m+3}{t}{\alpha_t}],
\end{align*}
whence
\begin{align}
\mathscr R&_m(\boldsymbol\alpha)-\mathscr L'_m(\boldsymbol\gamma,\boldsymbol\alpha)\nonumber\\
&=\sum_{1\le i\not=j\le m-1} [\sr{m+1}{i+m+1}{\alpha_{i}},\sr{m+1}{j+2m+2}{\alpha_{j}}]+[\sr{2m+2}{i+2m+2}{\alpha_{i}},\sr{2m+2}{j}{\alpha_{j}}]+[\sr{3m+3}{i}{\alpha_{i}},
\sr{3m+3}{j+m+1}{\alpha_{j}}]\nonumber\\
&=\psi_{[1,3m+3]\setminus\{m,2m+1,3m+2\}}\Big(\sum_{1\le i\not=j\le m-1} [\sr{m}{i+m}{\alpha_{i}},\sr{m}{j+2m}{\alpha_{j}}]+[\sr{2m}{i+2m}{\alpha_{i}},\sr{2m}{j}{\alpha_{j}}]+[\sr{3n}{i}{\alpha_{i}},
\sr{3n}{j+m}{\alpha_{j}}]\Big)\nonumber\\
&=\psi_{[1,3m+3]\setminus\{m,2m+1,3m+2\}}(\mathscr R_{m-1}(\boldsymbol\alpha^-)).\label{eq:interm jc id}
\end{align}
Furthermore,
\begin{align*}
\psi_{[m+2,2m+1]\cup[2m+3,3m+2]}(\lie j'_{\boldsymbol\gamma})&=
\psi_{[m+2,2m]\cup[2m+3,3m+1]}((\lie j_{\boldsymbol\gamma^-})^-)\\
&=
\psi_{[1,3m+3]\setminus\{m,2m+1,3m+2\}}\circ \psi_{[m+1,2m-1]\cup [2m+1,3m-1]}((\lie j_{\boldsymbol\gamma^-})^-),\\
\psi_{[1,m]\cup[2m+3,3m+2]}(\lie j'_{\boldsymbol\gamma})&=\psi_{[1,m-1]\cup[2m+3,3m+1]}((\lie j_{\boldsymbol\gamma^-})^-)\\
&=\psi_{[1,3m+3]\setminus\{m,2m+1,3m+2\}}\circ \psi_{[1,m-1]\cup[2m+1,3m-1]}((\lie j_{\boldsymbol\gamma^-})^-),\\
\psi_{[1,m]\cup[m+2,2m+1]}(\lie j'_{\boldsymbol\gamma})&=\psi_{[1,m-1]\cup[m+2,2m]}((\lie j_{\boldsymbol\gamma^-})^-)\\
&=\psi_{[1,3m+3]\setminus\{m,2m+1,3m+2\}}\circ \psi_{[1,m-1]\cup[m+1,2m-1]}((\lie j_{\boldsymbol\gamma^-})^-).
\end{align*}
Therefore,
\begin{align*}
\mathscr L&_m(\boldsymbol\gamma,\boldsymbol{\alpha})-
\mathscr L'_m(\boldsymbol\gamma,\boldsymbol{\alpha})\\
&=
\psi_{[1,3m+3]\setminus\{m,2m+1,3m+2\}}\Big(\sum_{1\le i\le m}\Big(
[\psi_{[m+1,2m-1]\cup [2m+1,3m-1]}((\lie j_{\boldsymbol\gamma^-})^-),\sr m{m+i}{\alpha_i}+\sr{m}{2m+i}{\alpha_i}]\\
&
\phantom{=
\psi_{[1,3m+3]\setminus\{m,2m+1,3m+2\}}\Big(\sum_{1\le i\le m}}
-[\psi_{[1,m-1]\cup[2m+1,3m-1]}((\lie j_{\boldsymbol\gamma^-})^-),
\sr{2m}{i}{\alpha_{i}}+\sr{2m}{2m+i}{\alpha_{i}}]\\
&\phantom{=
\psi_{[1,3m+3]\setminus\{m,2m+1,3m+2\}}\Big(\sum_{1\le i\le m}}
+[\psi_{[1,m-1]\cup[m+1,2m-1]}((\lie j_{\boldsymbol\gamma^-})^-),\sr{3n}{i}{\alpha_i}+
\sr{3n}{m+i}{\alpha_i}]\Big)\Big).
\end{align*}
The terms corresponding to~$i=m$ in these sums are equal zero by~\eqref{eq:CYB rels comm}, whence 
$$
\mathscr L_m(\boldsymbol\gamma,\boldsymbol\alpha)-\mathscr L'_m(\boldsymbol\gamma,\boldsymbol\alpha)
=\psi_{[1,3m+3]\setminus\{m,2m+1,3m+2\}}(\mathscr L_{m-1}(\boldsymbol\gamma^-,\boldsymbol\alpha^-)).
$$
It follows from~\eqref{eq:interm jc id} and the above that $\mathscr L_m(\boldsymbol\gamma,\boldsymbol \alpha)=\mathscr R_m(\boldsymbol\alpha)$
if and only if 
$$
\psi_{[1,3m+3]\setminus\{m,2m+1,3m+2\}}(\mathscr L_{m-1}(\boldsymbol\gamma^-,\boldsymbol\alpha^-)-
\mathscr R_{m-1}(\boldsymbol{\alpha^-}))=0.
$$
Yet~$(\boldsymbol\gamma^-)^{\boldsymbol{\alpha}^-}$ is transitive by Lemma~\ref{lem:rest ext trans}\ref{lem:rest ext trans.b},
and so~$\mathscr L_{m-1}(\boldsymbol\gamma^-,\boldsymbol\alpha^-)-
\mathscr R_{m-1}(\boldsymbol{\alpha^-})=0$ by the induction hypothesis. This 
completes the proof of the inductive step and hence of the Proposition.
\end{proof}

\subsection{A family of classical twists} We can now establish the main result of this section.
\begin{theorem}\label{thm:trans classical twists}
Let~$(\lie g,\delta)$ be a Lie bialgebra with a family~$
\mathbf r=\{r^{(c)}\}_{c\in C}$ of classical r-matrices for~$\delta$. Then for any~$n\ge 2$ and any
transitive~$\mathbf c:I_n\to C$, $j_{\mathbf c}=j_{\mathbf c}(\mathbf r):=\Psi_{\mathbf r}^{(2n)}(\lie j_{\mathbf c})\in \lie g^{\oplus n}\tensor \lie g^{\oplus n}\subset 
U(\lie g)^{\tensor n}\tensor U(\lie g)^{\tensor n}$ is a classical Drinfeld 
twist for~$(\lie g^{\oplus n},\delta_{\lie g^{\oplus n}})$.
\end{theorem}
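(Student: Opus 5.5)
We argue by induction on~$n$, viewing $\lie g^{\oplus n}=\lie g^{\oplus(n-1)}\oplus\lie g$ and using Corollary~\ref{cor:clas twist from rel twist}. For $n=2$ we have $j_{\mathbf c}=r^{(c_{1,2})}_{2,3}$. By Proposition~\ref{prop:basic twist class} (with $r$ replaced by $r^{(c_{1,2})}$, which is an r-matrix for the same $\delta$), this is a classical Drinfeld twist. For the inductive step we take $n\ge3$ and transitive $\mathbf c$. By Lemma~\ref{lem:rec class j},
\[
j_{\mathbf c}=(j_{\mathbf c^-})_{[1,2n-1]\setminus\{n\}}+\sum_{1\le i\le n-1} r^{(\mathbf c(i,n))}_{n,i+n}.
\]
Since $\mathbf c^-$ is transitive (Lemma~\ref{lem:rest ext trans}), the induction hypothesis says that $j_{\mathbf c^-}$ is a classical twist for $\lie g^{\oplus(n-1)}$; denote the twisted cobracket by $\delta'$. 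Under the identification $U(\lie g^{\oplus(n-1)}\oplus\lie g)^{\tensor 2}$, the first summand is exactly $(j_{\lie g^{\oplus(n-1)}})_{1,3}$, and $j_{\lie g}=0$. Corollary~\ref{cor:clas twist from rel twist} therefore reduces the claim to the following: $\mathbf f:=\sum_{i<n} r^{(\alpha_i)}_{1,1+i}\in\lie g\tensor\lie g^{\oplus(n-1)}$, with $\alpha_i=\mathbf c(i,n)$, is a relative classical Drinfeld twist for $(\lie g^{\oplus(n-1)},\delta')\oplus(\lie g,\delta)$.

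To verify this relative twist property we use the criterion of Proposition~\ref{prop:rel class Drinfeld}. We have to compute $(\id\tensor\delta')(\mathbf f)$ and $(\delta\tensor\id)(\mathbf f)$. Here $\delta'=\delta_{\lie g^{\oplus(n-1)}}+\delta_{j^-_{\mathbf c^-}}$. The part $\delta_{\lie g^{\oplus(n-1)}}$, together with $\delta$ on the $\lie h=\lie g$ leg, is handled exactly as in the proof of Proposition~\ref{prop:basic twist class}. That is, Proposition~\ref{prop:compat cond}\ref{prop:compat cond.d} converts these terms into commutators $[r^{(\cdot)},r^{(\cdot)}]$. Mixed colors appear on different legs, and the transitive CYBE (Proposition~\ref{prop:color CYBE}) applies because $\alpha_i\in\{\mathbf c(i,j),\alpha_j\}$. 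The extra part $\delta_{j^-_{\mathbf c^-}}(\mathbf f)$ produces commutators of $j^-_{\mathbf c^-}$, placed on suitable legs, with $\Delta$ of the $\lie g^{\oplus(n-1)}$ factors of $\mathbf f$. After applying the three cyclic placements in Proposition~\ref{prop:rel class Drinfeld}, these are precisely the left-hand side terms of the identity \eqref{eq:key id jc}, with $m=n-1$, $\boldsymbol\gamma=\mathbf c^-$ and $\boldsymbol\alpha=(\alpha_i)$. The hypothesis that $\boldsymbol\gamma^{\boldsymbol\alpha}=\mathbf c$ is transitive is exactly what is needed there.

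The whole computation is first carried out inside $\lie{qtr}_{3n}(C)$ and then pushed to $U(\lie g)^{\tensor 3n}$ via $\Psi^{(3n)}_{\mathbf r}$. This regroups the legs of $(U(\lie g^{\oplus(n-1)})\tensor U(\lie g))^{\tensor 3}$ into blocks $[1,n-1]$, $\{n\}$, $[n+1,2n-1]$, $\{2n\}$, and so on. Proposition~\ref{prop:identity jc} then equates the $j^-_{\mathbf c^-}$ contribution with the cross terms $[\mathbf f_{\cdot},\mathbf f_{\cdot}]$ having distinct indices $i\ne j$. The diagonal terms $i=j$ are matched by the basic-twist computation above. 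Together these give the right-hand side $[\mathbf f_{2,3},\mathbf f_{2,5}]+\cdots$ of Proposition~\ref{prop:rel class Drinfeld}.

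The main obstacle is the bookkeeping in this last step. One has to check that the index relabellings in Proposition~\ref{prop:identity jc} (the maps $\psi_{[m+2,2m+1]\cup[2m+3,3m+2]}$ and so on, and the legs $m+1$, $2m+2$, $3m+3$ carrying the new copy of $\lie g$) agree exactly with the placements $\tau_{2,4}\tau_{1,3}$, $\tau_{2,6}\tau_{3,5}\tau_{1,3}$ and the others in Proposition~\ref{prop:rel class Drinfeld}, including the signs. The plan is to write every term in $\lie{qtr}_{3n}(C)$ using the $\sr ij c$ notation and to match terms summand by summand. Once that is done, Corollary~\ref{cor:clas twist from rel twist} completes the induction.
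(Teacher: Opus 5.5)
Your proposal is correct and follows the paper's proof essentially step for step. The paper uses the same induction through Lemma~\ref{lem:rec class j} and Corollary~\ref{cor:clas twist from rel twist}, and the same relative twist $\mathbf f=\sum_{i<n}r^{(\mathbf c(i,n))}_{1,i+1}\in\lie g\tensor\lie g^{\oplus(n-1)}$, checked via Proposition~\ref{prop:rel class Drinfeld}; there the $j^-_{\mathbf c^-}$-terms are matched with the off-diagonal cross terms by Proposition~\ref{prop:identity jc} (with $m=n-1$, $\boldsymbol\gamma=\mathbf c^-$, $\boldsymbol\alpha=\boldsymbol\alpha_{\mathbf c}$), pushed forward by $\Psi^{(3n)}_{\mathbf r}$. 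One minor point of placement: the untwisted contributions $(\delta\tensor\id)(\mathbf f)$ and $(\id\tensor\delta_{\lie g^{\oplus(n-1)}})(\mathbf f)$ yield only single-color commutators via Proposition~\ref{prop:compat cond}\ref{prop:compat cond.d}, and the mixed-color transitive CYBE is used only inside Proposition~\ref{prop:identity jc} and in $\Psi^{(3n)}_{\mathbf r}$ being a homomorphism.
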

\begin{proof}
The argument is by induction on~$n$. The case~$n=2$ was established in Proposition~\ref{prop:basic twist class}. To prove the inductive step, note 
that~$j_{\mathbf c^-}$ is a classical Drinfeld twist for $(\lie g^{\oplus(n-1)},
\delta_{\lie g^{\oplus(n-1)}})$ by the induction hypothesis.
We need the following
\begin{proposition}\label{prop:key prop classical}
Let~$\lie a=\lie g^{\oplus(n-1)}$ with~$\delta_{\lie a}$
being the natural Lie bialgebra structure $\delta_{\lie g^{\oplus(n-1)}}$ twisted by 
$j_{\mathbf c^-}(\mathbf r)$ and let $\lie b=\lie g$
with~$\delta_{\lie b}=\delta$. Then $\mathbf f=\sum\limits_{1\le i\le n-1}
r_{1,i+1}^{(\mathbf c(i,n))}\in \lie b\tensor \lie a$ is a relative Drinfeld twist for~$(\lie a\oplus \lie b,
\delta_{\lie a\oplus\lie b})$. 
\end{proposition}
\begin{proof}
Abbreviate
$c_{i,k}=\mathbf c(i,k)$, $1\le i<k\le n$,
$\mathbf i=[n(i-1)+1,ni-1]$, $i\in \{1,3,5\}$,
$\mathbf k=\{n(k-1)\}$, $k\in \{2,4,6\}$
and identify $U(\lie a)$ with~$U(\lie g)^{\tensor(n-1)}$ as an associative algebra.
By Proposition~\ref{prop:compat cond}\ref{prop:compat cond.d},
$(\delta_{\lie b}\tensor\id_{\lie a})(r_{1,i+1}^{(c)})=[r_{2,i+2}^{(c)},r_{1,i+2}^{(c)}]$,
$c\in C$, $1\le i\le n-1$, whence 
\begin{align*}
(\id_{(U(\lie a)\tensor U(\lie b))^{\tensor3}}&-\tau_{\mathbf 4,\mathbf 6}\tau_{\mathbf 3,\mathbf 5}+\tau_{\mathbf 2,\mathbf 4}\tau_{\mathbf 4,\mathbf 6}\tau_{\mathbf 1,\mathbf 5})((\delta_{\lie b}\tensor \id_{\lie a})(\mathbf f)_{\mathbf 2,\mathbf 4,\mathbf 5})\\
&=\sum_{1\le i\le n-1} [r_{2n,2n+i}^{(c_{i,n})},r_{n,2n+i}^{(c_{i,n})}]-
[r_{3n,n+i}^{(c_{i,n})},r_{n,n+i}^{(c_{i,n})}]+[r_{3n,i}^{(c_{i,n})},r_{2n,i}^{(c_{i,n)}}].
\end{align*}
Furthermore, we have  in $(U(\lie a)\tensor U(\lie b))^{\tensor 3}$
\begin{align*}
[\mathbf f&_{\mathbf 2,\mathbf 3},\mathbf f_{\mathbf 2,\mathbf 5}]+[\mathbf f_{\mathbf 2,\mathbf 5},\mathbf f_{\mathbf 4,\mathbf 5}]+
[\mathbf f_{\mathbf 4,\mathbf 5},\mathbf f_{\mathbf 4,\mathbf 1}]
+[\mathbf f_{\mathbf 4,\mathbf 1},\mathbf f_{\mathbf 6,\mathbf 1}]+[\mathbf f_{\mathbf 6,\mathbf 1},\mathbf f_{\mathbf 6,\mathbf 3}]
+[\mathbf f_{\mathbf 6,\mathbf 3},\mathbf f_{\mathbf 2,\mathbf 3}]\\
&=\sum_{1\le i,k\le n-1} [r_{n,i+n}^{(c_{i,n})},r_{n,k+2n}^{(c_{k,n})}]
+[r_{n,k+2n}^{(c_{k,n})},r_{2n,i+2n}^{(c_{i,n})}]
+[r_{2n,i+2n}^{(c_{i,n})},r_{2n,k}^{(c_{k,n})}]
+[r_{2n,k}^{(c_{k,n})},r_{3n,i}^{(c_{i,n})}]\\
&\phantom{=\sum_{1\le i,k\le n-1}}+[r_{3n,i}^{(c_{i,n})},
r_{3n,k+n}^{(c_{k,n})}]+[r_{3n,k+n}^{(c_{k,n})},r_{n,i+n}^{(c_{i,n})}]\\
&=\sum_{1\le i,k\le n-1} [r_{n,i+n}^{(c_{i,n})},r_{n,k+2n}^{(c_{k,n})}]+[r_{2n,i+2n}^{(c_{i,n})},r_{2n,k}^{(c_{k,n})}]+[r_{3n,i}^{(c_{i,n})},
r_{3n,k+n}^{(c_{k,n})}]\\
&\quad +\sum_{1\le i\le n-1}[r_{n,i+2n}^{(c_{i,n})},r_{2n,i+2n}^{(c_{i,n})}]
+[r_{2n,i}^{(c_{i,n})},r_{3n,i}^{(c_{i,n})}]+[r_{3n,i+n}^{(c_{i,n})},r_{n,i+n}^{(c_{i,n})}]
\\
&=\sum_{1\le i,k\le n-1} [r_{n,i+n}^{(c_{i,n})},r_{n,k+2n}^{(c_{k,n})}]+[r_{2n,i+2n}^{(c_{i,n})},r_{2n,k}^{(c_{k,n})}]+[r_{3n,i}^{(c_{i,n})},
r_{3n,k+n}^{(c_{k,n})}]
\\
&\phantom{=}-(\id_{(U(\lie a)\tensor U(\lie b))^{\tensor3}}-\tau_{\mathbf 4,\mathbf 6}\tau_{\mathbf 3,\mathbf 5}+\tau_{\mathbf 2,\mathbf 4}\tau_{\mathbf 4,\mathbf 6}\tau_{\mathbf 1,\mathbf 5})((\delta_{\lie b}\tensor \id_{\lie a})(\mathbf f)_{\mathbf 2,\mathbf 4,\mathbf 5}).
\end{align*}
Thus, by Proposition~\ref{prop:rel class Drinfeld}, it remains to prove that 
\begin{align}
(\id_{(U(\lie a)\tensor U(\lie b))^{\tensor3}}&-\tau_{\mathbf 2,\mathbf 4}\tau_{\mathbf 1,\mathbf 3}+\tau_{\mathbf 2,\mathbf 6}\tau_{\mathbf 3,\mathbf 5}\tau_{\mathbf 1,\mathbf 3})((\id_{\lie b}\tensor \delta_{\lie a})(\mathbf f)_{\mathbf 2,\mathbf 3,\mathbf 5})\nonumber\\
&=\sum_{1\le i,k\le n-1} [r_{n,i+n}^{(c_{i,n})},r_{n,k+2n}^{(c_{k,n})}]+[r_{2n,i+2n}^{(c_{i,n})},r_{2n,k}^{(c_{k,n})}]+[r_{3n,i}^{(c_{i,n})},
r_{3n,k+n}^{(c_{k,n})}].\label{eq:jc rel equiv}
\end{align}
By Proposition~\ref{prop:compat cond}\ref{prop:compat cond.d} and the definition of~$\delta_{\lie a}$ we have in $U(\lie b)\tensor U(\lie a)\tensor U(\lie a)$
\begin{align*}
(\id_{\lie b}\tensor\delta_{\lie a})(r_{1,i+1}^{(c)})=
[r_{1,i+1}^{(c)},r_{1,n+i}^{(c)}]+[1\tensor (j_{\mathbf c^-})^-,r_{1,i+1}^{(c)}+r_{1,n+i}^{(c)}],
\end{align*}
whence 
$$
(\id_{\lie b}\tensor\delta_{\lie a})(\mathbf f)_{\mathbf 2,\mathbf 3,\mathbf 5}
=\sum_{1\le i\le n-1} [r_{n,n+i}^{(c_{i,n})},r_{n,2n+i}^{(c_{i,n})}]
+[( j_{\mathbf c^-})^-_{[n+1,2n-1]\cup[2n+1,3n-1]},
r_{n,n+i}^{(c_{i,n})}+r_{n,2n+i}^{(c_{i,n})}].
$$
Therefore,
\begin{align*}
(&\id_{(U(\lie a)\tensor U(\lie b))^{\tensor3}}-\tau_{\mathbf 2,\mathbf 4}\tau_{\mathbf 1,\mathbf 3}+\tau_{\mathbf 2,\mathbf 6}\tau_{\mathbf 3,\mathbf 5}\tau_{\mathbf 1,\mathbf 3})((\id_{\lie b}\tensor \delta_{\lie a})(\mathbf f)_{\mathbf 2,\mathbf 3,\mathbf 5})\\
&=\sum_{1\le i\le n-1} 
[r_{n,n+i}^{(c_{i,n})},r_{n,2n+i}^{(c_{i,n})}]-
[r_{2n,i}^{(c_{i,n})},r_{2n,2n+i}^{(c_{i,n})}]
+[r_{3n,i}^{(c_{i,n})},r_{3n,n+i}^{(c_{i,n})}]\\
&\phantom{=}+\sum_{1\le i\le n-1}\Big([( j_{\mathbf c^-})^-_{[n+1,2n-1]\cup[2n+1,3n-1]},
r_{n,n+i}^{(c_{i,n})}+r_{n,2n+i}^{(c_{i,n})}]\\
&\phantom{=}\quad-[( j_{\mathbf c^-})^-_{[1,n-1]\cup[2n+1,3n-1]},
r_{2n,i}^{(c_{i,n})}+r_{2n,2n+i}^{(c_{i,n})}]+[( j_{\mathbf c^-})^-_{[1,n-1]\cup[n+1,2n-1]},
r_{3n,i}^{(c_{i,n})}+r_{3n,n+i}^{(c_{i,n})}]\Big),
\end{align*}
and so~\eqref{eq:jc rel equiv} is equivalent to
\begin{align*}
&\sum_{1\le i\le n-1}\Big([( j_{\mathbf c^-})^-_{[n+1,2n-1]\cup[2n+1,3n-1]},
r_{n,n+i}^{(c_{i,n})}+r_{n,2n+i}^{(c_{i,n})}]\\
&\phantom{\sum_{1\le i\le n-1}}-[( j_{\mathbf c^-})^-_{[1,n-1]\cup[2n+1,3n-1]},
r_{2n,i}^{(c_{i,n})}+r_{2n,2n+i}^{(c_{i,n})}]+[( j_{\mathbf c^-})^-_{[1,n-1]\cup[n+1,2n-1]},
r_{3n,i}^{(c_{i,n})}+r_{3n,n+i}^{(c_{i,n})}]\Big)\\
&=\sum_{1\le i\not=k\le n-1} [r_{n,i+n}^{(c_{i,n})},r_{n,k+2n}^{(c_{k,n})}]+[r_{2n,i+2n}^{(c_{i,n})},r_{2n,k}^{(c_{k,n})}]+[r_{3n,i}^{(c_{i,n})},
r_{3n,k+n}^{(c_{k,n})}].
\end{align*}
Since~$\mathbf c=(\mathbf c^-)^{\boldsymbol{\alpha}_{\mathbf c}}$ by Lemma~\ref{lem:rest ext trans}\ref{lem:rest ext trans.c} and hence is transitive,
it remains to use Proposition~\ref{prop:identity jc} with~$m=n-1$, $\boldsymbol{\gamma}=\mathbf c^-$ and~$\boldsymbol{\alpha}=\boldsymbol{\alpha}_{\mathbf c}$ 
and apply the homomorphism $\Psi^{(3n)}_{\mathbf r}$.
\end{proof}
By Lemma~\ref{lem:rec class j}, $j_{\mathbf c}=(j_{\mathbf c^-})_{[1,2n-1]\setminus\{n\}}+\mathbf f_{[n,2n-1]}=(j_{\mathbf c^-})_{\mathbf 1,\mathbf 3}+
\mathbf f_{\mathbf 2,\mathbf 3}$, where $\mathbf 1=[n-1]$, $\mathbf 2=\{n\}$, $\mathbf 3=[n+1,2n-1]$.
Then by Proposition~\ref{prop:key prop classical} and Corollary~\ref{cor:clas twist from rel twist}, $j_{\mathbf c}$ is a classical Drinfeld twist for~$\lie g^{\oplus n}$ with its standard cobracket~$\delta_{\lie g^{\oplus n}}$.
\end{proof}
\begin{corollary}
For any transitive~$\mathbf c:I_n\to C$, $\sum\limits_{2\le i\le n} r_{i-1,n}^{(\mathbf c(1,i))}$ is a relative Drinfeld twist 
for~$(\lie a\oplus \lie b,\delta_{\lie a\oplus \lie b})$ where~$(\lie a,\delta_{\lie a})=(\lie g,\delta)$ and~$\lie b=\lie g^{\oplus (n-1)}$ with~$\delta_{\lie b}$ obtained by twisting~$\delta_{\lie g^{\oplus(n-1)}}$ by~$j_{\mathbf c^+}$.
\end{corollary}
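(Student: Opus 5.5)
The plan is to mirror the proof of Proposition~\ref{prop:key prop classical}, with the roles of the first and last factors exchanged. The cleanest route is via a symmetry that reduces the corollary to that proposition, and I would use the direct computation only as a fallback.

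\emph{Reduction by reversal.} Let $w_\circ\in S_n$ be the longest element, $i\mapsto n+1-i$, and define $\mathbf c^\vee:I_n\to C$ by $\mathbf c^\vee(i,j)=\mathbf c(n+1-j,n+1-i)$. Reversing the order of indices preserves transitivity: the condition $c_{i,k}\in\{c_{i,j},c_{j,k}\}$ is symmetric under $(i,j,k)\mapsto(n+1-k,n+1-j,n+1-i)$ once the two entries are swapped, so $\mathbf c^\vee$ is transitive. One checks that $(\mathbf c^\vee)^-=(\mathbf c^+)^\vee$ and $\mathbf c^\vee(i,n)=\mathbf c(1,n+1-i)$.

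The summands $r_{j,n+i}$ of $\lie j_{\mathbf c}$ have their first leg in the left copy of $\lie g^{\oplus n}$ at a larger index than the second leg's position in the right copy. Reversal turns these into summands whose first leg has the smaller index. Hence I would first record the mirror of Lemma~\ref{lem:rec class j}, namely the recursion~\eqref{eq:rec jc-2}, which expresses $\lie j_{\mathbf c}$ through $\lie j_{\mathbf c^+}$ plus the new terms $\sr{i}{n+1}{\mathbf c(1,i)}$. Under the identification of $U(\lie a)\tensor U(\lie b)$ with $U(\lie g)\tensor U(\lie g)^{\tensor(n-1)}$, these new terms are exactly $\mathbf f'=\sum_{2\le i\le n}r^{(\mathbf c(1,i))}_{i-1,n}\in\lie b\tensor\lie a$.

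\emph{Direct verification.} I would then run the computation of Proposition~\ref{prop:key prop classical} with $\lie a=\lie g$ untwisted and $\lie b$ twisted by $j_{\mathbf c^+}$.

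First, use Proposition~\ref{prop:compat cond}\ref{prop:compat cond.d} on the untwisted side, now the $\lie a$-leg, to compute $(\id_{\lie b}\tensor\delta_{\lie a})(\mathbf f')$ as the commutator $[r_{1,2},r_{1,3}]$-type terms.

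Next, compute $(\delta_{\lie b}\tensor\id_{\lie a})(\mathbf f')$. This equals $[r_{2,3},r_{1,3}]$-type terms plus $[(j_{\mathbf c^+})^-,\ \cdot\ ]$ corrections, since $\delta_{\lie b}=\delta_{\lie g^{\oplus(n-1)}}+\delta_{(j_{\mathbf c^+})^-}$.

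Then substitute into Proposition~\ref{prop:rel class Drinfeld}. After cancelling the six-term sum of commutators of $\mathbf f'$ against the untwisted part, as in the proof above, the claim reduces to an identity in $\lie{qtr}_{3n}(C)$ of the shape~\eqref{eq:key id jc}, with $\boldsymbol\gamma=\mathbf c^+$ and $\boldsymbol\alpha=(\mathbf c(1,2),\dots,\mathbf c(1,n))$.

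\emph{The main obstacle.} The hard step is this last identity. Transitivity here gives $\alpha_1$-type conditions $\mathbf c(1,k)\in\{\mathbf c(1,j),\mathbf c(j,k)\}$: the new vertex is prepended rather than appended. So Proposition~\ref{prop:identity jc} does not apply verbatim; it needs its mirror version, in which the new vertex sits at the front and $\boldsymbol\gamma^{\boldsymbol\alpha}$ is replaced by the corresponding prepended extension.

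I would obtain this mirror version by applying to~\eqref{eq:key id jc}, for $\mathbf c^\vee$, the automorphism of $\lie{qtr}_{3m+3}(C)$ induced by the permutation reversing each of the three blocks of size $m+1$. If the bookkeeping of signs fails, say because $\lie j^-$ is not mapped to $\pm\lie j^-$, I would fall back to redoing the induction of Proposition~\ref{prop:identity jc} on $m$. That induction peels off the first index using~\eqref{eq:rec jc-2} in place of~\eqref{eq:rec jc}. Its only nontrivial input is~\eqref{eq:CYB rels}, applied with $c'\in\{c,c''\}$ supplied by transitivity in the form $\mathbf c(1,k)\in\{\mathbf c(1,2),\mathbf c(2,k)\}$.

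Finally, applying $\Psi^{(3n)}_{\mathbf r}$ transports the $\lie{qtr}$ identity to $U(\lie g)^{\tensor 3n}$ and completes the proof.
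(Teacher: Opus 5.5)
Your outline is viable, but it takes a much longer road than the paper, which needs no new computation. By Theorem~\ref{thm:trans classical twists}, applied to $\mathbf c$ and to the transitive $\mathbf c^+$, both $j_{\mathbf c}$ and $j_{\mathbf c^+}$ are classical Drinfeld twists. By \eqref{eq:rec jc-2}, $j_{\mathbf c}=(j_{\mathbf c^+})_{[2,2n]\setminus\{n+1\}}+\sum_{2\le i\le n}r^{(\mathbf c(1,i))}_{i,n+1}$; in the block notation of $(\lie a\oplus\lie b)^{\tensor 2}$ this reads $j_{\mathbf c}=(j_{\mathbf c^+})_{2,4}+\mathbf f'_{2,3}$. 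Proposition~\ref{prop:tensortwist} is an \emph{equivalence}, so applying it with $j_{\lie a}=0$ and $j_{\lie b}=j_{\mathbf c^+}$ shows that $\mathbf f'_{2,3}$ is a classical twist for $\lie a\oplus\lie b$ with $\delta_{\lie b}$ twisted by $j_{\mathbf c^+}$. That is precisely the definition of a relative classical twist.

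You used \eqref{eq:rec jc-2} only to identify $\mathbf f'$. What you missed is that the ``prepended'' relative twist is a formal consequence of the ``appended'' one already established in the proof of Theorem~\ref{thm:trans classical twists}. Hence the mirror of Proposition~\ref{prop:identity jc} never has to be proved. Your direct route does prove that mirror identity in $\lie{qtr}_{3n}(C)$, a slightly stronger universal statement, but at the cost of redoing the whole computation of Proposition~\ref{prop:key prop classical}.

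On the symmetry step, you were right to worry. The block-reversing permutation alone sends $\lie j^-_{\mathbf c^\vee}$ not to $\pm\lie j^-_{\mathbf c}$ but to its image under the leg swap $\sr{k}{l}{c}\mapsto-\sr{l}{k}{c}$. For $n=2$, for instance, $\sr{2}{3}{c}-\sr{4}{1}{c}\mapsto\sr{1}{4}{c}-\sr{3}{2}{c}$. This leg swap is nevertheless an automorphism of $\lie{qtr}_N(C)$:
\begin{itemize}
\item it preserves \eqref{eq:CYB rels comm};
\item it sends \eqref{eq:CYB rels} for $(i,j,k;c,c',c'')$ to minus the same relation for $(k,j,i;c'',c',c)$, with the same condition $c'\in\{c,c''\}$;
\item on realizations it amounts to replacing $\mathbf r$ by $\{-\tau(r^{(c)})\}$, which is again a family of r-matrices for $\delta$.
\end{itemize}
Composing the reversal with it makes your symmetry argument go through, so the fallback induction is not needed.
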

\begin{proof}
This is an immediate consequence of Lemma~\ref{lem:rec class j}, Proposition~\ref{prop:tensortwist}
and Theorem~\ref{thm:trans classical twists}.
\end{proof}

\subsection{Proof of Theorem~\ref{thm:main thm trans}}\label{subs:pf m thm 1}
We now have all necessary ingredients to the main 
result announced in the Introduction in the classical case.
\begin{proof}[Proof of Theorem~\ref{thm:main thm trans}]
Let~$\mathbf c:I_n\to C$ be transitive 
and let~$\boldsymbol{d}=(d_1,\dots,d_n)\in C^n$.
Then~$\mathbf r^{(\diag \boldsymbol d)}=\sum_{i\in [n]} r_{i,i+n}^{(d_i)}$
is a classical r-matrix for~$\lie g^{\oplus n}$ 
with its standard cobracket~$\delta_{\lie g^{\oplus n}}$.
Since~$j_{\mathbf c}=j_{\mathbf c}(\mathbf r)$ is a classical Drinfeld twist
by Theorem~\ref{thm:trans classical twists},
it follows from Proposition~\ref{prop:cnd classical twist}
\ref{prop:cnd classical twist.b} that~$\mathbf r^{(\diag\boldsymbol d)}+j_{\mathbf c}^-$ is a classical r-matrix for~$\lie g^{\oplus n}$. It remains to observe that
\begin{equation*}
\mathbf r^{(\diag\boldsymbol d)}+j_{\mathbf c}^-=
\sum_{1\le i\le n} r_{i,i+n}^{(d_i)}+
\sum_{1\le i<j\le n} r_{j,i+n}^{(\mathbf c(i,j))}
-\sum_{1\le i<j\le n} r_{j+n,i}^{(\mathbf c(i,j))}
=\mathbf r(\mathbf c,\boldsymbol d).\qedhere
\end{equation*}
\end{proof}
\begin{remark}\label{rem:pf thm 1.1}
Let~$r$ be an r-matrix and let~$\mathbf r=\{r^{(1)},r^{(-1)}\}=\{r,-\tau(r)\}$. 
Since~$\sgna w:I_n\to \{1,-1\}$ is transitive 
by Lemma~\ref{lem:trans sign}, 
$\mathbf r(\sgna w,\boldsymbol d)$,
$\boldsymbol{d}\in\{1,-1\}^n$ is an r-matrix for~$\lie g^{\oplus n}$ by Lemma~\ref{lem:bas classical family} and Theorem~\ref{thm:main thm trans}.
We have 
\begin{align*}
\mathbf r(\sgna w,\boldsymbol{d})&=
\sum_{1\le i\le n} r_{i,i+n}^{(d_i)}+
\sum_{1\le i<j\le n} (r_{j,i+n}^{(\sign(w(j)-w(i)))}-
r_{j+n,i}^{(\sign(w(j)-w(i)))})
\\
&=\sum_{1\le i\le n} r_{i,i+n}^{(d_i)}+
\sum_{1\le i<j\le n} r_{j,i+n}^{(\sign(w(j)-w(i)))}+
r_{i,j+n}^{(\sign(w(i)-w(j)))}\\
&=\sum_{i,j\in[n]} r_{i,j+n}^{(d_i\delta_{i,j}+\sign(w(j)-w(i)))}=
\mathbf r^{(\sgna{w,\boldsymbol{d}})},
\end{align*}
which proves Theorem~\ref{thm:thm 1}.
\end{remark}
\begin{remark}\label{rem:isom permutations}
Since~$w$ acts on~$\lie g^{\oplus n}$ by permutation of factors and $(w\tensor w)(\mathbf r^{\sgna{\id,
\boldsymbol d})})=\mathbf r^{(\sgna{w,w(\boldsymbol d)})}$,
it follows that all r-matrices described in Theorem~\ref{thm:thm 1} are equivalent
to $\mathbf r(\sgna{\id},\boldsymbol d)$
for some~$\boldsymbol d\in \{1,-1\}^n$,
and the corresponding bialgebra 
structures on~$\lie g^{\oplus n}$ are isomorphic.
\end{remark}

\subsection{Diagonal embedding}
Given a Lie algebra~$\lie g$, the diagonal map~$\Delta^{(n)}:\lie g\to \lie g^{\oplus n}$,
$x\mapsto (x,\dots,x)$, $x\in \lie g$, is obviously a homomorphism of Lie algebras.
However, if~$\lie g$ is a Lie bialgebra with the cobracket~$\delta:\lie g\to 
\lie g\tensor\lie g$, then $\Delta^{(n)}$
is not a homomorphism of Lie bialgebras $(\lie g,\delta)\to 
(\lie g^{\oplus n},\delta_{\lie g^{\oplus n}})$.
\begin{remark}
The notation~$\Delta^{(n)}$ is justified by the fact that, on the level of universal enveloping algebras, the diagonal embedding $\lie g\hookrightarrow \lie g^{\oplus n}$
corresponds to the iterated comultiplication~$\Delta^{(n)}:U(\lie g)\to U(\lie g)^{\tensor n}$.
\end{remark}
\begin{theorem}\label{thm:diag embed bialg}
Let~$(\lie g,\delta)$ be a quasi-triangular Lie bialgebra with a family~$\mathbf r=\{r^{(c)}\}_{c\in C}$ of classical r-matrices corresponding to~$\delta$. 
Then for~$n\ge 2$ and $\mathbf c:I_n\to C$ transitive, $\Delta^{(n)}$ is a homomorphism of Lie bialgebras
$(\lie g,\delta)\to (\lie g^{\oplus n},\tilde\delta_{\mathbf c})$ where~$\tilde\delta_{\mathbf c}:=\widetilde{\delta_{\lie g^{\oplus n}}}_{j_{\mathbf c}(\mathbf r)}$
is the standard cobracket~$\delta_{\lie g^{\oplus n}}$ on~$\lie g^{\oplus n}$ twisted by~$j_{\mathbf c}(\mathbf r)$.
\end{theorem}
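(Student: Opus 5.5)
The plan is to verify directly that $\tilde\delta_{\mathbf c}\circ\Delta^{(n)}=(\Delta^{(n)}\tensor\Delta^{(n)})\circ\delta$ on $\lie g$. Transitivity of $\mathbf c$ enters only through Theorem~\ref{thm:main thm trans} (equivalently Theorem~\ref{thm:trans classical twists} together with Proposition~\ref{prop:cnd classical twist}), which guarantees that $\tilde\delta_{\mathbf c}$ is a Lie cobracket on $\lie g^{\oplus n}$. Since $\Delta^{(n)}$ is already a Lie algebra homomorphism, only compatibility with cobrackets remains to be checked, and this will be a short computation with the explicit r-matrix.

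First I would rewrite $\tilde\delta_{\mathbf c}$ in inner form. Fix any $\boldsymbol d\in C^n$. As in the proof of Theorem~\ref{thm:main thm trans}, $\delta_{\lie g^{\oplus n}}=\delta_{\mathbf r^{(\diag\boldsymbol d)}}$ and $\mathbf r^{(\diag\boldsymbol d)}+j_{\mathbf c}^-=\mathbf r(\mathbf c,\boldsymbol d)$. Hence
$$
\tilde\delta_{\mathbf c}(y)=[\mathbf r(\mathbf c,\boldsymbol d),\Delta_{U(\lie g)^{\tensor n}}(y)],\qquad y\in\lie g^{\oplus n}.
$$
For $y=\Delta^{(n)}(x)=\sum_{k=1}^n x_k$ (with $x_k$ meaning $x$ in the $k$th tensor slot), we get $\Delta_{U(\lie g)^{\tensor n}}(y)=X:=\sum_{k=1}^{2n}x_k\in U(\lie g)^{\tensor 2n}$.

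The key observation is the following. For any $c\in C$ and $a\ne b$, only the summands $x_a$ and $x_b$ of $X$ fail to commute with $r^{(c)}_{a,b}$. Therefore
$$
[r^{(c)}_{a,b},X]=\bigl([r^{(c)},x\tensor1+1\tensor x]\bigr)_{a,b}=\delta(x)_{a,b},
$$
because every $r^{(c)}$ is an r-matrix for the \emph{same} cobracket $\delta$. In particular this bracket does not depend on $c$. Moreover, $\delta(x)_{b,a}=-\delta(x)_{a,b}$ by~\ref{cnd.1}. Applying this to~\eqref{eq:r c d}:
$$
[\mathbf r(\mathbf c,\boldsymbol d),X]=\sum_{i}\delta(x)_{i,i+n}+\sum_{i<j}\bigl(\delta(x)_{j,i+n}-\delta(x)_{j+n,i}\bigr)
=\sum_{i,j\in[n]}\delta(x)_{i,j+n},
$$
where I have used $-\delta(x)_{j+n,i}=\delta(x)_{i,j+n}$. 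Since $\delta(x)\in\lie g\tensor\lie g$ and $\Delta^{(n)}(a)=\sum_i a_i$, the right-hand side is exactly $(\Delta^{(n)}\tensor\Delta^{(n)})(\delta(x))$, which finishes the argument.

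I do not expect a serious obstacle. The only points needing care are the identification of $\Delta_{U(\lie g)^{\tensor n}}\circ\Delta^{(n)}$ with $\Delta^{(2n)}$ (the coassociativity and cocommutativity of $\Delta$ on primitive elements), and the bookkeeping of index positions in the skew term $-r_{j+n,i}$. As a by-product, the computation shows again that $\tilde\delta_{\mathbf c}$ is independent of $\boldsymbol d$ on the diagonal.
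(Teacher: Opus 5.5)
Your proposal is correct and is essentially the paper's own proof. You rewrite $\tilde\delta_{\mathbf c}$ as the inner cobracket of $\mathbf r(\mathbf c,\boldsymbol d)$, observe that only the legs $a,b$ of $\sum_{k\le 2n}x_k$ interact with $r^{(c)}_{a,b}$, and use $[r^{(c)},\Delta(x)]=[-\tau(r^{(c)}),\Delta(x)]=\delta(x)$ for every $c\in C$ to collapse the sum to $\sum_{i,j\in[n]}\delta(x)_{i,j+n}=(\Delta^{(n)}\otimes\Delta^{(n)})(\delta(x))$; as you note, transitivity enters only through Theorem~\ref{thm:main thm trans}, which guarantees that $\tilde\delta_{\mathbf c}$ is a cobracket.
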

\begin{proof}
Let~$\boldsymbol{\alpha}\in C^n$ and let~$\mathbf r(\mathbf c,\boldsymbol\alpha)$
be as in Theorem~\ref{thm:main thm trans}. 
Let~$x\in \lie g$. Then $\Delta^{(n)}(x)=\sum_{1\le i\le n} x_i$ in~$U(\lie g)^{\tensor n}$
where~$x_i=1^{\tensor (i-1)}\tensor x\tensor 1^{\tensor (n-i)}$ and so
\begin{align*}
\tilde\delta_{\mathbf c}&(\Delta^{(n)}(x))=[\mathbf r(\mathbf c,\boldsymbol{\alpha}),\Delta_{U(\lie g)^{\tensor n}}(\Delta^{(n)}(x))]\\
&=
\sum_{k\in[n]} \Big(\sum_{i\in[n]} [r_{i,n+i}^{(\alpha_i)},x_k+x_{n+k}]
+\sum_{(i,j)\in I_n} [r_{j,n+i}^{(\mathbf c(i,j))},x_k+x_{n+k}]+
[-r_{n+j,i}^{(\mathbf c(i,j))},x_k+x_{n+k}]\Big)
\\
&=
\sum_{i\in[n]} [r_{i,n+i}^{(\alpha_i)},x_i+x_{n+i}]
+\sum_{(i,j)\in I_n} [r_{j,n+i}^{(\mathbf c(i,j))},x_j+x_{n+i}]+
[-r_{n+j,i}^{(\mathbf c(i,j))},x_i+x_{n+j}]\\
&=
\sum_{i\in[n]} [r^{(\alpha_i)},\Delta(x)]_{i,n+i}
+\sum_{(i,j)\in I_n} [r^{(\mathbf c(i,j))},\Delta(x)]_{j,n+i}+
[-\tau(r^{(\mathbf c(i,j))}),\Delta(x)]_{i,n+j}\\
&=\sum_{i,j\in[n]}\delta(x)_{i,j+n}
=(\Delta^{(n)}\tensor \Delta^{(n)})(\delta(x)),
\end{align*}
since~$[r^{(c)},\Delta(x)]=[-\tau(r^{(c)}),\Delta(x)]=\delta(x)$ for all~$c\in C$, 
$x\in\lie g$.
\end{proof}

We now establish the necessary and sufficient condition
on our family~$\{r^{(c)}\}_{c\in C}$
for~$\mathbf r^{(\mathbf c)}$ to be an r-matrix for~$\lie g^{\oplus n}$, assuming that it satisfies CYBE.
\begin{lemma}\label{lem:quasi invariant}
Let~$\mathbf r=\{ r^{(c)}\}_{c\in C}\subset \lie g\tensor \lie g$ be such that $r^{(c)}+r_{2,1}^{(c)}$, $c\in C$ is $\lie g$-invariant. Then for any~$\mathbf c:[n]\times[n]\to C$, $\mathbf r^{(\mathbf c)}+
\mathbf r^{(\mathbf c)}{}^{op}$ is $\lie g^{\oplus n}$-invariant if and only if 
$r^{(c_{i,k})}+\tau(r^{(c_{k,i})})\in \lie z(\lie g)\tensor \lie z(\lie g)$ for all~$(i,k)\in I_n$, where~$\lie z(\lie g)$
is the center of~$\lie g$.
\end{lemma}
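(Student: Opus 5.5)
Write $\mathbf r^{(\mathbf c)}=\sum_{i,k\in[n]} r^{(c_{i,k})}_{k,i+n}$, following \eqref{eq:r c gen}, and view it in $\lie g^{\oplus n}\tensor\lie g^{\oplus n}$. The plan is to split the symmetrization into its components in the blocks $\lie g_k\tensor\lie g_i$ and to reduce $\lie g^{\oplus n}$-invariance to a separate condition on each block.

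First, compute $\mathbf r^{(\mathbf c)}{}^{op}$. Swapping the two halves sends $r^{(c)}_{k,i+n}$ to $\tau(r^{(c)})_{i,k+n}$. Re-indexing then gives
$$\mathbf r^{(\mathbf c)}+\mathbf r^{(\mathbf c)}{}^{op}=\sum_{i,k\in[n]}\bigl(r^{(c_{i,k})}+\tau(r^{(c_{k,i})})\bigr)_{k,i+n}.$$
The index conventions here need to be checked carefully against \eqref{eq:r c gen}, but the shape of the formula is clear. Write $s_{i,k}$ for the summand in block $(k,i)$. Since $\lie g^{\oplus n}\tensor\lie g^{\oplus n}=\bigoplus_{k,i}\lie g_k\tensor\lie g_i$ and the $\lie g^{\oplus n}$-action preserves each block, invariance of the sum is equivalent to invariance of each block component. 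On block $(k,i)$ the summand $\lie g_j$ acts by $x\mapsto(\delta_{j,k}\ad x\tensor1+\delta_{j,i}1\tensor\ad x)$.

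Next, treat the diagonal blocks $i=k$. Here only the diagonal copy of $\lie g$ acts, and $s_{i,i}=r^{(c_{i,i})}+\tau(r^{(c_{i,i})})$ is $\lie g$-invariant by hypothesis. So these blocks impose no condition.

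Then treat the off-diagonal blocks $i\neq k$. Here $\lie g_k$ acts on the first factor and $\lie g_i$ on the second, independently. Thus $s$ is invariant if and only if $(\ad x\tensor1)s=0$ and $(1\tensor\ad x)s=0$ for all $x\in\lie g$. Writing $s=\sum a_t\tensor b_t$ with the $b_t$ linearly independent forces every $a_t\in\lie z(\lie g)$, and symmetrically for the second factor. So the condition is $s\in\lie z(\lie g)\tensor\lie z(\lie g)$; the converse is immediate.

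For $(i,k)\in I_n$, the block $(k,i)$ gives the condition $r^{(c_{i,k})}+\tau(r^{(c_{k,i})})\in\lie z\tensor\lie z$. The block $(i,k)$ gives $r^{(c_{k,i})}+\tau(r^{(c_{i,k})})\in\lie z\tensor\lie z$, which is $\tau$ of the first condition. Since $\lie z\tensor\lie z$ is $\tau$-stable, the two are equivalent, and this yields exactly the stated criterion.

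The only delicate step is the index bookkeeping that identifies which $c$ labels appear in which block. Getting the $op$ convention right there is what I expect to be the main obstacle, though it is a minor one.
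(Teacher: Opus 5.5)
Your proof is correct, including the index bookkeeping $\mathbf r^{(\mathbf c)}+\mathbf r^{(\mathbf c)}{}^{op}=\sum_{i,k}\bigl(r^{(c_{i,k})}+\tau(r^{(c_{k,i})})\bigr)_{k,i+n}$. It is essentially the paper's argument: the paper tests invariance against each $x_k+x_{k+n}$ and uses the $op$-symmetry rather than splitting into blocks $\lie g_k\tensor\lie g_i$ up front, but the ingredients are the same. In both, the diagonal blocks are handled by the hypothesis, each leg of an off-diagonal block is forced into $\lie z(\lie g)$, and one concludes via $(\lie z(\lie g)\tensor\lie g)\cap(\lie g\tensor\lie z(\lie g))=\lie z(\lie g)\tensor\lie z(\lie g)$, which you use implicitly when you say ``symmetrically for the second factor.''
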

\begin{proof}
Clearly, $\mathbf f\in \lie g^{\oplus n}\tensor \lie g^{\oplus n}$ is~$\lie g^{\oplus n}$-invariant
if and only if~$[\mathbf f,x_k+x_{k+n}]=0$ for all~$x\in\lie g$ and~$k\in[n]$, where, as before, $x_r=1^{\tensor(r-1)}\tensor x\tensor 1^{\tensor(n-r)}$, $r\in [n]$.
We have 
\begin{align*}
[\mathbf r^{(\mathbf c)}&+\mathbf r^{(\mathbf c)}{}^{op},x_k+x_{k+n}]=
\sum_{i,j\in[n]} [r_{i,j+n}^{(c_{j,i})}+
r_{i+n,j}^{(c_{j,i})},x_k+x_{k+n}]\\
&=[r_{k,k+n}^{(c_{k,k})}+r_{k+n,k}^{(c_{k,k})},x_k+x_{k+n}]
+\sum_{i\in[n]\setminus\{k\}}
([r_{k,i+n}^{(c_{i,k})}+r_{i+n,k}^{(c_{k,i})},x_k]
+[r_{i,k+n}^{(c_{k,i})}+r_{k+n,i}^{(c_{i,k})},x_{k+n}])\\
&=
\sum_{i\in[n]\setminus\{k\}}
[r_{k,i+n}^{(c_{i,k})}+r_{i+n,k}^{(c_{k,i})},x_k]
+\sum_{i\in[n]\setminus\{k\}}
[r_{k+n,i}^{(c_{i,k})}+r_{i,k+n}^{(c_{k,i})},x_{k+n}]
\end{align*}
since the first term is $[r^{(c_{k,k})}+\tau(r^{(c_{k,k})}),x\tensor 1+1\tensor x]_{k,k+n}=0$ as $r^{(c)}+\tau(r^{(c)})$ is $\lie g$-invariant for all~$c\in C$. Since the second sum
is obtained by applying ${}^{op}$ to the first,
it follows that
$[\mathbf r^{(\mathbf c)}+\mathbf r^{(\mathbf c)}{}^{op},x_k+x_{k+n}]=0$
if and only if
$$
\sum_{i\in[n]\setminus \{k\}}[r_{k,i+n}^{(c_{i,k})}+r_{i+n,k}^{(c_{k,i})},x_k]=0,
$$
which in turn is equivalent to
$$
\sum_{1\le i\le k-1} 
[(\tau(r^{(c_{i,k})})+r^{(c_{k,i})})_{i,k},x_k]
+\sum_{k+1\le i\le n}
[(r^{(c_{i,k})}+\tau(r^{(c_{k,i})}))_{k,i},x_k]=0
$$
as an element of~$U(\lie g)^{\tensor n}$. Since~$x$
is arbitrary, this is equivalent to
$$
\sum_{1\le i\le k-1} 
(\tau(r^{(c_{i,k})})+r^{(c_{k,i})})_{i,k}
+\sum_{k+1\le i\le n}
(r^{(c_{i,k})}+\tau(r^{(c_{k,i})}))_{k,i}
\in 
U_1(\lie g)^{\tensor(k-1)}\tensor\lie z(\lie g)\tensor 
U_1(\lie g)^{\tensor (n-k)},
$$
or, finally, to~$r^{(c_{i,k})}+\tau(r^{(c_{k,i})})
\in (\lie g\tensor \lie z(\lie g))
\cap (\lie z(\lie g)\tensor\lie g)=
\lie z(\lie g)\tensor\lie z(\lie g)$, $i\not=k\in[n]$.
\end{proof}

\begin{remark}
The condition that~$r^{(c)}+r_{2,1}^{(c)}$ is an invariant
for each~$c\in C$ is reminiscent of~\cite{FR}. However,
the authors do not consider CYBE involving more than one member of the family $\{r^{(c)}\}_{c\in C}$.
\end{remark}

\subsection{Poisson algebras and proof of Theorem~\ref{thm:Poisson mult}}\label{subs:Poisson}
Let~$\mathbf c:I_n\to C$ be transitive. By Proposition~\ref{prop:pois twist}, $\kk[G]^{\tensor n}$
acquires a Poisson algebra structure via
$$
\{ f,f'\}_{\mathbf c}:=\mu( \mathbf r(\mathbf c,\boldsymbol d)\bowtie (f\tensor f')),\qquad f,f'\in\kk[G]^{\tensor n}
$$
for any~$\boldsymbol d\in C^n$, where~$\mu:\kk[G]^{\tensor n}\tensor\kk[G]^{\tensor n}\to \kk[G]^{\tensor n}$ is the multiplication map. It should be noted that this bracket is independent of~$\boldsymbol d$, which explains the notation. More explicitly, $\kk[G]$ is 
generated, as an algebra, by $f^{(k)}:=1^{\tensor(k-1)}\tensor f\tensor 1^{\tensor (n-k)}$, $f\in\kk[G]$,
$k\in[n]$. Thus, the Poisson bracket~$\{\cdot,\cdot\}_{\mathbf c}$ is uniquely determined
by $\{f'{}^{(k')},f^{(k)}\}_{\mathbf c}$ where~$f,f'\in\kk[G]$ and~$k\le k'\in [n]$.
Furthermore, by the definition~\eqref{eq:r c d} of~$\mathbf r(\mathbf c,\boldsymbol d)$ and Proposition~\ref{prop:pois twist},
\begin{equation}\label{eq:c Poi bracket}
\{ f'{}^{(k')},f^{(k)}\}_{\mathbf c}=
\begin{cases}
(r^{(\mathbf c(k,k'))}\bowtie(f'\tensor f))_{k,k'},&k<k',\\
\{f',f\}^{(k)},&k=k',
\end{cases}
\end{equation}
for all~$f,f'\in\kk[G]$, $k\le k'\in [n]$, where~$\{\cdot,\cdot\}$ is the Poisson bracket on~$\kk[G]$
corresponding to~$r^{(c)}$ for {\em any} $c\in C$.
\begin{proof}[Proof of Theorem~\ref{thm:Poisson mult}]
It suffices to prove the assertion for
generators~$f^{(k)}$, $f\in\kk[G]$, $k\in[n]$. The iterated multiplication map
$\mu^{(n)}:\kk[G]^{\tensor n}\to \kk[G]$ is then given by 
$f^{(k)}\mapsto f$, $f\in \kk[G]$. Also, since the Poisson bracket is skew-symmetric, it suffices to prove that $\mu^{(n)}(\{f'{}^{(k')},f^{(k)}\}_{\mathbf c})=\{f',f\}$
for~$f,f'\in\kk[G]$ and~$k\le k'\in[n]$. This is obvious for~$k=k'$, while for~$k<k'$,
$\mu^{(n)}(\{f'{}^{(k')},f^{(k)}\})=\mu(r^{(\mathbf c(k,k'))}\bowtie (f'\tensor f))=\{f',f\}$ for all~$f,f'\in\kk[G]$.
\end{proof}

\section{Main results: the quantum case}\label{sect:main quantum}

\subsection{\texorpdfstring{$C$}{C}-quasi-triangular monoid}\label{subs:QTR(C)}
Let~$C$ be a set and~$n\in\ZZ_{>0}$.
The~$C$-{\em quasi-triangular monoid}~$\mathsf{QTr}^+_n(C)$ is generated by
the $\sfR_{i,j}^{(c)}$, $1\le i\not=j\le n$, $c\in C$ subjects to relations
\begin{alignat}{3}
&\sfR_{i,j}^{(c)}\sfR_{k,l}^{(c')}=\sfR_{k,l}^{(c')}\sfR_{i,j}^{(c)},&&
i\not=j\not=k\not=l\in[n], &\quad & c,c'\in C \label{eq:defn rel QTr comm}
\\
\label{eq:defn rel QTr}
&\sfR_{i,j}^{(c)} \sfR_{i,k}^{(c')} \sfR_{j,k}^{(c'')}=\sfR_{j,k}^{(c'')}\sfR_{i,k}^{(c')} \sfR_{i,j}^{(c)},&\quad&  i\not=j\not=k\in[n], && c'\in\{c,c''\}\subset C.
\end{alignat}
For~$|C|=1$, it 
has the same defining relations as the quasi-triangular group~$\mathsf{QTr}_n$
introduced in~\cite{BEER}.

Let~$N\ge n$, let $\mathbf i=(i_1,\dots,i_n)\in [N]^n$ with~$i_k\not=i_l$, $1\le k<l\le n$ and let~$C'\supset C$. Then the assignments $\sfR_{k,l}^{(c)}\mapsto 
\sfR_{i_k,i_l}^{(c)}$, $1\le k<l\le n$, $c\in C$
define a homomorphism of monoids~$\phi_{\mathbf i}:\mathsf{QTr}^+_n(C)\to \mathsf{QTr}^+_N(C')$.

Let~$H$ be a bialgebra and suppose that~$\boldsymbol R=\{R^{(c)}\}_{c\in C}$ is a family of R-matrices for~$H$ with respect to the same comultiplication. It follows from Proposition~\ref{prop:cQYBE} that
the assignments $\mathsf R_{i,j}^{(c)}\mapsto 
R_{i,j}^{(c)}$, $i\not=j\in[n]$, $c\in C$, define a homomorphism of monoids 
$\Phi^{(n)}_{\boldsymbol R}:\mathsf{QTr}^+_n(C)\to H^{\tensor n}$ for any~$n\ge 2$.

Mirroring~\eqref{eq:jc defn} we define, for any~$n\ge 2$ and~$\boldsymbol\gamma:I_n\to C$,
\begin{equation}\label{eq:Jc defn}
\mathsf J_{\boldsymbol\gamma}=\ascprod_{2\le i\le n}\,\dscprod_{1\le j\le i-1} \sfR_{i,n+j}^{(\boldsymbol\gamma(j,i))}\in\mathsf{QTr}^+_{2n}(C).
\end{equation}
\begin{lemma}\label{lem:Jc rec}
Let~$n\ge 3$. Then
for all~$\boldsymbol\gamma:I_n\to C$
\begin{align}\label{eq:rec Jc}
\mathsf J_{\boldsymbol\gamma}&=\phi_{[1,2n-1]\setminus\{n\}}(\mathsf J_{\boldsymbol\gamma^-})\dscprod_{1\le i\le n-1}
\sfR_{n,i+n}^{(\boldsymbol\gamma(i,n))}\\
&=\phi_{[2,2n]\setminus\{n+1\}}(\mathsf J_{\boldsymbol\gamma^+})\ascprod_{2\le i\le n} \sfR_{i,n+1}^{(\boldsymbol\gamma(1,i))}\label{eq:rec Jc-2}.
\end{align}
\end{lemma}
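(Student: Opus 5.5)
Both identities come from regrouping the factors in the defining product \eqref{eq:Jc defn}. The first needs no rearrangement at all. The second needs only the commutation relations \eqref{eq:defn rel QTr comm}, and never the transitive braid relations \eqref{eq:defn rel QTr}. The hypothesis $n\ge 3$ just guarantees that $\boldsymbol\gamma^\pm$ are defined on $I_{n-1}$ with $n-1\ge 2$, so that $\mathsf J_{\boldsymbol\gamma^\pm}$ makes sense.

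For \eqref{eq:rec Jc}, I will split off the last factor $i=n$ of the outer ascending product. This factor is exactly $\dscprod_{1\le j\le n-1}\sfR_{n,n+j}^{(\boldsymbol\gamma(j,n))}$, the second factor on the right-hand side. What remains is $\ascprod_{2\le i\le n-1}\dscprod_{1\le j\le i-1}\sfR_{i,n+j}^{(\boldsymbol\gamma(j,i))}$. On the other hand, $\mathsf J_{\boldsymbol\gamma^-}\in\mathsf{QTr}^+_{2n-2}(C)$ is the product of the $\sfR_{i,n-1+j}^{(\boldsymbol\gamma(j,i))}$, $2\le i\le n-1$, $1\le j<i$, in the same order. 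The map $\phi_{[1,2n-1]\setminus\{n\}}$ sends the index $k\le n-1$ to $k$ and the index $n-1+j$ to $n+j$. Hence it sends $\mathsf J_{\boldsymbol\gamma^-}$ factor by factor to the remaining product, and \eqref{eq:rec Jc} follows.

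For \eqref{eq:rec Jc-2}, I will write, for each $i$, the descending product over $j$ as $A_i\,\sfR_{i,n+1}^{(\boldsymbol\gamma(1,i))}$. Here $A_i=\dscprod_{2\le j\le i-1}\sfR_{i,n+j}^{(\boldsymbol\gamma(j,i))}$, with $A_2=1$. This works because the $j=1$ factor comes last in a descending product. Thus
\[
\mathsf J_{\boldsymbol\gamma}=A_2\sfR_{2,n+1}^{(\boldsymbol\gamma(1,2))}A_3\sfR_{3,n+1}^{(\boldsymbol\gamma(1,3))}\cdots A_n\sfR_{n,n+1}^{(\boldsymbol\gamma(1,n))}.
\]
Every factor of $A_{i'}$ has the form $\sfR_{i',n+j}$ with $j\ge 2$. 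For $i'\ne i$ the four indices $i,i',n+1,n+j$ are pairwise distinct, since $i,i'\le n<n+1<n+j$. So by \eqref{eq:defn rel QTr comm} each $\sfR_{i,n+1}^{(\boldsymbol\gamma(1,i))}$ commutes with $A_{i'}$ for all $i'>i$. Moving these factors to the right one at a time, starting from the rightmost, keeps their relative order. This gives $\mathsf J_{\boldsymbol\gamma}=(A_3\cdots A_n)\ascprod_{2\le i\le n}\sfR_{i,n+1}^{(\boldsymbol\gamma(1,i))}$.

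It then remains to identify $A_3\cdots A_n$ with $\phi_{[2,2n]\setminus\{n+1\}}(\mathsf J_{\boldsymbol\gamma^+})$. The element $\mathsf J_{\boldsymbol\gamma^+}$ is the product of the factors $\sfR_{i,n-1+j}^{(\boldsymbol\gamma(j+1,i+1))}$, $2\le i\le n-1$, $1\le j<i$, in the order of \eqref{eq:Jc defn}. The map $\phi_{[2,2n]\setminus\{n+1\}}$ sends $k\le n-1$ to $k+1$ and $n-1+j$ to $n+1+j$. After reindexing $i'=i+1$, $j'=j+1$, the image is $\ascprod_{3\le i'\le n}\dscprod_{2\le j'\le i'-1}\sfR_{i',n+j'}^{(\boldsymbol\gamma(j',i'))}=A_3\cdots A_n$. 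The only real care needed in the whole lemma is the commutation step, namely checking the index disjointness that justifies \eqref{eq:defn rel QTr comm}; the rest is bookkeeping of the maps $\phi_{\mathbf i}$.
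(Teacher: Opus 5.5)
Your proof is correct and follows the paper's argument essentially verbatim. For \eqref{eq:rec Jc} you split off the $i=n$ block. For \eqref{eq:rec Jc-2} you peel the $j=1$ factor $\sfR_{i,n+1}^{(\boldsymbol\gamma(1,i))}$ off each block, push it to the right using only \eqref{eq:defn rel QTr comm}, and then reindex via $\phi_{[2,2n]\setminus\{n+1\}}$. Your index-disjointness check covers all $i'>i$ up to $n$, which is slightly more careful than the range stated in the paper's proof.
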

\begin{proof}
Abbreviate $\gamma_{ij}=\boldsymbol\gamma(i,j)$, $1\le i<j\le n$. 
We have 
$$
\mathsf J_{\boldsymbol\gamma}=\Big(\ascprod_{2\le i\le n-1}\, 
\dscprod_{1\le j\le i-1} \sfR^{(\gamma_{j,i})}_{i,n+j}\Big) \Big(\dscprod_{1\le j\le n-1} \sfR_{n,j+n}^{(\gamma_{j,n})}\Big).
$$
Since
$$
\mathsf J_{\boldsymbol\gamma^-}=\ascprod_{2\le i\le n-1}\,
\dscprod_{1\le j\le i-1} \sfR_{i,n-1+j}^{(\gamma_{ji})},
$$
the first identity follows. To prove the second, write
$$
\mathsf J_{\boldsymbol\gamma}=\ascprod_{2\le i\le n}\Big(
\Big( 
\dscprod_{2\le j\le i-1} \sfR^{(\gamma_{j,i})}_{i,n+j}\Big)\sfR_{i,n+1}^{(\gamma_{1,i})}\Big).
$$
By~\eqref{eq:defn rel QTr comm}, $\sfR_{i,n+1}^{(c)}$ commutes with the~$\sfR_{k,n+j}^{(c')}$, 
$i<k\le n-1$, $2\le j\le k-1$, $c,c'\in C$. Therefore, 
\begin{align*}
\mathsf J_{\boldsymbol\gamma}&=\Big(\ascprod_{3\le i\le n}
\,
\dscprod_{2\le j\le i-1} \sfR^{(\gamma_{j,i})}_{i,n+j}\Big)\Big(\ascprod_{2\le i\le n}\sfR_{i,n+1}^{(\gamma_{1,i})}\Big)=\Big(\ascprod_{2\le i\le n-1}\,
\dscprod_{1\le j\le i-1} \sfR^{(\gamma_{j+1,i+1})}_{i+1,n+j+1}\Big)\Big(\ascprod_{2\le i\le n}\sfR_{i,n+1}^{(\gamma_{1,i})}\Big)\\
&
=\Big(\ascprod_{2\le i\le n-1}
\,
\dscprod_{1\le j\le i-1} \sfR^{(\boldsymbol\gamma^+(i,j))}_{i+1,n+j+1}\Big)\Big(\ascprod_{2\le i\le n}\sfR_{i,n+1}^{(\gamma_{1,i})}\Big)=\phi_{[2,2n]\setminus\{n+1\}}(\mathsf J_{\boldsymbol\gamma^+})\Big(\ascprod_{2\le i\le n}\sfR_{i,n+1}^{(\gamma_{1,i})}\Big).\qedhere
\end{align*}
\end{proof}

The following can be viewed as a quantum analogue of Proposition~\ref{prop:identity jc} 
and plays a crucial role in our proof of Theorem~\ref{thm:main thm 2}.
\begin{proposition}\label{prop:key relation}
Let~$m\ge 2$, $\boldsymbol\gamma:I_m\to C$, $\boldsymbol\alpha=(\alpha_1,\dots,\alpha_m)\in C^m$
and suppose that~$\boldsymbol\gamma^{\boldsymbol\alpha}:I_{m+1}\to C$ is transitive. Then in~$\mathsf{QTr}^+_{2m-1}(C)$
\begin{equation}
\mathsf J_{\boldsymbol\gamma}\Big(\dscprod_{1\le j\le m-1}
\sfR_{1,m+j}^{(\alpha_j)}\Big)\Big(\dscprod_{2\le j\le m}
\sfR_{1,j}^{(\alpha_j)}\Big)
=\sfR_{1,m}^{(\alpha_m)}\Big(\dscprod_{2\le j\le m-1}
\sfR_{1,m+j}^{(\alpha_j)}\sfR_{1,j}^{(\alpha_j)}\Big)\sfR_{1,m+1}^{(\alpha_1)}\,
\mathsf J_{\boldsymbol\gamma}.\label{eq:compressed-monoid}
\end{equation}
\end{proposition}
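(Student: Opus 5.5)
We argue by induction on $m$, working entirely inside $\mathsf{QTr}^+_{2m-1}(C)$ and using only the defining relations \eqref{eq:defn rel QTr comm}, \eqref{eq:defn rel QTr}. Throughout we use the following observation: $\mathsf J_{\boldsymbol\gamma}$ involves only the generators $\sfR_{i,m+j}$ with $2\le i\le m$ and $1\le j\le m-1$, so the index $1$ never occurs in it. Hence index $1$ plays the role of the ``new'' tensor factor, and \eqref{eq:compressed-monoid} says that a product of generators $\sfR_{1,\bullet}$ can be transported through $\mathsf J_{\boldsymbol\gamma}$ at the cost of reordering.

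\emph{Base case} $m=2$. Here $\mathsf J_{\boldsymbol\gamma}=\sfR_{2,3}^{(\gamma_{1,2})}$, and \eqref{eq:compressed-monoid} reads
$$\sfR_{2,3}^{(\gamma_{1,2})}\sfR_{1,3}^{(\alpha_1)}\sfR_{1,2}^{(\alpha_2)}=\sfR_{1,2}^{(\alpha_2)}\sfR_{1,3}^{(\alpha_1)}\sfR_{2,3}^{(\gamma_{1,2})}.$$
This is exactly \eqref{eq:defn rel QTr} with $(i,j,k)=(1,2,3)$ and $(c,c',c'')=(\alpha_2,\alpha_1,\gamma_{1,2})$. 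The required condition $\alpha_1\in\{\alpha_2,\gamma_{1,2}\}$ holds because $\boldsymbol\gamma^{\boldsymbol\alpha}$ is transitive (Lemma~\ref{lem:rest ext trans}\ref{lem:rest ext trans.a}).

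\emph{Inductive step.} First, apply \eqref{eq:rec Jc} with $n=m$ to write $\mathsf J_{\boldsymbol\gamma}=\phi(\mathsf J_{\boldsymbol\gamma^-})\,D$, where $D=\dscprod_{1\le i\le m-1}\sfR_{m,m+i}^{(\gamma_{i,m})}$ and $\phi$ is the appropriate index embedding. Second, split the $\alpha$-products on both sides of \eqref{eq:compressed-monoid}: factor out the terms carrying $\alpha_m$, namely $\sfR_{1,m}^{(\alpha_m)}$, and the terms $\sfR_{1,2m-1}^{(\alpha_{m-1})}$, $\sfR_{1,m-1}^{(\alpha_{m-1})}$ that involve the ``last'' indices. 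What remains is the image, under an embedding $\phi'$ fixing the index $1$, of the corresponding products for $(\boldsymbol\gamma^-,\boldsymbol\alpha^-)$. Since $(\boldsymbol\gamma^-)^{\boldsymbol\alpha^-}$ is transitive by Lemma~\ref{lem:rest ext trans}\ref{lem:rest ext trans.b}, the induction hypothesis lets us move those remaining products across $\phi(\mathsf J_{\boldsymbol\gamma^-})$.

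Third, we must move the extra block $D$ across the $\sfR_{1,\bullet}$-factors. The key local move is \eqref{eq:defn rel QTr} on the triple of indices $(1,m,m+i)$ with colours $(c,c',c'')=(\alpha_m,\alpha_i,\gamma_{i,m})$:
$$\sfR_{m,m+i}^{(\gamma_{i,m})}\sfR_{1,m+i}^{(\alpha_i)}\sfR_{1,m}^{(\alpha_m)}=\sfR_{1,m}^{(\alpha_m)}\sfR_{1,m+i}^{(\alpha_i)}\sfR_{m,m+i}^{(\gamma_{i,m})}.$$
This move is admissible precisely because transitivity gives $\alpha_i\in\{\gamma_{i,m},\alpha_m\}$. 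Applying it successively for $i=m-1,\dots,1$ pushes $\sfR_{1,m}^{(\alpha_m)}$ leftward through $D$. The factors $\sfR_{1,j}^{(\alpha_j)}$ with $j<m$ commute with $D$ by \eqref{eq:defn rel QTr comm}, since their index sets are disjoint from those of $D$. Fourth, reassemble the product using \eqref{eq:rec Jc} again, obtaining the right-hand side of \eqref{eq:compressed-monoid}.

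The main obstacle is the ordering bookkeeping in the third step. The factors $\sfR_{1,m+i}^{(\alpha_i)}$ must appear adjacent to $\sfR_{m,m+i}$ and $\sfR_{1,m}$ at the right moment, so we must first interleave the products $\dscprod\sfR_{1,m+j}$ and $\dscprod\sfR_{1,j}$ into the shape $\dscprod(\sfR_{1,m+j}\sfR_{1,j})$ appearing on the right-hand side, using \eqref{eq:defn rel QTr comm}. If the direct interleaving fails, the fallback is to use the alternative recursion \eqref{eq:rec Jc-2} instead, peeling off $\sfR_{i,m+1}^{(\gamma_{1,i})}$ and running the same argument with the triples $(1,i,m+1)$.
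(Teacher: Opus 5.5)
Your ingredients are the same as in the paper's proof: induction on $m$, the same base case, the splitting $\mathsf J_{\boldsymbol\gamma}=\phi_{[1,2m-1]\setminus\{m\}}(\mathsf J_{\boldsymbol\gamma^-})D$ from \eqref{eq:rec Jc}, the local move on $(1,m,m+i)$ with colours $(\alpha_m,\alpha_i,\gamma_{i,m})$, and the induction hypothesis for $(\boldsymbol\gamma^-,\boldsymbol\alpha^-)$. However, the assembly, which you yourself identify as the crux, does not work as proposed.

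\begin{itemize}
\item The interleaving of $\dscprod\sfR_{1,m+j}$ and $\dscprod\sfR_{1,j}$ that you want to achieve with \eqref{eq:defn rel QTr comm} is impossible. Every factor involved has the leg $1$, so no two of them commute in $\mathsf{QTr}^+_{2m-1}(C)$. It is also unnecessary.
\item On the left-hand side, $D$ sits between $\phi_{[1,2m-1]\setminus\{m\}}(\mathsf J_{\boldsymbol\gamma^-})$ and the $\alpha$-products. So your second step (applying the induction hypothesis across $\phi(\mathsf J_{\boldsymbol\gamma^-})$) cannot be carried out there before $D$ has been moved.
\item The local moves must be applied for $i=1,\dots,m-1$, not $i=m-1,\dots,1$, because $\sfR_{1,m}^{(\alpha_m)}$ starts out adjacent to $\sfR_{1,m+1}^{(\alpha_1)}$.
\end{itemize}

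The order that works is the paper's Lemma~\ref{lem:move}: push $D$ to the far right first. For $k=1,\dots,m-1$:
\begin{itemize}
\item move $\sfR_{m,m+k}^{(\gamma_{k,m})}$ right past the $\sfR_{1,m+j}^{(\alpha_j)}$ with $j>k$;
\item apply the local move to $\sfR_{m,m+k}\sfR_{1,m+k}\sfR_{1,m}$;
\item keep moving $\sfR_{m,m+k}$ right past the $\sfR_{1,m+j}$ with $j<k$ and the $\sfR_{1,j}$ with $2\le j\le m-1$. All of these have legs disjoint from $\{m,m+k\}$.
\end{itemize}
This yields
\[
D\Big(\dscprod_{1\le j\le m-1}\sfR_{1,m+j}^{(\alpha_j)}\Big)\Big(\dscprod_{2\le j\le m}\sfR_{1,j}^{(\alpha_j)}\Big)
=\sfR_{1,m}^{(\alpha_m)}\Big(\dscprod_{1\le j\le m-1}\sfR_{1,m+j}^{(\alpha_j)}\Big)\Big(\dscprod_{2\le j\le m-1}\sfR_{1,j}^{(\alpha_j)}\Big)D.
\]
Next, move only $\sfR_{1,m}^{(\alpha_m)}\sfR_{1,2m-1}^{(\alpha_{m-1})}$ to the left of $\phi_{[1,2m-1]\setminus\{m\}}(\mathsf J_{\boldsymbol\gamma^-})$; this is allowed because that element avoids the legs $1,m,2m-1$. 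Do not split off $\sfR_{1,m-1}^{(\alpha_{m-1})}$: what remains is then exactly the image under $\phi_{[1,2m-1]\setminus\{m\}}$ of the left-hand side for $(\boldsymbol\gamma^-,\boldsymbol\alpha^-)$. The induction hypothesis turns it into $\sfR_{1,m-1}^{(\alpha_{m-1})}\big(\dscprod_{2\le j\le m-2}\sfR_{1,m+j}^{(\alpha_j)}\sfR_{1,j}^{(\alpha_j)}\big)\sfR_{1,m+1}^{(\alpha_1)}\phi_{[1,2m-1]\setminus\{m\}}(\mathsf J_{\boldsymbol\gamma^-})$. Together with the prefix $\sfR_{1,2m-1}^{(\alpha_{m-1})}$, this is already the interleaved product on the right of \eqref{eq:compressed-monoid}. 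So the interleaving is produced by the induction, not by commutations, and no fallback to \eqref{eq:rec Jc-2} is needed.
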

\begin{proof}
We use induction on~$m$.
For~$m=2$, $\mathsf J_{\boldsymbol\gamma}=\sfR_{2,3}^{(\boldsymbol\gamma(1,2))}$ and so~\eqref{eq:compressed-monoid}
reads 
\begin{equation}\label{eq:interm III}
\sfR_{2,3}^{(\boldsymbol\gamma(1,2))}\sfR_{1,3}^{(\alpha_1)}\sfR_{1,2}^{(\alpha_2)}=
\sfR_{1,2}^{(\alpha_2)}\sfR_{1,3}^{(\alpha_1)}\sfR_{2,3}^{(\boldsymbol\gamma(1,2))}.
\end{equation}
Since~$\boldsymbol\gamma^{\boldsymbol{\alpha}}$ is transitive, $\alpha_1\in
\{\boldsymbol\gamma(1,2),\alpha_2\}$ by Lemma~\ref{lem:rest ext trans}\ref{lem:rest ext trans.a} and so~\eqref{eq:interm III} follows from~\eqref{eq:defn rel QTr}.

To prove the inductive step, we need the following
\begin{lemma}\label{lem:move}
For all~$1\le k\le m$,
\begin{align*}
\Big(\dscprod_{1\le i\le m-1}&
\sfR_{m,i+m}^{(\boldsymbol\gamma(i,m))}\Big)\Big(\dscprod_{1\le j\le m-1}
\sfR_{1,m+j}^{(\alpha_j)}\Big)\Big(\dscprod_{2\le j\le m}
\sfR_{1,j}^{(\alpha_j)}\Big)\\
&=\Big(\dscprod_{k\le i\le m-1}
\sfR_{m,i+m}^{(\boldsymbol\gamma(i,m))}\Big)
\Big(\dscprod_{k\le j\le m-1}
\sfR_{1,m+j}^{(\alpha_j)}\Big)\sfR_{1,m}^{(\alpha_m)}
\Big(\dscprod_{1\le j\le k-1}
\sfR_{1,m+j}^{(\alpha_j)}\Big)\times\\
&\qquad \Big(\dscprod_{2\le j\le m-1}
\sfR_{1,j}^{(\alpha_j)}\Big)\Big(\dscprod_{1\le i\le k-1}
\sfR_{m,i+m}^{(\boldsymbol\gamma(i,m))}\Big).
\end{align*}
\end{lemma}
\begin{proof}
The argument is by induction on~$k$, the case~$k=1$ being trivial.

For the inductive step, suppose that~$1\le k<m$.
Since by~\eqref{eq:defn rel QTr comm} $\sfR_{m,k+m}^{(\boldsymbol\gamma(k,m))}$
commutes with the $\sfR_{1,m+j}^{(\alpha_j)}$, $k+1\le j\le m-1$, we obtain by the induction hypothesis,
\begin{align*}
\dscprod_{1\le i\le m-1}&
\sfR_{m,i+m}^{(\boldsymbol\gamma(i,m))}\Big(\dscprod_{1\le j\le m-1}
\sfR_{1,m+j}^{(\alpha_j)}\Big)\Big(\dscprod_{2\le j\le m}
\sfR_{1,j}^{(\alpha_j)}\Big)\\
&=\Big(\dscprod_{k+1\le i\le m-1}
\sfR_{m,i+m}^{(\boldsymbol\gamma(i,m))}\Big)
\Big(\dscprod_{k+1\le j\le m-1}
\sfR_{1,m+j}^{(\alpha_j)}\Big)\sfR_{m,m+k}^{(\boldsymbol\gamma(k,m))} \sfR_{1,m+k}^{(\alpha_k)}\sfR_{1,m}^{(\alpha_m)}
\times\\
&\qquad \Big(\dscprod_{1\le j\le k-1}
\sfR_{1,m+j}^{(\alpha_j)}\Big)\Big(\dscprod_{2\le j\le m-1}
\sfR_{1,j}^{(\alpha_j)}\Big)\Big(\dscprod_{1\le i\le k-1}
\sfR_{m,i+m}^{(\boldsymbol\gamma(i,m))}\Big).
\end{align*}
Since~$\boldsymbol\gamma^{\boldsymbol\alpha}$ is transitive, $\alpha_k\in \{\boldsymbol\gamma(k,m),\alpha_m\}$ by Lemma~\ref{lem:rest ext trans}\ref{lem:rest ext trans.a}. Therefore, by~\eqref{eq:defn rel QTr}
\begin{align*}
\dscprod_{1\le i\le m-1}&
\sfR_{m,i+m}^{(\boldsymbol\gamma(i,m))}\Big(\dscprod_{1\le j\le m-1}
\sfR_{1,m+j}^{(\alpha_j)}\Big)\Big(\dscprod_{2\le j\le m}
\sfR_{1,j}^{(\alpha_j)}\Big)\\
&=\Big(\dscprod_{k+1\le i\le m-1}
\sfR_{m,i+m}^{(\boldsymbol\gamma(i,m))}\Big)
\Big(\dscprod_{k+1\le j\le m-1}
\sfR_{1,m+j}^{(\alpha_j)}\Big) \sfR_{1,m}^{(\alpha_m)}\sfR_{1,m+k}^{(\alpha_k)}\sfR_{m,m+k}^{(\boldsymbol\gamma(k,m))}
\times\\
&\qquad \Big(\dscprod_{1\le j\le k-1}
\sfR_{1,m+j}^{(\alpha_j)}\Big)\Big(\dscprod_{2\le j\le m-1}
\sfR_{1,j}^{(\alpha_j)}\Big)\Big(\dscprod_{1\le i\le k-1}
\sfR_{m,i+m}^{(\boldsymbol\gamma(i,m))}\Big)\\
&=\Big(\dscprod_{k+1\le i\le m-1}
\sfR_{m,i+m}^{(\boldsymbol\gamma(i,m))}\Big)
\Big(\dscprod_{k+1\le j\le m-1}
\sfR_{1,m+j}^{(\alpha_j)}\Big) \sfR_{1,m}^{(\alpha_m)}
\Big(\dscprod_{1\le j\le k}
\sfR_{1,m+j}^{(\alpha_j)}\Big)\times\\
&\qquad \Big(\dscprod_{2\le j\le m-1}
\sfR_{1,j}^{(\alpha_j)}\Big)\Big(\dscprod_{1\le i\le k}
\sfR_{m,i+m}^{(\boldsymbol\gamma(i,m))}\Big),
\end{align*}
which completes the proof of the inductive step.
\end{proof}
Applying the Lemma with~$k=m$, we obtain
\begin{align*}
    \Big(\dscprod_{1\le i\le m-1}&
\sfR_{m,i+m}^{(\boldsymbol\gamma(i,m))}\Big)\Big(\dscprod_{1\le j\le m-1}
\sfR_{1,m+j}^{(\alpha_j)}\Big)\Big(\dscprod_{2\le j\le m}
\sfR_{1,j}^{(\alpha_j)}\Big)\\
&=\sfR_{1,m}^{(\alpha_m)}
\Big(\dscprod_{1\le j\le m-1}
\sfR_{1,m+j}^{(\alpha_j)}\Big)
\Big(\dscprod_{2\le j\le m-1}
\sfR_{1,j}^{(\alpha_j)}\Big)\Big(\dscprod_{1\le i\le m-1}
\sfR_{m,i+m}^{(\boldsymbol\gamma(i,m))}\Big).
\end{align*}
Therefore, by Lemma~\ref{lem:Jc rec},
\begin{align*}
    \mathsf J&_{\boldsymbol\gamma}\Big(\dscprod_{1\le j\le m-1}
\sfR_{1,m+j}^{(\alpha_j)}\Big)\Big(\dscprod_{2\le j\le m}
\sfR_{1,j}^{(\alpha_j)}\Big)
\\
&=\phi_{[1,2m-1]\setminus\{m\}}(\mathsf J_{\boldsymbol\gamma^-})
\sfR_{1,m}^{(\alpha_m)}
\Big(\dscprod_{1\le j\le m-1}
\sfR_{1,m+j}^{(\alpha_j)}\Big)\Big(\dscprod_{2\le j\le m-1}
\sfR_{1,j}^{(\alpha_j)}\Big)\Big(\dscprod_{1\le i\le m-1}
\sfR_{m,i+m}^{(\boldsymbol\gamma(i,m))}\Big)\\
&=\sfR_{1,m}^{(\alpha_m)}\sfR_{1,2m-1}^{(\alpha_{m-1})}\phi_{[1,2m-1]\setminus\{m\}}(
\mathsf J_{\boldsymbol\gamma^-})
\Big(\dscprod_{1\le j\le m-2}
\sfR_{1,m+j}^{(\alpha_j)}\Big) 
\Big(\dscprod_{2\le j\le m-1}
\sfR_{1,j}^{(\alpha_j)}\Big)\Big(\dscprod_{1\le i\le m-1}
\sfR_{m,i+m}^{(\boldsymbol\gamma(i,m))}\Big)\\
&=\sfR_{1,m}^{(\alpha_m)}\sfR_{1,2m-1}^{(\alpha_{m-1})}\phi_{[1,2m-1]\setminus\{m\}}\Big(
\mathsf J_{\boldsymbol\gamma^-}
\Big(\dscprod_{1\le j\le m-2}\!\!
\sfR_{1,m-1+j}^{(\alpha_j)}\Big) 
\Big(\dscprod_{2\le j\le m-1}\!\!
\sfR_{1,j}^{(\alpha_j)}\Big)\Big)\Big(\dscprod_{1\le i\le m-1}\!\!
\sfR_{m,i+m}^{(\boldsymbol\gamma(i,m))}\Big).
\end{align*}
By Lemma~\ref{lem:rest ext trans}\ref{lem:rest ext trans.b},
$(\boldsymbol\gamma^-)^{\boldsymbol{\alpha}^-}:I_m\to C$ 
is transitive.
Thus, the induction hypothesis applies to~$\boldsymbol\gamma^-:I_{m-1}\to C$ and~$\boldsymbol\alpha^-:[m-1]\to C$, and  we conclude that 
\begin{align*}
\mathsf J_{\boldsymbol\gamma^-}\Big(\dscprod_{1\le j\le m-2}
\sfR_{1,m-1+j}^{(\alpha_j)}\Big) 
\Big(\dscprod_{2\le j\le m-1}
\sfR_{1,j}^{(\alpha_j)}\Big)=\sfR_{1,m-1}^{(\alpha_{m-1})}\Big(\dscprod_{2\le j\le m-2}
\sfR_{1,m-1+j}^{(\alpha_j)}\sfR_{1,j}^{(\alpha_j)}\Big)\sfR_{1,m}^{(\alpha_1)}
\mathsf J_{\boldsymbol\gamma^-}.
\end{align*}
Therefore,
\begin{align*}
    \mathsf J&_{\boldsymbol\gamma}\Big(\dscprod_{1\le j\le m-1}
\sfR_{1,m+j}^{(\alpha_j)}\Big)\Big(\dscprod_{2\le j\le m}
\sfR_{1,j}^{(\alpha_j)}\Big)\\
&=\sfR_{1,m}^{(\alpha_m)}\sfR_{1,2m-1}^{(\alpha_{m-1})}\phi_{[1,2m-1]\setminus\{m\}}\Big(\sfR_{1,m-1}^{(\alpha_{m-1})}\Big(\dscprod_{2\le j\le m-2}\!\!
\sfR_{1,m-1+j}^{(\alpha_j)}\sfR_{1,j}^{(\alpha_j)}\Big)\sfR_{1,m}^{(\alpha_1)}
\mathsf J_{\boldsymbol\gamma^-}\Big)\Big(\dscprod_{1\le i\le m-1}\!\!
\sfR_{m,i+m}^{(\boldsymbol\gamma(i,m))}\Big)\\
&=\sfR_{1,m}^{(\alpha_m)}\sfR_{1,2m-1}^{(\alpha_{m-1})}\sfR_{1,m-1}^{(\alpha_{m-1})}\Big(\dscprod_{2\le j\le m-2}\!\!
\sfR_{1,m+j}^{(\alpha_j)}\sfR_{1,j}^{(\alpha_j)}\Big)\sfR_{1,m+1}^{(\alpha_1)}\phi_{[1,2m-1]\setminus\{m\}}(\mathsf J_{\boldsymbol\gamma^-})\Big(\dscprod_{1\le i\le m-1}
\sfR_{m,i+m}^{(\boldsymbol\gamma(i,m))}\Big)\\
&=\sfR_{1,m}^{(\alpha_m)}\Big(\dscprod_{2\le j\le m-1}
\sfR_{1,m+j}^{(\alpha_j)}\sfR_{1,j}^{(\alpha_j)}\Big)\sfR_{1,m+1}^{(\alpha_1)}
\mathsf J_{\boldsymbol\gamma},
\end{align*}
which completes the proof of the inductive step.
\end{proof}

\subsection{A family of Drinfeld twists}\label{subs:fam Drinf twist}
We will now prove the main result of this section.
\begin{proof}[Proof of Theorem~\ref{thm:trans twist}]
Note that~$J_{\mathbf c}=J_{\mathbf c}(\mathbf R)$
defined in Theorem~\ref{thm:trans twist} equals
$\Phi^{(2n)}_{\mathbf  R}(\mathsf J_{\mathbf c})$.
Abbreviate $c_{j,i}=\mathbf c(j,i)$, $1\le j<i\le n$. We use the induction on~$n$, the case~$n=1$ being trivial. For $n=2$, we have~$J_{\mathbf c}=R_{23}^{(c_{12})}$, and the assertion follows from Proposition~\ref{prop:basic twist}.

Suppose that~$n\ge 3$ and let~$\mathbf c:I_n\to C$ be transitive. Then~$\mathbf c^-$
is transitive and so, 
by the induction hypothesis, $J_{\mathbf c^-}$ is a Drinfeld twist for~$H^{\tensor(n-1)}$ with its standard comultiplication.
The key ingredient in our proof is
\begin{proposition}\label{prop:key proposition}
Suppose that~$n\ge 3$. Let~$A=H^{\tensor (n-1)}$ with the comultiplication
$\Delta_A:A\to A\tensor A$ defined by~$\Delta_A(x)=J_{\mathbf c^-}{}^{-1}\Delta_{H^{\tensor(n-1)}}(x)J_{\mathbf c^-}$, $x\in H^{\tensor(n-1)}$, and let~$B=H$ with~$\Delta_B=\Delta$. Then
$F=\dscprod\limits_{1\le j\le n-1} R_{1,j+1}^{(c_{j,n})}\in B\tensor A$
is a relative Drinfeld twist for $(A\tensor B,\Delta_{A\tensor B})$.
\end{proposition}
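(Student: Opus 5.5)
We verify the criterion of Proposition~\ref{prop:rel Drinfeld twist} in its divided form~\eqref{eq:rel twist divided form}, namely
\begin{equation*}
F_{13,4}F_{34}^{-1}=F_{1,24}F_{12}^{-1}
\end{equation*}
in $B\tensor A\tensor B\tensor A=H\tensor H^{\tensor(n-1)}\tensor H\tensor H^{\tensor(n-1)}$, together with the centrality conditions on $(\varepsilon\tensor\id)(F)$ and $(\id\tensor\varepsilon)(F)$. The centrality conditions come directly from the R-matrix axioms. Each $(\varepsilon\tensor\id)(R^{(c)})$ and $(\id\tensor\varepsilon)(R^{(c)})$ is central in~$H$, hence central in $H^{\tensor(n-1)}$ in every leg, and products of central elements are central.

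The first side is easy because $\Delta_B$ is untwisted. Since $(\Delta\tensor\id)(R^{(c)})=R^{(c)}_{13}R^{(c)}_{23}$, each factor $R^{(c)}_{1,j+1}$ of $F$ becomes $R^{(c)}_{1,\cdot}R^{(c)}_{3,\cdot}$ under $\Delta_B$ in the first leg. We relabel the $4$ blocks as legs $[1,2n]$ of $H^{\tensor 2n}$: block~$1=\{1\}$, block~$2=[2,n]$, block~$3=\{n+1\}$, block~$4=[n+2,2n]$. Then $F_{13,4}=\dscprod_j R^{(c_{j,n})}_{1,n+1+j}R^{(c_{j,n})}_{n+1,n+1+j}$. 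Because factors with disjoint legs commute by~\eqref{eq:defn rel QTr comm}, this equals $\big(\dscprod_j R^{(c_{j,n})}_{1,n+1+j}\big)F_{34}$. Hence the left-hand side is simply $\dscprod_j R^{(c_{j,n})}_{1,n+1+j}$.

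The second side involves the twisted $\Delta_A$, and this is the crux. The untwisted coproduct gives $(\id\tensor\Delta_{H^{\tensor(n-1)}})(F)=\dscprod_j R^{(c_{j,n})}_{1,n+1+j}R^{(c_{j,n})}_{1,1+j}$. Conjugating by $J_{\mathbf c^-}$ placed on blocks $2,4$ yields
\begin{equation*}
F_{1,24}=(J_{\mathbf c^-})^{-1}_{2,4}\Big(\dscprod_j R^{(c_{j,n})}_{1,n+1+j}R^{(c_{j,n})}_{1,1+j}\Big)(J_{\mathbf c^-})_{2,4}.
\end{equation*}
The required identity then becomes
\begin{equation*}
\Big(\dscprod_j R_{1,n+1+j}R_{1,1+j}\Big)(J_{\mathbf c^-})_{2,4}=(J_{\mathbf c^-})_{2,4}\Big(\dscprod_j R_{1,n+1+j}\Big)\Big(\dscprod_j R_{1,1+j}\Big),
\end{equation*}
after cancelling $F_{12}=\dscprod_j R_{1,1+j}$. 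This is precisely the shape of Proposition~\ref{prop:key relation} with $m=n-1$, $\boldsymbol\gamma=\mathbf c^-$, $\boldsymbol\alpha=\boldsymbol\alpha_{\mathbf c}$, so that $\boldsymbol\gamma^{\boldsymbol\alpha}=\mathbf c$ is transitive by Lemma~\ref{lem:rest ext trans}. The idea is to identify leg~$1$ here with the distinguished leg~$1$ there, and blocks $2,4$ with the $2m$ legs carrying $\mathsf J_{\boldsymbol\gamma}$.

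The main obstacle is the careful matching of index conventions and product orders. Proposition~\ref{prop:key relation} as stated lives in $\mathsf{QTr}^+_{2m-1}$, with $\mathsf J_{\boldsymbol\gamma}$ on legs $2,\dots,2m$ up to relabeling. I expect to apply a suitable $\phi_{\mathbf i}$, shifting legs by one and inserting leg~$1$, and then check that the interleaved product $\sfR_{1,m}\prod(\sfR_{1,m+j}\sfR_{1,j})\sfR_{1,m+1}$ coincides, after using commutations~\eqref{eq:defn rel QTr comm} among $R_{1,\cdot}$ factors only where they are legitimately allowed, with $\dscprod_j R_{1,n+1+j}R_{1,1+j}$. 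In that identification, $R_{1,1+j}$ and $R_{1,n+1+j}$ appear adjacent in decreasing $j$. Finally I apply $\Phi^{(2n)}_{\mathbf R}$ to transport the monoid identity to $H^{\tensor 2n}$.
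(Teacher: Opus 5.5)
Everything up to your displayed target identity is correct and is the paper's own reduction: the centrality check, $F_{13,4}=\big(\dscprod_j R^{(c_{j,n})}_{1,n+1+j}\big)F_{34}$, $F_{1,24}$ as the $(J_{\mathbf c^-})_{2,4}$-conjugate of $\dscprod_j R_{1,n+1+j}R_{1,1+j}$, and the parameters $m=n-1$, $\boldsymbol\gamma=\mathbf c^-$, $\boldsymbol\alpha=\boldsymbol\alpha_{\mathbf c}$. The gap is the final matching, which would fail as you describe it.

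Your target is not of the shape of \eqref{eq:compressed-monoid}. Your products run over all $1\le j\le n-1=m$. In Proposition~\ref{prop:key relation} the two products on the left have only $m-1$ factors each, with colors $\alpha_1,\dots,\alpha_{m-1}$ and $\alpha_2,\dots,\alpha_m$ respectively. That identity also lives on $2m-1$ legs, with $\mathsf J_{\boldsymbol\gamma}$ on legs $2,\dots,2m-1$, not $2m$. So no relabeling $\phi_{\mathbf i}$ can make the products ``coincide''. Commutations among the $R_{1,\cdot}$ factors are never available either, since they all share leg~$1$.

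The missing idea concerns the two surplus factors $R^{(c_{n-1,n})}_{1,2n}$ and $R^{(c_{1,n})}_{1,2}$. They commute with $J:=(J_{\mathbf c^-})_{2,4}$. Indeed, $\mathsf J_{\boldsymbol\gamma}$ only involves $\sfR_{i,m+j}$ with $2\le i\le m$, $1\le j\le m-1$. Hence $J$ touches neither leg $2$ (the first leg of block~$2$) nor leg $2n$ (the last leg of block~$4$).

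In $\big(\dscprod_j R_{1,n+1+j}R_{1,1+j}\big)J$, the factor $R_{1,2n}$ is leftmost and $R_{1,2}$ stands just before $J$. In $J\big(\dscprod_j R_{1,n+1+j}\big)\big(\dscprod_j R_{1,1+j}\big)$, the factor $R_{1,2n}$ stands just after $J$ and $R_{1,2}$ is rightmost. Move $J$ past them and cancel; equivalently, multiply the shorter identity by $R_{1,2n}$ on the left and $R_{1,2}$ on the right. This reduces your target to
\[
R^{(c_{n-1,n})}_{1,n}\Big(\dscprod_{2\le j\le n-2}R^{(c_{j,n})}_{1,n+1+j}R^{(c_{j,n})}_{1,1+j}\Big)R^{(c_{1,n})}_{1,n+2}\,J
=J\Big(\dscprod_{1\le j\le n-2}R^{(c_{j,n})}_{1,n+1+j}\Big)\Big(\dscprod_{2\le j\le n-1}R^{(c_{j,n})}_{1,1+j}\Big).
\]
This identity lives on the legs $\{1\}\cup[3,n]\cup[n+2,2n-1]$. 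It is exactly the image of \eqref{eq:compressed-monoid} under $\Phi^{(2n)}_{\mathbf R}\circ\phi_{\mathbf i}$ with $\mathbf i=(1,3,4,\dots,n,n+2,\dots,2n-1)$.

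So legs $2$ and $2n$ are skipped, not ``shifted by one with leg $1$ inserted''. The distinguished leg $1$ of Proposition~\ref{prop:key relation} is the one leg of $\mathsf J_{\boldsymbol\gamma}$'s native $2m$ legs that $\mathsf J_{\boldsymbol\gamma}$ does not use. This is how the paper concludes, and with this step added your argument is complete.
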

\begin{proof}
Since~$(\varepsilon_B\tensor\id_A)(F)$ and~$(\id_B\tensor\varepsilon_A)(F)$ are manifestly central in respective algebras, 
by Proposition~\ref{prop:rel Drinfeld twist} we only need to prove that 
\begin{equation}\label{eq:rel twist to prove}
[(\Delta_B\tensor\id_A)(F)]_{(\mathbf 1,\mathbf 3,\mathbf 4)}(F\tensor 1_B\tensor 1_A)
=[(\id_B\tensor \Delta_A)(F)]_{(\mathbf 1,\mathbf 2,\mathbf 4)}(1_B\tensor 1_A\tensor F).
\end{equation}
We have
\begin{align}
[(\Delta_B\tensor\id_A)(F)]_{(\mathbf 1,\mathbf 3,\mathbf 4)}&=\dscprod_{2\le j\le n}
[(\Delta\tensor\id_{H^{\tensor (n-1)}})(R_{1,j}^{(c_{j-1,n})})]_{\{1\}\cup[n+1,2n]}\nonumber\\
&=\dscprod_{2\le j\le n}
(R_{1,n+j}^{(c_{j-1,n})}R_{n+1,n+j}^{(c_{j-1,n})})=\Big(\dscprod_{2\le j\le n}
R_{1,n+j}^{(c_{j-1,n})}\Big)\Big(\dscprod_{2\le j\le n}
R_{n+1,n+j}^{(c_{j-1,n})}\Big)
\nonumber\\
&=\Big(\dscprod_{2\le j\le n}
R_{1,n+j}^{(c_{j-1,n})}\Big)(1_B\tensor 1_A\tensor F).
\label{eq:F13,4}
\end{align}
Since~$F$ is invertible, it is therefore sufficient to prove that 
\begin{equation}\label{eq:F step I}
\Big(\dscprod_{2\le j\le n}
R_{1,n+j}^{(c_{j-1,n})}\Big)\Big(\dscprod_{2\le j\le n}
R_{1,j}^{(c_{j-1,n})}\Big)=[(\id_B\tensor \Delta_A)(F)]_{(\mathbf 1,\mathbf 2,\mathbf 4)}.
\end{equation}
Since both sides of this expression are contained in~$B\tensor A\tensor 1_B\tensor A$, \eqref{eq:F step I} is equivalent to
$$
\Big(\dscprod_{2\le j\le n}
R_{1,n+j-1}^{(c_{j-1,n})}\Big)\Big(\dscprod_{2\le j\le n}
R_{1,j}^{(c_{j-1,n})}\Big)=(\id_B\tensor \Delta_A)(F)
$$
in~$B\tensor A\tensor A$.
Furthermore, 
\begin{align*}
(\id_B&\tensor \Delta_A)(F)
=(1_B\tensor J_{\mathbf c^-}{}^{-1})\cdot (\id_B\tensor \Delta_{H^{\tensor(n-1)}})(F)\cdot(1_B\tensor J_{\mathbf c^-})\\
&=(1_B\tensor J_{\mathbf c^-}{}^{-1})\cdot (\Delta_{2,n+1}\cdots\Delta_{n,2n-1})\Big(\dscprod_{2\le j\le n}
R_{1,j}^{(c_{j-1,n})}
\Big)
\cdot(1_B\tensor J_{\mathbf c^-})\\
&=(1_B\tensor J_{\mathbf c^-}^{-1})\cdot \Big(\dscprod_{2\le j\le n}
R_{1,n+j-1}^{(c_{j-1,n})}R_{1,j}^{(c_{j-1,n})}
\Big)
\cdot(1_B\tensor J_{\mathbf c^-})
\end{align*}
in~$B\tensor A\tensor A$.
Thus, \eqref{eq:rel twist to prove} is equivalent to
$$
(1_H\tensor J_{\mathbf c^-})\Big(\dscprod_{2\le j\le n}
R_{1,n+j-1}^{(c_{j-1,n})}\Big)\Big(\dscprod_{2\le j\le n}
R_{1,j}^{(c_{j-1,n})}\Big)=\Big(\dscprod_{2\le j\le n}
R_{1,n+j-1}^{(c_{j-1,n})}R_{1,j}^{(c_{j-1,n})}
\Big)(1_H\tensor J_{\mathbf c^-})
$$
inside~$H^{\tensor n}\tensor H^{\tensor n}$. Note that,
since~$J_{\mathbf c^-}\in 1_H\tensor (H^{\tensor n-2})^{\tensor 2}\tensor 1_H\subset 
(H^{\tensor (n-1)})^{\tensor 2}$, it follows that $1_H\tensor J_{\mathbf c^-}$
commutes with $R_{1,2}^{(c_{1n})}$ and 
with~$R_{1,2n-1}^{(c_{n-1,n})}$. Therefore,
\eqref{eq:rel twist to prove} is equivalent to
\begin{multline*}
(1_H\tensor J_{\mathbf c^-})\Big(\dscprod_{2\le j\le n-1}
R_{1,n+j-1}^{(c_{j-1,n})}\Big)\Big(\dscprod_{3\le j\le n}
R_{1,j}^{(c_{j-1,n})}\Big)\\
=R_{1,n}^{(c_{n-1,n})}\Big(\dscprod_{3\le j\le n-1}
R_{1,n+j-1}^{(c_{j-1,n})}R_{1,j}^{(c_{j-1,n})}\Big)R_{1,n+1}^{(c_{1,n})}
(1_H\tensor J_{\mathbf c^-}).
\end{multline*}
Finally, observe that both sides of the above 
equation are contained in $H\tensor 1_H\tensor H^{\tensor 2n-3}\tensor 1_H$. Therefore, \eqref{eq:rel twist to prove} is equivalent to
\begin{align}
J_{\mathbf c^-}\Big(\dscprod_{1\le j\le n-2}\!\!
R_{1,n+j-1}^{(c_{j,n})}\Big)&\Big(\dscprod_{2\le j\le n-1}\!\!
R_{1,j}^{(c_{j,n})}\Big)=R_{1,n-1}^{(c_{n-1,n})}\Big(\dscprod_{2\le j\le n-2}\!\!
R_{1,n+j-1}^{(c_{j,n})}R_{1,j}^{(c_{j,n})}\Big)R_{1,n}^{(c_{1,n})}
J_{\mathbf c^-}\label{eq:compressed}
\end{align}
in~$(H^{\tensor (n-1)})^{\tensor 2}$.  By Lemma~\ref{lem:rest ext trans}\ref{lem:rest ext trans.c}, $(\mathbf c^-)^{\boldsymbol\alpha_{\mathbf c}}=\mathbf c$ is transitive. Then~\eqref{eq:compressed} follows 
from Proposition~\ref{prop:key relation} with $m=n-1$, $\boldsymbol{\gamma}=\mathbf c$ 
and~$\boldsymbol{\alpha}=\boldsymbol{\alpha}_{\mathbf c}$
by applying
the homomorphism $\Phi^{(2n)}_{\mathbf R}$.
\end{proof}
By~\eqref{eq:rec Jc}, $J_{\mathbf c}=(J_{\mathbf c^-})_{[1,2n-1]\setminus\{n\}}(1_A\tensor F\tensor 1_B)=(J_{\mathbf c^-})_{\mathbf 1,\mathbf 3}F_{\mathbf 2,\mathbf 3}$
where~$\mathbf 1=[n-1]$, $\mathbf 2=\{n\}$, $\mathbf 3=\{n+1,2n-1\}$. Then
by
Proposition~\ref{prop:key proposition} and Corollary~\ref{cor:twist from rel twist},
$J_{\mathbf c}$ 
is a Drinfeld twist for~$H^{\tensor n}$ with its standard comultiplication. This proves the first
assertion of Theorem~\ref{thm:trans twist}. Since
$\prod_{i\in[n]}R_{i,i+n}^{(d_i)}$ is an R-matrix
for~$\Delta_{H^{\tensor n}}$ for any $d_1,\dots,d_n\in C$,
the second 
assertion follows from the first and Proposition~\ref{prop:Drinfeld twist}\ref{prop:Drinfeld twist.b}.
\end{proof}
\begin{corollary}\label{cor:other recursion Jc}
For any transitive~$\mathbf c:I_n\to C$, $\ascprod\limits_{2\le i\le n} R_{i-1,n}^{(\mathbf c(1,i))}$ is a relative Drinfeld twist 
for~$A\tensor B$ where~$A=H$ and~$B=H^{\tensor (n-1)}$ with~$\Delta_B$ obtained by twisting~$\Delta_{H^{\tensor(n-1)}}$ by~$J_{\mathbf c^+}$.
\end{corollary}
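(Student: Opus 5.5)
The plan is to mirror the proof of the classical corollary following Theorem~\ref{thm:trans classical twists}, now using the second recursion~\eqref{eq:rec Jc-2} of Lemma~\ref{lem:Jc rec} and the quantum Lemma~\ref{lem:rel twist from twist} in place of Proposition~\ref{prop:tensortwist}. Set $A=H$, $B=H^{\tensor(n-1)}$ and $F=\ascprod_{2\le i\le n}R_{i-1,n}^{(\mathbf c(1,i))}\in B\tensor A$. Identify $H^{\tensor n}=A\tensor B$ via the legs $\mathbf 1=\{1\}$ and $\mathbf 2=[2,n]$, and $(H^{\tensor n})^{\tensor 2}$ with legs $\mathbf 1,\mathbf 2,\mathbf 3=\{n+1\},\mathbf 4=[n+2,2n]$. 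In this labelling $F_{\mathbf 2,\mathbf 3}=\ascprod_{2\le i\le n}R_{i,n+1}^{(\mathbf c(1,i))}$, since the $i$-th leg of $B$ sits at position $i$ and the leg of $A$ sits at $n+1$.

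The first step is to rewrite $J_{\mathbf c}$ using~\eqref{eq:rec Jc-2} and the homomorphism $\Phi^{(2n)}_{\mathbf R}$:
\[
J_{\mathbf c}=(J_{\mathbf c^+})_{[2,2n]\setminus\{n+1\}}\,F_{\mathbf 2,\mathbf 3}=(J_{\mathbf c^+})_{\mathbf 2,\mathbf 4}\,F_{\mathbf 2,\mathbf 3}.
\]
By Theorem~\ref{thm:trans twist}, $J_{\mathbf c}$ is a Drinfeld twist for $(A\tensor B,\Delta_{A\tensor B})$ with the standard comultiplication. Since $\mathbf c^+$ is transitive (Lemma~\ref{lem:rest ext trans}), Theorem~\ref{thm:trans twist} also shows that $J_{\mathbf c^+}$ is a Drinfeld twist for $H^{\tensor(n-1)}$; let $\Delta'_B$ be the comultiplication it produces. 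On $A$ we use the trivial twist $J_A=1$.

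Next apply Lemma~\ref{lem:rel twist from twist} with $J_A=1$, $J_B=J_{\mathbf c^+}$ and $J_{A\tensor B}=J_{\mathbf c}$. It says that
\[
J'_{A\tensor B}=(J_B)^{-1}_{\mathbf 2,\mathbf 4}\,J_{\mathbf c}=F_{\mathbf 2,\mathbf 3}
\]
is a Drinfeld twist for $(A\tensor B,\Delta'_{A\tensor B})$, where $\Delta'_{A\tensor B}=(\Delta)_{\mathbf 1,\mathbf 3}\circ(\Delta'_B)_{\mathbf 2,\mathbf 4}$. This is exactly the statement that $F$ is a relative Drinfeld twist for $A\tensor B$ with $\Delta_B=\Delta'_B$, which is the claim.

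The main obstacle is bookkeeping. First, the ordering of factors must be checked: \eqref{eq:rec Jc-2} places $\phi(\mathsf J_{\mathbf c^+})$ on the left, which is what Lemma~\ref{lem:rel twist from twist} needs in order to strip $(J_B)^{-1}$ from the left. Second, the leg relabelling must be verified, including that $\phi_{[2,2n]\setminus\{n+1\}}$ sends the legs of $J_{\mathbf c^+}$ precisely to $\mathbf 2\cup\mathbf 4$. Third, the counit centrality conditions in the definition of a Drinfeld twist have to hold; they are automatic here, since $(\varepsilon\tensor\id)(R^{(c)})$ is central and this is inherited by $F$. If the ordering convention in Lemma~\ref{lem:rel twist from twist} turned out to be off, one could instead verify~\eqref{eq:rel Drinfeld twist} directly. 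That would be a computation symmetric to Proposition~\ref{prop:key proposition}, using a mirrored version of Proposition~\ref{prop:key relation} obtained by reversing indices, but the reduction above should avoid it.
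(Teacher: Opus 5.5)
Your proposal is correct and follows the paper's proof exactly: the paper derives the corollary in one line from the recursion \eqref{eq:rec Jc-2} of Lemma~\ref{lem:Jc rec}, Theorem~\ref{thm:trans twist}, and Lemma~\ref{lem:rel twist from twist}. Your bookkeeping supplies the details the paper leaves implicit: the identification $J_{\mathbf c}=(J_{\mathbf c^+})_{\mathbf 2,\mathbf 4}F_{\mathbf 2,\mathbf 3}$, the use of $J_A=1$, and the removal of $(J_{\mathbf c^+})^{-1}_{\mathbf 2,\mathbf 4}$ on the left.
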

\begin{proof}
This is an immediate consequence of Lemmata~\ref{lem:Jc rec} and~\ref{lem:rel twist from twist} and Theorem~\ref{thm:trans twist}.
\end{proof}
\begin{proof}[Proof of Theorem~\ref{thm:main thm 2}]
Given a permutation~$w\in S_n$ and an~R-matrix $R$ for~$H$, $J_w(R)$ defined in Theorem~\ref{thm:main thm 2}
coincides with~$J_{\sgna{w}}(\mathbf  R)$ where
$\sgna{w}:I_n\to\{1,-1\}$ is defined as in Lemma~\ref{lem:trans sign} and~$\mathbf R=\{R^{(1)},R^{(-1)}\}=
\{R,R_{2,1}^{-1}\}$. It remains to apply Theorem~\ref{thm:trans twist} with~$\mathbf c=\sgna{w}$.
\end{proof}
We now prove a special case of Conjecture~\ref{conj:QCYBE trans 1,-1}.
\begin{proposition}\label{prop:Spec trans conj}
Let~$H$ be a quasi-triangular bialgebra with an R-matrix~$R$,
and let $\mathbf R=\{R^{(1)},R^{(-1)}\}=\{R,R_{2,1}^{-1}\}$.
Suppose that~$\boldsymbol a=(a_{i,j})_{i,j\in[n]}$,
$a_{i,j}\in \{1,-1\}$, $i,j\in[n]$ is
transitive and satisfies $a_{j,i}=-a_{i,j}$, $(i,j)\in I_n$. Then~$\mathbf R^{(\boldsymbol a)}$ solves QYBE.
\end{proposition}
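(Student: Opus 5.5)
By Lemma~\ref{lem:In trans to trans}, $\boldsymbol a=\sgna{w,\boldsymbol d}$ for a unique $w\in S_n$ and $\boldsymbol d\in\{1,-1\}^n$. So $a_{i,i}=d_i$, while for $i\ne j$ we have $a_{i,j}=\sign(w(j)-w(i))$ and $a_{j,i}=-a_{i,j}$. The plan is to show that
$$\mathbf R^{(\boldsymbol a)}=\mathbf R(\sgna w,\boldsymbol d)=(J_w^{op})^{-1}R_{1,n+1}^{(d_1)}\cdots R_{n,2n}^{(d_n)}J_w .$$
The right-hand side is an R-matrix for $H^{\tensor n,w}$ by Theorem~\ref{thm:main thm 2}\ref{thm:main thm 2.b}, and every R-matrix of a quasi-triangular bialgebra solves QYBE. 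So everything reduces to a combinatorial rewriting identity in the monoid $\mathsf{QTr}^+_{2n}(\{1,-1\})$, extended by the inverses $(\sfR^{(\epsilon)}_{i,j})^{-1}$. After applying $\Phi^{(2n)}_{\mathbf R}$, we use
$$R^{(-\epsilon)}_{i,j}=(R^{(\epsilon)}_{j,i})^{-1}.$$
This holds for $\mathbf R=\{R,R_{2,1}^{-1}\}$ and lets us rewrite $(J_w^{op})^{-1}$ as a product of generators $R^{(\cdot)}_{\cdot,\cdot}$.

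First, compute $J_w^{op}$. Applying the Grassmann permutation swapping the two blocks of $n$ tensor factors gives
$$J_w^{op}=\prod R_{n+i,j}^{(\sgna w_{j,i})}=\prod \bigl(R_{j,n+i}^{(-\sgna w_{j,i})}\bigr)^{-1},$$
with the same ordering as in $J_w$. Hence $(J_w^{op})^{-1}$ is the reversed product of the factors $R_{j,n+i}^{(-\sgna w_{j,i})}=R_{j,n+i}^{(a_{i,j})}$ over $j<i$. Thus $\mathbf R(\sgna w,\boldsymbol d)$ is a word containing every $R_{k,n+l}^{(a_{l,k})}$ exactly once, which is exactly the set of factors of $\mathbf R^{(\boldsymbol a)}$:
the factors $R_{k,n+l}$ with $k>l$ come from $J_w$, those with $k<l$ from $(J_w^{op})^{-1}$, and those with $k=l$ from the diagonal. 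It remains to show that the two orderings agree modulo the relations. Only commutations~\eqref{eq:defn rel QTr comm} between factors with disjoint index pairs, and possibly the transitive QYBE~\eqref{eq:defn rel QTr}, should be needed.

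Second, carry out the reordering. The target order in \eqref{eq:R eps defn} groups factors by second leg, in descending order $2n,2n-1,\dots,n+1$. Within the group for second leg $n+l$, the first legs increase $1,\dots,n$. I would proceed by induction on $n$, mirroring Lemma~\ref{lem:Jc rec}. Write
$$J_w=\phi(J_{w^-})\dscprod_{i\le n-1} R_{n,n+i}^{(\cdot)},$$
and decompose $(J_w^{op})^{-1}$ and the diagonal product in the same way. Then move the factors involving index $n$ or $2n$ to their target positions. Factors $R_{k,n+l}$ and $R_{k',n+l'}$ with $k\ne k'$ and $l\ne l'$ commute freely. The only obstruction is two factors sharing a leg, and in the products at hand these share a leg in $[n]$ or in $[n+1,2n]$ but never both. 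For $n=2$ the required identity is
$$R_{1,4}^{(a_{2,1})}R_{2,4}^{(a_{2,2})}R_{1,3}^{(a_{1,1})}R_{2,3}^{(a_{1,2})}=R_{1,4}^{(a_{2,1})}R_{1,3}^{(a_{1,1})}R_{2,4}^{(a_{2,2})}R_{2,3}^{(a_{1,2})},$$
which is a plain commutation, consistent with Proposition~\ref{prop:basic twist}. In general I expect that the relative order of factors sharing a common leg is already the same in both words, so only~\eqref{eq:defn rel QTr comm} is used.

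The main obstacle is verifying this claim about the relative order of factors sharing a leg across the three blocks. Factors with first leg $n$ appear in $J_w$ with second legs descending, but in $(J_w^{op})^{-1}$ the factors with first leg $k<l$ appear reversed. I would check carefully that, for a fixed first leg $k$, the second legs $n+l$ occur in descending order in the concatenated word. If instead some pair is out of order, I would use~\eqref{eq:defn rel QTr} with the transitivity of $\boldsymbol a$ (so that $c'\in\{c,c''\}$) to repair it, exactly as in the proof of Lemma~\ref{lem:move}.
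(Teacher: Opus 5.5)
Your proposal is correct and is essentially the paper's proof: both identify $\mathbf R^{(\boldsymbol a)}$ with $\mathbf R(\sgna w,\boldsymbol d)$, invoke Theorem~\ref{thm:main thm 2}\ref{thm:main thm 2.b}, and get the identity purely from commutations of factors on disjoint legs. The paper packages this as an induction on $n$ that peels off the index $1$ via \eqref{eq:rec Jc-2}, rather than the index $n$.

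The check you left open does go through, so neither \eqref{eq:defn rel QTr} nor transitivity is needed at this stage. Suppressing superscripts, in $(J_w^{op})^{-1}R_{1,n+1}^{(d_1)}\cdots R_{n,2n}^{(d_n)}J_w$:
\begin{enumerate}
\item[(i)] The factors with first leg $k$ appear as $R_{k,2n},\dots,R_{k,n+k+1}$ (the outer product in $(J_w^{op})^{-1}$ runs over decreasing second leg), then $R_{k,n+k}$, then $R_{k,n+k-1},\dots,R_{k,n+1}$ (the $k$th block of $J_w$).
\item[(ii)] The factors with second leg $n+l$ appear as $R_{1,n+l},\dots,R_{l-1,n+l}$, then $R_{l,n+l}$, then $R_{l+1,n+l},\dots,R_{n,n+l}$.
\end{enumerate}
These are exactly the orders in $\mathbf R^{(\boldsymbol a)}$. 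Hence any adjacent pair out of target order has disjoint legs, and a bubble sort using \eqref{eq:defn rel QTr comm} alone finishes.
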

\begin{proof}
By Lemma~\ref{lem:In trans to trans}, 
there exists~$w\in S_n$ such that $\boldsymbol a=\boldsymbol\epsilon(w,\boldsymbol d)$ where
$\boldsymbol d=(a_{i,i})_{i\in [n]}$.
We claim that
$$
\mathbf R^{(\boldsymbol a)}=
\dscprod_{1\le j\le n} \ascprod_{1\le i\le n} 
R_{i,n+j}^{(a_{j,i})}=\mathbf R(\sgna{w},\boldsymbol d).
$$
In particular, $\mathbf R^{(\boldsymbol a)}$
is an R-matrix by Theorem~\ref{thm:main thm 2}\ref{thm:main thm 2.b} and
hence satisfies QYBE.

To prove the claim, we use induction on~$n$, the case~$n=1$ being trivial. For the inductive step, we have
\begin{align*}
\mathbf R^{(\boldsymbol{a})}&=
\Big(\dscprod_{2\le j\le n} R_{1,n+j}^{(a_{j,1})}\Big(\ascprod_{2\le i\le n} R_{i,n+j}^{(a_{j,i})}\Big)\Big)R_{1,n+1}^{(a_{1,1})}
\ascprod_{2\le i\le n} R_{i,n+1}^{(a_{1,i})}\\
&=\Big(\dscprod_{2\le i\le n} R_{1,n+i}^{(-a_{1,i})}
\Big) (\mathbf R^{(\boldsymbol{a}^+)})_{[2,2n]\setminus\{n+1\}}
R_{1,n+1}^{(a_{1,1})}
\ascprod_{2\le i\le n} R_{i,n+1}^{(a_{1,i})},
\end{align*}
where, as before, $\boldsymbol a^+:[n-1]\times[n-1]\to\{1,-1\}$ is defined by $\boldsymbol a^+(i,j)=\boldsymbol a(i+1,j+1)$. By the induction hypothesis,
$$\mathbf R^{(\boldsymbol a^+)}=\mathbf R(\sgna{w}^+,\boldsymbol d^+)=(J_{\sgna{w}^+}^{op})^{-1}
\prod_{i\in[n-1]} R_{i,n-1+i}^{(a_{i+1,i+1})} J_{\sgna{w}}.
$$
Since~$R^{(-\epsilon)}=R_{2,1}^{(\epsilon)}{}^{-1}$, it follows that
\begin{align*}
\mathbf R^{(\boldsymbol{a})}&=\Big(\Big((J_{\sgna{w}^+})_{[2,2n]\setminus\{n+1\}}\ascprod_{2\le i\le n} R_{i,n+1}^{(a_{1,i})}\Big)^{op}\Big)^{-1}
\prod_{i\in[n]} R_{i,i+n}^{(a_{i,i)}} \Big((J_{\sgna{w}^+})_{[2,2n]\setminus\{n+1\}}\ascprod_{2\le i\le n} R_{i,n+1}^{(a_{1,i})}\Big)\\
&=(J_{\sgna w}^{op})^{-1}\Big(\prod_{i\in[n]} R_{i,i+n}^{(a_{i,i)}}\Big) J_{\sgna w}=
\mathbf R(\sgna{w},\boldsymbol{d}),
\end{align*}
where we used~\eqref{eq:rec Jc-2}.
\end{proof}

\subsection{Diagonal homomorphism of bialgebras}
We now establish a quantum analogue of Theorem~\ref{thm:diag embed bialg}.
\begin{theorem}\label{thm:diag embed quantum}
Let~$H$ be a quasi-triangular bialgebra with a family of ~R-matrices
$\mathbf R=\{ R^{(c)}\}_{c\in C}$ and let~$\mathbf c:I_n\to C$, $n\in\ZZ_{\ge1}$ be transitive.
Then the iterated comultiplication $\Delta^{(n)}:=
\dscprod\limits_{0\le t\le n-2}(\Delta\tensor\id_H^{\tensor t})=
\dscprod\limits_{0\le t\le n-2}(\id_H^{\tensor t}\tensor\Delta)$ is
a homomorphism of bialgebras~$H\to H^{\tensor n}$ where
the standard comultiplication $\Delta_{H^{\tensor n}}$ is twisted by~$J_{\mathbf c}(\mathbf R)$.
\end{theorem}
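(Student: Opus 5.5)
We argue by induction on $n$, using the recursion \eqref{eq:rec Jc} for $J_{\mathbf c}$ and Proposition~\ref{prop:rel twist comult prop}. Since $\Delta^{(n)}$ is an algebra homomorphism, the only issue is compatibility with the comultiplications:
\[
\Delta_{J_{\mathbf c}}\circ\Delta^{(n)}=(\Delta^{(n)}\tensor\Delta^{(n)})\circ\Delta,
\qquad\text{i.e.}\qquad
\Delta_{H^{\tensor n}}(\Delta^{(n)}(h))\,J_{\mathbf c}=J_{\mathbf c}\,(\Delta^{(n)}\tensor\Delta^{(n)})(\Delta(h)).
\]
Counits are compatible automatically, as they are unchanged by twisting.

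The base cases are $n=1$, which is trivial, and $n=2$. For $n=2$ we have $J_{\mathbf c}=R^{(c_{1,2})}_{2,3}$, and the identity reduces to $R^{(c)}\Delta(h)=\Delta^{op}(h)R^{(c)}$. This is exactly the criterion of Proposition~\ref{prop:rel twist comult prop} with $U=A=B=H$, $\psi_A=\psi_B=\id$, and $F=R^{(c)}$, which is a relative twist by Proposition~\ref{prop:basic twist}.

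For the inductive step, write $\Delta^{(n)}=(\Delta^{(n-1)}\tensor\id_H)\circ\Delta$ and set $A=H^{\tensor(n-1)}$, with comultiplication twisted by $J_{\mathbf c^-}$, and $B=H$. By the induction hypothesis applied to the transitive $\mathbf c^-$, the map $\psi_A:=\Delta^{(n-1)}\colon H\to A$ is a coalgebra homomorphism. Trivially, $\psi_B:=\id_H\colon H\to B$ is one as well. By Proposition~\ref{prop:key proposition}, $F=\dscprod_{1\le j\le n-1}R_{1,j+1}^{(c_{j,n})}$ is a relative Drinfeld twist for $A\tensor B$. By \eqref{eq:rec Jc} and Corollary~\ref{cor:twist from rel twist}, the comultiplication of $A\tensor B$ twisted by $F_{2,3}$ coincides with $\Delta_{H^{\tensor n}}$ twisted by $J_{\mathbf c}=(J_{\mathbf c^-})_{\mathbf 1,\mathbf 3}F_{\mathbf 2,\mathbf 3}$. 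Proposition~\ref{prop:rel twist comult prop} therefore reduces the claim to the identity
\[
F\cdot(h_{(1)}\tensor\Delta^{(n-1)}(h_{(2)}))=(h_{(2)}\tensor\Delta^{(n-1)}(h_{(1)}))\cdot F,\qquad h\in H,
\]
in $H\tensor H^{\tensor(n-1)}$.

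We prove this identity by commuting the factors $R^{(c_{j,n})}_{1,j+1}$ past one at a time, which is where the main work lies. Write $\Delta^{(n)}(h)=h_{(1)}\tensor\cdots\tensor h_{(n)}$, so the left side is $F\cdot(h_{(1)}\tensor h_{(2)}\tensor\cdots\tensor h_{(n)})$. The rightmost factor of $F$ is $R^{(c_{1,n})}_{1,2}$, and it acts on legs $1,2$. By coassociativity we may view the pair $(h_{(1)},h_{(2)})$ as $\Delta(h')$ for a single Sweedler component $h'$. Then $R^{(c)}\Delta(h')=\Delta^{op}(h')R^{(c)}$ swaps the two components in legs $1$ and $2$. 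Next, $R^{(c_{2,n})}_{1,3}$ acts on legs $1,3$, which now carry $h_{(2)}$ and $h_{(3)}$. These are again consecutive Sweedler components, so this factor swaps them as well. Iterating $n-1$ times carries $h_{(1)}$ from leg $1$ to leg $2$ and so on, until $h_{(n)}$ sits in leg $1$ and $(h_{(1)},\dots,h_{(n-1)})$ occupy legs $2,\dots,n$. This is exactly the right side, with every factor of $F$ now on the left of the $h$-terms.

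Only the defining property of each $R^{(c)}$ is used here; transitivity of $\mathbf c$ enters only through Proposition~\ref{prop:key proposition} and the induction hypothesis. The main obstacle I expect is bookkeeping: checking that at each stage the two legs being acted on carry consecutive Sweedler components, so that the intertwining property applies, and verifying the identification of twisted comultiplications via Corollary~\ref{cor:twist from rel twist}, including the matching of the leg conventions $\mathbf 1,\mathbf 2,\mathbf 3$.
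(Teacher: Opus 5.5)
Your proof is correct and is essentially the paper's argument: induction on $n$ through Proposition~\ref{prop:rel twist comult prop}, which reduces the claim to an intertwining identity, and that identity is proved by moving the factors of $F$ past consecutive Sweedler components one at a time using $R^{(c)}\Delta(h)=\Delta^{op}(h)R^{(c)}$. The only difference is that you split off the last tensor factor, via recursion \eqref{eq:rec Jc}, $\mathbf c^-$, the relative twist of Proposition~\ref{prop:key proposition} and $\Delta^{(n)}=(\Delta^{(n-1)}\tensor\id_H)\circ\Delta$. The paper instead splits off the first factor, via \eqref{eq:rec Jc-2}, $\mathbf c^+$, Corollary~\ref{cor:other recursion Jc} and $\Delta^{(n)}=(\id_H\tensor\Delta^{(n-1)})\circ\Delta$, so your commuting computation is the mirror image of Lemma~\ref{lem:hom bialg intern}.
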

\begin{proof}
The argument is by induction on~$n$, the case~$n=1$ being
trivial. For the inductive step, let 
$A=H$ and let $B=H^{\tensor (n-1)}$ with the 
comultiplication twisted by~$J_{\mathbf c^+}(\mathbf R)$.
By the induction hypothesis, $\Delta^{(n-1)}:H\to B$
is a homomorphism of bialgebras, and  
$F=\ascprod\limits_{2\le i\le n} R_{i-1,n}^{(\mathbf c(1,i))}$ is a relative Drinfeld twist for~$A\tensor B$
by Corollary~\ref{cor:other recursion Jc}.
Since~$\Delta^{(n)}=(\id_H\tensor\Delta^{(n-1)})\circ\Delta$, 
by Proposition~\ref{prop:rel twist comult prop}
with~$U=H$, $\psi_A=\id_H$ and~$\psi_B=\Delta^{(n-1)}$
it suffices to prove that  
$$
F\cdot\Delta^{(n)}(h)=(\tau_{\mathbf 1,\mathbf 2}\circ\Delta^{(n)})(h)\cdot F,\qquad h\in H
$$
where we abbreviate~$\mathbf 1=\{1\}$, $\mathbf 2=[2,n]$, or in Sweedler notation
\begin{equation}\label{eq:hom coalg id to prove}
\Big(\ascprod\limits_{1\le i\le n-1} R_{i,n}^{(\mathbf c(1,i+1))}\Big)
\Big(\ascprodtens_{1\le t\le n} h_{(t)}\Big)=
\Big(\Big(\ascprodtens_{2\le t\le n}h_{(t)}\Big)\tensor h_{(1)}\Big)\Big(
\ascprod\limits_{1\le i\le n-1} R_{i,n}^{(\mathbf c(1,i+1))}\Big).
\end{equation}
We need the following
\begin{lemma}\label{lem:hom bialg intern}
For all~$h\in H$, $c_1,\dots,c_{n-1}\in C$
and~$1\le k\le n$
\begin{align*}
\Big(\ascprod\limits_{1\le i\le n-1} R_{i,n}^{(c_i)}\Big)
\cdot\Big(\ascprodtens_{1\le t\le n} h_{(t)}\Big)
=\Big(\ascprod_{1\le i\le k-1} R_{i,n}^{(c_i)}
\Big)\cdot\Big(\Big(\ascprodtens_{t\in [n]\setminus\{k\}}\!\! h_{(t)}\Big)\tensor h_{(k)}\Big)
\cdot\Big(\ascprod_{k\le i\le n-1}\!\! R_{i,n}^{(c_i)}\Big).
\end{align*}
\end{lemma}
\begin{proof}
The argument is by descending induction on~$k$, the case~$k=n$ being trivial. For the inductive step,
it suffices to observe that, since~$R^{(c)}\cdot \Delta(h')=
\tau\circ \Delta(h')\cdot R^{(c)}$ for all~$h'\in H$, $c\in C$,
\begin{align*}
R_{k,n}^{(c_k)}\cdot\Big(\Big(&\ascprodtens_{t\in [n]\setminus\{k+1\}}\!\! h_{(t)}\Big)\tensor h_{(k+1)}\Big)\\
&=
\Big(\Big(\ascprodtens_{t\in[k-1]} h_{(t)}\Big)\tensor 1
\tensor \Big(\ascprodtens_{t\in[k+2,n]} h_{(t)}\Big)\tensor 1\Big)
\cdot (R^{(c_k)}\cdot (h_{(k)}\tensor h_{(k+1)}))_{k,n}\\
&=
\Big(\Big(\ascprodtens_{t\in[k-1]} h_{(t)}\Big)\tensor 1
\tensor \Big(\ascprodtens_{t\in[k+2,n]} h_{(t)}\Big)\tensor 1\Big)
\cdot ((h_{(k+1)}\tensor h_{(k)})\cdot R^{(c_k)})_{k,n}\\
&=\Big(\Big(\ascprodtens_{t\in[n]\setminus\{k\}}
h_{(t)}\Big)\tensor h_{(k)}\Big)\cdot R_{k,n}^{(c_k)}.
\qedhere
\end{align*}
\end{proof}
Applying the Lemma with~$k=1$ and~$c_i=\mathbf c(1,i+1)$, $i\in[n-1]$
yields~\eqref{eq:hom coalg id to prove} and completes the 
proof of the inductive step and hence of Theorem~\ref{thm:diag embed quantum}.
\end{proof}
\begin{remark}
It should be noted that, while Lemma~\ref{lem:hom bialg intern} does not require the transitivity of~$\mathbf c$, we need it to ensure that~$J_{\mathbf c}$ is a Drinfeld twist and,
therefore, $\Delta_{H^{\tensor n}}$ twisted by~$J_{\mathbf c}$ remains coassociative.
\end{remark}

\subsection{The dual picture}\label{subs:dual Jc}
Retain the notation from~\S\ref{subs:quant dual}.
Let~$H$ be a bialgebra and let~$\underline{\mathcal R}=\{\mathcal R^{(c)}\}\subset \Hom_\kk(H\tensor H,\kk)$ be a family of co-quasi-triangular structures on~$H$. As in~\S\ref{subs:QTR(C)}, by Proposition~\ref{prop:dual cQYBE}
the assignments~$\sfR_{i,j}^{(c)}\mapsto \mathcal R_{i,j}^{(c)}$, $i,j\in[n]$, $c\in C$, define a homomorphism of monoids
$\Phi^{(n)}_{\underline{\mathcal R}}:\mathsf{QTr}^+_n(C)\to \Hom_\kk(H^{\tensor n},\kk)$.
Let
\begin{equation}\label{eq:Jc dual product}
\mathcal J_{\mathbf c}=\Phi^{(2n)}_{\underline{\mathcal R}}(\mathsf J_{\mathbf c})=\ascprodst_{2\le i\le n}
\dscprodst_{1\le j\le i-1} \mathcal R_{i,j+n}^{(\mathbf c(j,i))}. 
\end{equation}
\begin{theorem}\label{thm:dual Jc twist}
If~$\mathbf c:I_n\to C$ is transitive then~${\mathcal J}_{\mathbf c}$ is a dual Drinfeld twist
for~$H^{\tensor n}\tensor H^{\tensor n}$.
\end{theorem}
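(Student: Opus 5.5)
The plan is to repeat, word for word, the inductive proof of Theorem~\ref{thm:trans twist}, now in the dual setting. Every identity there was proved abstractly in the monoid $\mathsf{QTr}^+_{2n}(C)$ and then pushed forward by the homomorphism $\Phi_{\mathbf R}$. Here we push forward by $\Phi^{(2n)}_{\underline{\mathcal R}}$ into the convolution monoid $\Hom_\kk(H^{\tensor 2n},\kk)$. We induct on $n$.

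\textbf{Base cases.} The case $n=1$ is trivial.

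For $n=2$ we have $\mathcal J_{\mathbf c}=\mathcal R^{(c_{1,2})}_{2,3}$, and the claim is exactly Lemma~\ref{lem:basic rel dual}. That lemma says $\mathcal R^{(c)}$ is a relative dual Drinfeld twist. Its counit-type conditions reduce to the identities $\mathcal R(b_{(1)},1)b_{(2)}=\mathcal R(b_{(2)},1)b_{(1)}$ and $\mathcal R(1,a_{(1)})a_{(2)}=\mathcal R(1,a_{(2)})a_{(1)}$. These follow by specializing \eqref{eq:coquas triang 1} to $a'=1$ (respectively $a=1$).

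\textbf{Inductive step.} Let $n\ge3$. By \eqref{eq:rec Jc} applied via $\Phi^{(2n)}_{\underline{\mathcal R}}$, we can write
\[
\mathcal J_{\mathbf c}=(\mathcal J_{\mathbf c^-})_{\mathbf 1,\mathbf 3}\ast\mathcal F_{\mathbf 2,\mathbf 3},\qquad \mathcal F:=\dscprodst_{1\le j\le n-1}\mathcal R^{(\mathbf c(j,n))}_{1,j+1},
\]
where $\mathbf 1=[n-1]$, $\mathbf 2=\{n\}$, $\mathbf 3=[n+1,2n-1]$. Since $\mathbf c^-$ is transitive, $\mathcal J_{\mathbf c^-}$ is a dual Drinfeld twist for $H^{\tensor(n-1)}$ by the induction hypothesis. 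By Corollary~\ref{cor:dual twist from rel twist} (taking $\mathcal J_B$ trivial), it then suffices to show that $\mathcal F$ is a relative dual Drinfeld twist for $A\tensor B$. Here $A=H^{\tensor(n-1)}$ carries the multiplication $\bullet_{\mathcal J_{\mathbf c^-}}$, and $B=H$ is untwisted.

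This is the dual counterpart of Proposition~\ref{prop:key proposition}, and I would verify \eqref{eq:rel twist eq'} by transcribing that proof. The substitutions are: $\Delta$ becomes $m$, products become convolutions, and conjugation by $J_{\mathbf c^-}$ becomes $\mathcal J_{\mathbf c^-}^{\ast-1}\ast(\cdot)\ast\mathcal J_{\mathbf c^-}$. The individual steps go as follows.
\begin{enumerate}
\item Use the multiplicativity rules \eqref{eq:coquas triang 2} to compute $\mathcal F_{13,4}$. This factorizes as $\bigl(\dscprodst\mathcal R_{1,n+j}\bigr)\ast\mathcal F_{3,4}$, mirroring \eqref{eq:F13,4}.
\item Compute $\mathcal F_{1,24}$ for the twisted product on $A$. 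It equals $\mathcal J_{\mathbf c^-}$ conjugating $\dscprodst(\mathcal R_{1,n+j-1}\ast\mathcal R_{1,j})$. Here one uses \eqref{eq:hom coalg} and \eqref{eq:hom coalg inv} to pass $\ast$-inverses through the coalgebra maps.
\item Cancel the factors that commute because they have disjoint legs; in the convolution algebra $\mathcal R_{i,j}$ and $\mathcal R_{k,l}$ with disjoint index sets commute.
\item This reduces everything to the dual of \eqref{eq:compressed}, which is the image under $\Phi_{\underline{\mathcal R}}$ of Proposition~\ref{prop:key relation}.
\end{enumerate}
The remaining counit-type conditions on $\mathcal F$ follow from those for each factor $\mathcal R^{(c)}$, because each factor satisfies them and they are compatible with convolution.

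\textbf{Main obstacle.} The hard part is step 2: getting the analogue of $(\id_B\tensor\Delta_A)(F)$ correct when the \emph{multiplication} on $A$, not the comultiplication, is twisted. Concretely, I would expand $\mathcal F(b,a\bullet_{\mathcal J}a')$ using the definition of $\bullet_{\mathcal J}$ and then apply \eqref{eq:coquas triang 2} factor by factor. The leg ordering must come out in the same order as in the quantum proof; only then does Proposition~\ref{prop:key relation} apply verbatim, and the convention for $\ast$ (reversed relative to the product in $H^{\tensor 2n}$) needs care.

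If the direct transcription becomes unwieldy, there is a fallback when $H$ is finitely generated. One can evaluate the proven identity for $J_{\mathbf c}$ in a completed dual using Lemma~\ref{lem:natural pairing}. However, the general case should be done by the abstract monoid argument above.
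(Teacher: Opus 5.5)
Your proposal is correct and follows the paper's proof essentially verbatim. The structure is the same: induction with Lemma~\ref{lem:basic rel dual} as the base case, then showing that $\mathcal F=\dscprodst_{j}\mathcal R^{(\mathbf c(j,n))}_{1,j+1}$ is a relative dual twist for $A=(H^{\tensor(n-1)},\bullet_{\mathcal J_{\mathbf c^-}})$ and $B=H$. The steps match as well: factor $\mathcal F_{13,4}$, write $\mathcal F\circ(\id_B\tensor m_A)$ as a $\mathcal J_{\mathbf c^-}$-conjugate, reduce to the convolution form of \eqref{eq:compressed} via Proposition~\ref{prop:key relation}, and finish with Lemma~\ref{lem:Jc rec} and Corollary~\ref{cor:dual twist from rel twist}.

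One small correction: convolution is \emph{not} reversed relative to the product in $H^{\tensor 2n}$. Compare Lemma~\ref{lem:natural pairing} with the same-order relations \eqref{eq:coquas cQYBE} and \eqref{eq:dual Drinf eq 2}; this is precisely why the transcription through $\Phi^{(2n)}_{\underline{\mathcal R}}$ goes through word for word.
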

\begin{proof}
The proof is similar to that of Theorem~\ref{thm:trans twist} and uses induction on~$n$, the induction base being Lemma~\ref{lem:basic rel dual}. To prove the 
inductive step, note that $\mathcal J_{\mathbf c^-}$
is a dual Drinfeld twist for~$H^{\tensor (n-1)}$
by the induction hypothesis.
We prove that  
$\mathcal F=\dscprodst \limits_{1\le i\le n-1}
\mathcal R_{1,i+1}^{(c_{i,n})}$
is a relative dual Drinfeld twist for~$A\tensor B$ where~$B=H$ and~$A=H^{\tensor(n-1)}$
with the multiplication~$\bullet_{\mathcal J_{\mathbf c^-}}$ and, as before, $c_{i,j}=\mathbf c(i,j)$, $(i,j)\in I_n$.
Indeed, note that $\mathcal F_{\mathbf{13},\mathbf 4}
=(\mathcal F\circ m_B\tensor\id_A)_{\mathbf 1,\mathbf 3,\mathbf 4}$ as an element of~$\Hom_\kk((B\tensor A)^{\tensor 2},\kk)$ which we identify with~$
\Hom_\kk(H^{\tensor 2n},\kk)$; we abbreviate~$\mathbf 1=\{1\}$,
$\mathbf 2=[2,n]$, $\mathbf 3=\{n\}$, $\mathbf 4=[n+1,2n]$. Then, similarly to~\eqref{eq:F13,4}
$$
\mathcal F_{\mathbf{13},\mathbf 4}=\Big(\dscprodst_{2\le j\le n}
\mathcal R_{1,n+j}^{(c_{j-1,n})}\Big)\ast \mathcal F_{\mathbf 3,\mathbf 4}.
$$
Since~$\mathcal F$ is invertible, it is therefore sufficient to prove that
$$
\Big(\dscprodst_{2\le j\le n}
\mathcal R_{1,n+j}^{(c_{j-1,n})}\Big)\ast 
\Big(\dscprodst_{2\le j\le n}
\mathcal R_{1,j}^{(c_{j-1,n})}\Big)
=\mathcal F_{\mathbf 1,\mathbf{24}}
$$
(compare with~\eqref{eq:F step I}) or, equivalently that
\begin{equation}\label{eq:dual F step I}
\Big(\dscprodst_{2\le j\le n}
\mathcal R_{1,n+j-1}^{(c_{j-1,n})}\Big)\ast 
\Big(\dscprodst_{2\le j\le n}
\mathcal R_{1,j}^{(c_{j-1,n})}\Big)
=\mathcal F\circ(\id_B\tensor m_A)
\end{equation}
in~$\Hom_\kk(B\tensor A\tensor A,\kk)$.
Now, since 
\begin{align*}\mathcal F\circ(\id_B\tensor m_A)(b\tensor a\tensor a')&=\mathcal F(b,a\bullet_{\mathcal J_{\mathbf c^-}} a')
=\mathcal J_{\mathbf c^-}{}^{-1}(a_{(1)},a'_{(1)})\mathcal F(b,a_{(2)}a'_{(2)})
\mathcal J_{c^-}(a_{(3)},a'_{(3)})\\
&=(\varepsilon_B\tensor \mathcal J_{\mathbf c^-}{}^{-1})\ast 
\mathcal F\circ(\id_B\tensor m_{H^{\tensor n}})\ast 
(\varepsilon_B\tensor \mathcal J_{\mathbf c^-})(b\tensor a\tensor a'),
\end{align*}
for all $a,a'\in A$, $b\in B$,
the identity~\eqref{eq:dual F step I} is equivalent to
$$
(\varepsilon_H\tensor \mathcal J_{\mathbf c^-})\ast 
\Big(\dscprodst_{2\le j\le n}
\mathcal R_{1,n+j-1}^{(c_{j-1,n})}\Big)\ast 
\Big(\dscprodst_{2\le j\le n}
\mathcal R_{1,j}^{(c_{j-1,n})}\Big)=
\Big(\dscprodst_{2\le j\le n}
\mathcal R_{1,n+j-1}^{(c_{j-1,n})}\ast \mathcal R_{1,j}^{(c_{j-1,n})}\Big)\ast (\varepsilon_H\tensor \mathcal J_{\mathbf c^-}),
$$
which reduces to the same identity~\eqref{eq:compressed}
with products replaced by convolution products and elements of~$H^{\wh\tensor k}$ replaced by their dual counterparts. Thus, \eqref{eq:dual F step I} follows from Proposition~\ref{prop:key relation}.
To complete the proof of the inductive step,
it remains to apply Lemma~\ref{lem:Jc rec}
together with Corollary~\ref{cor:dual twist from rel twist}.
\end{proof}
Similarly to Corollary~\ref{cor:other recursion Jc},
we have
\begin{corollary}\label{cor:other rec dual Jc}
For any transitive~$\mathbf c:I_n\to C$, $\ascprodst\limits_{2\le i\le n} \mathcal R_{i-1,n}^{(\mathbf c(1,i))}$ is a relative dual Drinfeld twist 
for~$A\tensor B$ where~$A=H$ and~$B=H^{\tensor (n-1)}$ with the multiplication twisted by~$\mathcal J_{\mathbf c^+}$.
\end{corollary}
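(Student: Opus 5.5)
The plan is to mirror the proof of Corollary~\ref{cor:other recursion Jc}, with elements of $H^{\wh\tensor k}$ replaced by functionals on $H^{\tensor k}$ and products replaced by convolution products. First, apply the homomorphism $\Phi^{(2n)}_{\underline{\mathcal R}}$ to the second recursion~\eqref{eq:rec Jc-2} of Lemma~\ref{lem:Jc rec}. This gives
$$
\mathcal J_{\mathbf c}=(\mathcal J_{\mathbf c^+})_{[2,2n]\setminus\{n+1\}}\ast \mathcal G_{\mathbf 2,\mathbf 3},\qquad \mathcal G:=\ascprodst_{2\le i\le n}\mathcal R_{i-1,n}^{(\mathbf c(1,i))}\in\Hom_\kk(B\tensor A,\kk),
$$
where $A=H$, $B=H^{\tensor(n-1)}$, and the legs are grouped as $\mathbf 1=\{1\}$, $\mathbf 2=[2,n]$, $\mathbf 3=\{n+1\}$, $\mathbf 4=[n+2,2n]$. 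Indeed, $(\mathcal R_{i,n+1})$ pairs the $i$th leg of the first copy of $H^{\tensor n}$, which lies in $B$, with the first leg of the second copy, which is $A$. Moreover $(\mathcal J_{\mathbf c^+})_{[2,2n]\setminus\{n+1\}}$ is $(\mathcal J_A)_{1,3}\ast(\mathcal J_B)_{2,4}$ with $\mathcal J_A=\varepsilon\tensor\varepsilon$ trivial and $\mathcal J_B=\mathcal J_{\mathbf c^+}$.

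Next, since $\mathbf c^+$ is transitive (Lemma~\ref{lem:rest ext trans}\ref{lem:rest ext trans.b}), $\mathcal J_{\mathbf c^+}$ is a dual Drinfeld twist for $H^{\tensor(n-1)}$ by Theorem~\ref{thm:dual Jc twist}. Theorem~\ref{thm:dual Jc twist} applied to $\mathbf c$ says the same for $\mathcal J_{\mathbf c}$ on $H^{\tensor n}=A\tensor B$.

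Now invoke Lemma~\ref{lem:dual rel twist from twist} with $\mathcal J_{A\tensor B}=\mathcal J_{\mathbf c}$. It shows that $(\mathcal J_A)^{\ast-1}_{1,3}\ast(\mathcal J_B)^{\ast-1}_{2,4}\ast\mathcal J_{\mathbf c}=\mathcal G_{2,3}$ is a dual Drinfeld twist for $A\tensor B$ with multiplication $(\cdot_A)\tensor(\bullet_{\mathcal J_{\mathbf c^+}})$. By definition, this means that $\mathcal G$ is a relative dual Drinfeld twist for $A\tensor B$ with $B$ carrying the multiplication twisted by $\mathcal J_{\mathbf c^+}$, which is the assertion.

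The main obstacle is bookkeeping rather than new identities. I must check that the leg reindexing $\phi_{[2,2n]\setminus\{n+1\}}$ matches the identification $(A\tensor B)^{\tensor2}\cong H^{\tensor 2n}$ with $\mathcal J_B$ occupying legs $\mathbf 2,\mathbf 4$. I also need that $\Phi^{(2n)}_{\underline{\mathcal R}}\circ\phi_{\mathbf i}$ equals reindexing of functionals; this holds because $\mathcal R_{i,j}$ is extended by counits. Finally, the ordering in $\ascprodst$ must agree with that of $\ascprod_{2\le i\le n}\sfR_{i,n+1}^{(\mathbf c(1,i))}$, after the relabelling $i\mapsto i-1$ within $B$ and $n+1\mapsto n$.
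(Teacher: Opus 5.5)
Your proposal is correct and is essentially the argument the paper intends. The paper gives this corollary as the dual analogue of Corollary~\ref{cor:other recursion Jc}, whose proof combines the second recursion~\eqref{eq:rec Jc-2} of Lemma~\ref{lem:Jc rec} with the twist-transfer lemma and the main twist theorem; in the dual setting these are Lemma~\ref{lem:dual rel twist from twist} and Theorem~\ref{thm:dual Jc twist}, applied to both $\mathbf c$ and $\mathbf c^+$, exactly as you do, and your bookkeeping checks (reindexing of functionals is precomposition with a coalgebra map, hence compatible with $\Phi^{(2n)}_{\underline{\mathcal R}}$ and convolution order; the leg groups $\mathbf 2,\mathbf 3,\mathbf 4$ match) are precisely what makes this immediate.
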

Abbreviate~$\Psi^{(c)}:=\Psi_{\mathcal R^{(c)}}\in\End_{\kk}(H\tensor H)$
in the notation of Corollary~\ref{cor:dual twist from rel twist}, that is
\begin{equation}\label{eq:Psi via R}
\Psi^{(c)}(h\tensor h')=(\mathcal R^{(c)})^{\ast-1}(h_{(1)},h'_{(1)})\mathcal R^{(c)}(h_{(3)},h'_{(3)})
h'_{(2)}\tensor h_{(2)},\qquad h,h'\in H.
\end{equation}
\begin{proposition}\label{prop:Jc twisted mult}
Let~$n\in\ZZ_{>1}$, let~$\mathbf c:I_n\to C$ be
transitive and abbreviate~$\bullet_{\mathbf c}:=
\bullet_{\mathcal J_{\mathbf c}}$. Then
for all~$\mathbf h,\mathbf h'\in H^{\tensor n}$
\begin{equation}\label{eq:product via braid}
\mathbf h\bullet_{\mathbf c} \mathbf h'
=m_H{}^{\tensor n}\circ \Big(\dscprod\limits_{1\le i\le n-1}
\ascprod\limits_{i+1\le j\le n}
\id_H^{\tensor(i+j-2)}\tensor \Psi^{(\mathbf c(i,j))}\tensor\id_H^{\tensor (2n-i-j)}\Big)(\mathbf h\tensor \mathbf h').
\end{equation}
Explicitly, if~$\mathbf h=a^1\tensor\cdots\tensor a^n$,
$\mathbf h'=b^1\tensor\cdots \tensor b^n$, $a^i,b^i\in H$,
$i\in[n]$ then
\begin{align}
\mathbf h\bullet_{\mathbf c}
\mathbf h'
&=
\prod_{1\le j<i\le n} (\mathcal R^{(\mathbf c(j,i))})^{\ast-1}
(a^i_{(j)},b^j_{(n+1-i)})\mathcal R^{(\mathbf c(j,i))}(a^i_{(2i-j)},b^j_{(n+i+1-2j)})
\ascprodtens_{1\le i\le n} a^i_{(i)}b^i_{(n+1-i)}.
\label{eq:c product}
\end{align}
In particular, if
all the~$\mathcal R^{(c)}$, $c\in C$ are 
counital in the sense of~\eqref{eq:counital}
then
\begin{align*}
(1^{\tensor(l-1)}\tensor h
\tensor 1^{\tensor(n-l)})\bullet_{\mathbf c} 
(1^{\tensor(k-1)}\tensor h'\tensor 1^{\tensor(n-k)})=
\begin{cases}
(hh')_k,&k=l,\\
(h\tensor h')_{l,k},&k>l,\\
(\Psi^{(\mathbf c(k,l))}(h\tensor h'))_{k,l},
&k<l,
\end{cases}
\end{align*}
for all~$h,h'\in H$, $k,l\in[n]$.
\end{proposition}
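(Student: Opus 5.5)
We argue by induction on~$n$, using the recursion \eqref{eq:rec Jc} for~$\mathsf J_{\mathbf c}$ together with Corollary~\ref{cor:dual twist from rel twist}, which says how the product twisted by $(\mathcal J_A)_{1,3}\ast(\mathcal J_B)_{2,4}\ast\mathcal F_{2,3}$ factors.

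\textbf{Base case and recursion.} For $n=2$ we have $\mathcal J_{\mathbf c}=\mathcal R^{(\mathbf c(1,2))}_{2,3}$. Corollary~\ref{cor:dual twist from rel twist}, applied with trivial $\mathcal J_A,\mathcal J_B$ and $\mathcal F=\mathcal R^{(\mathbf c(1,2))}$, gives
\[
(a^1\tensor a^2)\bullet_{\mathbf c}(b^1\tensor b^2)=(m_H\tensor m_H)(\id_H\tensor\Psi^{(\mathbf c(1,2))}\tensor\id_H)(a^1\tensor a^2\tensor b^1\tensor b^2),
\]
which is \eqref{eq:product via braid} for $n=2$. For the inductive step, \eqref{eq:rec Jc} gives
\[
\mathcal J_{\mathbf c}=(\mathcal J_{\mathbf c^-})_{[1,2n-1]\setminus\{n\}}\ast\mathcal F_{\mathbf 2,\mathbf 3},
\qquad
\mathcal F=\dscprodst_{1\le i\le n-1}\mathcal R^{(\mathbf c(i,n))}_{1,i+1},
\]
and $\mathcal F$ is a relative dual Drinfeld twist by the proof of Theorem~\ref{thm:dual Jc twist}. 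Corollary~\ref{cor:dual twist from rel twist}, with $A=H^{\tensor(n-1)}$ carrying $\bullet_{\mathbf c^-}$ and $B=H$ untwisted, then yields
\[
\mathbf h\bullet_{\mathbf c}\mathbf h'=(\bullet_{\mathbf c^-}\tensor m_H)\circ(\id\tensor\Psi_{\mathcal F}\tensor\id)(\mathbf h\tensor\mathbf h').
\]

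\textbf{Factoring $\Psi_{\mathcal F}$.} We claim that $\Psi_{\mathcal F}$, a map $H\tensor H^{\tensor(n-1)}\to H^{\tensor(n-1)}\tensor H$, is the composite of the elementary braidings $\Psi^{(\mathbf c(i,n))}$ that move the single factor $a^n$ leftward past $b^1,\dots,b^{n-1}$ in turn. To see this, write $\mathcal F$ as a convolution product of the functionals $\mathcal R^{(\mathbf c(i,n))}_{1,i+1}$. Since each factor evaluates on a single tensor leg of~$A$, the Sweedler expansion of $\mathcal F^{\ast-1}(\cdot)\,\mathcal F(\cdot)$ splits leg by leg. Coassociativity then lets us regroup the result as an iterated composite of the $\Psi^{(\mathbf c(i,n))}$. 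These are exactly the factors $\id^{\tensor(i+n-2)}\tensor\Psi^{(\mathbf c(i,n))}\tensor\id^{\tensor(n-i)}$ appearing in \eqref{eq:product via braid}.

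\textbf{Assembling \eqref{eq:product via braid}.} Substituting the inductive formula for $\bullet_{\mathbf c^-}$ and commuting braidings that act on disjoint positions, we recover the ordered product in \eqref{eq:product via braid}, after reindexing positions between $H^{\tensor 2(n-1)}$ and $H^{\tensor 2n}$. Matching this ordering with $\dscprod_i\ascprod_j$ is the main bookkeeping obstacle. I would first check the case $n=3$ by hand to fix the conventions, and then use the fact that the $\Psi$'s are linear maps built from coassociative Sweedler splittings.

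\textbf{Explicit formula and counital case.} Formula \eqref{eq:c product} follows by expanding every $\Psi$ via \eqref{eq:Psi via R} and tracking which Sweedler component each $\mathcal R^{\pm1}$ consumes. Each $a^i$ is split $2i-1$ times and each $b^j$ is split $2(n-j)+1$ times, and the indices $(j)$, $(2i-j)$, $(n+1-i)$, $(n+i+1-2j)$ come out of this counting. For the last assertion, take $\mathbf h$ and $\mathbf h'$ with entries $1$ except in one slot each. By counitality, $\mathcal R^{(c)}(1,\cdot)=\varepsilon$ and likewise for its inverse, so every factor in \eqref{eq:c product} with a unit argument collapses to~$\varepsilon$. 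Only the $(j,i)=(k,l)$ factor survives, and only when $k<l$; this gives $(\Psi^{(\mathbf c(k,l))}(h\tensor h'))_{k,l}$. In the cases $k\ge l$ all factors collapse, leaving $(hh')_k$ when $k=l$ and $(h\tensor h')_{l,k}$ when $k>l$.
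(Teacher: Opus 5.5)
Your proof is correct, but it runs the induction through the mirror-image recursion.

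\emph{The paper's route.} It peels off the \emph{first} tensor factor, using \eqref{eq:rec Jc-2} in the form of Corollary~\ref{cor:other rec dual Jc}, with $A=H$ and $B=(H^{\tensor(n-1)},\bullet_{\mathbf c^+})$. It then checks in \eqref{eq:id Psi b a} that $\Psi_{\mathcal F}$ is the block $\ascprod_{2\le j\le n}$ of braidings $\Psi^{(\mathbf c(1,j))}$. This is the $i=1$ block of \eqref{eq:product via braid}, the first one applied. The inductive hypothesis for $\mathbf c^+$, shifted by two legs, supplies the remaining blocks verbatim, so the assembly is immediate.

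\emph{Your route.} You peel off the \emph{last} factor via \eqref{eq:rec Jc}, reusing the relative twist from the proof of Theorem~\ref{thm:dual Jc twist}. This produces the column $j=n$ of braidings first, and $\bullet_{\mathbf c^-}$ then acts on legs $1,\dots,2n-2$ with no reindexing at all.

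\emph{The deferred reordering.} The price of your route is the reordering you left as bookkeeping, but it is a one-line check. Let $X_{i,j}$ denote $\Psi^{(\mathbf c(i,j))}$ acting on legs $i+j-1,i+j$. Then $X_{i,n}$ touches legs $i+n-1,i+n$, while every $X_{i',j}$ with $i'<i$ and $j\le n-1$ touches only legs $\le i+n-2$. So you can slide the $X_{i,n}$ to the right, keeping their mutual order, and obtain
\[
\dscprod_{1\le i\le n-1}\ascprod_{i+1\le j\le n}X_{i,j}
=\Bigl(\dscprod_{1\le i\le n-2}\ascprod_{i+1\le j\le n-1}X_{i,j}\Bigr)\circ X_{n-1,n}\circ\cdots\circ X_{1,n},
\]
which is exactly what your recursion yields.

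\emph{Two small corrections.} In your factorization of $\Psi_{\mathcal F}$, it is the descending order in $\mathcal F$ (reversed in $\mathcal F^{\ast-1}$) that pairs $a^n_{(i)}$ and $a^n_{(2n-i)}$ with $b^i$; this matches the nesting of $X_{n-1,n}\circ\cdots\circ X_{1,n}$. Also, $a^n$ travels rightward, from leg $n$ to leg $2n-1$, not leftward.

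\emph{The explicit formula.} The paper obtains \eqref{eq:c product} without unwinding any braid. It evaluates $\mathcal J_{\mathbf c}^{\ast\pm1}$ on pure tensors, \eqref{eq:Jc on simple tens}, and substitutes into the definition of $\bullet_{\mathcal J}$; this is shorter and independent of \eqref{eq:product via braid}. Your Sweedler tracking through the composite also works, your piece counts are right, and your derivation of the counital case from \eqref{eq:c product} is fine.
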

\begin{proof}
We prove~\eqref{eq:product via braid} by induction on~$n$. The case~$n=2$ is
immediate from Lemma~\ref{lem:basic rel dual} and Corollary~\ref{cor:dual twist from rel twist} with~$A=B=H$, $\mathcal J_A=\mathcal J_B=\varepsilon\tensor\varepsilon$. 
For the inductive step, let
$a,a'\in A=H$, $\mathbf b,\mathbf b'\in B=H^{\tensor (n-1)}$, the multiplication in~$B$ being~$\bullet_{\mathbf c^+}$. 
Using Corollaries~\ref{cor:dual twist from rel twist} and~\ref{cor:other rec dual Jc} and the
induction hypothesis, we obtain
\begin{align*}
(a'&\tensor \mathbf b)\bullet_{\mathbf c}(a\tensor \mathbf  b')
=(m_H\tensor \bullet_{\mathbf c^+})(\id_A\tensor \Psi_{\mathcal F}
\tensor \id_B)( a'\tensor\mathbf b\tensor a\tensor\mathbf  b')\\
&=
m_H{}^{\tensor n}\circ
\Big(\dscprod\limits_{2\le i\le n-1}
\ascprod\limits_{i+1\le j\le n}
\id_H^{\tensor(i+j-2)}\tensor \Psi^{(\mathbf c(i,j))}\tensor\id_H^{\tensor (2n-i-j)}\Big)
(a'\tensor \Psi_{\mathcal F}(\mathbf b\tensor  a)
\tensor \mathbf b'),
\end{align*}
where~$\mathcal F=\ascprodst_{1\le j\le n-1} \mathcal R_{j,n}^{(\mathbf c(1,j+1))}$. Furthermore,
if~$\mathbf b=h^1\tensor\cdots \tensor h^{n-1}$,
$h^j\in H$, $j\in[n-1]$ then
\begin{align}
\Psi_{\mathcal F}(&\mathbf b\tensor a)
=\mathcal F^{\ast-1}(\mathbf b_{(1)}, a_{(1)})
\mathcal F(\mathbf b_{(3)},a_{(3)})
a_{(2)} \tensor \mathbf b_{(2)} \nonumber\\
&=\dscprodst_{1\le j\le n-1} (\mathcal R_{j,n}^{(\mathbf c(1,j+1))})^{\ast-1}(\mathbf b_{(1)},a_{(1)})\ascprodst_{1\le j\le n-1} \mathcal R_{j,n}^{(\mathbf c(1,j+1))}(\mathbf b_{(3)},a_{(3)}) a_{(2)}\tensor \mathbf b_{(2)}\nonumber\\
&=
\prod_{1\le j\le n-1} (\mathcal R^{(\mathbf c(1,j+1))})^{\ast-1}(h^{j}_{(1)},a_{(n-j)})
\mathcal R^{(\mathbf c(1,j+1))}(h^{j}_{(3)},a_{(n+j)}) a_{(n)}\tensor h^1_{(2)}
\tensor\cdots\tensor h^{n-1}_{(2)}\nonumber\\
&=\Big(\ascprod_{2\le j\le n} (\id_H^{\tensor (j-2)}
\tensor \Psi^{(\mathbf c(1,j))}\tensor \id_H^{\tensor (n-j)})\Big)(\mathbf b\tensor a).\label{eq:id Psi b a}
\end{align}
The inductive step and hence~\eqref{eq:product via braid} are now immediate. To prove the second assertion,
note that for~$\mathbf h=a^1\tensor\cdots\tensor a^n$,
$\mathbf h'=b^1\tensor\cdots\tensor b^n$, $a^i,b^i\in H$,
$i\in [n]$ it follows from~\eqref{eq:Jc dual product} that  
\begin{align}
&\mathcal J_{\mathbf c}(\mathbf h,
\mathbf h')
=\varepsilon(a^1)\varepsilon(b^{n})
\prod_{1\le j<i\le n}
\mathcal R^{(\mathbf c(j,i))}(a^{i}_{(i-j)},b^{j}_{(i-j)}),\nonumber\\
&\mathcal J_{\mathbf c}^{\ast-1}(\mathbf h,\mathbf h')=
\varepsilon(a^1)\varepsilon(b^{n})
\prod_{1\le j<i\le n} 
(\mathcal R^{(\mathbf c(j,i))})^{\ast-1}(a^i_{(j)},b^{j}_{(n+1-i)}),\label{eq:Jc on simple tens}
\end{align}
which immediately yields~\eqref{eq:c product}.
Finally, if all the~$\mathcal R^{(c)}$, $c\in C$
are counital then
for~$\mathbf h=1^{\tensor (l-1)}\tensor h\tensor 1^{\tensor (n-l)}$, $\mathbf h'=1^{\tensor (k-1)}
\tensor h'\tensor 1^{\tensor(n-k)}$, $h,h'\in H$, $k,l\in[n]$, \eqref{eq:Jc on simple tens} then yields
\begin{equation*}
\mathcal J_{\mathbf c}^{\ast\pm 1}(\mathbf h,\mathbf h')
=\begin{cases}
\varepsilon(h)\varepsilon(h'),& k\ge l,\\
(\mathcal R^{(\mathbf c(k,l))})^{\ast\pm 1}(h,h'),& k<l.
\end{cases}
\end{equation*}
The last assertion is now immediate.
\end{proof}

Thus, if all the~$\mathcal R^{(c)}$, $c\in C$
are counital, $(H^{\tensor n},\bullet_{\mathbf c})$
is generated by the $h^{(k)}$, $h\in H$, $k\in [n]$
subject to relations
\begin{align*}
&a^{(k)}\bullet_{\mathbf c}b^{(k)}=(ab)^{(k)},\qquad k\in[n],\\
&a^{(l)}\bullet_{\mathbf c}
b^{(k)}=(\mathcal R^{(\mathbf c(k,l))})^{\ast-1}(a_{(1)},b_{(1)})
\mathcal R^{(\mathbf c(k,l))}(a_{(3)},b_{(3)})
b_{(2)}{}^{(k)}\bullet_{\mathbf c}a_{(2)}{}^{(l)},\,\, k<l\in[n],\,a,b\in H.
\end{align*}
\begin{proposition}\label{prop:perm isom}
Suppose that all the~$\mathcal R^{(c)}$, $c\in C$ are counital.
Let~$\mathbf c,\mathbf c':I_n\to C$ be transitive 
and let~$\sigma\in S_n$. Then the assignments
$h^{(i)}\mapsto h^{(\sigma(i))}$, $i\in [n]$, $h\in H$
define 
an isomorphism of bialgebras $(H^{\tensor n},\bullet_{\mathbf c})\to (H^{\tensor n},\bullet_{\mathbf c'})$
if and only if, for all $h,h'\in H$, $k\not=l\in[n]$
$$
\mathcal Z_{k,l;\mathbf c,\mathbf c',\sigma}(h_{(1)}\tensor h'_{(1)})h_{(2)}\tensor h'_{(2)}=
\mathcal Z_{k,l;\mathbf c,\mathbf c',\sigma}(h_{(2)},h'_{(2)})
h_{(1)}\tensor h'_{(1)}
$$
where~$\mathcal Z_{k,l;\mathbf c,\mathbf c',\sigma}=
(\mathcal R^{(\mathbf c(k,l))})^{\Upsilon(l-k)}
\ast (\mathcal R^{(\mathbf c'(\sigma(k),\sigma(l)))})^{\ast-\Upsilon(\sigma(l)-\sigma(k))}\in\Hom_\kk(H\tensor H,\kk)$.
\end{proposition}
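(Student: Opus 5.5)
The map $\phi_\sigma\colon h^{(i)}\mapsto h^{(\sigma(i))}$ is, as a linear map of $H^{\tensor n}$, just the permutation of tensor factors $\sigma\triangleright$. By the earlier lemma on the $S_n$-action, it is a coalgebra automorphism of $H^{\tensor n}$ with its standard comultiplication. Twisting by a dual Drinfeld twist changes only the multiplication, not the coalgebra structure (Lemma~\ref{lem:twisted product}). Hence $\phi_\sigma$ is automatically a bijective coalgebra map $(H^{\tensor n},\bullet_{\mathbf c})\to(H^{\tensor n},\bullet_{\mathbf c'})$. The whole question reduces to when it is an algebra homomorphism.

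Since all $\mathcal R^{(c)}$ are counital, the presentation stated right after Proposition~\ref{prop:Jc twisted mult} applies. The algebra $(H^{\tensor n},\bullet_{\mathbf c})$ is generated by the $h^{(k)}$ subject to two families of relations: $a^{(k)}\bullet_{\mathbf c} b^{(k)}=(ab)^{(k)}$, and the cross relations for $k<l$. The cross relations express $a^{(l)}\bullet_{\mathbf c} b^{(k)}$ through $b_{(2)}{}^{(k)}\bullet_{\mathbf c}a_{(2)}{}^{(l)}$ via $\Psi^{(\mathbf c(k,l))}$.

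Moreover, by the last formula in Proposition~\ref{prop:Jc twisted mult}, $a^{(k)}\bullet_{\mathbf c} b^{(l)}=(a\tensor b)_{k,l}$ for $k<l$. Also, the ordered products $\ascprod a^{(i)}$ exhaust $H^{\tensor n}$, so the relations above give a genuine presentation. Therefore $\phi_\sigma$ is an algebra map if and only if it preserves every defining relation. The same-index relations are preserved trivially. For $k\ne l$, the cross relation must map to a valid identity in $(H^{\tensor n},\bullet_{\mathbf c'})$ among elements supported in slots $\sigma(k),\sigma(l)$.

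For a fixed pair $k\ne l$, I would rewrite each product of two generators in the ordered normal form $x^{(p)}\bullet y^{(q)}$ with $p<q$, which equals $(x\tensor y)_{p,q}$. The source product reads, uniformly in the order of $k,l$,
$$a^{(l)}\bullet_{\mathbf c}b^{(k)}=(\mathcal R^{(\mathbf c(k,l))})^{\ast-\Upsilon(l-k)}(a_{(1)},b_{(1)})\,(\mathcal R^{(\mathbf c(k,l))})^{\ast\Upsilon(l-k)}(a_{(3)},b_{(3)})\,(\text{normal form}).$$
Here exponent $0$ means $\varepsilon\tensor\varepsilon$, so this covers both orders. The target product has the same form with $\mathbf c'(\sigma(k),\sigma(l))$ and $\Upsilon(\sigma(l)-\sigma(k))$. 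Equating $\phi_\sigma$ of the source with the target, and using injectivity of $(x,y)\mapsto(x\tensor y)_{p,q}$, gives an identity in $H\tensor H$. It has the shape $(\mathcal S^{\ast-1}\tensor\id\tensor\mathcal S)\circ\Delta^{(3)}=(\mathcal T^{\ast-1}\tensor\id\tensor\mathcal T)\circ\Delta^{(3)}$ on $a\tensor b$, with $\mathcal S,\mathcal T$ the two convolution powers.

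Lemma~\ref{lem:old mult from twisted}, applied on the coalgebra $H\tensor H$ with $f=\id$, converts this identity into the stated commutation condition for $\mathcal Z_{k,l;\mathbf c,\mathbf c',\sigma}=\mathcal S\ast\mathcal T^{\ast-1}$. Concretely, the identity is equivalent to $\mathcal T^{\ast-1}\ast\mathcal S$ being \emph{central} in the sense $\mathcal Z(x_{(1)})x_{(2)}=\mathcal Z(x_{(2)})x_{(1)}$, and one then passes between $\mathcal T^{\ast-1}\ast\mathcal S$ and $\mathcal S\ast\mathcal T^{\ast-1}$ by the same lemma.

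The main obstacle is the bookkeeping of cases: $k<l$ versus $k>l$, and whether $\sigma$ preserves or reverses the order. The obstacle is getting all four cases to collapse to the single $\Upsilon$-formula with the correct placement of inverses. The second delicate point is justifying the converse: that the relations indeed present the algebra, so that preserving them suffices. I would handle this by constructing an inverse map $\phi_{\sigma^{-1}}$ and checking the relations symmetrically.
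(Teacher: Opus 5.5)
Your proposal is correct and is essentially the paper's proof. The paper likewise writes $\sigma(h^{(l)}\bullet_{\mathbf c}h'{}^{(k)})$ in normal form, compares it with $h^{(\sigma(l))}\bullet_{\mathbf c'}h'{}^{(\sigma(k))}$ according to the relative order of $k,l$ and of $\sigma(k),\sigma(l)$, and concludes by Lemma~\ref{lem:old mult from twisted}. Your remarks that twisting does not change the coalgebra structure, and that the counital presentation reduces everything to pairs of generators, fill in details the paper leaves implicit. For the converse, note also that the conditions for pairs inverted by $\sigma$ make the corresponding generators commute in the target; this is what identifies the algebra map defined on generators with the linear permutation of tensor factors.
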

\begin{proof}
Let~$k\not=l\in[n]$, $h,h'\in H$ and let~$k'=\sigma(k)$, $l'=\sigma(l)$. 
We have 
$$
\sigma(h^{(l)}\bullet_{\mathbf c} h'{}^{(k)})
=\begin{cases}
(\mathcal R^{(\mathbf c(k,l))})^{\ast-1}(h_{(1)},h'_{(1)})
\mathcal R^{(\mathbf c(k,l))}(h_{(3)},h'_{(3)})(h_{(2)}\tensor h'_{(2)})_{l',k'},&k<l,\\
(h\tensor h')_{l',k'},&k>l.
\end{cases}
$$
On the other hand,
$$
h^{(l')}\bullet_{\mathbf c'} h'{}^{(k')}=
\begin{cases}
(\mathcal R^{(\mathbf c'(k',l'))})^{\ast-1}(h_{(1)},h'_{(1)})
\mathcal R^{(\mathbf c'(k',l'))}(h_{(3)},h'_{(3)})(h_{(2)}\tensor h'_{(2)})_{l',k'},&k'<l',\\
(h\tensor h')_{l',k'},&k'>l'.
\end{cases}
$$
The assertion follows by applying Lemma~\ref{lem:old mult from twisted}.
\end{proof}
In particular, for~$n=2$, given any co-quasi-triangular structure~$\mathcal R\in\Hom_\kk(H^{\tensor 2},\kk)$,
we obtain two possible multiplications
$\bullet_\pm:H^{\tensor 2}\tensor H^{\tensor 2}\to H^{\tensor 2}$ on~$H^{\tensor 2}$,
corresponding to~$\mathcal R^{(\pm 1)}$ 
where~$\mathcal R^{(1)}=\mathcal R$
and~$\mathcal R^{(-1)}=\mathcal R^{\ast-1}\circ\tau$
(cf. Lemma~\ref{lem:basic coquas family}).
Then $\tau:H\tensor H\to H\tensor H$
is an isomorphism of algebras~$(H^{\tensor 2},
\bullet_+)\to (H^{\tensor 2},\bullet_-)$ if and only if
\begin{equation}\label{eq:n=2 isom cond}
\mathcal R(h_{(1)},h'_{(1)})h_{(2)}\tensor h'_{(2)}=
\mathcal R(h_{(2)},h'_{(2)})h_{(1)}\tensor h'_{(1)},\qquad h,h'\in H,
\end{equation}
which holds automatically if~$H$ is cocommutative.

We complete this chapter with the counterpart of Theorem~\ref{thm:diag embed quantum}.
\begin{theorem}\label{thm:mult is homomorphism}
For any transitive~$\mathbf c:I_n\to C$, the iterated multiplication map~$m_H^{(n)}:
H^{\tensor n}\to H$
is a homomorphism of bialgebras~$(H^{\tensor n},
\bullet_{\mathbf c})\to H$.
\end{theorem}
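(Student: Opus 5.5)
The plan is to mirror the proof of Theorem~\ref{thm:diag embed quantum} in the dual setting and argue by induction on~$n$. The case~$n=1$ is trivial. Note first that~$m_H^{(n)}$ is a coalgebra homomorphism $H^{\tensor n}\to H$ for the standard coalgebra structure on~$H^{\tensor n}$, because~$H$ is a bialgebra. The twist~$\bullet_{\mathbf c}$ changes only the multiplication (Lemma~\ref{lem:twisted product}). So what has to be shown is that~$m_H^{(n)}$ is unital, which is clear, and multiplicative for~$\bullet_{\mathbf c}$.

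For the inductive step, set~$A=H$ and~$B=H^{\tensor(n-1)}$ with multiplication~$\bullet_{\mathbf c^+}$. By the induction hypothesis, $\varphi_B:=m_H^{(n-1)}:B\to H$ is an algebra homomorphism; take~$\varphi_A=\id_H$ and~$U=H$. By Corollary~\ref{cor:other rec dual Jc}, $\mathcal F=\ascprodst_{2\le i\le n}\mathcal R_{i-1,n}^{(\mathbf c(1,i))}$ is a relative dual Drinfeld twist for~$A\tensor B$. By Corollary~\ref{cor:dual twist from rel twist} together with the recursion of Lemma~\ref{lem:Jc rec} (this is exactly the computation already used in the proof of Proposition~\ref{prop:Jc twisted mult}), the multiplication~$\bullet_{\mathbf c}$ on~$H\tensor H^{\tensor(n-1)}$ is the one obtained from~$\bullet_{\mathbf c^+}$ on~$B$ and~$\mathcal F$. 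Since $m_H^{(n)}=m_H\circ(\id_H\tensor m_H^{(n-1)})$, Proposition~\ref{prop:twist hom alg} (applied with the twisted multiplication on~$B$, in the form of Corollary~\ref{cor:dual twist from rel twist}) reduces the claim to the identity~\eqref{eq:cnd hom F}:
\[
\mathcal F(\mathbf b_{(1)},a_{(1)})\,m_H^{(n-1)}(\mathbf b_{(2)})\,a_{(2)}=\mathcal F(\mathbf b_{(2)},a_{(2)})\,a_{(1)}\,m_H^{(n-1)}(\mathbf b_{(1)}),\qquad a\in H,\ \mathbf b\in H^{\tensor(n-1)}.
\]

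To prove this, write~$\mathbf b=h^1\tensor\cdots\tensor h^{n-1}$ and expand~$\mathcal F$ as a convolution product, as in~\eqref{eq:id Psi b a}. This gives $\mathcal F(\mathbf b,a)=\prod_j\mathcal R^{(c_j)}(h^j,a_{(\cdot)})$, with $c_j=\mathbf c(1,j+1)$ and suitably ordered Sweedler indices of~$a$. The identity is then the dual counterpart of Lemma~\ref{lem:hom bialg intern}. I would prove a dual version of that lemma by descending induction on~$k$, moving~$a$ leftward past~$h^{n-1},\dots,h^1$ one factor at a time. Each step uses~\eqref{eq:coquas triang 1} for~$\mathcal R^{(c_k)}$, namely $\mathcal R^{(c)}(x_{(1)},y_{(1)})x_{(2)}y_{(2)}=\mathcal R^{(c)}(x_{(2)},y_{(2)})y_{(1)}x_{(1)}$, applied with~$x=h^k$ and~$y$ the appropriate Sweedler component of~$a$. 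The step I expect to be the main obstacle is the bookkeeping of Sweedler indices: one has to check that the order of the convolution factors in~$\mathcal F$ (the~$\ascprodst$ ordering) matches the order in which~$a$ passes the~$h^j$. As in the remark after Theorem~\ref{thm:diag embed quantum}, this step does not use transitivity. Transitivity enters only through Theorem~\ref{thm:dual Jc twist} and Corollary~\ref{cor:other rec dual Jc}, which guarantee that~$\bullet_{\mathbf c}$ is associative and that the reduction via Proposition~\ref{prop:twist hom alg} is legitimate.

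Alternatively, a direct route is available: apply~$m_H^{(n)}$ to the explicit formula~\eqref{eq:product via braid}. Since~$m_H\circ\Psi^{(c)}=m_H$ by~\eqref{eq:coquas triang 1} and Lemma~\ref{lem:old mult from twisted}, each braiding~$\Psi^{(\mathbf c(i,j))}$ can be absorbed once the adjacent factors are multiplied. This telescopes to $m_H^{(n)}(\mathbf h)\,m_H^{(n)}(\mathbf h')$, and I would use this as a cross-check.
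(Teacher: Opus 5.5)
Your main route is the paper's proof. It runs by induction on $n$ with $A=H$ and $B=(H^{\tensor(n-1)},\bullet_{\mathbf c^+})$, uses Corollary~\ref{cor:other rec dual Jc} and Proposition~\ref{prop:twist hom alg} to reduce everything to \eqref{eq:interm step hom}, and proves that identity by repeatedly applying \eqref{eq:coquas triang 1} to move $a$ past $h^1,\dots,h^{n-1}$. The paper starts from the other side and moves $a$ rightward by ascending induction on $k$, which is the same computation.

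Your cross-check via \eqref{eq:product via braid} together with $m_H\circ\Psi^{(c)}=m_H$ is also correct. Once Proposition~\ref{prop:Jc twisted mult} is available it is in fact a shorter complete proof, because $m_H^{(n)}\circ m_H^{\tensor n}=m_H^{(2n)}$ absorbs every adjacent braiding.
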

\begin{proof}
Since~$m_H^{(n)}$ is a homomorphism of coalgebras,
we only need to prove that it is a homomorphism of algebras.
The argument by induction on~$n$, the case~$n=1$ being trivial. For the inductive step, let~$A=H$
and let~$B=(H^{\tensor(n-1)},\bullet_{\mathbf c^+})$.
Since
$m^{(n-1)}_H:(H^{\tensor(n-1)},\bullet_{\mathbf c^+})\to H$
is a homomorphism of algebras by the induction hypothesis
and~$m^{(n)}_H=m_H\circ(\id_H\tensor m_H^{(n-1)})$,
by Corollary~\ref{cor:other rec dual Jc} and Proposition~\ref{prop:twist hom alg} used with~$C=H$, $\varphi_A=\id_H$, $\varphi_B=m_H^{(n-1)}$
and~$\mathcal F$ as in the proof of Proposition~\ref{prop:Jc twisted mult},
to complete the inductive step it remains to show
that, for all~$a\in H$, $\mathbf b\in H^{\tensor(n-1)}$, 
\begin{equation}\label{eq:interm step hom}
\mathcal F(\mathbf b_{(2)},a_{(2)})
a_{(1)}m_H^{(n-1)}(\mathbf b_{(1)})=\mathcal F(\mathbf b_{(1)},a_{(1)})
m_H^{(n-1)}(\mathbf b_{(2)})a_{(2)}.
\end{equation}
Clearly, it suffices to prove~\eqref{eq:interm step hom}
for~$\mathbf b=h^1\tensor\cdots\tensor h^{n-1}$,
$h^j\in H$, $1\le j\le n-1$.
We need the following
\begin{lemma}
For all~$a,h^1,\dots,h^{n-1}\in H$, and~$1\le k\le n$
\begin{multline*}
\mathcal F(h^1_{(2)}\tensor\cdots\tensor h^{n-1}_{(2)},
a_{(2)}) a_{(1)}\Big(\ascprod_{1\le j\le n-1} h^j_{(1)}\Big)
\\=
\prod_{1\le j\le k-1}\!\!\!\!\mathcal R^{(\mathbf c(1,j+1))}
(h^j_{(1)},a_{(j)})\prod_{k\le j\le n-1}\!\!\!\!
\mathcal R^{(\mathbf c(1,j+1))}(h^j_{(2)},a_{(j+1)})
\Big(\ascprod_{1\le j\le k-1}\!\!\!\! h^j_{(2)}\Big)
a_{(k)}\Big(\ascprod_{k\le j\le n-1}\!\!\!\!h^j_{(1)}\Big)
\end{multline*}
\end{lemma}
\begin{proof}
The argument is by induction on~$k$, the case~$k=1$ being immediate from the definition of~$\mathcal F$. To prove
the inductive step, it suffices to observe that,
by~\eqref{eq:coquas triang 1}, 
\begin{align*}
&\prod_{1\le j\le k-1}\!\!\!\!\mathcal R^{(\mathbf c(1,j+1))}
(h^j_{(1)},a_{(j)})\prod_{k\le j\le n-1}\!\!\!\!
\mathcal R^{(\mathbf c(1,j+1))}(h^j_{(2)},a_{(j+1)})
\Big(\ascprod_{1\le j\le k-1}\!\!\!\! h^j_{(2)}\Big)
a_{(k)}\Big(\ascprod_{k\le j\le n-1}\!\!\!\!h^j_{(1)}\Big)
\\
&=\prod_{1\le j\le k-1}\!\!\!\!\mathcal R^{(\mathbf c(1,j+1))}
(h^j_{(1)},a_{(j)})\prod_{k+1\le j\le n-1}\!\!\!\!
\mathcal R^{(\mathbf c(1,j+1))}(h^j_{(2)},a_{(j+1)})
\Big(\ascprod_{1\le j\le k-1}\!\!\!\! h^j_{(2)}\Big)
\times\\
&\mskip200mu\mathcal R^{(\mathbf c(1,k+1))}(h^k_{(2)},a_{(k+1)})
a_{(k)}h^k_{(1)}\Big(\ascprod_{k+1\le j\le n-1}\!\!\!\!h^j_{(1)}\Big)\\
&=\prod_{1\le j\le k-1}\!\!\!\!\mathcal R^{(\mathbf c(1,j+1))}
(h^j_{(1)},a_{(j)})\prod_{k+1\le j\le n-1}\!\!\!\!
\mathcal R^{(\mathbf c(1,j+1))}(h^j_{(2)},a_{(j+1)})
\Big(\ascprod_{1\le j\le k-1}\!\!\!\! h^j_{(2)}\Big)
\times\\
&\mskip200mu\mathcal R^{(\mathbf c(1,k+1))}(h^k_{(1)},a_{(k)})
h^k_{(2)} a_{(k+1)}\Big(\ascprod_{k+1\le j\le n-1}\!\!\!\!h^j_{(1)}\Big)
\\
&=\prod_{1\le j\le k}\!\!\!\!\mathcal R^{(\mathbf c(1,j+1))}
(h^j_{(1)},a_{(j)})\prod_{k+1\le j\le n-1}\!\!\!\!
\mathcal R^{(\mathbf c(1,j+1))}(h^j_{(2)},a_{(j+1)})\times\\
&\mskip200mu
\Big(\ascprod_{1\le j\le k-1}\!\!\!\! h^j_{(2)}\Big)
a_{(k+1)}\Big(\ascprod_{k+1\le j\le n-1}\!\!\!\!h^j_{(1)}\Big).\qedhere
\end{align*}
\end{proof}
It remains to observe that, for~$k=n$, the 
right hand side of the identity in the Lemma is
precisely the right hand side of~\eqref{eq:interm step hom} with~$\mathbf b=h^1\tensor \cdots\tensor h^{n-1}$,
$h^i\in H$, $1\le i\le n-1$. 
\end{proof}
\begin{remark}
If all the~$\mathcal R^{(c)}$ are counital, the argument simplifies dramatically since in that case it suffices to prove the assertion for generators $h^{(k)}$, $h\in H$, $k\in[n]$ (cf.~\S\ref{subs:Poisson}).
\end{remark}

\section{Examples}

\subsection{Twisted tensor powers of quantum matrices}\label{subs:Poisson ex}
\label{subs:twisted tens quantum matrices}
We begin with the classical picture.
Retain the notation of~\S\ref{subs:Poisson}
and abbreviate~$\mathcal A_m=\kk[\Mat_m]$. Let~$E_{a,b}$, $a,b\in [m]$ be 
the standard basis of~$\Mat_m$ or of~$\lie g=\lie{gl}_m$
and define $x_{i,j}\in \lie g^*$, $i,j\in[m]$, by $x_{i,j}(E_{a,b})=
\delta_{i,a}\delta_{j,b}$, $a,b\in [m]$. Then~$\mathcal A_m$ identifies with~$\kk[x_{i,j}\,:\, i,j\in[m]]$
and is naturally a sub-bialgebra of~$U(\lie g)^\smallsquare\subset U(\lie g)^*$ in the notation 
of~\S\ref{subs:compl}.
The algebra~$\kk[GL_m]$ is the localization of~$\mathcal A_m$ by the determinant~$\det$. 
Then standard
right and left actions of~$\lie g$ on~$\mathcal A_m$
are given by, respectively, by
$$
x_{i,j}\triangleleft E_{a,b}=\delta_{a,i} x_{b,j},\quad E_{a,b}\triangleright x_{i,j}=\delta_{b,j}x_{i,a},\qquad a,b,i,j\in [m].
$$
Let
$$
r=\sum_{1\le i\le m} E_{i,i}\tensor E_{i,i}+2\sum_{1\le i<j\le m} E_{i,j}\tensor E_{j,i}
$$
be the standard r-matrix for~$\lie g$ (see e.g.~\cite{CPbook}*{\S2.2}). 
\begin{lemma}\label{lem:r eps acts}
For all $i,i',j,j'\in [m]$, $\epsilon\in\{1,-1\}$
\begin{align*}
r^{(\epsilon)}\bowtie x_{i,j}\tensor x_{i',j'}
=(\sign(i'-i)+\epsilon) x_{i',j}\tensor x_{i,j'}-
(\sign(j-j')+\epsilon)x_{i,j'}\tensor x_{i',j}.
\end{align*}
\end{lemma}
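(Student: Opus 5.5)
The plan is a direct computation from the definitions of $\bowtie$ and of the actions $\triangleleft$, $\triangleright$ on the generators $x_{i,j}$. Recall that $(x\tensor y)\bowtie(f\tensor f')=(f\triangleleft x)\tensor(f'\triangleleft y)-(x\triangleright f)\tensor(y\triangleright f')$. Since $\bowtie$ is bilinear in the first argument, it suffices to evaluate it on each summand of $r^{(1)}=r$ and of $r^{(-1)}=-\tau(r)$. Here
\[
\tau(r)=\sum_{1\le i\le m}E_{i,i}\tensor E_{i,i}+2\sum_{1\le a<b\le m}E_{b,a}\tensor E_{a,b},
\]
so I would write $r^{(\epsilon)}$ uniformly as $\epsilon\sum_a E_{a,a}\tensor E_{a,a}+\sum_{a\ne b}(1+\epsilon\,\sign(b-a))E_{a,b}\tensor E_{b,a}$. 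To check this for $\epsilon=1$: the coefficient $1+\sign(b-a)$ equals $2$ for $a<b$ and $0$ for $a>b$. For $\epsilon=-1$: the coefficient $1-\sign(b-a)$ equals $2$ exactly when $a>b$, which matches the terms $-2E_{b,a}\tensor E_{a,b}$ with $b>a$ after relabeling. However, the second case needs care, because those terms carry an overall minus sign. So I would actually keep a general coefficient $\beta_{a,b}$ on $E_{a,b}\tensor E_{b,a}$ for $a\ne b$ and a coefficient $\gamma$ on the diagonal part, and only substitute the values at the end.

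Next I compute the two halves for a single summand. Using $x_{i,j}\triangleleft E_{a,b}=\delta_{a,i}x_{b,j}$, the right-action half of $(E_{a,b}\tensor E_{b,a})\bowtie(x_{i,j}\tensor x_{i',j'})$ is $\delta_{a,i}\delta_{b,i'}\,x_{i',j}\tensor x_{i,j'}$. Using $E_{a,b}\triangleright x_{i,j}=\delta_{b,j}x_{i,a}$, the left-action half is $\delta_{b,j}\delta_{a,j'}\,x_{i,j'}\tensor x_{i',j}$. Summing over $a,b$ (the diagonal terms included, with $a=b$), the right-action part collapses to the single coefficient of $E_{i,i'}\tensor E_{i',i}$ in $r^{(\epsilon)}$, times $x_{i',j}\tensor x_{i,j'}$. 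Likewise, the left-action part collapses to the coefficient of $E_{j',j}\tensor E_{j,j'}$, times $x_{i,j'}\tensor x_{i',j}$.

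It then remains to identify these coefficients. The coefficient of $E_{a,b}\tensor E_{b,a}$ in $r^{(\epsilon)}$ should equal $\sign(b-a)+\epsilon$ for all $a,b$, including $a=b$ where it gives $\epsilon$. I verify this case by case. For $\epsilon=1$ it is $2,1,0$ according as $a<b$, $a=b$, $a>b$. For $\epsilon=-1$ we have $-\tau(r)$, whose coefficient on $E_{a,b}\tensor E_{b,a}$ is $-1$ if $a=b$, $-2$ if $a>b$ (coming from $-2E_{b,a}\tensor E_{a,b}$ with $b<a$), and $0$ if $a<b$; this equals $\sign(b-a)-1$. With $(a,b)=(i,i')$ the right-action coefficient is $\sign(i'-i)+\epsilon$, and with $(a,b)=(j',j)$ the left-action coefficient is $\sign(j-j')+\epsilon$. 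This gives exactly the stated formula.

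The only delicate point is bookkeeping: tracking signs and index order in $-\tau(r)$, and making sure the diagonal terms fit the uniform formula $\sign(b-a)+\epsilon$. Everything else is immediate from the stated actions.
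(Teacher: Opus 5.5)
Your proposal is correct and is essentially the paper's direct computation. The paper evaluates the diagonal part and the strictly upper-triangular part of $r$ separately and then merges them via $\delta_{k,l}+2\Upsilon(\epsilon(l-k))=1+\epsilon\,\sign(l-k)$; you instead fold this into the single coefficient $\sign(b-a)+\epsilon$ of $E_{a,b}\tensor E_{b,a}$ in $r^{(\epsilon)}$ and read off two entries. (Your first tentative uniform expression $\epsilon\sum_a E_{a,a}\tensor E_{a,a}+\sum_{a\ne b}(1+\epsilon\,\sign(b-a))E_{a,b}\tensor E_{b,a}$ has the wrong sign on the terms with $a>b$ when $\epsilon=-1$, but you rightly discard it in favour of the correct case-by-case coefficient.)
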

\begin{proof}
We have 
\begin{align*}
\sum_{a\in[m]} (E_{aa}\tensor E_{aa})&\bowtie (x_{i,j}\tensor x_{i',j'})=
\sum_{a\in [m]} (\delta_{a,i}\delta_{a,i'} x_{a,j}\tensor x_{a,j'}
-\delta_{a,j}\delta_{a,j'} x_{i,a}\tensor x_{i',a})\\
&=\delta_{i,i'} x_{i,j}\tensor x_{i,j'}-\delta_{j,j'}x_{i,j}\tensor x_{i',j}
=\delta_{i,i'} x_{i',j}\tensor x_{i,j'}-\delta_{j,j'}x_{i,j'}\tensor x_{i',j},
\end{align*}
while
\begin{align*}
\sum_{1\le a<b\le m} (E_{ab}\tensor E_{ba})\bowtie (x_{i,j}\tensor x_{i',j'})&=
\sum_{1\le a<b\le m} (\delta_{a,i}\delta_{b,i'} x_{b,j}\tensor x_{a,j'}
-\delta_{b,j}\delta_{a,j'} x_{i,a}\tensor x_{i',b})\\ 
&=\Upsilon(i'-i) x_{i',j}\tensor x_{i,j'}-\Upsilon(j-j') x_{i,j'}\tensor x_{i',j}.
\end{align*}
Therefore,
$$
r^{(\epsilon)}\bowtie (x_{i,j}\tensor x_{i',j'})=
\epsilon(\delta_{i,i'}+2\Upsilon(\epsilon(i'-i))x_{i',j}\tensor x_{i,j'}-
(\delta_{j,j'}+2\Upsilon(\epsilon(j-j')))x_{i,j'}\tensor x_{i',j}).
$$
By~\eqref{eq:Ups formula}, $\delta_{k,l}+2\Upsilon(\epsilon(l-k))=1+\sign(\epsilon(l-k))=1+\epsilon\sign(l-k)$
for all $k,l\in\ZZ$, $\epsilon\in\{1,-1\}$, and
the assertion follows.
\end{proof}

Using~\eqref{eq:c Poi bracket} and the Lemma, we immediately obtain the standard Poisson bracket 
on~$\mathcal A_m$
\begin{equation}
\{ x_{i,j},x_{i',j'}\}=(\sign(i'-i)+\sign(j'-j))x_{i,j'}x_{i',j}\label{eq:Am Pois bracket}
\end{equation}
and a Poisson bracket on~$\mathcal A_m^{\tensor n}$
\begin{align}
\{ x_{i,j}^{(k)},x_{i',j'}^{(k')}\}_{\sgna{\id}}
&=(\sign(i'-i)+\sign(k-k'))x_{i',j}^{(k)}x_{i,j'}^{(k')}\nonumber\\
&\phantom{=}+(\sign(j'-j)-\sign(k-k'))x_{i,j'}^{(k)}x_{i',j}^{(k')}\label{eq:Am,n Pois bracket}
\end{align}
for all~$i,i',j,j'\in[m]$, $k,k'\in[n]$.
Evidently, the assignments $x_{i,j}^{(k)}\mapsto 
x_{i,j}$, $i,j\in[m]$, $k\in[n]$ define a homomorphism
of Poisson algebras, as stipulated in Theorem~\ref{thm:Poisson mult}.
\begin{remark}\label{rem:Pois perms}
We can define Poisson brackets using the classical
Drinfeld twist corresponding to $\sgna{w}$ for any permutation~$w\in S_n$. However, the natural action of~$S_n$
on $\mathcal A_m^{\tensor n}$ by permutations of factors
yields an isomorphism between that Poisson algebra
and the one we just described, since the corresponding
Lie bialgebras are isomorphic (see Remark~\ref{rem:isom permutations}).
\end{remark}
\begin{remark}
It is easy to check, using~\eqref{eq:g act k[G]}, that
$E_{a,b}\triangleright\det=\det\triangleleft E_{a,b}=
\delta_{a,b}\det$, $a,b\in[m]$ whence $r^{(\epsilon)}\bowtie 
\det\tensor x_{i,j}=0$, $\epsilon\in\{1,-1\}$, $i,j\in[m]$ and so~$\det$ is Poisson-central
in~$\mathcal A_m$ and~$\det^{(k)}$, $k\in[n]$ are Poisson-central
in~$\mathcal A_{m}^{\tensor n}$. Thus, our Poisson bracket remains the same on~$\kk[G]^{\tensor n}$.
\end{remark}

Let~$q\in\kk^\times$ and assume that~$q$ is not a root of unity. The quantum analogue~$\mathcal A_{q,m}$ of~$\mathcal A_m$ is
generated by the $x_{i,j}$, $i,j\in [m]$ subject to relations
\begin{equation}\label{eq:q-matrices}
q^{\delta_{i,i'}}x_{i',j'}x_{i,j}-q^{\delta_{j,j'}}
x_{i,j}x_{i',j'}=(q-q^{-1})(\Upsilon(j-j')-
\Upsilon(i'-i))x_{i,j'}x_{i',j},
\end{equation}
for all $i,i',j,j'\in[m]$.
For instance, if~$m=2$ we obtain the familiar 
relations of the quantum coordinate ring
of $2\times 2$ matrices 
\begin{alignat*}{2}
&x_{1,1}x_{1,2}=q x_{1,2}x_{1,1},&\quad&
x_{1,1}x_{2,1}=q x_{2,1}x_{1,1},\\
&x_{1,2}x_{2,2}=q x_{2,2}x_{1,2},
&&x_{2,1}x_{2,2}=q x_{2,2}x_{2,1},\\
&x_{1,2}x_{2,1}=x_{2,1}x_{1,2},&&
x_{1,1}x_{2,2}-x_{2,2}x_{1,1}=(q-q^{-1})x_{1,2}x_{2,1}
\end{alignat*}
(see e.g.~\cite{Ge} or~\cite{CPbook}*{\S7.1} with~$q=e^{-h}$). 
The $q\to 1$ limit 
of this algebra is~$\mathcal A_m$
with the Poisson bracket given by~\eqref{eq:Am Pois bracket}. The coalgebra structure of~$\mathcal A_{q,m}$ is given by
$\Delta(x_{i,j})=\sum_{k\in[m]}x_{i,k}\tensor x_{k,j}$,
$i,j\in[m]$, while its counital co-quasi-triangular structure
$\mathcal R\in\Hom_\kk(\mathcal A_{q,m}\tensor\mathcal A_{q,m},\kk)$ is defined by
$$
\mathcal R(x_{k,l},x_{k',l'})=q^{\delta_{k,k'}}\delta_{k,l}\delta_{k',l'}
+(q-q^{-1})\Upsilon(k-k')\delta_{k,l'}\delta_{k',l},
\qquad k,k',l,l'\in[m].
$$

Let~$\mathcal R^{(1)}=\mathcal R$ and~$\mathcal R^{(-1)}=
\mathcal R^{\ast-1}\circ\tau$ (cf. Lemma~\ref{lem:basic coquas family}). In particular,
$(\mathcal R^{(\epsilon)})^{\ast-1}=\mathcal R^{(-\epsilon)}\circ\tau$, $\epsilon\in\{1,-1\}$.
It is easy to check 
that, for all $k,k',l,l'\in [m]$, $\epsilon\in\{1,-1\}$,
$$
\mathcal R^{(\epsilon)}(x_{k,l},x_{k',l'})=q_\epsilon^{\delta_{k,k'}}\delta_{k,l}\delta_{k',l'}+(q_\epsilon-q_\epsilon^{-1})\Upsilon(\epsilon(k-k'))\delta_{k,l'}\delta_{k',l},
$$
where~$q_\epsilon=q^{\epsilon}$. 
By~\eqref{eq:Psi via R}, for all
$i,i',j,j'\in[m]$, $\epsilon\in\{1,-1\}$,
\begin{align*}
\Psi^{(\epsilon)}(x_{i',j'}\tensor x_{i,j})&=
q_\epsilon^{\delta_{j,j'}-\delta_{i,i'}}x_{i,j}\tensor x_{i',j'}
-q_\epsilon^{\delta_{j,j'}}(q_\epsilon-q_\epsilon^{-1})\Upsilon(\epsilon(i'-i))x_{i',j}\tensor x_{i,j'}\\
&\phantom{=}+q_\epsilon^{-\delta_{i,i'}}(q_\epsilon-q_\epsilon^{-1})
\Upsilon(\epsilon(j-j'))x_{i,j'}\tensor x_{i',j}\\
&\phantom{=}
-(q-q^{-1})^2\Upsilon(\epsilon(i'-i))\Upsilon(\epsilon(j-j'))x_{i',j'}\tensor x_{i,j}.
\end{align*}
Given~$w\in S_n$, let~$\sgna{w}:I_n\to\{1,-1\}$ be as in Lemma~\ref{lem:trans sign} and
abbreviate~$\bullet_w=\bullet_{\sgna{w}}$.
Let, like in~\S\ref{subs:Poisson} and~\S\ref{subs:dual Jc}, $x^{(k)}=1^{\tensor (k-1)}\tensor x\tensor 1^{(n-k)}$, $x\in\mathcal A_{q,m}$, $k\in[n]$.
By Proposition~\ref{prop:Jc twisted mult},
$x_{i',j'}^{(k')}\bullet_w x_{i,j}^{(k)}=
x_{i',j'}^{(k')}\cdot x_{i,j}^{(k)}$ if~$k'\le k$, while 
for~$k'>k$
\begin{align*}
x_{i',j'}^{(k')}&\bullet_w x_{i,j}^{(k)}
=q_{k,k'}^{\delta_{j,j'}-\delta_{i,i'}}x_{i,j}^{(k)}\cdot x_{i',j'}^{(k')}
-q_{k,k'}^{\delta_{j,j'}}(q_{k,k'}-q_{k,k'}^{-1})\Upsilon(\sgna{w}_{k,k'}(i'-i))x_{i',j}^{(k)}\cdot x_{i,j'}^{(k')}\\
&+q_{k,k'}^{-\delta_{i,i'}}(q_{k,k'}-q_{k,k'}^{-1})
\Upsilon(\sgna{w}_{k,k'}(j-j'))x_{i,j'}^{(k)}\cdot x_{i',j}^{(k')}\\
&-(q-q^{-1})^2\Upsilon(\sgna{w}_{k,k'}(i'-i))
\Upsilon(\sgna{w}_{k,k'}(j-j'))x_{i',j'}^{(k)}\cdot x_{i,j}^{(k')},
\end{align*}
where~$\cdot$ stands for the usual multiplication in~$\mathcal A_{q,m}{}^{\tensor n}$ and~$q_{k,k'}=q_{\sgna{w}_{k,k'}}$, $(k,k')\in I_n$.
Thus,
$\mathcal A_{q,m}^{\tensor n,w}:=\mathcal A_{q,m}{}^{\tensor n}$ as a coalgebra, is generated, as an algebra, by the $x_{i,j}^{(k)}$, $i,j\in[m]$, $k\in[n]$,
such that for each~$k\in[n]$, the subalgebra
generated by the $x_{i,j}^{(k)}$, $i,j\in[m]$
is isomorphic to~$\mathcal A_{q,m}$ and
\begin{align*}
q_{k,k'}^{\delta_{i,i'}}x_{i',j'}^{(k')}&\bullet_w x_{i,j}^{(k)}
=q_{k,k'}^{\delta_{j,j'}}x_{i,j}^{(k)}\bullet_w x_{i',j'}^{(k')}
-q_{k,k'}^{\delta_{j,j'}}(q_{k,k'}-q_{k,k'}^{-1})\Upsilon(\sgna{w}_{k,k'}(i'-i))x_{i',j}^{(k)}\bullet_w x_{i,j'}^{(k')}
\\
&+(q_{k,k'}-q_{k,k'}^{-1})
\Upsilon(\sgna{w}_{k,k'}(j-j'))x_{i,j'}^{(k)}\bullet_w x_{i',j}^{(k')}\\
&
-(q-q^{-1})^2\Upsilon(\sgna{w}_{k,k'}(i'-i))\Upsilon(\sgna{w}_{k,k'}(j-j'))x_{i',j'}^{(k)}\bullet_w x_{i,j}^{(k')},\quad (k,k')\in I_n,
\end{align*}
for all $i,i',j,j'\in[m]$. Unlike in the classical case 
described~\S\ref{subs:Poisson ex}, it is quite an exercise to verify directly that the assignments $x_{i,j}^{(k)}\mapsto
x_{i,j}$, $i,j\in[m]$, $k\in[n]$ define the homomorphism 
of bialgebras~$\mathcal A_{q,m}^{\tensor n,w}\to 
\mathcal A_{q,m}$ provided by Theorem~\ref{thm:mult is homomorphism}. 
The Poisson bracket~\eqref{eq:Am,n Pois bracket} on~$\mathcal A_{m}^{\tensor n}$ 
is obtained
as a ``dequantization'' of~$\mathcal A_{q,m}^{\tensor n,\id}$. Note also, that 
since~$\mathcal A_{q,m}$ has a Poincar\'e-Birkhoff-Witt (PBW) basis, so does~$\mathcal A_{q,m}^{\tensor n,w}$
for any~$n\ge 2$ and~$w\in S_n$.

Needless to say, the $q=1$ limit of~$\mathcal A_{q,m}^{\tensor n,w}$ is also a Poisson algebra which, as discussed in Remark~\ref{rem:Pois perms}, is isomorphic to~$\mathcal A_{m}^{\tensor n}$ as a Poisson algebra via the natural action of~$w$ by permutation of factors. However, this no longer yields an isomorphism of algebras~$\mathcal A_{q,m}^{\tensor n,\id}\to 
\mathcal A_{q,m}^{\tensor n,w}$. Indeed, let~$n=2$. By~\eqref{eq:n=2 isom cond}, if
$\tau$ was an isomorphism of algebras 
$\mathcal A_{q,m}^{\tensor 2,\id}\to \mathcal A_{q,m}^{\tensor 2,(1,2)}$,
$x^{(k)}\mapsto x^{(3-k)}$, $k\in\{1,2\}$, $x\in \mathcal A_{q,m}$, then we would have, for all
$i,i',j,j'\in[m]$
$$
\sum_{s,s'\in[m]}\mathcal R(x_{i,s},x_{i',s'})x_{s,j}\tensor x_{s',j'}
=\sum_{s,s'\in[m]}\mathcal R(x_{s,j},x_{s',j'})x_{i,s}\tensor x_{i',s'},
$$
which is equivalent to
$$
(q^{\delta_{i,i'}}-q^{\delta_{j,j'}})x_{i,j}\tensor x_{i',j'}=(q-q^{-1})(\Upsilon(j'-j)x_{i,j'}\tensor x_{i',j}-\Upsilon(i-i')x_{i',j}\tensor x_{i,j'}).
$$
Let~$i>j\in[m]$ and let~$i'=j$, $j'=i$. Then
the above yields
$$
(q-q^{-1})(x_{i,i}\tensor x_{j,j}-x_{j,j}\tensor x_{i,i})=0,
$$
which is a contradiction for~$q\not=\pm1$.

\subsection{Families of classical r-matrices for Takiff
Lie algebras}\label{subs:Takiff}
Let~$\lie g$ be a Lie algebra and let~$f:\lie g\to V$ be a surjective
homomorphism of~$\lie g$-modules. Let~$\lie t=\lie t(\lie g,V)=V\rtimes \lie g$, which is equal to $V\oplus \lie g$ as a vector space with the Lie bracket
defined by $[(v,x),(v',x')]_{\lie t}=(xv'-x'v,[x,x']_{\lie g})$. The Lie algebra~$\lie t$ is a generalization 
of the Takiff Lie algebra, which corresponds to the case when~$f$ is an isomorphism and in turn is isomorphic to the truncated current algebra $\lie g\tensor \mathbb k[t]/(t^2)$.
It follows from the definition that~$\lie g$ (respectively, $V$) identifies with a subalgebra (respectively, an abelian ideal) of~$\lie t$.
\begin{lemma}
Let~$(\lie g,\delta)$ be a Lie bialgebra and let~$f:\lie g\to V$ be a surjective homomorphism of $\lie g$-modules.
Then~$\lie t=V\rtimes \lie g$ is a Lie bialgebra with
$\wh\delta:\lie t\to \lie t\tensor\lie t$ defined by
$\wh\delta(x)=(f\tensor\id_{\lie g}+\id_{\lie g}\tensor f)\delta(x)$, $\wh\delta(f(x))=(f\tensor f)\delta(x)$,
$x\in\lie g$.
\end{lemma}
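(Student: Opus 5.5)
We verify the three conditions \ref{cnd.3}, \ref{cnd.1}, \ref{cnd.2} for $\wh\delta$ directly, after first making the definition precise. Identify $\lie t=V\oplus\lie g$ and let $\hat f:\lie g\to\lie t$ be $x\mapsto f(x)\in V$. Then $\wh\delta|_{\lie g}=(\hat f\tensor\id+\id\tensor\hat f)\circ\delta$ and $\wh\delta\circ\hat f=(\hat f\tensor\hat f)\circ\delta$. Well-definedness on $V$ needs care: if $f(x)=0$ we need $(f\tensor f)\delta(x)=0$. I would reduce this to $\ker f$ being an ideal of $\lie g$ (since $f$ is a $\lie g$-module map from $\lie g_{\ad}$) together with the condition that $\delta(\ker f)\subset \ker f\tensor\lie g+\lie g\tensor\ker f$. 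In the intended applications, $f$ is an isomorphism or $\delta=\delta_r$. For $\delta=\delta_r$, $\delta_r(x)=[r,\Delta(x)]$ and the equivariance of $f$ give $(f\tensor f)\delta_r(x)=(\ad x\tensor f+f\tensor\ad x)$-type expressions, which vanish when $f(x)=0$ after using $f([y,x])=y\cdot f(x)$. If the general case fails, I would state the lemma under that mild hypothesis. This well-definedness is the step I expect to be the main obstacle.

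The cleanest route to the axioms is the dual-number trick. Extend scalars to $\kk[\varepsilon]/(\varepsilon^2)$. Then $\lie g\tensor\kk[\varepsilon]$ is a Lie bialgebra over $\kk[\varepsilon]$ with cobracket $\delta$ extended linearly. When $f$ is an isomorphism, $\lie t\cong\lie g\oplus\varepsilon\lie g$ as a $\kk$-Lie algebra, and the $\kk[\varepsilon]$-linear cobracket, written over $\kk$ via the map $\varepsilon\lie g\tensor_{\kk[\varepsilon]}\ldots$ to $\lie t\tensor_\kk\lie t$, becomes exactly $\wh\delta$. Concretely, $\delta(\varepsilon x)=\varepsilon\delta(x)$ is realised by putting one $\varepsilon$ on either leg, which is the formula $(\hat f\tensor\id+\id\tensor\hat f)$, and $\varepsilon^2=0$ gives $(\hat f\tensor\hat f)$ on $V$. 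I would avoid relying on this identification, since $\kk[\varepsilon]$-linearity and $\kk$-tensor products differ, and instead use it only as a guide for the direct computation below.

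For \ref{cnd.1}, each formula is manifestly $\tau$-antisymmetric because $\delta$ is and the operators $\hat f\tensor\id+\id\tensor\hat f$ and $\hat f\tensor\hat f$ commute with $\tau$.

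For \ref{cnd.3}, there are three cases: $x,y\in\lie g$; $x\in\lie g$, $v=f(y)$; and $v,v'\in V$. The last case is trivial on both sides, since $V$ is abelian and $(\hat f\tensor\hat f)(\cdot)$ has both legs in the abelian ideal $V$, with $V\cdot V=0$. For the other two cases, apply $\hat f$-insertion operators to \ref{cnd.3} for $\delta$. The key identity is $[\hat f(a),\Delta(y)]=\hat f([a,y])$ together with $\hat f(a)\hat f(b)$ commuting in $U(\lie t)$, which reduces everything to \ref{cnd.3} in $\lie g$.

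For \ref{cnd.2}, composing the cobracket with itself on $x\in\lie g$ gives $(\sum_{\text{one leg}}\hat f)(\delta\tensor\id)\delta(x)$ plus terms. By coassociativity bookkeeping, this equals the single-$\hat f$ insertion operator $\hat f_1+\hat f_2+\hat f_3$ applied to $(\delta\tensor\id)\delta(x)$. That operator commutes with cyclic permutations, so the cyclic sum vanishes. Similarly, on $V$ the double insertion $\sum_{i<j}\hat f_i\hat f_j$ applied to the co-Jacobi expression gives zero.
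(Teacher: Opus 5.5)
Your route is the paper's. You check \ref{cnd.3} in the three cases $(\lie g,\lie g)$, $(\lie g,V)$, $(V,V)$ using equivariance of $f$ and the fact that $V$ is abelian, get \ref{cnd.1} by inspection, and get \ref{cnd.2} by writing $(\wh\delta\tensor\id)\circ\wh\delta$ as a permutation-invariant ``insertion of $f$'' operator applied to $(\delta\tensor\id)\circ\delta$.

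Your bookkeeping in that last step is off by one. Let $f_i$ denote applying $f$ in the $i$th tensor factor. Since $\wh\delta$ raises the number of tensor legs lying in $V$ by exactly one, expanding $\wh\delta(x)=f(x_1)\tensor x_2+x_1\tensor f(x_2)$ gives
\[
(\wh\delta\tensor\id)\wh\delta(x)=(f_1f_2+f_1f_3+f_2f_3)\,(\delta\tensor\id)\delta(x),\qquad (\wh\delta\tensor\id)\wh\delta(f(x))=f_1f_2f_3\,(\delta\tensor\id)\delta(x).
\]
You claimed $f_1+f_2+f_3$ and $\sum_{i<j}f_if_j$ instead. The correct operators are still invariant under all permutations of the three factors, so the cyclic sum still vanishes by \ref{cnd.2} for $\delta$, exactly as you intended. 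What misled you is the dual-number picture. $\wh\delta$ is homogeneous of degree $+1$ for the grading $\deg\lie g=0$, $\deg V=1$, so it is not the $\kk[\varepsilon]$-linear extension of $\delta$. Drop that heuristic.

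On well-definedness you are more careful than the paper, whose proof never addresses it, and your suspicion is justified. The lemma genuinely needs $\delta(\ker f)\subset\ker f\tensor\lie g+\lie g\tensor\ker f$. Here is a counterexample without it:
\begin{itemize}
\item Take $\lie g=\lie{sl}_2^*$ with zero bracket, and $\delta$ the transpose of the bracket of $\lie{sl}_2$. This is a Lie bialgebra; \ref{cnd.3} is vacuous because $U(\lie g)$ is commutative.
\item Let $E^*,F^*,H^*$ be the basis dual to $E,F,H$.
\item Let $f:\lie g\to V=\lie g/\kk H^*$ be the projection onto a trivial module, which is a module map since $\lie g$ is abelian.
\end{itemize}
Then $f(H^*)=0$, while $(f\tensor f)\delta(H^*)=f(E^*)\tensor f(F^*)-f(F^*)\tensor f(E^*)\neq 0$.

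The condition holds if $f$ is injective. As you note, it also holds whenever $\delta=\delta_r$: then $\delta_r(x)=[r_1,x]\tensor r_2+r_1\tensor[r_2,x]$ lies in $I\tensor\lie g+\lie g\tensor I$ for $x$ in any ideal $I$. This covers Proposition~\ref{prop:takiff}, the only place the lemma is used. With that hypothesis stated and the corrected operators in the \ref{cnd.2} step, your argument is complete.
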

\begin{proof}
Write~$\delta(z)=z_1\tensor z_2$, $z\in\lie g$ in Sweedler-like notation. We have, 
for all $x,y\in \lie g$
\begin{align*}
\wh\delta([x,y]_{\lie t})&=(f\tensor \id_{\lie g}+\id_{\lie g}\tensor f)(\delta([x,y]_{\lie g})=(f\tensor\id_{\lie g}+
\id_{\lie g}\tensor f)([\delta(x),\Delta(y)]-[\delta(y),\Delta(x)])\\
&
=(f\tensor\id_{\lie g}+
\id_{\lie g}\tensor f)([x_1,y]\tensor x_2+x_1\tensor [x_2,y]-[y_1,x]\tensor y_2-y_1\tensor [y_2,x])\\
&=f([x_1,y])\tensor x_2+f(x_1)\tensor [x_2,y]-
f([y_1,x])\tensor y_2-f(y_1)\tensor [y_2,x]
\\
&\phantom{-}+[x_1,y]\tensor f(x_2)+x_1\tensor f([x_2,y])-[y_1,x]\tensor f(y_2)-y_1\tensor f([y_2,x])\\
&=[f(x_1),y]\tensor x_2+f(x_1)\tensor [x_2,y]-
[f(y_1),x]\tensor y_2-f(y_1)\tensor [y_2,x]
\\
&\phantom{-}+[x_1,y]\tensor f(x_2)+x_1\tensor [f(x_2),y])-[y_1,x]\tensor f(y_2)-y_1\tensor [f(y_2),x])\\
&=[\wh\delta(x),\Delta(y)]-[\wh\delta(y),\Delta(x)],
\\
\intertext{while}
\wh\delta([&x,f(y)]_{\lie t})=\wh\delta(f([x,y]_{\lie g}))
=(f\tensor f)\delta([x,y]_{\lie g})=(f\tensor f)([\delta(x),\Delta(y)]-[\delta(y),\Delta(x)])\\
&=f([x_1,y])\tensor f(x_2)+f(x_1)\tensor f([x_2,y])-
f([y_1,x])\tensor f(y_2)-f(y_1)\tensor f([y_2,x]) \\
&=[x_1,f(y)]\tensor f(x_2)+f(x_1)\tensor [x_2,f(y)]
-[f(y_1),x]\tensor f(y_2)-f(y_1)\tensor [f(y_2),x]\\
&=[\wh\delta(x),\Delta(f(y))]-[f(x_1),f(y)]\tensor x_2-
x_1\tensor [f(x_2),f(y)]-[\wh\delta(f(y)),\Delta(x)]\\
&=[\wh\delta(x),\Delta(f(y))]-[\wh\delta(f(y)),\Delta(x)].
\end{align*}
Finally, $\widehat \delta([f(x),f(y)]_{\lie t})=0=[\widehat\delta(f(x)),\Delta(f(y))]-[\widehat\delta(f(y)),\Delta(f(x))]$ for all~$x,y\in\lie g$.
Thus, $\wh\delta$ satisfies 
\ref{cnd.3}. The condition~\ref{cnd.1} is evident. To verify~\ref{cnd.2}, note that for all $x\in\lie g$
\begin{align*}
(\wh\delta\tensor\id_{\lie t})\wh\delta(x)&=
(\wh\delta\tensor\id_{\lie t})(f(x_1)\tensor x_2+x_1\tensor f(x_2))\\
&=(f\tensor f\tensor\id_{\lie g}+f\tensor\id_{\lie g}\tensor f+\id_{\lie g}\tensor f\tensor f)(\delta(x_1)\tensor x_2)\\
&=(f\tensor f\tensor\id_{\lie g}+f\tensor\id_{\lie g}\tensor f+\id_{\lie g}\tensor f\tensor f)\circ(\delta\tensor\id_{\lie g})\circ \delta(x),
\\
\intertext{while}
(\wh\delta\tensor\id_{\lie t})\wh\delta(f(x))
&=(\wh\delta\tensor\id_{\lie t})(f(x_1)\tensor f(x_2))
=(f\tensor f\tensor f)\circ (\delta\tensor\id_{\lie g})\circ\delta(x).
\end{align*}
It is now immediate that~$\wh\delta$ 
satisfies~\ref{cnd.2}.
\end{proof}
Given $r\in \lie g\tensor \lie g$, denote 
$\wh r:=(f\tensor\id_{\lie g}+\id_{\lie g}\tensor f)(r)$
regarded as an element of~$\lie t\tensor\lie t$.
We will now construct an infinite family of non-equivalent r-matrices for~$\lie t$.
\begin{proposition}\label{prop:takiff}
Let~$\lie g$ be a quasi-triangular Lie bialgebra with an r-matrix $r$.
Then 
\begin{enumalph}
    \item\label{prop:takiff.a}
    $\lie t$ is a quasi-triangular Lie bialgebra with an r-matrix $\wh{r}$ and~$\wh{\delta_r}=\delta_{\wh r}$;
    \item\label{prop:takiff.b} if~$r+\tau(r)=0$ then for any~$\Omega\in (V\tensor V)^{\lie g}$,
    $\{\wh{r}+\lambda \Omega\}_{\lambda\in\kk}$ is 
    a family of classical r-matrices for the same Lie cobracket 
    $\delta=\delta_{\wh{r}}$ on~$\lie t$.
\end{enumalph}
\end{proposition}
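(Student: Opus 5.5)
Both parts reduce to Proposition~\ref{prop:compat cond}, computing in $U(\lie t)^{\tensor 2}$ and $U(\lie t)^{\tensor 3}$. Write $r=r_1\tensor r_2$ in Sweedler-like notation, so that $\wh r=f(r_1)\tensor r_2+r_1\tensor f(r_2)$.

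For part~\ref{prop:takiff.a} I will first check that $\delta_{\wh r}=\wh\delta$ on the generators $x\in\lie g$ and $f(x)$, $x\in\lie g$, of $\lie t$. In $\lie t$ one has $[f(a),x]=f([a,x])$ because $f$ is a $\lie g$-module map. Hence
\[
[\wh r,\Delta(x)]=(f\tensor\id+\id\tensor f)([r,\Delta(x)])=\wh\delta(x).
\]
For $f(x)$, the terms $[f(r_1),f(x)]=0$ vanish since $V$ is an abelian ideal. The remaining terms give
\[
[\wh r,\Delta(f(x))]=[r_1,f(x)]\tensor f(r_2)+f(r_1)\tensor[r_2,f(x)]=(f\tensor f)([r,\Delta(x)])=\wh\delta(f(x)).
\]
So $\delta_{\wh r}=\wh\delta$, and the preceding Lemma shows this is a Lie cobracket. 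It remains to prove $\cybe{\wh r}=0$. I will expand
\[
\cybe{\wh r}=[\wh r_{12},\wh r_{13}]+[\wh r_{12},\wh r_{23}]+[\wh r_{13},\wh r_{23}].
\]
Every term containing two $f$'s in the same tensor slot is a bracket of elements of $V$, so it vanishes. What survives should be exactly
\[
(f\tensor\id\tensor\id+\id\tensor f\tensor\id+\id\tensor\id\tensor f)(\cybe r),
\]
where in each slot $[f(a),b]=f([a,b])$ is used, and this is $0$. This bookkeeping — which term lands in which slot — is the main computation, and I expect it to be routine but index-heavy. An alternative that avoids it is to use Proposition~\ref{prop:compat cond}\ref{prop:compat cond.d} together with $\delta_{\wh r}(\wh r)$ computed via the formulas for $\wh\delta$.

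For part~\ref{prop:takiff.b}, since $\Omega$ is $\lie g$-invariant and $V$ is abelian, $\Omega$ commutes with $\Delta(\lie t)$ in $U(\lie t)^{\tensor2}$. Therefore $\delta_{\wh r+\lambda\Omega}=\delta_{\wh r}$, and it suffices to show $\cybe{\wh r+\lambda\Omega}=0$. Expanding in $\lambda$ gives
\[
\cybe{\wh r}+\lambda\bigl(\cyb{\wh r}{\Omega}+\cyb{\Omega}{\wh r}\bigr)+\lambda^2\cybe{\Omega}.
\]
The quadratic term vanishes because $V$ is abelian. For the linear term, expand in the explicit form: each $\Omega_{ij}$ has both legs in $V$, so only the $\lie g$-leg of $\wh r$ can bracket nontrivially with it. The linear term will therefore collapse to an expression of the form
\[
\bigl[(r_{12}+r_{13}+r_{23})\text{-type terms},\ \Omega_{\cdot\cdot}\bigr],
\]
involving only $r$ placed in $\lie g$-slots. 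I plan to regroup it as a sum of terms like $[r_{12},\Omega_{13}+\Omega_{23}]$ and their analogues. Using invariance, $[x_1+x_2,\Omega_{12}]=0$, together with skew-symmetry $r_{21}=-r_{12}$, each pair should cancel. This is exactly where $r+\tau(r)=0$ enters: without it, the terms $[r_{12},\Omega_{13}]$ and $[r_{21},\Omega_{13}]$-type contributions do not match. I expect this cancellation to be the main obstacle, and I would verify it first in the case $V=\lie g$, $f=\id$ before writing the general argument.
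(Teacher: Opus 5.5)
Your plan follows the paper's proof. You compute $\cybe{\wh r}$ by expanding and using $[V,V]=0$ together with the $\lie g$-equivariance of $f$. Part~(b) rests on the $\lie t$-invariance of $\Omega$ plus skew-symmetry. The one structural difference is in how you get the cobracket axioms. You check $\delta_{\wh r}=\wh\delta$ on $\lie g$ and on $f(\lie g)$ and invoke the preceding Lemma. The paper instead shows that $\wh r+\tau(\wh r)=(f\tensor\id+\id\tensor f)(r+\tau(r))$ is $\lie t$-invariant (Lemma~\ref{lem:main cas Takiff}) and applies Proposition~\ref{prop:compat cond}. Your route has the small advantage of actually verifying $\wh{\delta_r}=\delta_{\wh r}$, which the paper leaves implicit.

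Two steps need correcting.

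First, your formula for what survives in $\cybe{\wh r}$ is wrong. Each summand of $\wh r$ has exactly one leg in $V$, so every term of $[\wh r_{i,j},\wh r_{k,l}]$ carries two $V$-legs. Those sharing a slot vanish, and the rest have $f$ in two \emph{distinct} slots. For example, $[f(r_1)\tensor r_2\tensor 1,\,r'_1\tensor 1\tensor f(r'_2)]=f([r_1,r'_1])\tensor r_2\tensor f(r'_2)$. The correct identity is $\cybe{\wh r}=(f\tensor f\tensor\id+f\tensor\id\tensor f+\id\tensor f\tensor f)(\cybe r)$. The conclusion $\cybe{\wh r}=0$ is unaffected.

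Second, in part~(b) the grouping you suggest does not cancel. The term $[\wh r_{1,2},\Omega_{1,3}+\Omega_{2,3}]=[\wh r_{1,2},(\Delta\tensor\id)(\Omega)]$ equals $(\delta_{\wh r}\tensor\id)(\Omega)$, which is not zero in general. Invariance of $\Omega$ lets you commute a \emph{single} $\Omega_{i,j}$ past a coproduct of $\wh r$, so group the terms that way instead. In
\[
\cyb{\wh r}{\Omega}+\cyb{\Omega}{\wh r}=[\wh r_{1,2}+\wh r_{1,3},\Omega_{1,3}+\Omega_{2,3}]+[\Omega_{1,2}+\Omega_{1,3},\wh r_{1,3}+\wh r_{2,3}]
\]
the brackets $[(\id\tensor\Delta)(\wh r),\Omega_{2,3}]$ and $[\Omega_{1,2},(\Delta\tensor\id)(\wh r)]$ vanish, leaving $[\wh r_{1,2}-\wh r_{2,3},\Omega_{1,3}]$. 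Since $\tau(\wh r)=-\wh r$ (which follows from $\tau(r)=-r$), this equals $\tau_{2,3}([\wh r_{1,3}+\wh r_{2,3},\Omega_{1,2}])=\tau_{2,3}([(\Delta\tensor\id)(\wh r),\Omega_{1,2}])=0$. This is exactly the paper's computation. There is no need to pass to $\lie g$-slots or to the special case $V=\lie g$.
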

\begin{proof}
We need the following
\begin{lemma}\label{lem:main cas Takiff}
If~$\Omega\in \lie g\tensor\lie g$ is $\lie g$-invariant
then~$\wh\Omega$
is $\lie t$-invariant. 
\end{lemma}
\begin{proof}
Since~$f\tensor\id_{\lie g}$ and~$\id_{\lie g}\tensor f$ are homomorphisms of~$\lie g$-modules
$\lie g\tensor\lie g\to V\tensor \lie g$ and~$\lie g\tensor \lie g\to \lie g\tensor V$, it follows that~$\wh{\Omega}$ is $\lie g$-invariant in~$\lie t\tensor\lie t$. Let~$v\in V$. Then $v=f(x)$ for some~$x\in\lie g$. Write $\Omega=\Omega_1\tensor \Omega_2$ in Sweedler-like notation. Then 
\begin{align*}
(\ad v&\tensor \id_{\lie g}+\id_{\lie g} \tensor \ad v)(\wh{\Omega})=(\ad f(x)\tensor \id_{\lie g}+\id_{\lie g}\tensor\ad f(x))(f(\Omega_1)\tensor \Omega_2+\Omega_1\tensor f(\Omega_2))\\
&=[f(x),f(\Omega_1)]\tensor \Omega_2+[f(x),\Omega_1]\tensor f(\Omega_2)
+f(\Omega_1)\tensor [f(x),\Omega_2]+\Omega_1\tensor [f(x),f(\Omega_2)]\\
&=(f\tensor f)([x,\Omega_1]\tensor \Omega_2+\Omega_1\tensor [x,\Omega_2])=(f\tensor f)(\ad x\tensor\id_{\lie g}+\id_{\lie g}\tensor\ad x)(\Omega)=0.\qedhere
\end{align*}
\end{proof}
Since~$r+\tau(r)$ is a $\lie g$-invariant in~$\lie g\tensor\lie g$
by Proposition~\ref{prop:compat cond}\ref{prop:compat cond.b}, we have $\wh{r}+\tau(\wh{r})=(f\tensor \id_{\lie g}+\id_{\lie g}\tensor f)(r+\tau(r))$ which is $\lie t$-invariant by Lemma~\ref{lem:main cas Takiff}. Note that 
$\wh{r}_{i,j}=(f_i+f_j)(r_{i,j})$, $1\le i<j\le 3$ where
$f_k:=\id_{\lie g}^{\tensor(k-1)}
\tensor f\tensor\id_{\lie g}^{\tensor(3-k)}$, $k\in\{1,2,3\}$.
Then
\begin{align*}
\cybe{\wh{r}}
&=[(f_1+f_2)(r_{1,2}),
(f_1+f_3)(r_{1,3})]+[(f_1+f_2)(r_{1,2}),
(f_2+f_3)(r_{2,3})]\\
&\phantom{=}+[(f_1+f_3)(r_{1,3}),
(f_2+f_3)(r_{2,3})]\\
&=[f_1(r_{1,2}),f_1(r_{1,3})]+
[f_2(r_{1,2}),f_2(r_{2,3})]+
[f_3(r_{1,3}),f_3(r_{2,3})]\\
&\phantom{=}
+[f_1(r_{1,2}),
f_3(r_{1,3})]+[f_2(r_{1,2}),
f_1(r_{1,3})]+[f_2(r_{1,2}),
f_3(r_{1,3})]\\
&\phantom{=}+[f_1(r_{1,2}),
f_2(r_{2,3})]+[f_1(r_{1,2}),
f_3(r_{2,3})]+[f_2(r_{1,2}),
f_3(r_{2,3})]\\
&\phantom{=}+[f_1(r_{1,3}),
f_2(r_{2,3})]+[f_1(r_{1,3}),
f_3(r_{2,3})]+[f_3(r_{1,3}),
f_2(r_{2,3})].
\end{align*}
Write $r=r_1\tensor r_2=r'_1\tensor r'_2$ in Sweedler-like notation. Then,
$$
[f_1(r_{1,2}),f_1(r_{1,3})]=[f(r_1),f(r'_1)]\tensor f_2\tensor f'_2=0,
$$
while 
\begin{align*}
[f_1(r_{1,2}),
f_3(r_{1,3})]=[f(r_1),r'_1]\tensor r_2\tensor f(r'_2)=
f([r_1,r'_1])\tensor r_2\tensor f(r'_2)=(f_1\circ f_3)[r_{1,2},r_{1,3}],
\end{align*}
since~$f$ is a homomorphism of~$\lie g$-modules. 
Using similar computations for other indices and noting that~$f_i\circ f_j=f_j\circ f_i$, $i\not=j$, we obtain
\begin{align*}
\cybe{\wh{r}}=(f_1\circ f_3+f_1\circ f_2+f_2\circ f_3)(\cybe{r})=0.
\end{align*}
Then part~\ref{prop:takiff.a} follows from Proposition~\ref{prop:compat cond}. 

To prove part~\ref{prop:takiff.b}, we need the following immediate
\begin{lemma}\label{lem:takiff inv}
If~$\Omega\in V\tensor V$ is $\lie g$-invariant then $\Omega$ is also $\lie t$-invariant. In particular, 
if $\Omega\in \lie g\tensor\lie g$ is $\lie g$-invariant then $(f\tensor f)(\Omega)$ is $\lie t$-invariant.
\end{lemma}
By Lemma~\ref{lem:takiff inv},
$\delta_{\wh{r}}=\delta_{\wh{r}+\lambda\Omega}$ for all~$\lambda\in\kk$. Thus, it remains to prove that~$\cybe{\wh{r}+\lambda\Omega}=0$. Since $\Omega\in (V\tensor V)^{\lie t}$ by Lemma~\ref{lem:takiff inv}, we have 
\begin{align*}
\cybe{\wh{r}+\lambda\Omega}&=
\cyb{\wh{r}}{\wh{r}}+\lambda\cyb{\wh{r}}{\Omega}
+\lambda\cyb{\Omega}{\wh{r}}+\lambda^2\cybe{\Omega}\\
&=\lambda[(\id_{\lie t}\tensor\Delta)(\wh{r}),\Omega_{1,3}+\Omega_{2,3}]
+\lambda[\Omega_{1,2}+\Omega_{1,3},(\Delta\tensor\id_{\lie t})(\wh{r})]\\
&=\lambda[\wh{r}_{1,2}+\wh{r}_{1,3},\Omega_{1,3}]
+\lambda[\Omega_{1,3},\wh{r}_{1,3}+\wh{r}_{2,3}]\\
&=\lambda[\wh{r}_{1,2}-\wh{r}_{2,3},\Omega_{1,3}]
=\lambda\tau_{2,3}([\wh{r}_{1,3}-\wh{r}_{3,2},\Omega_{1,2}]).
\end{align*}
Since~$\tau(r)=-r$, $\tau(\wh{r})=-\wh{r}$ and, therefore, 
\begin{equation*}
\cybe{\wh{r}+\lambda\Omega}=\lambda\tau_{2,3}([(\Delta\tensor\id_{\lie t})(\wh{r}),\Omega_{1,2}]=0.\qedhere
\end{equation*}
\end{proof}
Now, let~$\mathbf c:I_n\to \kk$ be transitive, let~$\boldsymbol{\alpha}:[n]\to \kk^n$, let~$\Omega\in (V\tensor V)^{\lie g}$ and
let~$r$
be a skew-symmetric classical r-matrix for~$\lie g$. By Theorem~\ref{thm:trans classical twists},
$$
\mathbf r(\mathbf c,\boldsymbol{\alpha})=
\sum_{i,j\in [n]} \wh{r}_{i,j+n}+
\sum_{i\in[n]}
\boldsymbol{\alpha}(i) \Omega_{i,i+n}
+\sum_{1\le i<j\le n} \mathbf c(i,j)(\Omega_{j,i+n}-\Omega_{j+n,i})
$$
is a classical r-matrix for~$\lie t^{\oplus n}$. In particular, if
$\tau(\Omega)=\Omega$ then
$$
\mathbf r(\mathbf c,\boldsymbol{\alpha})=\sum_{i,j\in[n]} (\wh{r}_{i,j+n}+\mathbf c(i,j)\Omega_{i,j+n}),
$$
where~$\mathbf c$ is extended to a map $[n]\times[n]\to\kk$ via $\mathbf c(i,i)=\boldsymbol{\alpha}(i)$, $i\in [n]$ and~$\mathbf c(j,i)=-\mathbf c(i,j)$, $(i,j)\in I_n$.

\subsection{Poisson structures for functions on Takiff groups}\label{subs:Poiss Takiff}
Let~$G$ be an algebraic group whose Lie algebra is~$\lie g$ and
consider $\kk[V\rtimes G]\cong \kk[V]\tensor \kk[G]$ 
as an algebra. Furthermore, $\kk[V]$ is generated by~$V^*$ as an algebra. Note that the group $G$ acts on~$\kk[V]$ on the right by algebra automorphisms
via
$$
(\xi\triangleleft g)(v)=\xi(gv),\qquad \xi\in V^*,\,g\in G,\,v\in V,
$$
and acts naturally on~$\kk[G]$ both on the left and on the right.

Fix a basis~$B$ of~$V$
and a dual basis~$\{\xi^b\}_{b\in B}$ of~$V^*$. Then~$V^*\tensor V$ is naturally a 
coalgebra via $\Delta(\xi\tensor v)=\xi\tensor \sum_{b\in B} b\tensor \xi^b\tensor v$, $v\in V$, $\xi\in V^*$, which, clearly,
is independent of the choice of~$B$.
\begin{lemma}\label{lem:hom-coalg}
The assignments $\xi\tensor v\mapsto \rho_{\xi,v}$ where~$\rho_{\xi,v}(g)=\xi(gv)$, $\xi\in V^*$, $v\in V$, $g\in G$, define a homomorphism
of coalgebras $\rho:V^*\tensor V\to \kk[G]$. In particular,
$\Delta(\rho_{\xi,v})=\sum_{b\in B} \rho_{\xi,b}\tensor \rho_{\xi^b,v}$, $\xi\in V^*$,
$v\in V$.
\end{lemma}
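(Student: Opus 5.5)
The claim is that $\rho(\xi\tensor v)=\rho_{\xi,v}$ defines a homomorphism of coalgebras $V^*\tensor V\to\kk[G]$, so there are three things to check. The map must land in $\kk[G]$, it must intertwine the comultiplications, and it must intertwine the counits. Here $V^*\tensor V$ carries the matrix-coefficient coalgebra structure $\Delta(\xi\tensor v)=\sum_{b\in B}(\xi\tensor b)\tensor(\xi^b\tensor v)$ with counit $\varepsilon(\xi\tensor v)=\xi(v)$. On $\kk[G]$ we have $\Delta(\phi)(g,g')=\phi(gg')$ and $\varepsilon(\phi)=\phi(1_G)$.

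\emph{Landing in $\kk[G]$.} Since $V$ is a rational $G$-module (implicitly finite-dimensional, or at least locally finite, as is needed for the dual basis sum to be finite), the map $g\mapsto \xi(gv)$ is a matrix coefficient of a rational representation. Hence it is a regular function on $G$, and $\rho$ is a well-defined linear map. If $V$ is only locally finite, we replace $V$ by the finite-dimensional $G$-submodule generated by $v$. This does not affect the formula below, because $\xi^b(g'v)=0$ for all but finitely many $b$.

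\emph{Comultiplication.} We evaluate both sides at $(g,g')\in G\times G$. On one side,
\[
\Delta(\rho_{\xi,v})(g,g')=\rho_{\xi,v}(gg')=\xi(g g' v).
\]
Expanding $g'v=\sum_{b\in B}\xi^b(g'v)\,b$ in the basis $B$ and using linearity of $g$ and $\xi$ gives
\[
\xi(g g'v)=\sum_{b\in B}\xi(gb)\,\xi^b(g'v)=\sum_{b\in B}\rho_{\xi,b}(g)\,\rho_{\xi^b,v}(g'),
\]
which is $(\rho\tensor\rho)(\Delta(\xi\tensor v))$ evaluated at $(g,g')$. Since $\kk[G]\tensor\kk[G]\cong\kk[G\times G]$ and regular functions are determined by their values, the identity $\Delta\circ\rho=(\rho\tensor\rho)\circ\Delta$ follows. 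This also gives the displayed formula for $\Delta(\rho_{\xi,v})$ in the statement of Lemma~\ref{lem:hom-coalg}.

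\emph{Counit.} We have $\varepsilon(\rho_{\xi,v})=\xi(1_G v)=\xi(v)=\varepsilon(\xi\tensor v)$, which completes the argument.

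The only point needing care is the finiteness and regularity of the matrix coefficients in the first step. The rest is the one-line basis expansion above.
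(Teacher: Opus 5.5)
Your proof is correct and is essentially the paper's argument: evaluate at $(g,g')$, expand $g'v=\sum_{b\in B}\xi^b(g'v)\,b$, and compare with $\rho_{\xi,v}(gg')$. The paper does not spell out the counit and regularity checks you add, and your locally-finite aside is unnecessary, since $V$ is finite-dimensional as a quotient of $\lie g$ (for infinite $B$ the sum defining $\Delta$ on $V^*\tensor V$ would not even be finite).
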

\begin{proof}
Since~$u=\sum_{b\in B}\xi^b(u)b$ for all~$u\in V$,
we have, for all $v\in V$, $\xi\in V^*$ and~$g,g'\in G$,
\begin{align*}
(((\rho\tensor\rho)\circ\Delta)(&\xi\tensor v))(g\tensor g')=
\Big(\sum_{b\in B} \rho_{\xi,b}\tensor \rho_{\xi^b,v}\Big)(g\tensor g')\\
&=\sum_{b\in B} \xi(g b)\xi^b(g'v)=\xi(gg'v)=\rho_{\xi,v}(gg')
=\Delta(\rho_{\xi,v})(g\tensor g').\qedhere
\end{align*}
\end{proof}
\begin{lemma}
The natural left and right actions of~$V\rtimes G$
on~$\kk[V]\tensor \kk[G]$ are defined by
\begin{alignat*}{2}
&v\triangleright \xi=\xi+\rho_{\xi,v},&\qquad &
\xi\triangleleft v=\xi(v)+\xi,\end{alignat*}
for all~$v\in V$, $\xi\in V^*$, together
with the trivial left $G$-action on~$\kk[V]$,
natural right $G$-action on~$\kk[V]$,
natural left and right actions of~$G$ on~$\kk[G]$
and trivial left and right actions of~$V$ on~$\kk[G]$.
\end{lemma}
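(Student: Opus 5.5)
We have to identify the natural left and right actions of $V\rtimes G$ on $\kk[V\rtimes G]\cong\kk[V]\tensor\kk[G]$. The plan is to compute directly with the formulas $(h\triangleright F)(p)=F(ph)$ and $(F\triangleleft h)(p)=F(hp)$ for $h,p\in V\rtimes G$; these are the group-level versions of~\eqref{eq:g act k[G]}. Since every action is by algebra automorphisms, it suffices to check generators: $\xi\in V^*$, which gives $\xi\tensor 1$, and $\phi\in\kk[G]$, which gives $1\tensor\phi$. Likewise it suffices to treat group elements of the two kinds $v=(v,1)$ and $g=(0,g)$.

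First fix conventions. Write elements of $V\rtimes G$ as pairs $(u,g)$ with multiplication $(u,g)(u',g')=(u+gu',gg')$. The identification $\kk[V\rtimes G]\cong\kk[V]\tensor\kk[G]$ sends $\xi\tensor\phi$ to the function $(u,g)\mapsto \xi(u)\phi(g)$. With this convention the projection $(u,g)\mapsto g$ is a group homomorphism. The $\kk[G]$-factor therefore sees only the $G$-component of a product, which gives the natural $G$-actions on $\kk[G]$ and trivial $V$-actions on it.

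Next compute on $\xi\in V^*$. For the left action, $(v\triangleright\xi)(u,g)=\xi(u+gv)=\xi(u)+\rho_{\xi,v}(g)$, which is $\xi+\rho_{\xi,v}$ in the notation of Lemma~\ref{lem:hom-coalg}. Also $(g'\triangleright\xi)(u,g)=\xi(u)$, so the left $G$-action on $\kk[V]$ is trivial. For the right action, $(\xi\triangleleft v)(u,g)=\xi(v+u)=\xi(v)+\xi(u)$. Also $(\xi\triangleleft g')(u,g)=\xi(g'u)$, which is the stated right $G$-action on $\kk[V]$. The $G$-actions on $\kk[G]$ follow from the $G$-components $gg'$ and $g'g$ of the products.

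The only real obstacle is getting the conventions consistent. The semidirect product multiplication has to match the bracket $[(v,x),(v',x')]=(xv'-x'v,[x,x'])$. The left/right conventions have to match~\eqref{eq:g act k[G]}, so that $v\triangleright\xi$ picks up $\rho_{\xi,v}$ and not some variant like $\rho$ with $g^{-1}$. I would settle this by differentiating at the identity and comparing with the Lie algebra action. If the opposite convention for the identification were intended, the same computation with the factors swapped would apply.
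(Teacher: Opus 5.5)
Your proof is correct, but it runs in the opposite direction from the paper's.

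You start from the defining formulas $(h\triangleright F)(p)=F(ph)$ and $(F\triangleleft h)(p)=F(hp)$. You evaluate them on the generators $\xi\tensor 1$ and $1\tensor\phi$, and on the elements $(v,1)$ and $(0,g)$. This \emph{derives} the stated formulas. The action axioms then come for free, because the regular representations are already actions.

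The paper instead takes the formulas as given and checks the action axiom $(v,g)\triangleright((v',g')\triangleright\xi)=(v+gv',gg')\triangleright\xi$, together with its right-hand analogue, by evaluating both sides at a point $(v'',g'')$. This shows that the separately prescribed $V$- and $G$-actions are compatible with the semidirect product law, so they glue to an action of $V\rtimes G$. The identification with the regular representation is left implicit in the paper.

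So your route proves the word ``natural'' directly and makes the compatibility automatic. The paper's route isolates exactly the consistency with $(v,g)(v',g')=(v+gv',gg')$.

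Your closing hedge about conventions is unnecessary. The multiplication $(u,g)(u',g')=(u+gu',gg')$ and the translation conventions you fixed are exactly the ones the paper uses; compare its proof and~\eqref{eq:g act k[G]}. No differentiation at the identity or swapping of factors is needed for this lemma.
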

\begin{proof}
Let~$\xi\in V^*$. We have, for all $v,v',v''\in V$
and~$g,g',g''\in G$
\begin{align*}
((v,g)&\lact ((v',g')\lact \xi))(v'',g'')=
(v,g)\lact (\xi+\rho_{\xi,v'})(v'',g'')\\&
=\xi(v'')+\rho_{\xi,v}(g'')+(g\lact \rho_{\xi,v'})(g'')
\\
&=\xi(v'')+\xi(g''v)+\rho_{\xi,v'}(g''g)=\xi(v'')+\xi(g''v)+\xi(g''gv')\\
&=\xi(v'')+\rho_{\xi,v}(g'')+\rho_{\xi,gv'}(g'')
=((v+gv',gg')\lact \xi)(v'',g'').
\end{align*}
Similarly,
\begin{align*}
((\xi\triangleleft (v,g))&\triangleleft (v',g'))(v'',g'')
=((\xi(v)+\xi\triangleleft g)\triangleleft (v',g'))(v'',g'')
\\
&=\xi(v)+(\xi\triangleleft g)(v')+((\xi\triangleleft g)\triangleleft g')(v'')=
\xi(v)+\xi(gv')+\xi(gg'v''),
\end{align*}
while~$
(\xi\triangleleft (v+gv',gg'))(v'',g'')
=\xi(v+gv')+\xi(gg'v'')$.
All remaining cases are obvious.
\end{proof}
\begin{corollary}
The natural right and left action of
$\lie t=V\rtimes \lie g$ on~$\kk[V]\tensor\kk[G]$ is determined by $\kk[G]$-linear derivations 
$\partial_v,{}_v\partial$ of~$\kk[V]\tensor\kk[G]$
defined by
$$
\partial_v(\xi)=\xi(v),\qquad {}_v\partial(\xi)=\rho_{\xi,v},\qquad 
v\in V,\, \xi\in V^*
$$
together with the zero left action of~$\lie g$ on~$\kk[V]$, the natural right action of~$\lie g$
on~$\kk[V]$, zero left and right actions of~$V$ on~$\kk[G]$ and natural left and right actions of~$\lie g$
on~$\kk[G]$ given by~\eqref{eq:g act k[G]}.
\end{corollary}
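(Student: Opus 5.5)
The plan is to differentiate the group-level actions from the preceding Lemma along one-parameter subgroups and then propagate to all of $\kk[V]\tensor\kk[G]$ via the Leibniz rule. By~\eqref{eq:g act k[G]}, the infinitesimal left (respectively, right) action of $(v,x)\in\lie t$ on a function $f$ on $V\rtimes G$ is $\frac{d}{dt}\big|_{t=0}$ of the group action of $\exp(t(v,x))$. Since $V\rtimes G$ acts by algebra automorphisms of $\kk[V\rtimes G]$ (the left and right regular actions), these infinitesimal actions act by derivations. Hence it suffices to compute them on the generators $\xi\in V^*\subset\kk[V]$ and $f\in\kk[G]$, treating $V$ and $\lie g$ separately, since $(v,x)=(v,0)+(0,x)$ and the action of $\lie t$ is linear.

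First I treat $v\in V$. Here $\exp(tv)=(tv,1)$, because $V$ is an abelian ideal. By the Lemma, $(tv)\triangleright\xi=\xi+\rho_{\xi,tv}=\xi+t\rho_{\xi,v}$ and $\xi\triangleleft(tv)=t\xi(v)+\xi$. Differentiating at $t=0$ gives ${}_v\partial(\xi)=\rho_{\xi,v}$ and $\partial_v(\xi)=\xi(v)$. Also $V$ acts trivially on $\kk[G]$ at the group level, so its derivatives vanish on $\kk[G]$; this makes $\partial_v$ and ${}_v\partial$ $\kk[G]$-linear derivations.

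Next I treat $x\in\lie g$. Here $\exp(tx)\in G$. The Lemma gives the trivial left action of $G$ on $\kk[V]$, whose derivative is zero. The right action $\xi\triangleleft g$ differentiates to the natural right action $(\xi\triangleleft x)(u)=\xi(xu)$. On $\kk[G]$, differentiating the natural left and right actions gives exactly~\eqref{eq:g act k[G]}.

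The main obstacle is bookkeeping: the identification $\kk[V\rtimes G]\cong\kk[V]\tensor\kk[G]$ must be compatible with the action formulas of the Lemma. In particular, the term $\rho_{\xi,v}$ lies in $\kk[G]$, and I need that $t\mapsto\rho_{\xi,tv}$ is linear in $t$, which holds because $\rho$ is bilinear. Once this is granted, the derivation property determines both actions uniquely from their values on generators, which proves the corollary.
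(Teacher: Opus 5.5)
Your proposal is correct and follows the route the paper intends. The paper gives no separate proof: it treats the Corollary as the immediate infinitesimal version of the preceding Lemma, obtained exactly as you do by differentiating the group actions along $t\mapsto(tv,1)$ and $t\mapsto\exp(tx)$, then extending from the generators $V^*$ and $\kk[G]$ by the Leibniz rule.
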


Now, suppose that~$\lie g$ is quasi-triangular with an r-matrix $r=r_1\tensor r_2$ in Sweedler-like notation and let~$f:\lie g\to V$ be a surjective homomorphism of~$\lie g$-modules. The Poisson 
bracket on~$\kk[V\rtimes G]$ induced by~$\wh r\in\lie t\tensor\lie t$ is  given by
\begin{align*}
&\{\varphi,\varphi'\}=0,\\
&\{\xi,\varphi\}=\xi(f(r_1))(\varphi\triangleleft r_2)-
\rho_{\xi,f(r_1)}\cdot(r_2\triangleright \varphi),\\
&\{\xi,\xi'\}=\xi(f(r_1))(\xi'\triangleleft r_2)+\xi'(f(r_1))(\xi\triangleleft r_2),
\end{align*}
for all~$\xi,\xi'\in V^*$, $\varphi,\varphi'\in\kk[G]$. Thus, $\kk[G]$
is a Poisson-commutative Poisson ideal in~$\kk[V\rtimes G]$ and~$\kk[V]$
is its Poisson subalgebra. Note that the Poisson
bracket~$\{\xi,\xi'\}$, $\xi,\xi'\in V^*$ is just the Lie bracket
on~$V^*$ induced by the Lie coalgebra structure
on~$\lie t$.

Let~$\Omega=\Omega_1\tensor\Omega_2\in V\tensor V$ in Sweedler-like notation be 
$\lie g$-invariant. 
Then $\Omega\bowtie (\varphi\tensor\varphi')=0=
\Omega\bowtie (\xi\tensor\varphi)$,
$\varphi,\varphi'\in\kk[G]$, $\xi\in V^*$ while 
for all $\xi,\xi'\in V^*$ 
$$
\Omega\bowtie(\xi\tensor \xi')=(\xi\tensor \xi')(\Omega)-\rho_{\xi,\Omega_1}\tensor \rho_{\xi',\Omega_2}.
$$
Assume for simplicity that~$\tau(\Omega)=\Omega$ and let~$\mathbf c:I_n\to \kk$ be transitive. Extend~$\mathbf c$
to a map~$[n]\times[n]\to \kk$ by $\mathbf c(j,i)=-\mathbf c(i,j)$, $i,j\in[n]$.
Using the notation from~\S\ref{subs:Poisson} and~\eqref{eq:c Poi bracket} we obtain
\begin{align*}
\{ \varphi^{(k)},\varphi'{}^{(l)}\}_{\mathbf c}&=0,\\
\{ \xi^{(k)},\varphi^{(l)}\}_{\mathbf c}&=\xi(f(r_1))(\varphi\triangleleft r_2)^{(l)}-
\rho_{\xi,f(r_1)}{}^{(k)}\cdot(r_2\triangleright \varphi)^{(l)},\\
\{\xi^{(k)},\xi'{}^{(l)}\}_{\mathbf c}&=\xi(f(r_1))(\xi'\triangleleft r_2)^{(l)}+\xi'(f(r_1))(\xi\triangleleft r_2)^{(k)}\\
&\qquad\qquad+\mathbf c(k,l)
(\xi(\Omega_1)
\xi'(\Omega_2)-\rho_{\xi,\Omega_1}{}^{(k)}\cdot \rho_{\xi',\Omega_2}{}^{(l)}),
\end{align*}
for all~$k,l\in[n]$, $\xi,\xi'\in V^*$ and~$\varphi,\varphi'\in \kk[G]$. Note that $\kk[G]^{\tensor n}$
is still a Poisson-commutative Poisson ideal of~$\kk[V\rtimes G]^{\tensor n}$
but~$\kk[V]^{\tensor n}$ is no longer a Poisson subalgebra.

We will now provide more explicit calculations for~$G=GL_m$, $\lie g=\lie{gl}_m$ and an isomorphism~$f:\lie g_{\ad}\to V$ of $\lie g$-modules. Denote
by $x_{i,j}\in\kk[G]$ the usual matrix element function (cf.~\S\ref{subs:Poisson ex}) and define
$\bar x_{i,j}\in\kk[G]$ by $\bar x_{i,j}(g)=x_{i,j}(g^{-1})$, $g\in G$. Then~$\kk[G]$ identifies with the localization of~$\kk[x_{i,j}\,:\, i,j\in[m]]$
by~$\det$.
It is easy to check that 
$\sum_{k\in[m]} x_{i,k}\bar x_{k,j}=\delta_{i,j}=\sum_{k\in[m]}\bar x_{i,k}x_{k,j}$ and~$\Delta(\bar x_{i,j})=
\sum_{k\in[m]} \bar x_{k,j}\tensor \bar x_{i,k}$, $i,j\in[m]$. 

Let~$\{y_{i,j}\}_{i,j\in[m]}$
be the basis of~$V^*$ dual to~$\{ f(E_{i,j})\}_{i,j\in [m]}$. 
We claim that
$$
\rho_{y_{i,j},f(E_{a,b})}=x_{i,a}\bar x_{b,j},\qquad i,j,a,b\in[m].
$$
Indeed, it is easy to check that the assignment $y_{i,j}\tensor f(E_{a,b})\mapsto 
x_{i,a}\bar x_{b,j}$, $i,j,a,b\in[m]$ define a homomorphism of coalgebras $V^*\tensor V\to \kk[G]$. 
Since~$G$ is generated by invertible diagonal matrices and by the $\exp(t E_{r,s})$, $t\in\kk$, $r\not=s$, it is enough to verify that~$y_{i,j}(f(g E_{a,b}g^{-1}))=
x_{i,a}(g)\bar x_{b,j}(g)=x_{i,a}(g)x_{b,j}(g^{-1})$ for any~$g\in G$ in one of these forms. If~$g=\sum_{r\in[m]}\lambda_r E_{r,r}$ with~$\lambda_r\in\kk^\times$, 
$r\in [m]$, we have
$$
y_{i,j}(f(g E_{a,b} g^{-1}))=\sum_{r,s}y_{i,j}( \lambda_r\lambda_s^{-1} E_{r,r}E_{a,b}E_{s,s})=\lambda_a\lambda_b^{-1}\delta_{i,a}\delta_{j,b}
=x_{i,a}(g)x_{b,j}(g^{-1}),
$$
while for~$g=\exp(t E_{r,s})=1+t E_{r,s}$, $r\not=s\in[m]$, $t\in\kk$,
\begin{align*}
y_{i,j}(f(gE_{a,b}g^{-1}))&=
y_{i,j}(f((1+t E_{r,s})E_{a,b}(1-t E_{r,s})))\\
&=\delta_{i,a}\delta_{j,b}+t\delta_{s,a}\delta_{i,r}\delta_{j,b}-
t\delta_{i,a}\delta_{j,s}\delta_{r,b}-t^2
\delta_{s,a}\delta_{r,b}\delta_{i,r}\delta_{j,s}\\
&=(\delta_{i,a}+t \delta_{i,r}\delta_{s,a})(\delta_{j,b}-t \delta_{j,s}\delta_{r,b})\\
&=x_{i,a}(1+t E_{r,s})x_{b,j}(1-t E_{r,s})=x_{i,a}(g) x_{b,j}(g^{-1}).
\end{align*}
Summarizing, we have for all~$i,j,a,b\in[m]$.
$$
f(E_{a,b})\triangleright y_{i,j}=x_{i,a}\bar x_{b,j},\quad 
y_{i,j}\triangleleft f(E_{a,b})=\delta_{i,a}\delta_{j,b},
\quad 
E_{a,b}\triangleright y_{i,j}=0,\,\, y_{i,j}\triangleleft E_{a,b}=\delta_{i,a}y_{b,j}-\delta_{b,j}y_{i,a}.
$$

Let~$\Omega=\sum_{a,b\in[m]} E_{a,b}\tensor E_{b,a}$ be the canonical $\lie g$-invariant in~$\lie g\tensor\lie g$. Then
\begin{align*}
(f\tensor f)(\Omega)&\bowtie(y_{i,j}\tensor y_{i',j'})=
\delta_{i,j'}\delta_{i',j}(1\tensor1)-
\sum_{a,b\in[m]} x_{i,a}\bar x_{b,j}\tensor x_{i',b}\bar x_{a,j'}.
\end{align*}
Note that applying the multiplication map to this tensor yields~$0$, as expected.

To write explicit Poisson brackets, we now need to choose a skew-symmetric r-matrix. For instance, by~\cite{GG}*{Example~5.2},
\begin{equation}\label{eq:skew symm r matr GLm}
r=\sum_{t\in[m]}d_t (E_{1,t}\tensor E_{t,m}-E_{t,m}\tensor E_{1,t}),
\end{equation}
where~$d_t=2-\delta_{t,1}-\delta_{t,m}$, $t\in [m]$, is a skew-symmetric
classical~r-matrix for~$\lie{gl}_n$; in particular, the well-known unique skew-symmetric
solution of~CYBE for~$\lie{gl}_2$ belongs to this family. 
We have, for all~$i,j,i',j'\in[m]$ 
\begin{align*}
\wh r\bowtie &(y_{i,j}\tensor x_{i',j'})= 
\sum_{t\in[m]} d_t \Big(y_{i,j}\triangleleft f(E_{1,t})\tensor x_{i',j'}\triangleleft 
E_{t,m}-y_{i,j}\triangleleft f(E_{t,m})\tensor x_{i',j'}\triangleleft 
E_{1,t}\\
&\phantom{(y_{i,j}\tensor x_{i',j'})=\sum_{t\in[m]}d_t} -f(E_{1,t})\triangleright y_{i,j}\tensor 
E_{t,m}\triangleright x_{i',j'}+f(E_{t,m})\triangleright y_{i,j}\tensor 
E_{1,t}\triangleright x_{i',j'}\Big)\\
&=d_j \delta_{1,i}
\delta_{i',j}\tensor x_{m,j'}-
d_i \delta_{m,j}\delta_{1,i'} \tensor x_{i,j'}+d_{j'} x_{i,j'}\bar x_{m,j}
\tensor 
x_{i',1}-\delta_{j',m}
x_{i,1}\sum_{t\in[m]}d_t \bar x_{t,j}
\tensor 
x_{i',t},\\
\wh r\bowtie &(y_{i,j}\tensor y_{i',j'})= 
\sum_{t\in[m]} d_t \Big(y_{i,j}\triangleleft f(E_{1,t})\tensor y_{i',j'}\triangleleft 
E_{t,m}-y_{i,j}\triangleleft f(E_{t,m})\tensor y_{i',j'}\triangleleft 
E_{1,t}\\
&\phantom{(y_{i,j}\tensor y_{i',j'})=\sum_{t\in[m]}d_t}+y_{i,j}\triangleleft E_{1,t}\tensor y_{i',j'}\triangleleft 
f(E_{t,m})-y_{i,j}\triangleleft E_{t,m}\tensor y_{i',j'}\triangleleft 
f(E_{1,t})\Big)\\
&= 
d_j \delta_{i,1}\tensor (\delta_{i',j} y_{m,j'}-\delta_{m,j'}y_{i',j})-d_i \delta_{j,m}\tensor (\delta_{1,i'}y_{i,j'}-\delta_{i,j'}y_{i',1})\\
&\phantom{=}+ d_{i'} (\delta_{1,i} y_{i',j}-\delta_{i',j}y_{i,1})\tensor \delta_{j',m}
-d_{j'} (\delta_{i,j'} y_{m,j}-\delta_{j,m}y_{i,j'})\tensor \delta_{i',1},
\end{align*}
while~$\wh r\bowtie (x_{i,j}\tensor x_{i',j'})=0$.
This yields the following Poisson bracket on~$\kk[V\rtimes G]$
\begin{align}
\{x_{i,j},x_{i',j'}\}&=0,\nonumber\\
\{y_{i,j},x_{i',j'}\}&=d_j \delta_{i',j}(\delta_{1,i}
x_{m,j'}-\delta_{j',m}
x_{i,1})+
d_i \delta_{m,j}\delta_{1,i'} x_{i,j'}+d_{j'} x_{i,j'}\bar x_{m,j}
x_{i',1},\nonumber\\
\{y_{i,j},y_{i',j'}\}&=
(\delta_{1,i}+\delta_{i,m}-\delta_{1,j'}-\delta_{j',m})\delta_{1,i'}\delta_{j,m} y_{i,j'}-
(\delta_{1,i'}+\delta_{i',m}-\delta_{1,j}-\delta_{j,m})\delta_{1,i}\delta_{j',m}y_{i',j}\nonumber\\
&\phantom{=}+\delta_{i,j'}d_i (\delta_{j,m} y_{i',1}-\delta_{i',1}y_{m,j})
-\delta_{i',j}d_{i'}(\delta_{j',m}y_{i,1}-\delta_{i,1} y_{m,j'})\label{eq:Lie algebra V*}
\end{align}
for~$i,j,i',j'\in[m]$. The last bracket, 
as already mentioned, is the Lie bracket on~$V^*$ induced by the Lie coalgebra structure on~$\lie t$.
Then for~$n\ge 2$ and
transitive $\mathbf c:I_n\to \kk$ extended 
to a skew-symmetric $\mathbf c:[n]\times[n]\to\kk$
we get the following Poisson bracket 
on~$\kk[V\rtimes G]^{\tensor n}$
\begin{align}
\{y_{i,j}^{(k)},y_{i',j'}^{(l)}\}_{\mathbf c}&=\delta_{1,i}\delta_{j',m}(d_{i'}y_{i',j}^{(k)}-d_j y^{(l)}_{i',j})
-\delta_{1,i'}\delta_{j,m}(d_i y_{i,j'}^{(l)}-d_{j'}y_{i,j'}^{(k)})\nonumber\\
&\phantom{=}+\delta_{i,j'}d_i(\delta_{j,m}y_{i',1}^{(l)}-
\delta_{1,i'}y_{m,j}^{(k)})
-\delta_{i',j}d_{i'}(\delta_{j',m} y^{(k)}_{i,1}-\delta_{1,i}y^{(l)}_{m,j'})\nonumber\\
&\phantom{=}+\mathbf c(k,l)\Big(\delta_{i,j'}\delta_{i',j}-\sum_{a,b\in[m]}
x_{i,a}^{(k)}\bar x_{b,j}^{(k)}x_{i',b}^{(l)}\bar x_{a,j'}^{(l)}\Big)
,\nonumber\\
\{y_{i,j}^{(k)},x_{i',j'}^{(l)}\}_{\mathbf c}&=
d_j \delta_{1,i}
\delta_{i',j}x^{(l)}_{m,j'}-
d_i \delta_{m,j}\delta_{1,i'}x^{(l)}_{i,j'}+d_{j'} x^{(k)}_{i,j'}\bar x_{m,j}^{(k)}
x^{(l)}_{i',1}-\delta_{j',m}
x_{i,1}^{(k)}\sum_{t\in[m]}d_t x^{(l)}_{i',t}\bar x^{(k)}_{t,j},\nonumber\\
\{x_{i,j}^{(k)},x_{i',j'}^{(l)}\}_{\mathbf c}&=0,\mskip200mu 
i,j,i',j'\in[m],\, k,l\in[n].\label{eq:Poiss Takiff tens}
\end{align}

\subsection{Small quantum algebra at roots of unity}
\label{subs:small quantum}
Following~\cite{LN}, we describe here an example of a family of
R-matrices for the small quantum group at roots of unity corresponding 
to~$\lie{sl}_2$. Let~$\kk=\mathbb C$ and let $q\in\kk^\times$ be a primitive root of unity of order~$2\ell>2$. 
The algebra~$H=u_{\lie q}(\lie{sl}_2)$
is generated by $E$, $F$ and~$L^{\pm 1}$  subject to relations
$$
[E,F]=\frac{L^2-L^{-2}}{q-q^{-1}},\quad 
LE L^{-1}=q E,\quad 
LF L^{-1}=q^{-1}F,\quad E^{\ell}=F^{\ell}=0,\quad 
L^{2\ell}=1.
$$
This is a Hopf algebra of dimension~$4\ell^3$ with the comultiplication
$$
\Delta(E)=E\tensor L^2+1\tensor E,\quad 
\Delta(F)=F\tensor 1+L^{-2}\tensor F,
$$
$L^{\pm1}$ being group-like. By~\cite{LN}, we obtain a 
family of R-matrices parametrized by~$\{1,-1\}^2$ for the same comultiplication, namely
\begin{align*}
R^{(\epsilon,1)}&=\frac1{4\ell}\sum_{\substack{0\le k\le \ell-1\\
0\le i,j\le 4\ell-1}}\, q^{\binom{k}2-\frac12 ij}\epsilon^{ij}\,
\frac{(q-q^{-1})^{k}}{[k]_q!}
 L^{i} E^k \tensor  L^{j}F^k,\\
R^{(\epsilon,-1)}&=\frac1{4\ell}\sum_{\substack{0\le k\le \ell-1\\
0\le i,j\le 4\ell-1}}\, q^{-\binom{k}2+\frac12 ij}\epsilon^{ij}\,
\frac{(q^{-1}-q)^{k}}{[k]_q!}
 F^k L^{i} \tensor  E^k L^{j}
\end{align*}
where~$\epsilon\in\{1,-1\}$ and~$[k]_q!=\prod_{1\le t\le k}(q^t-q^{-t})/(q-q^{-1})$; here~$R^{(\epsilon,-\epsilon')}=\tau(R^{(\epsilon,\epsilon')})^{-1}$, $(\epsilon,\epsilon')\in \{1,-1\}$.

We will now use this example to show that Drinfeld twists produced by our construction
are not, generally speaking, equivalent. Let~$R=R^{(1,1)}$ and consider two Drinfeld twists corresponding to the identity permutation and its counterpart corresponding to~$(1,2)$, that is
$J=R_{2,3}$, $J'=R_{3,2}^{-1}$.
We claim that~$J'$ cannot be obtained from~$J$ the same way the classical
twist corresponding to the transposition~$(1,2)$ is obtained from the classical twist corresponding 
to the identity permutation, that is, by conjugating with the permutation of factors (cf. Remark~\ref{rem:isom permutations}).
For, it is sufficient to show that
$$
(\tau_{1,2}\tensor \tau_{1,2})(J)J'^{-1}=
\tau_{1,2}\tau_{3,4}(R_{2,3})R_{3,2}
=R_{1,4}R_{3,2}
$$
does not commute with the action of~$H^{\tensor 2}$
given by the standard comultiplication
$\Delta_{H^{\tensor 2}}$
on the 4th tensor power of some $H$-module~$V$.  
Indeed, suppose that~$\ell>2$ and let~$V$ be the 3-dimensional $H$-module with the 
basis~$v_i$, $0\le i\le 2$ and the action given by 
$F v_i=v_{i+1}$, $i\in\{0,1\}$, $Fv_2=0$, $Ev_0=0$, $Ev_i=(q+q^{-1})v_{i-1}$,
$i\in\{1,2\}$ and~$Lv_i=q^{1-i}v_i$, $0\le i\le 2$. Note that 
$$
\frac1{4\ell}\sum_{0\le i,j\le 4\ell-1} q^{\pm\frac12 ij} L^{i}\tensor L^{j}(v_r\tensor v_s)=\Big(\frac1{4\ell}\sum_{0\le i\le 4\ell-1} q^{i(1-r)}
\sum_{0\le j\le 4\ell-1} q^{(1-s\pm\frac i2)j}\Big) v_r\tensor v_s.
$$
The inner sum is equal to~$0$ unless~$1-s\pm\frac12 i\equiv 0\pmod{2l}$ which
happens if and only if~$i=\mp2(1-s)$. Thus,
$$
\frac1{4\ell}\sum_{0\le i,j\le 4\ell-1} q^{\pm\frac12 ij} L^{i}\tensor L^{j}(v_r\tensor v_s)=q^{\mp 2(1-r)(1-s)}v_r\tensor v_s.
$$
Let~$u=v_2\tensor v_1\tensor v_2^{\tensor2}$. Since~$\Delta_{H^{\tensor 2}}(F)=F\tensor 1^{\tensor 3}+L^{-2}\tensor 1\tensor F\tensor 1$,
$\Delta_{H^{\tensor 2}}(F)(u)=0$.
On the other hand,
\begin{align*}
R_{1,4}R_{3,2}(u)&=R_{1,4}(u+(q^2-q^{-2}) v_2\tensor v_2\tensor v_1\tensor v_2)=q^2 u+(q^4-1) v_2\tensor v_2\tensor v_1\tensor v_2.
\end{align*}
Therefore, $\Delta_{H^{\tensor 2}}(F)
R_{1,4}R_{3,2}(u)=q^2(q^4-1)v_2^{\tensor 4}\not=0$
since we assumed that~$2\ell>4$.

More generally, one obtains families of~$R$ matrices in
extensions of the small quantum group at a root of unity by group like
elements. Such an extension depends on a pair of weight lattices~$\Lambda'\subset\Lambda$, and 
R-matrices are parametrized by certain pairings between subgroups of~$\Lambda/\Lambda'$
with values in~$\kk^\times$
(see~\cite{LN}*{Theorem~A}).

\BibSpec{book}{    +{}  {\PrintPrimary}                {transition}
    +{,} { \textit}                     {title}
    +{.} { }                            {part}
    +{:} { \textit}                     {subtitle}
    +{,} { \PrintEdition}               {edition}
    +{}  { \PrintEditorsB}              {editor}
    +{,} { \PrintTranslatorsC}          {translator}
    +{,} { \PrintContributions}         {contribution}
    +{,} { }                            {series}
    +{,} { \voltext}                    {volume}
    +{,} { }                            {publisher}
    +{,} { }                            {organization}
    +{,} { }                            {address}
    +{,} { \PrintDateB}                 {date}
    +{,} { }                            {status}
    +{}  { \parenthesize}               {language}
    +{}  { \PrintTranslation}           {translation}
    +{;} { \PrintReprint}               {reprint}
    +{.} { }                            {note}
    +{.} {}                             {transition}
    +{.} { \PrintDOI}                    {doi}
}

\renewcommand{\PrintDOI}[1]{DOI \href{http://dx.doi.org/#1}{#1}}
\renewcommand{\eprint}[1]{\href{http://arxiv.org/abs/#1}{arXiv:#1}}
\hbadness=99999


\begin{bibdiv}
\begin{biblist}

\bib{AM}{article}{
   author={Abedin, Raschid},
   author={Maximov, Stepan},
   title={Classification of classical twists of the standard Lie bi\-algebra
   structure on a loop algebra},
   journal={J. Geom. Phys.},
   volume={164},
   date={2021},
   pages={Paper No. 104149},
   issn={0393-0440},
   doi={10.1016/j.geomphys.2021.104149},
}

\bib{AN}{article}{
    author={Abedin, Raschid},
    author={Niu, Wenjun},
    title={Yangian for cotangent Lie algebras and spectral $R$-matrices},
    eprint={2405.19906}
}

\bib{ABGJ}{article}{
   author={Adin, Ron M.},
   author={Berenstein, Arkady},
   author={Greenstein, Jacob},
   author={Li, Jian-Rong},
   author={Marmor, Avichai},
   author={Roichman, Yuval},
   title={Transitive and Gallai colorings of the complete graph},
   journal={European J. Combin.},
   volume={130},
   date={2025},
   pages={Paper No. 104225},
   issn={0195-6698},
   doi={10.1016/j.ejc.2025.104225},
}

\bib{ABGJepr}{article}{
   author={Adin, Ron M.},
   author={Berenstein, Arkady},
   author={Greenstein, Jacob},
   author={Li, Jian-Rong},
   author={Marmor, Avichai},
   author={Roichman, Yuval},
   title={Transitive and Gallai colorings},
   eprint={2309.11203},
   date={2023},
}



\bib{BEER}{article}{
   author={Bartholdi, Laurent},
   author={Enriquez, Benjamin},
   author={Etingof, Pavel},
   author={Rains, Eric},
   title={Groups and Lie algebras corresponding to the Yang-Baxter
   equations},
   journal={J. Algebra},
   volume={305},
   date={2006},
   number={2},
   pages={742--764},
   issn={0021-8693},
   doi={10.1016/j.jalgebra.2005.12.006}
}

\bib{BB}{article}{
   author={Bazlov, Yuri},
   author={Berenstein, Arkady},
   title={Mystic reflection groups},
   journal={SIGMA Symmetry Integrability Geom. Methods Appl.},
   volume={10},
   date={2014},
   pages={Paper 040, 11},
   doi={10.3842/SIGMA.2014.040},
}

\bib{BoW}{article}{
   author={Borodin, Alexei},
   author={Wheeler, Michael},
   title={Colored stochastic vertex models and their spectral theory},
   journal={Ast\'erisque},
   number={437},
   date={2022},
       doi={10.24033/ast.1180},
}

\bib{Chaf}{article}{
   author={Chaffe, Matthew},
   title={Category $\mathcal{O}$ for Takiff Lie algebras},
   journal={Math. Z.},
   volume={304},
   date={2023},
   number={1},
   pages={Paper No. 14, 35},
   doi={10.1007/s00209-023-03262-1},
}

\bib{CG}{article}{
   author={Chari, Vyjayanthi},
   author={Greenstein, Jacob},
   title={A family of Koszul algebras arising from fi\-nite-di\-men\-sion\-al
   representations of simple Lie algebras},
   journal={Adv. Math.},
   volume={220},
   date={2009},
   number={4},
   pages={1193--1221},
   doi={10.1016/j.aim.2008.11.007},
}

\bib{ChKR}{article}{
   author={Chari, Vyjayanthi},
   author={Khare, Apoorva},
   author={Ridenour, Tim},
   title={Faces of polytopes and Koszul algebras},
   journal={J. Pure Appl. Algebra},
   volume={216},
   date={2012},
   number={7},
   pages={1611--1625},
   issn={0022-4049},
   doi={10.1016/j.jpaa.2011.10.014},
}

\bib{CPbook}{book}{
   author={Chari, Vyjayanthi},
   author={Pressley, Andrew},
   title={A guide to quantum groups},
   publisher={Cambridge University Press, Cambridge},
   date={1994},
}

\bib{Drinf-YBE}{article}{
   author={Drinfel\cprime d, V. G.},
   title={Constant quasiclassical solutions of the Yang-Baxter quantum
   equation},
   journal={Dokl. Akad. Nauk SSSR},
   volume={273},
   date={1983},
   number={3},
   pages={531--535},
   issn={0002-3264},
}

\bib{Drinf}{article}{
   author={Drinfel\cprime d, V. G.},
   title={Quasi-Hopf algebras},
   journal={Algebra i Analiz},
   volume={1},
   date={1989},
   number={6},
   pages={114--148},
   issn={0234-0852},
}

\bib{EH}{article}{
   author={Enriquez, Benjamin},
   author={Halbout, Gilles},
   title={Quantization of coboundary Lie bialgebras},
   journal={Ann. of Math. (2)},
   volume={171},
   date={2010},
   number={2},
   pages={1267--1345},
   issn={0003-486X},
   doi={10.4007/annals.2010.171.1267},
}

\bib{EK-I}{article}{
   author={Etingof, Pavel},
   author={Kazhdan, David},
   title={Quantization of Lie bialgebras. I},
   journal={Selecta Math. (N.S.)},
   volume={2},
   date={1996},
   number={1},
   pages={1--41},
   issn={1022-1824},
   doi={10.1007/BF01587938},
}

\bib{EK-II}{article}{
   author={Etingof, Pavel},
   author={Kazhdan, David},
   title={Quantization of Lie bialgebras. II},
   journal={Selecta Math. (N.S.)},
   volume={4},
   date={1998},
   number={2},
   pages={213--231},
   issn={1022-1824},
   doi={10.1007/s000290050030},
}

\bib{EK-III}{article}{
   author={Etingof, Pavel},
   author={Kazhdan, David},
   title={Quantization of Lie bialgebras. II, III},
   journal={Selecta Math. (N.S.)},
   volume={4},
   date={1998},
   number={2},
   pages={233--269},
   issn={1022-1824},
   doi={10.1007/s000290050030},
}


\bib{FZ}{article}{
   author={Foata, Dominique},
   author={Zeilberger, Doron},
   title={Graphical major indices},
   journal={J. Comput. Appl. Math.},
   volume={68},
   date={1996},
   number={1--2},
   pages={79--101},
   issn={0377-0427},
   doi={10.1016/0377-0427(95)00254-5},
}

\bib{FR}{article}{
   author={Fock, V. V.},
   author={Rosly, A. A.},
   title={Flat connections and polyubles},
   journal={Teoret. Mat. Fiz.},
   volume={95},
   date={1993},
   number={2},
   pages={228--238},
   issn={0564-6162},
   doi={10.1007/BF01017138},
}

\bib{GG}{article}{
   author={Gerstenhaber, M.},
   author={Giaquinto, A.},
   title={Boundary solutions of the classical Yang-Baxter equation},
   journal={Lett. Math. Phys.},
   volume={40},
   date={1997},
   number={4},
   pages={337--353},
   issn={0377-9017},
    doi={10.1023/A:1007363911649},
}

\bib{Ge}{article}{
   author={Goodearl, K. R.},
   title={Quantized coordinate rings and related Noetherian algebras},
   conference={
      title={Proceedings of the 35th Symposium on Ring Theory and
      Representation Theory},
      address={Okayama},
      date={2002},
   },
   book={
      publisher={Symp. Ring Theory Represent. Theory Organ. Comm., Okayama},
   },
   date={2003},
   pages={19--45},
 }

\bib{GM}{article}{
   author={Greenstein, Jacob},
   author={Mazorchuk, Volodymyr},
   title={Koszul duality for semidirect products and generalized Takiff
   algebras},
   journal={Algebr. Represent. Theory},
   volume={20},
   date={2017},
   number={3},
   pages={675--694},
   issn={1386-923X},
   doi={10.1007/s10468-016-9660-1},
}

\bib{Halb}{article}{
   author={Halbout, Gilles},
   title={Formality theorem for Lie bialgebras and quantization of twists
   and coboundary $r$-matrices},
   journal={Adv. Math.},
   volume={207},
   date={2006},
   number={2},
   pages={617--633},
   issn={0001-8708},
   doi={10.1016/j.aim.2005.12.006},
}

\bib{HK}{article}{
   author={Hetyei, G\'abor},
   author={Krattenthaler, Christian},
   title={The poset of bipartitions},
   journal={European J. Combin.},
   volume={32},
   date={2011},
   number={no.~8},
   pages={1253--1281},
   doi={10.1016/j.ejc.2011.03.019},
}

\bib{KPSST}{article}{
   author={Khoroshkin, S. M.},
   author={Pop, I. I.},
   author={Samsonov, M. E.},
   author={Stolin, A. A.},
   author={Tolstoy, V. N.},
   title={On some Lie bialgebra structures on polynomial algebras and their
   quantization},
   journal={Comm. Math. Phys.},
   volume={282},
   date={2008},
   number={3},
   pages={625--662},
   issn={0010-3616},
   doi={10.1007/s00220-008-0554-x},
}

\bib{LN}{article}{
   author={Lentner, Simon},
   author={Nett, Daniel},
   title={New R-matrices for small quantum groups},
   journal={Algebr. Represent. Theory},
   volume={18},
   date={2015},
   number={6},
   pages={1649--1673},
   issn={1386-923X},
   doi={10.1007/s10468-015-9555-6},
}

\bib{LuMou}{article}{
   author={Lu, Jiang-Hua},
   author={Mouquin, Victor},
   title={Mixed product Poisson structures associated to Poisson Lie groups
   and Lie bialgebras},
   journal={Int. Math. Res. Not. IMRN},
   date={2017},
   number={19},
   pages={5919--5976},
   issn={1073-7928},
      doi={10.1093/imrn/rnw189},
}

\bib{MS}{article}{
   author={Maillet, J. M.},
   author={Sanchez de Santos, J.},
   title={Drinfeld twists and algebraic Bethe ansatz},
   conference={
      title={L. D. Faddeev's Seminar on Mathematical Physics},
   },
   book={
      series={Amer. Math. Soc. Transl. Ser. 2},
      volume={201},
      publisher={Amer. Math. Soc., Providence, RI},
   },
   date={2000},
   pages={137--178},
   doi={10.1090/trans2/201/10},
}

\bib{Majid}{book}{
   author={Majid, Shahn},
   title={Foundations of quantum group theory},
   publisher={Cambridge University Press, Cambridge},
   date={1995},
   pages={x+607},
   isbn={0-521-46032-8},
   doi={10.1017/CBO9780511613104},
}

\bib{Mou}{article}{
   author={Mouquin, Victor},
   title={Quantization of a Poisson structure on products of principal
   affine spaces},
   journal={J. Noncommut. Geom.},
   volume={14},
   date={2020},
   number={3},
   pages={1049--1074},
   issn={1661-6952},
      doi={10.4171/jncg/386},
}


\bib{Ne}{article}{
   author={Negron, Cris},
   title={Small quantum groups associated to Belavin-Drinfeld triples},
   journal={Trans. Amer. Math. Soc.},
   volume={371},
   date={2019},
   number={8},
   pages={5401--5432},
   issn={0002-9947},
   doi={10.1090/tran/7438},
}

\bib{RSTS}{article}{
   author={Reshetikhin, N. Yu.},
   author={Semenov-Tian-Shansky, M. A.},
   title={Quantum R-matrices and factorization problems},
   journal={J. Geom. Phys.},
   volume={5},
   date={1988},
   number={4},
   pages={533--550 (1989)},
   issn={0393-0440},
   doi={10.1016/0393-0440(88)90018-6},
}

\bib{Pan}{article}{
   author={Panyushev, Dmitri I.},
   title={Semi-direct products of Lie algebras and their invariants},
   journal={Publ. Res. Inst. Math. Sci.},
   volume={43},
   date={2007},
   number={4},
   pages={1199--1257},
   issn={0034-5318},
}

\bib{OEIS}{book}{
editor={Sloane, N. J. A.},
title={The On-Line Encyclopedia of Integer Sequences},
note={Published electronically at \href{https://oeis.org}{https://oeis.org}}
}

\bib{Take}{article}{
   author={Takeuchi, Mitsuhiro},
   title={${\rm Ext}\sb{\rm ad}({\rm Sp}\,R,\mu \sp{A})\simeq \hat {\rm
   Br}(A/k)$},
   journal={J. Algebra},
   volume={67},
   date={1980},
   number={2},
   pages={436--475},
   issn={0021-8693},
   doi={10.1016/0021-8693(80)90170-2},
}



\bib{Tak}{article}{
   author={Takiff, S. J.},
   title={Rings of invariant polynomials for a class of Lie algebras},
   journal={Trans. Amer. Math. Soc.},
   volume={160},
   date={1971},
   pages={249--262},
   issn={0002-9947},
    doi={10.2307/1995803},
}



\bib{Ter}{article}{
   author={Terras, V.},
   title={Drinfel\cprime d{} twists and functional Bethe ansatz},
   journal={Lett. Math. Phys.},
   volume={48},
   date={1999},
   number={3},
   pages={263--276},
   issn={0377-9017},
   doi={10.1023/A:1007695001683},
}

\bib{Tor}{article}{
   author={Torrielli, Alessandro},
   title={On factorising twists in $AdS_3$ and $AdS_2$},
   journal={J. Geom. Phys.},
   volume={183},
   date={2023},
   pages={Paper No. 104690, 19},
   issn={0393-0440},
   doi={10.1016/j.geomphys.2022.104690},
}

\bib{Wag}{article}{
   author={Wagner, Carl G.},
   title={Enumeration of generalized weak orders},
   journal={Arch. Math. (Basel)},
   volume={39},
   date={1982},
   number={2},
   pages={147--152},
   issn={0003-889X},
   doi={10.1007/BF01899195},
}
	
\end{biblist}

\end{bibdiv}
\end{document}